\newtheorem{prop}{Proposition}[section]
\newtheorem{defi}[prop]{Definition}
\newtheorem{conj}[prop]{Conjecture}
\newtheorem{lem}[prop]{Lemma}
\newtheorem{thm}[prop]{Theorem}
\newtheorem{cor}[prop]{Corollary}
\theoremstyle{remark}
\newtheorem{examp}[prop]{Example}
\newtheorem{remar}[prop]{Remark}
\DeclareMathOperator{\Spf}{Spf}
\DeclareMathAlphabet{\mathpzc}{OT1}{pzc}{m}{it}
\DeclareMathOperator{\Aut}{Aut}
\DeclareMathOperator{\End}{End}
\DeclareMathOperator{\Hom}{Hom}
\DeclareMathOperator{\Ind}{Ind}
\DeclareMathOperator{\Res}{Res}
\DeclareMathOperator{\Sym}{Sym}
\DeclareMathOperator{\GL}{GL}
\DeclareMathOperator{\Ker}{Ker}
\DeclareMathOperator{\Gal}{Gal}
\DeclareMathOperator{\tr}{tr}
\DeclareMathOperator{\Stab}{Stab}
\DeclareMathOperator{\Irr}{Irr}
\DeclareMathOperator{\Spec}{Spec}
\DeclareMathOperator{\mSpec}{m-Spec}
\DeclareMathOperator{\id}{id}
\DeclareMathOperator{\Mod}{Mod}
\DeclareMathOperator{\Sp}{Sp}
\DeclareMathOperator{\Rep}{Rep}
\DeclareMathOperator{\Ext}{Ext}
\newcommand{\Q}{\mathbb{Q}}
\newcommand{\Qp}{\mathbb {Q}_p}
\newcommand{\Zp}{\mathbb{Z}_p}
\newcommand{\Qpbar}{\overline{\mathbb{Q}}_p}
\newcommand{\Zpbar}{\overline{\mathbb{Z}}_p}
\newcommand{\ZZ}{\mathbb Z}
\newcommand{\QQ}{\mathbb Q}
\newcommand{\ev}{\mathrm{ev}}
\newcommand{\RR}{\mathbb R}
\newcommand{\mm}{\mathfrak m}
\newcommand{\Fpbar}{\overline{\mathbb{F}}_p}
\newcommand{\OO}{\mathcal O}
\newcommand{\TT}{\mathbb T}
\DeclareMathOperator{\wtimes}{\widehat{\otimes}}
\newcommand{\cV}{\check{\mathbf{V}}}
\DeclareMathOperator{\Ad}{Ad}
\newcommand{\md}{\mathrm m}
\newcommand{\la}{\mathrm{la}}
\newcommand{\pro}{\mathrm{pro}}
\newcommand{\alg}{\mathrm{alg}}
\newcommand{\cont}{\mathrm{cont}}
\newcommand{\Z}{\mathbb{Z}}
\newcommand{\F}{\mathbb{F}}
\newcommand{\R}{\mathbb{R}}
\newcommand{\C}{\mathbb{C}}
\newcommand{\slt}{\mathfrak{sl}_2}
\DeclareMathOperator{\Frob}{Frob}
\newcommand{\PGL}{\mathrm{PGL}}
\newcommand{\rig}{\mathrm{rig}}
\newcommand{\Cp}{\mathbb C_p}
\newcommand{\rhobar}{\bar{\rho}}
\newcommand{\univ}{\mathrm{univ}}
\newcommand{\cyc}{\mathrm{cyc}}
\newcommand{\Ghat}{\widehat{G}}
\newcommand{\LG}{\leftidx{^L}G}
\newcommand{\LPad}{\leftidx{^L}P^{\mathrm{ad}}}
\newcommand{\CPad}{\leftidx{^C}P^{\mathrm{ad}}}
\newcommand{\That}{\widehat{T}}
\newcommand{\Bhat}{\widehat{B}}
\newcommand{\CG}{\leftidx{^C}G}
\newcommand{\LGf}{\LG_f}
\newcommand{\Xrig}{\mathfrak{X}^{\mathrm{rig}}}
\newcommand{\ghat}{\widehat{\mathfrak g}}
\newcommand{\that}{\widehat{\mathfrak t}}
\DeclareMathOperator{\Lie}{Lie}
\newcommand{\Rla}{R\text{-}\la}
\newcommand{\Sla}{S\text{-}\la}
\newcommand{\Gm}{\mathbb{G}_m}
\newcommand{\tw}{\mathrm{tw}}
\newcommand{\Sen}{\mathrm{Sen}}
\newcommand{\Pad}{P^{\mathrm{ad}}}
\DeclarePairedDelimiter{\set}{\{}{\}}
\newcommand{\Spl}{\mathrm{Spl}}
\newcommand{\Ytilde}{\widetilde{Y}}
\newcommand{\Mtilde}{\widetilde{\mathcal M}}
\title{Infinitesimal characters in arithmetic families}
\author{Gabriel Dospinescu}
\address{
Ecole Normale Sup\'erieure de Lyon,
46 all\'ee d'Italie,
69 364 Lyon Cedex 07, France}
\email{gabriel.dospinescu@ens-lyon.fr}
\author{Vytautas Pa\v{s}k\={u}nas}
\address{Universit\"at Duisburg-Essen,
Fakult\"at f\"ur Mathematik, Thea-Leymann-Str.\,9,
45127 Essen, Germany}
\email{paskunas@uni-due.de}
\author{Benjamin Schraen}
\address{Universit\'e Paris--Saclay, CNRS, Laboratoire de math\'ematiques d'Orsay, 91405, Orsay, France}
\email{benjamin.schraen@universite-paris-saclay.fr}
\date{\today.}
\begin{document} 
\maketitle

\begin{abstract} We associate infinitesimal characters to (twisted)
  families of $L$-parameters and $C$-parameters of $p$-adic reductive
  groups.  We use the construction to study the action of the centre
  of the universal enveloping algebra on the locally analytic vectors
  in the Hecke eigenspaces in the completed cohomology.
\end{abstract}
\tableofcontents

\section{Introduction} 
  
Let $G$ be a connected reductive group over $\mathbb{R}$ and let
$\mathfrak{g}=\Lie(G(\R))$.  The center $Z(\mathfrak{g})$ of the
enveloping algebra of $\mathfrak{g}_{\C}$ has a strong influence on
the representation theory of $G$, as we will briefly recall. Let
$\widehat{G(\R)}$ be the unitary dual of $G(\R)$.  By Segal's
theorem\footnote{This crucially uses the unitarity hypothesis, and the
  result fails for irreducible Banach representations, making it
  impossible to adapt the proof in the $p$-adic world.} there is an
``infinitesimal character map''
\[\widehat{G(\R)}\to {\rm Hom}_{\mathbb{C}\text{-}\rm
    alg}(Z(\mathfrak{g}), \mathbb{C}), \, \pi\mapsto \chi_{\pi},\] and
by Harish-Chandra's finiteness theorem this map has finite
fibres. Moreover, the study of the differential equations hidden in
the existence of $\chi_{\pi}$ yields important information about the
asymptotic behaviour of the matrix coefficients of $\pi$, and this can
be used to prove Casselman's subrepresentation theorem and the
Langlands classification. Going somewhat in the opposite direction,
one can use the existence of an infinitesimal character to deduce
finiteness results: another classical result of Harish-Chandra ensures
that any admissible Banach representation of $G(\R)$ with an
infinitesimal character has finite length.
   
It is both natural and tempting to investigate whether similar
phenomena happen in the $p$-adic world, but so far the situation is
far less rosy due to our rather poor understanding of these
representations. The results of this paper and its sequel \cite{DPS2}
point to some striking similarities with the above picture, as well as
significant differences. Our results are most definite in the cases
when a connection to Galois representations can be made.
   
\subsection{Quick overview}
  
There are essentially three main results in this paper. 

The first is a
very general construction (see sections \ref{Definf} and
\ref{sec_Cgr}) attaching infinitesimal characters to (twisted)
families of $L$-parameters and $C$-parameters of $p$-adic reductive
groups. This is a $p$-adic analogue of a classical construction for
$L$-parameters of real reductive groups (see section \ref{sec:real}
for a review). For an $L$-parameter
$\rho: \Gal_F\to \LG(\overline{\Q}_p)$ the associated infinitesimal
character $\zeta_{\rho}$ is obtained from the conjugacy class of the
semisimple part of the Sen operator attached to $\rho$.  

The second is
an interpolation result (theorems \ref{infinitesimal} and
\ref{infinitesimal_central}), stating that if we can produce
sufficiently many\footnote{In a sense which can and will be made
  precise later on.} locally analytic vectors in sufficiently many
members of a family of admissible Banach representations of a $p$-adic
reductive group $G$, with the property that the center of the
universal enveloping algebra of $\Lie(G)$ acts on them by characters
in a compatible way, then these characters glue and all Banach
representations of the family have an infinitesimal character obtained
by specialisation. 

The third exploits these two results to show
(theorems \ref{lzero} and \ref{lzero_central}) that many locally
analytic representations arising ``in nature'' have infinitesimal
characters, which can be explicitly computed from $p$-adic Hodge
theoretic data.  Informally, if we can attach Galois representations
to Hecke eigenspaces in completed cohomology, so that at classical
points the Hodge--Tate weights of the Galois representation match the
infinitesimal character of the corresponding classical automorphic
form (i.e.~a weak form of local-global compatibility at $p$), then
this property propagates by analytic continuation to all Hecke
eigenspaces. Thus in favorable settings if the Hecke eigenspace is
non-zero, then its locally analytic vectors have an infinitesimal
character, which can be related to the generalized Hodge--Tate weights
of the associated Galois representation.
      
Examples of situations covered by our results are given by
(sufficiently non-degenerate) Hecke eigenspaces in the completed
cohomology of modular curves, or more generally Shimura curves over
totally real fields, or of compact unitary Shimura varieties studied
by Caraiani--Scholze, or of definite unitary groups over totally real
fields (see sections \ref{sec_modular}, \ref{sec_shimura},
\ref{sec_unitary} and \ref{CS}). The formalism developed in this paper
also applies to ``patched'' modules obtained by patching these
completed cohomology groups as in \cite{6auth}. In particular, we show
(see theorem \ref{patched}) that the candidates for the $p$-adic local
Langlands correspondence for $p$-adic $\GL_n(F)$ constructed in
\cite{6auth} have infinitesimal characters depending only on the local
Galois representation one starts with, giving further nontrivial
evidence that these candidates are independent of the choices made in
their construction.
   
\subsection{Problems in the $p$-adic world}
   
Fix a prime number $p$ and a connected reductive group $G$ over
$\Qp$. Let $L$ be the coefficient field of our representations, a
sufficiently large finite extension of $\Qp$. We assume that $G$ is
split over $L$.  Let $\mathfrak{g}$ be the Lie algebra of the $p$-adic
Lie group $G(\Qp)$ (or equivalently of $G$).
    
Before discussing the problems arising for $p$-adic representations,
it is convenient to introduce some notation and recall a certain
number of basic results.  Let $H$ be a $p$-adic Lie group and let
$\mathfrak{h}=\Lie(H)$. Let ${\rm Ban}_{L} (H)$ be the category of
admissible $L$-Banach space representations of $H$ and let
$\widehat{H}_L$ be the set of isomorphism classes of absolutely
irreducible objects of this category. Contrary to the world of real
groups, admissibility of irreducible unitary representations does not
come for free, thus it is better to impose it from the very
beginning. Let ${\rm Ban}_{L} (H)^{\rm unit}$ be the full subcategory
of ${\rm Ban}_{L} (H)$ consisting of unitary representations,
i.e.~those having an $H$-invariant norm defining the Banach space
topology, and let $\widehat{H}^{\rm unit}_L$ be the set of isomorphism
classes of absolutely irreducible objects of
${\rm Ban}_{L} (H)^{\rm unit}$. If $\Pi\in {\rm Ban}_{L} (H)$ then we
let $\Pi^{\la}$ be the subspace of locally analytic vectors in $\Pi$,
i.e.~vectors whose orbit map $g\mapsto g.v$ is a locally analytic map
from $H$ to $\Pi$. Then $\Pi^{\la}$ is a dense subspace of $\Pi$ by
\cite{ST}, on which
$U(\mathfrak{h})_L\coloneqq U(\mathfrak{h})\otimes_{\Qp} L$ naturally
acts ($U(\mathfrak{h})$ is the universal enveloping algebra of
$\mathfrak{h}$). Let $Z(\mathfrak{h})_L$ be the center of
$U(\mathfrak{h})_L$ and let $\chi: Z(\mathfrak{h})_L\to L$ be an
$L$-algebra homomorphism. If $\Pi \in {\rm Ban}_L(H)$, we say that
$\Pi^{\la}$ has infinitesimal character $\chi$ if $Xv=\chi(X)v$ for
all $X\in Z(\mathfrak{h})_L, v\in \Pi^{\la}$. We will also abuse
language and say that $\Pi$ has infinitesimal character $\chi$ when
$\Pi^{\la}$ does so.

A first major problem, which is unfortunately unsolved (to our
knowledge) for {\it{any}} $G$ beyond tori, is the existence of an
infinitesimal character map
\begin{equation}\label{infcharmap}
  \widehat{G(\Qp)}_L\to {\rm Hom}_{L\text{-}\rm alg} (
  Z(\mathfrak{g})_L, L),
\end{equation}
i.e.~whether $\Pi^{\la}$ has an infinitesimal character for each
$\Pi\in \widehat{G(\Qp)}_L$. It is conjectured in \cite{DS} that this
is the case, and it is proved there (based on deep results of Ardakov
and Wadsley \cite{AW}) that this is so if we further\footnote{This is
  not automatic; actually it is not even known if $\Pi^{\la}$ always
  has finite length, and it is not even clear that this is to be
  expected.} assume that $\Pi^{\la}$ is an absolutely irreducible
locally analytic representation of $G(\Qp)$. If we replace
$\widehat{G(\Qp)}$ by $\widehat{G(\Qp)}^{\rm unit}$, then essentially
the only group for which the existence of this map is known is
$G=\GL_2$, and the argument in \cite{GDinf} fully uses the $p$-adic
local Langlands correspondence for $\GL_2(\Qp)$. We will give a
different proof in this paper, which still uses this input, as well as
global results, but sheds some more light on what could happen for
other groups. Contrary to the case of real groups, already for
$\GL_2(\Qp)$ all fibres of the map \eqref{infcharmap} are
uncountable. Still, the fibres of the restriction to
$\widehat{\GL_2(\Qp)}^{\rm unit}$ have nice geometric structures: they
are the set of $L$-points of some (non quasi-compact) rigid analytic
spaces.
  
Inspired by the situation for real groups, one may ask whether every
admissible $L$-Banach representation $\Pi$ of $G(\Qp)$ with an
infinitesimal character has finite length. One cannot expect such a
result to hold in great generality (i.e. with $G(\Qp)$ replaced by any
$p$-adic Lie group), since there are admissible $L$-Banach
representations $\Pi$ of $\GL_2(\Zp)$ with an infinitesimal character
and of infinite length. However, we will show in \cite{DPS2} that this
holds for unitary representations of $\GL_2(\Qp)$ (the proof does
{\it{not}} use the $p$-adic Langlands correspondence for
$\GL_2(\Qp)$), and that a similar result holds for the group of units
of a quaternion division algebra $D$ over $\Qp$, if we further assume
(in the case of $D^{\times}$) that the infinitesimal character is not
the one of an irreducible algebraic representation. When this
hypothesis on the infinitesimal character is no longer satisfied, it
is not clear to us whether the $\GL_2(\mathbb Z_p)$ or the
$\GL_2(\Qp)$ phenomenon prevails, since $D^{\times}$ is compact modulo
the centre.  This in turn has interesting local and global
applications, for instance to the study of eigenspaces in the
completed cohomology of certain Shimura curves or to the (still
hypothetical, despite a lot of recent progress \cite{ScholzeLT,
  Ludwig, howe, Knight}) $p$-adic Jacquet--Langlands correspondence for
the pair $(\GL_2(\Qp), D^{\times}$). We leave these applications for
\cite{DPS2}, but they should suggest that the existence of an
infinitesimal character on a Banach representation has rather strong
consequences on its structure, and it is precisely with this existence
problem that we are dealing in this paper.
         
\subsection{Analytic continuation and infinitesimal characters}
The basic idea is very simple: suppose that a representation $\Pi$
lives in a family of Banach representations $\Pi_{X}$, parameterized
by some rigid analytic space $X$, and suppose that we can produce
sufficiently ``many'' locally analytic vectors in sufficiently
``many'' members of a family of admissible Banach representations,
with the property that $Z(\mathfrak{g})_L$ acts on them by characters
in a compatible way, so that these characters interpolate to a
character $\chi: Z(\mathfrak{g})_L\to \mathcal{O}_X(X)$.  A density
argument then shows that $ Z(\mathfrak{g})_L$ acts by $\chi$ on
$\Pi_{X}^{\la}$, and thus all members of the family have infinitesimal
characters, obtained as specialisations of $\chi$.
   
In order to maximize its flexibility, it is convenient to state the
analytic continuation argument in a rather general and abstract
setting.  Let $\mathcal{O}$ be the ring of integers of $L$ and let
$(R, \mm)$ be a complete local noetherian $\mathcal{O}$-algebra with
the same residue field as $L$. Let $\Xrig$ be the generic fibre of the
formal scheme $\Spf R$ and let
$R^{\rig}=H^0(\Xrig, \mathcal{O}_{\Xrig})$.
    
Let $G$ be a connected reductive group over $\Qp$ and let $K$ be a
compact open subgroup of $G(\Qp)$. Finally, let $M$ be a finitely
generated $R[\![K]\!]$-module. There is a canonical topology on $M$
making it a compact topological $R[\![K]\!]$-module. Endowed with the
supremum norm, the (not necessarily admissible) unitary Banach space
representation of $K$
\[\Pi\coloneqq {\rm Hom}_{\mathcal{O}}^{\rm cont}(M,L)\]
can be thought of as a family of objects of
${\rm Ban}_L(K)^{\rm unit}$, parameterized by the maximal spectrum
$\mSpec(R[1/p])$: for any $x\in \mSpec(R[1/p])$ the subspace
$\Pi[\mm_x]$ of $\Pi$ consisting of elements killed by the maximal
ideal $\mm_x$ is an object of ${\rm Ban}_{\kappa(x)}(K)^{\rm unit}$
(and also an object of ${\rm Ban}_L(K)^{\rm unit}$ since the residue
field $\kappa(x)$ of $x$ is finite over $L$).
  
Choose the coefficient field $L$ large enough so that $G$ splits over
$L$. Let ${\rm Irr}_G(L)$ be the set of isomorphism classes of
irreducible algebraic representations of $G_L$.  For any
$V\in {\rm Irr}_G(L)$, by evaluating at $L$ we get an action of
$G(\Qp)\subset G(L)$ on $V$, and $V$ is absolutely irreducible as a
representation of $G(\Qp)$ (since $G$ splits over $L$), thus it has an
infinitesimal character. For such $V$ consider the $R[1/p]$-module
\[M(V)={\rm Hom}_K^{\rm cont}(V, \Pi)^\prime,\] where $W'$ is the
topological $L$-dual of the topological $L$-vector space $W$.  It is
not difficult to see\footnote{One can also present
  $M(V)=V\otimes_{\mathcal{O}[\![K]\!]} M$, for the right
  $\mathcal{O}[\![K]\!]$-module structure on $V$ induced by the
  anti-automorphism of $\mathcal{O}[\![K]\!]$ sending $k\in K$ to
  $k^{-1}$.}  that $M(V)$ is a finitely generated $R[1/p]$-module. Let
$R_V$ be the quotient of $R$ that acts faithfully on $M(V)$.
   
We refer the reader to theorems \ref{infinitesimal} and
\ref{infinitesimal_central} for variations on the next basic theorem.
   
\begin{thm}\label{main1}
  Let $R$, $M$ be as above and suppose that there is an $L$-algebra
  homomorphism $\chi: Z(\mathfrak{g})_L\to R^{\rig}$ such that the
  following hold
  \begin{enumerate}
  \item there is an $M$-regular sequence $y_1,...,y_h\in \mm$ such
    that $M/(y_1,...,y_h)M$ is a finitely generated projective
    $\mathcal{O}[\![K]\!]$-module.
     
  \item for all $V\in {\rm Irr}_G(L)$ the ring $R_{V}$ is reduced.
     
  \item for all $V\in {\rm Irr}_G(L)$ and all
    $x\in \mSpec(R_{V}[1/p])$ the infinitesimal character of $V$ is
    the specialisation $\chi_x: Z(\mathfrak{g})_L\to \kappa(x)$ of
    $\chi$ at $x$.
     
  \end{enumerate}
  Then for all $y\in \mSpec(R[1/p])$ the representation $\Pi[\mm_y]$
  has infinitesimal character $\chi_y$.
\end{thm}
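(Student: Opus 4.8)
The plan is to deduce the statement by interpolating the infinitesimal character across $\mSpec(R[1/p])$, using the algebraic representations $V\in\Irr_G(L)$ to pin down the character at a ``dense enough'' supply of points and then propagating by analytic continuation. Concretely, I would argue as follows.

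\emph{Step 1: Reduction to the module $M(V)$ and a density statement.} Fix $X\in Z(\mathfrak g)_L$ and let $\chi(X)\in R^{\rig}$ be its image. The key object is the element $\chi(X)-$ (action of $X$ on $\Pi^{\la}$), which I want to show is zero. Since $\Pi=\Hom_{\mathcal O}^{\cont}(M,L)$ and locally analytic vectors are built from the $K$-algebraic vectors $\Hom_K^{\cont}(V,\Pi)\otimes_L V$ as $V$ ranges over $\Irr_G(L)$ (this is the standard description of $\Pi^{\la}$ as a union/limit over the algebraic induction data, using that $K$ is compact), it suffices to control the action of $Z(\mathfrak g)_L$ on each isotypic-type piece, i.e.\ on $M(V)=\Hom_K^{\cont}(V,\Pi)'$. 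On that piece $X$ acts through a scalar valued in the ring $R_V$ of functions, and the difference $\delta_V(X)\coloneqq X|_{M(V)}-\chi(X)\in R_V$ (more precisely in $R_V^{\rig}$, the functions on the rigid generic fibre) is what we must show vanishes.

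\emph{Step 2: Vanishing of $\delta_V(X)$ on a Zariski-dense set.} By hypothesis (3), at every $x\in\mSpec(R_V[1/p])$ the algebraic representation $V$ contributes to $\Pi[\mm_x]^{\la}$ with infinitesimal character $\chi_x$, which forces $\delta_V(X)$ to specialise to $0$ at $x$. Thus $\delta_V(X)$ is a function on the rigid space attached to $R_V$ that vanishes at every maximal point of $\Spec R_V[1/p]$. Now hypothesis (2), that $R_V$ is reduced, together with the fact that $R_V$ is a finitely generated quotient of $R$ (hence $R_V[1/p]$ is Jacobson and its maximal ideals are Zariski dense), implies that a function vanishing at all those points is the zero function: so $\delta_V(X)=0$ identically, i.e.\ $X$ acts on $M(V)$ — equivalently on the $V$-algebraic part of $\Pi^{\la}$ — exactly by $\chi(X)$ viewed in $R_V^{\rig}$. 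Here I am implicitly using that the map $R^{\rig}\to R_V^{\rig}$ is compatible with the action, and that $\chi$ was built to land in $R^{\rig}$ so that its specialisations make sense; the role of hypothesis (1) is precisely to guarantee that $\Pi$ is large enough — that $M/(y_1,\dots,y_h)M$ being finite projective over $\mathcal O[\![K]\!]$ makes each $M(V)$ a faithful module over its quotient $R_V$ and makes the family ``fill up'' $\mSpec R[1/p]$, so that no component of $R$ is invisible to the algebraic vectors.

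\emph{Step 3: Glue over $V$ and specialise.} Having shown that $Z(\mathfrak g)_L$ acts on the $V$-algebraic vectors of $\Pi^{\la}$ by the composite $Z(\mathfrak g)_L\xrightarrow{\chi}R^{\rig}\to R_V^{\rig}$, and since $\Pi^{\la}=\varinjlim_V (\text{$V$-algebraic vectors})$ with compatible transition maps, the actions are compatible and assemble to say that $Z(\mathfrak g)_L$ acts on $\Pi^{\la}$ through $\chi:Z(\mathfrak g)_L\to R^{\rig}$ (after checking the evident compatibility $R^{\rig}\to\varprojlim_V R_V^{\rig}$ or just working directly with $R^{\rig}$ acting on $\Pi^{\la}$). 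Finally, for $y\in\mSpec(R[1/p])$, passing to $\Pi[\mm_y]$ kills $\mm_y$, so the $R^{\rig}$-valued action becomes the $\kappa(y)$-valued character $\chi_y$, which is exactly the assertion that $\Pi[\mm_y]^{\la}$ has infinitesimal character $\chi_y$.

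\emph{Main obstacle.} The subtle point is Step 1–2: one must show that controlling $Z(\mathfrak g)_L$ on the \emph{algebraic} $K$-vectors $\Hom_K^{\cont}(V,\Pi)\otimes V$ for all $V$ actually controls it on \emph{all} of $\Pi^{\la}$, and that the module-theoretic faithfulness built into hypothesis (1) really lets the reducedness hypothesis (2) propagate the identity $\delta_V(X)=0$ from closed points to the whole (possibly non-reduced, non-quasicompact) family. Making ``sufficiently many locally analytic vectors in sufficiently many members'' precise — i.e.\ verifying that $\bigcup_V M(V)$ is faithful over $R$ and that the relevant rigid functions are determined by their values at $\mSpec R_V[1/p]$ — is where the real work lies; the density/analytic-continuation endgame in Step 3 is then formal.
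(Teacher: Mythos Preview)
The central gap is your claim in Steps~1 and~3 that $\Pi^{\la}=\varinjlim_V(\text{$V$-algebraic vectors})$ as a union or limit: this is false. For a compact $p$-adic Lie group $K$, the $K$-algebraic vectors in an admissible locally analytic representation form a strict subspace in general --- at best dense, never an exhaustion --- and there is no ``standard description of $\Pi^{\la}$ as a union/limit over the algebraic induction data''. So killing each $V$-isotypic piece does not, by itself, kill all locally analytic vectors. You seem half-aware of this in your ``Main obstacle'' paragraph, yet Steps~1 and~3 assert it as if routine. The correct mechanism is density plus continuity: one works not with $\Pi^{\la}$ but with $\Pi^{R\text{-}\la}$ (when $h>0$ the representation $\Pi$ is not $K$-admissible, and one needs the $R$-locally-analytic vectors, which carry a continuous $R^{\rig}\widehat\otimes_L D(K,L)$-module structure and satisfy $\Pi^{R\text{-}\la}[\mm_y]=\Pi[\mm_y]^{\la}$), proves that the algebraic vectors are dense in $\Pi^{R\text{-}\la}$, and then uses that $1\otimes X-\chi(X)\otimes 1$ acts continuously and annihilates a dense subspace.

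Establishing that density is precisely where hypothesis~(1) does its work, and not in the way you describe. It is not there to make $M(V)$ faithful over $R_V$ --- that is the definition of $R_V$ --- nor to ensure the family ``fills up'' $\mSpec R[1/p]$. Rather, (1) is used to show that $M$ is finite projective over $S[\![K]\!]$ for $S=\mathcal O[\![x_1,\ldots,x_h]\!]$, which lets one reinterpret $\Pi$ as an admissible representation of the larger compact group $(1+2p\Zp)^h\times K$ inside $\Gm^h\times G$; the density of algebraic vectors then reduces, via this trick, to Amice's theorem that polynomials are dense in locally analytic functions on a polydisc. A second use of (1) is that it forces each $M(\Theta)$ to be Cohen--Macaulay over $R_V$, which together with (2) yields the finer density statement that $\bigoplus_{x}\Hom_K(V,\Pi[\mm_x]^{\la})$ is dense in $\Hom_K(V,\Pi^{R\text{-}\la})$. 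Your Jacobson argument in Step~2 is morally aimed at this, but note that $\chi(X)$ lives in $R^{\rig}$, not in $R[1/p]$, so the vanishing-on-closed-points argument must be carried out on the rigid generic fibre rather than on $\Spec R_V[1/p]$; this is where the Cohen--Macaulay input is used in the paper.
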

   
We end this paragraph with a detailed explanation of the key
ingredients in the proof of this result, since this also explains the
origin of the somewhat exotic hypotheses in the statement of the
theorem.  Let $D(K,L)$ be the algebra of $L$-valued distributions on
$K$.  The representation $\Pi$ gives rise to an
$R^{\rig}\wtimes_L D(K,L)$-module $\Pi^{\Rla}$, roughly speaking the
space of vectors $v\in \Pi$ that are locally analytic both for the
action of $K$ and that of $R$. More precisely, we have an isomorphism
\[(\Pi^{\Rla})^\prime\simeq (R^{\rig}\wtimes_L
  D(K,L))\otimes_{R[\![K]\!]} M\] describing $\Pi^{\Rla}$ as the
topological $L$-dual of the right-hand side. Moreover, the inclusion
$\Pi^{\Rla}[\mm_x]\to \Pi[\mm_x]^{\la}$ is an isomorphism for all
$x\in \mSpec(R[1/p])$.
     
It suffices therefore to prove that
$1\otimes D-\chi(D)\otimes 1\in R^{\rig}\wtimes_L D(K,L)$ kills
$\Pi^{\Rla}$ for all $D\in Z(\mathfrak{g})_L$. By continuity, it
suffices to prove this for a dense subspace of $\Pi^{\Rla}$. We will
explain how to find such a subspace under the hypotheses of the
theorem.  There are natural embeddings
\begin{equation}\label{firstemb}
  \bigoplus_{[V]\in {\rm Irr}_G(L)} {\rm Hom}_K(V, \Pi^{\Rla})\otimes_L V\to \Pi^{\Rla}
\end{equation}
and
\begin{equation} \label{secondemb} \bigoplus_{x\in \mSpec(R_{V}[1/p])}
  {\rm Hom}_K(V, \Pi[\mm_x]^{\la})\otimes_L V\to {\rm Hom}_K(V,
  \Pi^{\Rla})\otimes V,
\end{equation}
the first one identifying the left-hand side with the space of
$K$-algebraic vectors in $\Pi^{\Rla}$. We prove that under the
assumptions of the theorem both embeddings have dense image. Since the
third hypothesis implies that $1\otimes D-\chi(D)\otimes 1$ kills the
image of the second embedding for all $[V]\in {\rm Irr}_G(L)$ and
$D\in Z(\mathfrak{g})_L$, this will finish the proof.
     
The proof that \eqref{firstemb} and \eqref{secondemb} have dense image
crucially uses the first hypothesis, which allows us to replace $R$
first by $S=\mathcal{O}[\![x_1,...,x_h]\!]$ and then (this requires
changing the groups $K$ and $G$) by $\mathcal{O}$, while
simultaneously reducing the proof to the case
$M=\mathcal{O}[\![K]\!]$.  Sending $x_i$ to $y_i$ yields an action of
$S$ on $M$, and a key remark is that the first hypothesis forces $M$
to be a finite projective module over $S[\![K]\!]$.
   
Let us explain the proof that \eqref{firstemb} has dense image.  It is
proved in \cite{BHS} that $\Pi(M)^{\Rla}=\Pi(M)^{\Sla}$, thus we may
forget about $R$.  But $S[\![K]\!]$ is identified with
$\mathcal{O}[\![K']\!]$, where $K'=(1+2p\Zp)^h\times K$ is a compact
open subgroup of $G'=\Gm^h\times G$. The transparent link between
${\rm Irr}_G(L)$ and ${\rm Irr}_{G'}(L)$ reduces the proof to the case
$R=\mathcal{O}$ and $M$ is finite projective over
$\mathcal{O}[\![K]\!]$ (up to replacing $G$ and $K$ by $G'$ and
$K'$). This further reduces to the case $M=\mathcal{O}[\![K]\!]$. In
this case the result we want to prove is equivalent to the density of
the image of the natural map
$\oplus_{V} V^*\otimes V\to \mathcal{C}^{\la}(K, L)$. Since
$\oplus_{V} V^*\otimes V\simeq \mathcal{O}_G(G)$, the required density
is a consequence of the following more general result, a simple
application of Amice's theorem: for any smooth affine scheme $X$ over
$\Spec(\Qp)$ and for any open and closed subset $U$ of $X(\Qp)$, the
natural map
$L\otimes_{\Qp} \mathcal{O}_X(X)\to \mathcal{C}^{\la}(U, L)$, obtained
by restricting regular functions on $X$ to $U$, has dense image.

Finally, let us explain the proof of the density of the map
\eqref{secondemb}.  We argue by duality, using the Hahn--Banach
theorem. Thus we want to show that the dual of the right-hand side in
\eqref{secondemb} embeds in the dual of the left-hand side.  The
choice of a $K$-stable lattice $\Theta$ in $V$ induces an integral
structure $M(\Theta)$ on $M(V)$. Simple arguments show that
\[{\rm Hom}_K(V, \Pi^{\Rla})^\prime\simeq M(\Theta)\otimes_{R_{V}}
  R_{V}^{\rig}\] and for all $x\in \mSpec(R_{V}[1/p])$
\[{\rm Hom}_K(V, \Pi[\mm_x]^{\la})^\prime\simeq
  M(\Theta)\otimes_{R_{V}} \kappa(x).\] One can prove, using again the
first hypothesis, that $M(\Theta)$ is a finite free $S$-module and
deduce that it is a Cohen--Macaulay $R_{V}$-module. The result follows
now from the second hypothesis and the following commutative algebra
result: if $R$ is a complete local noetherian $\mathcal{O}$-algebra,
which is reduced and $\mathcal{O}$-torsion free, then for any faithful
Cohen--Macaulay $R$-module $M$ the natural map
\[M\otimes_R R^{\rig}\to \prod_{x\in (\Spf R)^{\rig}} M\otimes_R
  \kappa(x)\] is injective. Note that this is equivalent to the
density of the image of \eqref{secondemb} in the case when $G$ is
trivial. The proof is a simple application of the Noether
normalisation lemma and basic properties of faithful Cohen--Macaulay
modules.

\subsection{Infinitesimal characters and local Galois representations}
         
The previous paragraph gives a systematic way of proving the existence
of an infinitesimal character for a Banach representation, but one
needs to be able to find the objects $R,M$ and $\chi$ as in theorem
\ref{main1}. This is not at all a trivial problem. As we will explain
in the next paragraph, the existence of $R$ and $M$ can be proved in
many cases using global methods and the existence of Galois
representations attached to $p$-adic automorphic forms, combined with
suitable local-global compatibility results. In this paragraph we want
to focus on a key result of this paper, namely the construction of a
character $\chi$ starting from a family of local Galois
representations.  The construction uses Sen theory in families (as
discussed in \cite{BC}), Tannakian formalism, Chevalley's restriction
theorem and the Harish-Chandra isomorphism.

We find it useful to consider first the simplest possible case. Let
$\rho: \Gal_{\QQ_p}\to \GL_n(L)$ be a continuous Galois
representation, where $\Gal_{\QQ_p}$ is the absolute Galois group of
$\QQ_p$. We would like to attach to $\rho$ a character
$\zeta_{\rho}: Z(\mathfrak{g})_L\to L$, where $\mathfrak{g}$ is the
Lie algebra of $\GL_n(\QQ_p)$. By the Harish-Chandra isomorphism
$Z(\mathfrak{g})_L$ is a polynomial ring in $n$ variables, thus giving
$\zeta_{\rho}$ is equivalent to giving $n$ elements of $L$. These are
simply the coefficients of the characteristic polynomial of the Sen
operator of $\rho$. In other words $\zeta_{\rho}$ encodes the
generalized Hodge--Tate weights of $\rho$. When $\rho$ is de Rham with
regular Hodge--Tate weights, one can attach to $\rho$ an algebraic
representation of $\GL_n/\QQ_p$ over $L$, and $\zeta_{\rho}$ encodes
the highest weight of this representation.
      
Let us discuss now the general case. Let $F$ be a a finite extension
of $\QQ_p$ and let $G$ be a connected reductive group over $F$ with
Lie algebra $\mathfrak{g}$. Let $\Ghat$ be the dual group of $G$
(defined over $\mathbb{Z}$). This group comes with an action
$\mu_G: \Gal_F\to \Ghat$ of the absolute Galois group of $F$, and we
let $\LG=\Ghat\rtimes \Gal_F$ be the Langlands dual group of $G$. The
map $g\mapsto (1,g)$ identifies $\Ker(\mu_G)$ with a normal subgroup
of $\LG$, and we let $\LG_f=\LG/\Ker(\mu_G)=\Ghat\rtimes \Gamma$,
where $\Gamma=\Gal_F/\Ker(\mu_G)$.
   
Next, let $X$ be a rigid analytic variety and let $P$ be a
$\Ghat_X$-torsor, locally trivial for the \'etale
topology\footnote{One could allow other Grothendieck topologies as
  well.}. Let $\Pad$ be the sheaf of $\Ghat_X$-equivariant
automorphisms of $P$, which leave the base $X$ fixed. We define in
this generality the $L$-group $\LPad(X)$ of $G$ twisted by $P$, as
well as a notion of admissible representations
$\rho: \Gal_F\to \LPad(X)$, see definitions \ref{twistL} and
\ref{defi_twisted}. For instance, if $P=\Ghat_X$ is the trivial
$\Ghat_X$-torsor, then
$\LPad(X)=\LG_f(\OO_X(X))= \Ghat(\OO_X(X))\rtimes \Gamma$ and
admissibility of a continuous representation
$\rho: \Gal_F\to \Ghat(\OO_X(X))\rtimes \Gamma$ is the usual notion,
i.e. we ask that the induced map $\Gal_F\to \Gamma$ is the natural
projection. Our main construction associates to any admissible
representation $\rho: \Gal_F\to \LPad(X)$ an $L$-algebra homomorphism
\begin{equation}\label{zetarhoL}
  \zeta_{\rho}: Z(\Res_{F/\Qp}\mathfrak g)\otimes_{\Qp} L \rightarrow \OO_X(X).
\end{equation}
 
One may naturally wonder if working in the previous generality really
has some interest.  A motivating example appears in Chenevier's thesis
\cite{chenevier_thesis}, where he constructs an eigenvariety $X$ for a
unitary group compact at $\infty$ and split at $p$ associated to a
division algebra over a quadratic extension $E$ of $\QQ$, and a
pseudo-representation $t: \Gal_E\rightarrow \OO_X(X)$, which at points
corresponding to classical automorphic forms interpolates the traces
of corresponding Galois representations. The locus $X_{\mathrm{irr}}$,
where $t$ is absolutely irreducible, is an open rigid subvariety of
$X$ and Chenevier shows that over this locus, $t$ gives rise to a
representation $\rho: \Gal_E\rightarrow \mathcal A^*$, where
$\mathcal A$ is an Azumaya algebra over $X_{\mathrm{irr}}$, see
\cite[Thm.\,E]{chenevier_thesis}. A similar type of example arising in
the deformation theory of pseudo-representations appears in
\cite[\S4.2]{chenevier_det}.  Although, in these cases
$\Ghat= \LG_f=\GL_n$, the Galois representation takes values not in
$\GL_n(\OO_X(X_{\mathrm{irr}}))$, but in an inner form of it. We would
like our setup to cover this example and also the situations where the
action of $\Gal_F$ on $\Ghat$ is non-trivial.

All previous constructions can be adapted if we work with the
$C$-group $\CG$ of $G$ instead of the $L$-group. Recall that $\CG$ is
related to the $L$-group by an exact sequence
\begin{equation}\label{LGCG_intro}
 1\rightarrow \LG \rightarrow \CG \overset{d}{\rightarrow}\Gm \rightarrow 1
\end{equation}
and that $\CG$ is the $L$-group of a canonical central extension $G^T$
of $G$ by $\Gm$ over $F$. In particular, one can define (see remark
\ref{CPad}) a twisted $C$-group $\CPad(X)$ attached to a
$\Ghat_X$-torsor $P$, as well as a notion of admissibility for a
continuous representation $\rho: \Gal_F\rightarrow \CPad(X)$. And to
each such admissible representation $\rho$ we can associate (see
definition \ref{def_ringhom_C}) a character
\[\zeta_{\rho}^C: Z(\Res_{F/\Qp} \mathfrak g)\otimes_{\Qp} L
  \rightarrow \OO_X(X).\] If we consider $\CG$ as the $L$-group of
$G^T$ then $\zeta_{\rho}$ and $\zeta_{\rho}^C$ are related by a
twisting construction: taking square roots at the Lie algebra level is
just dividing by $2$ and we can always do that in characteristic zero.
We will only consider $L$-groups in the introduction, to avoid too
many technicalities, but we emphasize\footnote{We thank Peter Scholze
  for pointing out this.} that it is better to consider
representations valued in $C$-groups instead of $L$-groups. The first
issue is that one expects that the Galois representations attached to
$C$-algebraic automorphic forms take values in the $C$-group and not
the $L$-group, see \cite[Conj.\,5.3.4]{beegee}. Secondly, if we take
$L=\Qp$, $G=\GL_2$, so that $\LGf= \GL_2$, and
\[\rho: \Gal_{\Qp} \rightarrow \GL_2(\Qp), \quad g\mapsto \bigl
  (\begin{smallmatrix} \chi_{\cyc}(g)^{a} & 0 \\ 0 &
    \chi_{\cyc}(g)^b\end{smallmatrix}\bigr),\] where $a > b$ are
integers, then the Sen operator is the matrix
$\bigl (\begin{smallmatrix} a & 0 \\ 0 & b\end{smallmatrix}\bigr)$,
but the character $\zeta_{\rho}: Z(\mathfrak g)\rightarrow \Qp$ is not
an infinitesimal character of an algebraic representation of $\GL_2$.
The problem is caused is by the shift by the half sum of positive
roots appearing in the Harish-Chandra isomorphism.  If we choose a
twisting element
$\tilde{\delta}(t)\coloneqq \bigl (\begin{smallmatrix} t^{n+1} & 0 \\
  0 & t^n\end{smallmatrix}\bigr)$ for some $n\in \ZZ$ then to $\rho$
one may attach a Galois representation with values in
$\leftidx{^C}{\GL_2}(\Qp)$, which we denote by $\rho^C$. In this case,
$\zeta^C_{\rho^C}$ is equal to the infinitesimal character of
$\Sym^{a-b-1}\otimes \det^{b-n}$. See section \ref{sec_Cgr} for more
details.

The construction and the study of the characters $\zeta_{\rho}$ and
$\zeta_{\rho}^C$ occupies the first four chapters of the paper and is
a bit subtle since one needs to pay special attention to the action of
the group $\Gamma=\Gal_F/\Ker(\mu_G)$. One important result we prove
is the compatibility of our construction with the Buzzard--Gee
conjecture, cf. proposition \ref{C-alg-inf}. This roughly says that
for $C$-algebraic automorphic forms $\pi$, if $\rho_{\pi}$ is ``the''
Galois representation conjecturally attached to $\pi$ as in conjecture
$5.3.4$ in \cite{beegee}, then one can relate the infinitesimal
characters attached to the restriction of $\rho_{\pi}$ at places above
$p$ and the infinitesimal characters of the archimedean components of
$\pi$. This plays a crucial role in global applications, and it was
also a major motivation for our construction of $\zeta_{\rho}$ and
$\zeta_{\rho}^C$.
    
We end this long paragraph by sketching the construction of
$\zeta_{\rho}$ in the case when $X$ is an affinoid over $L$, $P$ is
the trivial $\Ghat_X$-torsor and $F=\Qp$. This case already covers the
key difficulties. Thus we start with an admissible representation
$\rho: \Gal_{\Qp}\rightarrow \LG_f(A)$, where $A$ is an $L$-affinoid
algebra, and we want to construct a character
$\zeta_{\rho}: Z(\mathfrak{g})_L\to A$.
       
Pick a finite Galois extension $E/\Qp$ splitting $G$ and pick an
embedding $\tau: E\to L$. The map $\zeta_{\rho}$ will be a composition
of two maps, each depending on the choice of $\tau$, but whose
composition is independent of any choice
\[Z(\mathfrak{g})_L\overset{\kappa_{\tau}}{\longrightarrow}
  S(\ghat^*)^{\Ghat}\overset{\theta_{\tau}}{\longrightarrow} A.\] We
write $S(W)$ for the symmetric algebra of an $L$-vector space $W$,
thought of as the ring of polynomial functions on the dual of $W$.
                   
The map $\kappa_{\tau}$ is an isomorphism, and is obtained by
combining the Harish-Chandra isomorphism and the Chevalley restriction
theorem. More precisely, let $T$ be a maximal torus in $G_E$ and let
$\widehat{T}$ be the dual torus in $\Ghat$. Let $\mathfrak{t}$ and
$\widehat{\mathfrak{t}}$ be the associated Lie algebras of $T$ and
$\widehat{T}$, and let $W$ be the Weyl group of the root system of
$(G,T)$, which is the same as the Weyl group of the dual root
system. Then $\kappa_{\tau}$ is the composition
\[ Z(\mathfrak{g})_L\simeq S(\widehat{\mathfrak{t}}^*)^W\simeq
  S(\widehat{\mathfrak{g}}^*)^{\Ghat},\] the first isomorphism being
obtained by base change along $\tau: E\to L$ of the (normalised)
Harish-Chandra isomorphism
$Z(\mathfrak{g}_E)\simeq S(\mathfrak{t})^W$, where we also use $\tau$
to identify
$\mathfrak{t}\otimes_{E,\tau} L\simeq \widehat{\mathfrak{t}}^*$, and
the second map being the inverse of the isomorphism
$S(\widehat{\mathfrak{g}}^*)^{\Ghat}\simeq
S(\widehat{\mathfrak{t}}^*)^W$ induced by restriction (this is
Chevalley's restriction theorem).
          
On the other hand, the map $\theta_{\tau}$ is the composition
\[S(\ghat^*)^{\Ghat}\to E\otimes A\to A,\] the second map being simply
$x\otimes a\to \tau(x)a$ and the first map being the evaluation of
$\Ghat$-invariant polynomial functions at a special element
\[\Theta_{\Sen, \rho}\in (\mathbb{C}_p\wtimes A)\otimes_{L}
  \widehat{\mathfrak{g}}.\] A priori the evaluation map lands in
$\mathbb{C}_p \wtimes A$, but using the Ax--Sen--Tate theorem and
special $\Gal_F$-equivariance properties of $\Theta_{\Sen, \rho}$ and
of the Harish-Chandra and Chevalley isomorphisms, we show that it
lands in $E\otimes A$, and moreover that
$\theta_{\tau}\circ \kappa_{\tau}$ is independent of $\tau$.  The
element $\Theta_{\Sen, \rho}$ is obtained using the Tannakian
formalism and the results of Berger--Colmez \cite{BC} on Sen theory in
families of Galois representations.
                  
\begin{remar}
 Let $R$ be a complete local noetherian $\OO$-algebra with residue field $k$. We expect that 
  even if  $\rho$ takes values in $\CG_f(R)$ the
  character $\zeta_\rho$ will take values in $R^{\rig}$ and not in $R[\tfrac{1}{p}]$, in general. 
  
  For example, if $G=\GL_n$ and  $\zeta_{\rho}$ takes values in $R[\tfrac{1}{p}]$ then this would imply that the  Sen polynomial of $\rho$ has coefficients in $R[\tfrac{1}{p}]$ and this would 
 impose  restrictions on the Hodge--Tate weights of the Galois representations obtained by specialising 
 $\rho$ at the maximal ideals of $R[\tfrac{1}{p}]$.  
 
 Specializing the example further, if $p\nmid 2n$ and $\rho^{\univ}: \Gal_{\Qp}\rightarrow \GL_n(R_{\overline{\rho}})$ is the universal deformation of an absolutely irreducible representation
  $\overline{\rho} : \Gal_{\Qp}\rightarrow\GL_n(k)$, which is  not isomorphic to
  its twist by the cyclotomic character,   then 
  the density results of \cite{EP} or \cite{Hellmann_Schraen}  imply that 
  the   locus in $\mSpec R_{\overline{\rho}}[\tfrac{1}{p}]$ corresponding to  de Rham representations with some fixed Hodge--Tate weights is dense in $\Spec R_{\overline{\rho}}$. Thus if 
  $\zeta_{\rho^{\univ}}$ takes values in $R_{\overline{\rho}}[\tfrac{1}{p}]$ then this would imply 
  that all the specialisations of $\rho^{\univ}$ have the same Hodge--Tate weights, 
  which would contradict \cite[Thm.\,D]{ghls}.
  \end{remar}

\subsection{Global input and applications}

We use Theorem \ref{main1} to study Hecke eigen\-spaces in the
completed cohomology. Our work is motivated by Emerton's ICM talk
\cite{emerton_icm} and his paper \cite{interpolate}. The theorem
\ref{mainglobal1} below requires a number of hypotheses to be
satisfied, most serious of which is the existence of Galois
representations attached to automorphic forms and satisfying a weak
form of local-global compatibility at $p$.  We explain in sections
\ref{sec_modular} to \ref{CS} that these hypotheses are satisfied for
modular curves, or more generally Shimura curves over totally real
fields, as well as in the setting of definite unitary groups over
totally real fields and of compact unitary Shimura varieties studied
by Caraiani--Scholze \cite{CS}.

Let $G$ be a connected reductive group over $\QQ$ and let $A$ be the
maximal split torus in the centre $Z$ of $G$.  Choose a maximal
compact subgroup $K_{\infty}$ of $G(\RR)$, as well as a sufficiently
small\footnote{See the discussion preceding lemma \ref{control_Gamma}
  for the precise hypotheses.} compact open subgroup
$K_f^p=\prod_{\ell\ne p} K_{\ell}$ of $G(\mathbb A^{p}_f)$, where
$K_{\ell}$ is a compact open subgroup of $G(\QQ_{\ell})$.  Let $S$ be
a finite set of prime numbers, containing $p$ and such that $G$ is
unramified over $\QQ_{\ell}$ and $K_{\ell}$ is hyperspecial for
$\ell\notin S$, and consider the universal spherical Hecke algebra
outside of $S$ (a polynomial ring over $\OO$ in infinitely many
variables)
\[\mathbb T^{\univ}=\bigotimes_{\ell\notin S} \mathcal H_{\ell},\]
where $\mathcal H_{\ell}=\OO[K_\ell\backslash G(\QQ_{\ell})/K_\ell]$
and the tensor product is taken over $\OO$.
                   
If $K_f$ is a compact open subgroup of $G(\mathbb A_f)$ let
\[\Ytilde(K_f)=G(\QQ)\backslash G(\mathbb A)/Z(\RR)^{\circ}
  K_{\infty}^{\circ} K_f,\] where $H^{\circ}$ denotes the neutral
connected component of the Lie group $H$. Let
\[\widetilde{H}^i=\varprojlim_{s} \varinjlim_{K_p} H^i(\Ytilde(K_f^p
  K_p), \OO/\varpi^s)\] be the associated completed cohomology groups,
the inductive limit being taken over compact open subgroups $K_p$ of
$G(\Qp)$.
     
The algebra $\mathbb T^{\univ}$ acts on the various $\widetilde{H}^i$
by spherical Hecke operators, and following \cite{emerton_icm} one can
define a suitable completion $\mathbb T$ of a quotient of
$\mathbb T^{\univ}$, which still acts on $\widetilde{H}^i$ for all
$i\ge 0$ (see the discussion preceding lemma \ref{neat} for the
precise definition). The algebra $\mathbb T$ is profinite and has only
finitely many open maximal ideals, but it is not known in this level
of generality whether it is Noetherian.

Let $x: \mathbb{T}[1/p]\rightarrow \Qpbar$ be a continuous
homomorphism of $\OO$-algebras with kernel $\mm_x$, such that the
image of $x$ is a finite extension of $\Qp$ (this condition is
satisfied if $\mathbb T$ is Noetherian). We conjecture that for all
$n\geq 0$ the $G(\Qp)$-representation
$$(\widetilde{H}^n\otimes_{\OO} L)[\mm_x]^{\la}\otimes_{\mathbb T,
  x}\Qpbar$$ has an infinitesimal character, and we give a conjectural
recipe for this character.  More precisely, motivated by
\cite[Conj.\,5.3.4]{beegee} we conjecture that one can associate to
$x$ an admissible Galois representation
\[ \rho : \Gal_{\QQ}\longrightarrow \CG_f(\Qpbar)\] which is
unramified outside of $S$ and such that for $\ell\notin S$ the
semisimplification of $\rho(\Frob_{\ell})$ matches the homomorphism
$x_{\ell}: \mathcal H_{\ell}\rightarrow \mathbb
T\overset{x}{\longrightarrow} \Qpbar$ via a suitable form of Satake
isomorphism for the $C$-group, defined in \cite{zhu}, see section
\ref{eyeswideshut} for the precise definitions.  Of course, there is
no reason for $\rho$ to be unique, but we prove that all such
representations $\rho$ (if they exist) have the same associated (by
the recipe described in the previous section) infinitesimal character
$\zeta^C_{\rho}$.  Let us note that if we fix an isomorphism
$\iota: \Qpbar \cong \C$ and if we suppose that
$\iota \circ x: \mathbb T\rightarrow \C$ is associated to a
$C$-algebraic automorphic form, our conjecture on the existence of
Galois representations becomes \cite[Conj.\,5.3.4]{beegee}.
  
\begin{conj}\label{dream_padic_intro}
  Let $\rho$ be an admissible representation associated to
  $x: \mathbb T[1/p] \rightarrow \Qpbar$ as in Conjecture
  \ref{dreamconj}, and let $n$ be a non-negative integer. Then
  $Z(\mathfrak{g})$ acts on
  $(\widetilde{H}^n\otimes_{\OO} L)[\mm_x]^{\la}\otimes_{\mathbb T,
    x}\Qpbar$ via $\zeta^C_{\rho}$.
\end{conj}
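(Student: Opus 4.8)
The plan is to deduce the statement from Theorem \ref{main1}, or rather from its variants Theorems \ref{infinitesimal} and \ref{infinitesimal_central}, by taking for $M$ the completed cohomology, for $R$ a big Hecke or Galois deformation algebra, and for $\chi$ the character $\zeta^C_{\rho}$ attached to the restriction at $p$ of a family deforming $\rho$. More precisely, fix the maximal ideal $\mm$ of $\mathbb T$ through which $x$ factors, a compact open subgroup $K\subset G(\Qp)$, and a complete local noetherian $\OO$-algebra $R$ surjecting onto the localisation $\mathbb T_{\mm}$; in the favourable settings of sections \ref{sec_modular} to \ref{CS} one can (after possibly patching as in \cite{6auth}) choose $R$ to be a global or patched deformation ring carrying a family of admissible Galois representations $\rho\colon \Gal_{\QQ}\to \CG_f(R)$ (or valued in a twisted $C$-group $\CPad(\Spf R)$) which specialises at each classical point to the Galois representation of the corresponding automorphic form. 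Set $M=(\widetilde{H}^n_{\mm})'$, the continuous $\OO$-dual of the localised completed cohomology; since $\widetilde{H}^n$ is admissible, $M$ is a finitely generated $R[\![K]\!]$-module, and $\Pi\coloneqq \Hom_{\OO}^{\rm cont}(M,L)$ satisfies $\Pi[\mm_y]\simeq (\widetilde{H}^n\otimes_{\OO}L)[\mm_y]$ for $y\in \mSpec R[1/p]$, so that the representation in the statement is $\Pi[\mm_x]^{\la}\otimes_{\mathbb T,x}\Qpbar$.

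The character $\chi\colon Z(\mathfrak g)_L = Z(\Res_{\Qp/\Qp}\mathfrak g)\otimes_{\Qp}L \to R^{\rig}$ is defined as $\zeta^C_{\rho_p}$, where $\rho_p$ is the restriction to a decomposition group $\Gal_{\Qp}\subset \Gal_{\QQ}$ of the family $\rho$, viewed over $R^{\rig}$ --- this is precisely the construction of section \ref{sec_Cgr} applied to a $C$-parameter in a $p$-adic family, and by that construction it is independent of the auxiliary choices entering its definition. The key point, which yields hypothesis (3) of Theorem \ref{main1}, is weak local--global compatibility at $p$: at a classical point $x'\in \mSpec R_V[1/p]$ the specialisation $\rho_{x'}$ is de Rham at $p$ with Hodge--Tate weights prescribed by the highest weight of $V$, so the semisimple part of the Sen operator of $\rho_{p,x'}$ has the expected eigenvalues, and proposition \ref{C-alg-inf} (compatibility of $\zeta^C$ with the Buzzard--Gee recipe) identifies the specialisation $\chi_{x'}$ with the infinitesimal character of $V$. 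Hypothesis (2) --- reducedness of the quotient $R_V$ of $R$ acting faithfully on $M(V)$ --- is obtained in these cases either from the reducedness (after inverting $p$) of the relevant local potentially semistable deformation rings together with the density of classical points, or, in the patched setting, directly from the properties of the patched module established in \cite{6auth}.

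The remaining, and main, obstacle is hypothesis (1): the existence of an $M$-regular sequence $y_1,\dots,y_h\in \mm$ with $M/(y_1,\dots,y_h)M$ finitely generated and projective over $\OO[\![K]\!]$. This is exactly the point at which the global hypotheses of Theorem \ref{mainglobal1} are needed; it expresses that the completed cohomology is concentrated in the expected cohomological degree and that, after cutting out the ``extra'' Galois-deformation variables, it behaves like a projective $\OO[\![K]\!]$-module --- a statement of Calegari--Emerton / Calegari--Geraghty type. For modular and Shimura curves, for definite unitary groups, and for the Caraiani--Scholze compact unitary Shimura varieties this is available (the sequence $y_1,\dots,y_h$ coming from the auxiliary patching or framing variables, together with $p$ in the torsion-free case), whereas for the unconditional Conjecture \ref{dream_padic_intro} it is precisely the input that is still missing in general. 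Granting (1)--(3), Theorem \ref{main1} yields that $\Pi[\mm_y]$ has infinitesimal character $\chi_y=\zeta^C_{\rho_y}$ for every $y\in \mSpec R[1/p]$; specialising at $y=x$ and base changing along $x$ gives the claim, using moreover that any two admissible $\rho$ attached to $x$ have the same associated $\zeta^C_{\rho}$, as established in the construction of section \ref{sec_Cgr}.
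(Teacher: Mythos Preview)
The statement is a \emph{conjecture}, and the paper does not prove it; it explicitly says ``As it stands, the conjecture seems out of reach'' and only establishes it under the additional hypotheses of Theorem~\ref{mainglobal1} (weakly non-Eisenstein $\mm$, noetherian $\mathbb T_{\mm}$, existence of a family $\rho$ over $\mathbb T_{\mm}^{\rig}$, etc.). Your proposal is therefore not a proof of the conjecture but a strategy, and as a strategy it is essentially the one the paper itself uses to obtain Theorems~\ref{lzero}, \ref{lzero_central} and \ref{patched}. You correctly single out hypothesis~(1) of Theorem~\ref{main1} as the genuine obstruction, and the paper says exactly this (see the paragraph ``Beyond $l_0=0$ case'' in the introduction and Remark~\ref{know_it_all}): in general the localised completed homology $\widetilde H_{n,\mm}$ is \emph{not} expected to be projective over $\OO[\![K_p]\!]$, so Theorems~\ref{infinitesimal} and \ref{infinitesimal_central} cannot be applied to it directly, and one would need a Calegari--Geraghty style patched complex instead.

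Two smaller points where your sketch is looser than the paper. First, you quietly assume several inputs that are themselves conjectural or case-by-case: that $\mathbb T_{\mm}$ is noetherian, and that there exists a \emph{family} $\rho$ over $R$ (or $\mathbb T_{\mm}^{\rig}$) interpolating the Galois representations at classical points --- this is an explicit hypothesis in Theorem~\ref{mainglobal1}, not something that comes for free from Conjecture~\ref{dreamconj} at the single point $x$. Second, the fact that $\zeta^C_{\rho}$ is well-defined (independent of the choice of $\rho$ attached to $x$) is not part of ``the construction of section~\ref{sec_Cgr}''; the paper proves it separately via Lemma~\ref{deptrace} together with a \v{C}ebotarev argument (see the discussion following Conjecture~\ref{dreamconj}).
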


As it stands, the conjecture seems out of reach, but we prove it in a
certain number of cases, cf. theorem \ref{mainglobal1} below. We now
explain the extra assumptions one needs to make in our theorem. First,
we assume for simplicity of the exposition that $Z(\RR)/A(\RR)$ is
compact, cf. theorem \ref{lzero_central} for a version with a fixed
central character which allows one to suppress this hypothesis (this
is useful in practice, for instance in the case of Shimura curves over
totally real fields). Next, we choose an open maximal ideal $\mm$ and
we make the crucial hypothesis that $\mm$ is weakly non-Eisenstein,
i.e.  that there is an integer $q_0$ such that
\[H^i(\Ytilde(K^p_f K_p), \OO/\varpi)_{\mm}=0\] for all $i\ne q_0$ and
all sufficiently small compact open subgroups $K_p$ of
$G(\QQ_p)$. This assumption is automatically satisfied if
$G(\RR)/Z(\RR)$ is compact. When this quotient is not compact, the
assumption is rather strong, since one expects it to be satisfied only
when the rank of $G(\RR)$ is the same as the rank of
$Z(\RR) K_{\infty}$, i.e. the defect of the derived group of $G(\RR)$
is $0$, in which case $q_0$ is half the dimension of the symmetric
space $G(\RR)/Z(\RR)^{\circ}K_{\infty}^{\circ}$. As mentioned in
\cite{emerton_icm} (see also lemma \ref{need_this}) the hypothesis on
$\mm$ has very strong consequences, for instance the continuous
$\OO$-dual of $\widetilde{H}^n$ is finite projective over
$\OO[\![K_p]\!]$ for all sufficiently small compact open subgroups
$K_p$ of $G(\QQ_p)$, and $\mathbb{T}_{\mm}$ acts faithfully on this
dual.
     
Fix a supercuspidal type $\tau$, i.e.~a smooth absolutely irreducible
representation of $K_p$ on an $L$-vector space such that for any
smooth irreducible non supercuspidal $\bar{L}$-representation $\pi_p$
of $G(\QQ_p)$ we have ${\rm Hom}_{K_p} (\tau, \pi_p)=0$. If $V$ is an
irreducible algebraic representation of $G$ over $L$, let
$V(\tau)=V\otimes_L \tau$ and let $\mathbb T_{\mm, V(\tau)}$ be the
quotient of the localization $\mathbb T_{\mm}$ acting faithfully on
${\rm Hom}_{K_p}(V(\tau), \widetilde{H}^{q_0}_{\mm}[1/p])$. As in
\cite{emerton_icm}, one can (thanks to Franke's theorem \cite{franke})
associate to any $L$-algebra homomorphism
$x: \mathbb T_{\mm, V(\tau)}\to \Qpbar$ a cuspidal automorphic
representation $\pi_x=\otimes'_{v} \pi_{x,v}$ of $G(\mathbb A)$ such
that (among other things, cf. lemma \ref{TmVtau}) $\mathbb T_{\mm}$
acts on $\pi_x^{K_f^p}$ via $x$ and $\pi_{x,\infty}$ has the same
infinitesimal character as $V$, thus $\pi_x$ is cohomological and in
particular $C$-algebraic. Conjecture 5.3.4 of \cite{beegee} asserts
that there should be an admissible Galois representation
$\rho_x: \Gal_{\QQ}\to \CG(\Qpbar)$ attached to $\pi_x$. This
representation is not necessarily unique (for instance because $\pi_x$
is not necessarily unique), but one of the properties imposed on
$\rho_x$ is a relation between the Hodge--Tate cocharacter of $\rho_x$
at places above $p$ and the infinitesimal character of $\pi_x$ at the
archimedean places, as discussed in \cite[Rem.\,5.3.5]{beegee}.  In
particular $\zeta^C_{\rho, x}$ is well-defined, i.e.~independent of
the choice of $\pi_x$ or $\rho_x$. Our main result is (we keep the
previous hypotheses):
     
\begin{thm}\label{mainglobal1}
  We assume that the following hold:
  \begin{itemize}
  \item[(i)] $\mathbb T_{\mm}$ is noetherian and $\mm$ is weakly
    non-Eisenstein;
  \item[(ii)] there is an admissible representation
    $\rho: \Gal_{\QQ}\rightarrow \CG_f(\mathbb T_{\mm}^{\rig})$, such
    that for all $V \in \Irr_{G}(L)$ and all
    $x: \mathbb T_{\mm, V(\tau)}[1/p]\rightarrow \Qpbar$, the
    specialisation of $\rho$ at $x$ matches $\pi_x$ according
    \cite[Conj.\,5.3.4]{beegee};
  \item[(iii)] the composition $d\circ \rho$ is equal to the $p$-adic
    cyclotomic character.
  \end{itemize}
  Then for all $y\in \mSpec \mathbb T_{\mm}[1/p]$ the centre
  $Z(\mathfrak g)$ acts on
  $(\widetilde{H}^{q_0}_{\mm} \otimes_{\OO} L)[\mm_y]^{\la}$ by
  $\zeta^C_{\rho, y}$.
\end{thm}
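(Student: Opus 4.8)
The plan is to deduce the theorem from Theorem~\ref{main1}, or rather from its refinement incorporating a supercuspidal type at $p$, namely Theorem~\ref{infinitesimal} (and Theorem~\ref{infinitesimal_central} in the fixed-central-character variant; since we assume $Z(\RR)/A(\RR)$ compact, the plain version suffices). After enlarging $L$ we may assume that $G$ splits over $L$ and that $\mm$ has residue field $\OO/\varpi$, so that by hypothesis~(i) the ring $R\coloneqq\mathbb T_{\mm}$ is a complete local noetherian $\OO$-algebra with the residue field of $L$. Let $M$ be the continuous $\OO$-dual of $\widetilde H^{q_0}_{\mm}$, so that $\Pi\coloneqq\Hom^{\cont}_{\OO}(M,L)$ is $\widetilde H^{q_0}_{\mm}\otimes_{\OO}L$ as a unitary $G(\Qp)$-representation, and take $K\coloneqq K_p$. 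Finally, let $\chi\coloneqq\zeta^C_{\rho}\colon Z(\mathfrak g)_L\to R^{\rig}=\mathbb T_{\mm}^{\rig}$ be the character attached, via the construction of Section~\ref{sec_Cgr}, to the restriction to a decomposition group at $p$ of the admissible representation $\rho\colon\Gal_{\QQ}\to\CG_f(\mathbb T_{\mm}^{\rig})$ of hypothesis~(ii); hypothesis~(iii), i.e.\ $d\circ\rho=\chi_{\cyc}$, is what ensures that this local parameter is still admissible and that the construction of Section~\ref{sec_Cgr} applies to it. The weakly non-Eisenstein part of hypothesis~(i) gives at once hypothesis~(1) of Theorem~\ref{main1}: by \cite{emerton_icm} (see Lemma~\ref{need_this}), for $K_p$ small enough $M$ is finitely generated and projective over $\OO[\![K_p]\!]$, hence a fortiori over $R[\![K_p]\!]$, and $\mathbb T_{\mm}$ acts faithfully on it; thus (1) holds with the empty $M$-regular sequence ($h=0$).

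Hypothesis~(2) concerns $R_V=\mathbb T_{\mm,V(\tau)}$, the quotient of $\mathbb T_{\mm}$ acting faithfully on $N_V\coloneqq\Hom_{K_p}(V(\tau),\widetilde H^{q_0}_{\mm}[1/p])$. This is where the supercuspidal type is genuinely used: since $\Hom_{K_p}(\tau,\pi_p)=0$ for every smooth irreducible non-supercuspidal $\pi_p$, and since the weakly non-Eisenstein condition makes $\widetilde H^{q_0}_{\mm}$ the only relevant completed cohomology group (with projective dual), the space $N_V$ is a finite-dimensional $L$-vector space realized inside a space of classical — in fact cuspidal — automorphic forms of bounded level and fixed infinitesimal character, on which the spherical Hecke operators away from $S$ act by commuting semisimple endomorphisms; cf.\ Lemma~\ref{TmVtau}. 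Consequently the image of $\mathbb T_{\mm}$ in $\End_L(N_V)$ is a finite product of fields, so $\mathbb T_{\mm,V(\tau)}$ is reduced, and it is $\OO$-torsion free since it acts faithfully on an $\OO$-lattice in the dual of $N_V$; this is hypothesis~(2).

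For hypothesis~(3), fix $V\in\Irr_G(L)$ and $x\in\mSpec(\mathbb T_{\mm,V(\tau)}[1/p])$. By Franke's theorem~\cite{franke}, $x$ corresponds to a cuspidal automorphic representation $\pi_x$ that is cohomological with coefficients in $V$, hence $C$-algebraic, and $\pi_{x,\infty}$ has the same infinitesimal character as $V$ (the smooth type $\tau$ contributing nothing to $Z(\mathfrak g)$). Regarding $x$ as a point of $(\Spf\mathbb T_{\mm})^{\rig}$, the compatibility of the $\zeta^C$-construction with base change gives $\chi_x=\zeta^C_{\rho_x}$, where $\rho_x\colon\Gal_{\QQ}\to\CG_f(\kappa(x))$ is the specialisation of $\rho$. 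By hypothesis~(ii) the pair $(\pi_x,\rho_x)$ realizes \cite[Conj.\,5.3.4]{beegee}, and hypothesis~(iii) guarantees that $\rho_x$ is normalised exactly as required by Proposition~\ref{C-alg-inf}; that proposition then identifies $\zeta^C_{\rho_x}$ — a generalized Hodge--Tate invariant of $\rho_x$ at $p$ — with the infinitesimal character of $\pi_{x,\infty}$, i.e.\ with that of $V$. Hence $\chi_x$ is the infinitesimal character of $V$, which is hypothesis~(3).

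With (1)--(3) in hand, Theorem~\ref{main1} (resp.\ Theorem~\ref{infinitesimal}, or Theorem~\ref{infinitesimal_central} in the fixed-central-character situation) yields that for every $y\in\mSpec(\mathbb T_{\mm}[1/p])$ the representation $\Pi[\mm_y]=(\widetilde H^{q_0}_{\mm}\otimes_{\OO}L)[\mm_y]$ has infinitesimal character $\chi_y$, that is, $Z(\mathfrak g)$ acts on $(\widetilde H^{q_0}_{\mm}\otimes_{\OO}L)[\mm_y]^{\la}$ via $\zeta^C_{\rho,y}$ — which is exactly the assertion of the theorem. The only substantive points are (2) and (3): (2) uses the classicality and finite-dimensionality of the type space $\Hom_{K_p}(V(\tau),\widetilde H^{q_0}_{\mm}[1/p])$ forced by the supercuspidal type, while (3) is an application of Proposition~\ref{C-alg-inf} once one has checked that all normalisations agree. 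I expect the main obstacle to be precisely this last normalisation bookkeeping in (3): one must track the Harish-Chandra shift by the half-sum of positive roots on the automorphic side against the $C$-group twist on the Galois side — this is exactly why hypothesis~(iii) is imposed — and it is here that the whole construction of $\zeta^C$ in Section~\ref{sec_Cgr} and its compatibility with the Buzzard--Gee conjecture must dovetail with the weak local-global compatibility input.
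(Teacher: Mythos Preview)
Your proposal is correct and follows essentially the same route as the paper's proof of Theorem~\ref{lzero}: set $R=\mathbb T_{\mm}$, $M=\widetilde H_{q_0,\mm}$ (equivalently, by Lemma~\ref{need_this}(iii), the continuous $\OO$-dual of $\widetilde H^{q_0}_{\mm}$), and $\zeta=\zeta^C_{\rho}$, then verify hypotheses (i)--(iii) of Theorem~\ref{infinitesimal} via Lemma~\ref{need_this} (with $h=0$), Lemma~\ref{TmVtau}, and Proposition~\ref{C-alg-inf} together with Lemma~\ref{base_change_C} and Lemma~\ref{inf_match}, respectively. Your identification of the normalisation bookkeeping in step~(3) as the delicate point is accurate, and the paper handles it exactly as you describe.
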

  
We apply this theorem in the settings of modular curves, definite
unitary groups over totally real fields, compact unitary Shimura
varieties, studied by Caraiani--Scholze, and we apply a version of the
theorem with a fixed central character in the setting of Shimura
curves.  We note that Lue Pan has proved a version of the theorem
above in the setting of modular curves in \cite[prop.\,6.1.5]{LuePan}
using the geometry of the Hodge--Tate period map and relative Sen
theory (as opposed to the classical one, used in this paper).
    
\subsection{$p$-adic Jacquet--Langlands and infinitesimal characters}
With a view towards the $p$-adic Jacquet--Langlands correspondence,
one particularly interesting example is that of a quaternion algebra
$D$ over $\mathbb Q$ split at $\infty$, and the first completed
cohomology group of the tower of Shimura curves associated to $D$ (of
a fixed tame level), localised at a maximal ideal corresponding to an
absolutely irreducible $2$-dimensional mod $p$ Galois representation
of $\Gal(\overline{\mathbb{Q}}/ \mathbb Q)$. If the ideal $\mm_x$ in
the theorem corresponds to a $p$-adic Galois representation which is
not Hodge--Tate at $p$ then $\zeta^C_{\rho, x}$ cannot be an
infinitesimal character of an irreducible finite dimensional
$U(\slt)_L$-module. This implies that if $K$ is a compact open
subgroup of $(D\otimes \Qp)^\times$ then
$(\widetilde{H}^1_{\mm} \otimes L)[\mm_x]$ does not have a finite
dimensional $K$-invariant subquotient.  By a different argument (see
\cite{Ludwig} when locally at $p$ the residual Galois representation
is reducible and generic and \cite{DPS2} for the general case) we can
bound the Gelfand--Kirilov dimension of this Banach space
representation by $1$ and, putting the two ingredients together,
conclude that $(\widetilde{H}^1_{\mm} \otimes L)[\mm_x]$ is of finite
length as a Banach space representation of $K$. This is interesting
because there should be a $p$-adic Jacquet--Langlands correspondence
realised by the completed cohomology, and the result suggests that the
objects on the division algebra side should be (some) admissible
unitary $L$-Banach space representations of $(D\otimes\Qp)^\times$ of
finite length and of Gelfand--Kirilov dimension $1$. The case when the
$p$-adic Galois representation is Hodge--Tate at $p$ seems quite a bit
more involved, in particular we do not know how to prove that the
corresponding Banach representation has finite length.

\subsection{$p$-adic Langlands and infinitesimal characters}
In the applications discussed so far we applied theorem \ref{main1}
with $h=0$ in part (i), since the completed cohomology is
admissible. However, we may also apply it to the modules coming out of
Taylor--Wiles patching as done in \cite{6auth}, as we will now
explain.  Let $F$ be a finite extension of $\Qp$ and let $n\geq1$ be
such that $p\nmid 2n$. Consider a mod $p$ Galois representation
$\rhobar : \Gal_F\rightarrow\GL_n(k)$, with universal framed
deformation
$\rho^\Box: \Gal_F\rightarrow\GL_n(R_{\overline{\rho}}^\Box)$.  Let
$R_{\infty}$ be the patched deformation ring, which is a flat
$R_{\overline{\rho}}^{\Box}$-algebra, and let
$\rho:\Gal_F \rightarrow \GL_n(R_{\infty})$ be the Galois
representation obtained from $\rho^\Box$ by extending scalars to
$R_{\infty}$.

Let $K=\GL_n(\mathcal{O}_F)$ and let $M_\infty$ be the compact
$R_\infty[\![K]\!]$-module with a compatible $R_\infty$-linear action
of $G$ constructed in \cite{6auth} by patching automorphic forms on
definite unitary groups. Let
$\Pi_{\infty}\coloneqq \Hom_{\OO}^{\cont}(M_{\infty}, L)$. Any
$y\in \mSpec R_{\infty}[1/p]$ gives rise to a Galois representation
$\rho_x: \Gal_F \rightarrow \GL_n(\kappa(x))$, where $x$ is the image
of $y$ in $\mSpec R_{\overline{\rho}}^\Box[1/p]$, as well as to an
admissible unitary $\kappa(y)$-Banach space representation
$\Pi_{\infty}[\mm_y]$ of $G\coloneqq \GL_n(F)$. It is expected but not
known beyond the case $n=2$ and $F=\Qp$, that $\Pi_{\infty}[\mm_y]$
depends only on $\rho_x$, more precisely that $\Pi_{\infty}[\mm_y]$
and $\rho_x$ should be related by the hypothetical $p$-adic Langlands
correspondence, see \cite[\S 6]{6auth}. Theorem \ref{patched1} below
shows that the infinitesimal character of $\Pi_{\infty}[\mm_y]^{\la}$
depends only on $\rho_x$, thus adding nontrivial evidence that the
expectation should be true.  Using a twisting element we may associate
to $\rho$ an admissible Galois representation $\rho^C$ with values in
the $C$-group of $\GL_n$, see lemma \ref{rhoC} for more details.

\begin{thm}\label{patched1}
  Let $\mathfrak{g}$ be the $\Qp$-linear Lie algebra of $G$. The
  algebra $Z(\mathfrak{g})$ acts on $\Pi_{\infty}[\mm_y]^{\la}$
  through the character $\zeta_{\rho^C_x}^C$.
\end{thm}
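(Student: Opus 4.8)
The plan is to deduce this statement as an application of Theorem \ref{main1} (or rather its refinement, Theorem \ref{infinitesimal}), with the patched module $M_\infty$ playing the role of $M$, the ring $R_\infty$ playing the role of $R$, the group $K = \GL_n(\mathcal{O}_F)$, and the group $G = \GL_n(F)$. The character $\chi$ will be the infinitesimal character $\zeta^C_{\rho^C}$ attached to the patched Galois representation $\rho^C : \Gal_F \to \CG_f(R_\infty)$ by the main construction of the paper (section \ref{sec_Cgr}), composed with the canonical map $R_\infty \to R_\infty^{\rig}$; here $\rho^C$ is obtained from $\rho : \Gal_F \to \GL_n(R_\infty)$ by the twisting element recalled in Lemma \ref{rhoC}. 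Note that this is a representation of $\Gal_F$ rather than of $\Gal_\Qp$, so strictly one uses the version of the construction over $F$ and the fact (built into $\zeta^C_\rho$, see \eqref{zetarhoL}) that it lands in $Z(\Res_{F/\Qp}\mathfrak{g}) \otimes_{\Qp} L$, which is exactly $Z(\mathfrak{g})_L$ for $\mathfrak{g}$ the $\Qp$-linear Lie algebra of $\GL_n(F)$.

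First I would check hypothesis (1) of Theorem \ref{main1}: there is an $M_\infty$-regular sequence $y_1,\dots,y_h \in \mathfrak{m}_{R_\infty}$ with $M_\infty/(y_1,\dots,y_h)$ finite projective over $\mathcal{O}[\![K]\!]$. This is precisely one of the foundational properties of the Taylor--Wiles patching construction of \cite{6auth}: $M_\infty$ is a finite projective $\mathcal{O}[\![K]\!][\![x_1,\dots,x_h]\!]$-module (equivalently, a finite Cohen--Macaulay $R_\infty[\![K]\!]$-module with the patching variables forming a regular sequence), and this is exactly the shape required. Second, hypotheses (2) and (3) concern the rings $R_V = (R_\infty)_V$ acting faithfully on $M_\infty(V) = \Hom_K^{\cont}(V,\Pi_\infty)'$ for $V \in \Irr_G(L)$. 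Here one invokes the local-global compatibility input: classical points $x \in \mSpec(R_V[1/p])$ correspond to automorphic (de Rham with prescribed regular Hodge--Tate--Sen weights dictated by $V$) Galois representations, and by the automorphic description of these points the Hodge--Tate cocharacter of $\rho^C_x$ matches the highest weight of $V$ shifted appropriately, which by Proposition \ref{C-alg-inf} (compatibility of $\zeta^C$ with Buzzard--Gee) is exactly the statement that the infinitesimal character of $V$ equals $\zeta^C_{\rho^C,x}$. Reducedness of $R_V$ follows from the fact that $M_\infty(V)$ is (up to the usual interpolation) essentially an automorphic Hecke module over a (localised, completed) Hecke algebra which is known to be reduced in this setting — this is a standard consequence of the patching construction together with the semisimplicity of the classical Hecke action, and is the point where hypothesis (ii)-type input from \cite{6auth} enters.

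The main obstacle, as in the global applications, is verifying hypothesis (3): one must know that at \emph{every} point of $\mSpec(R_V[1/p])$ (not just a Zariski-dense set of classical de Rham points) the character $\zeta^C_{\rho^C}$ specialises to the infinitesimal character of $V$. The strategy is the one outlined in the introduction: one only checks the equality on the classical de Rham points, where it follows from local-global compatibility at $p$ in \cite{6auth} (the Hodge--Tate weights of $\rho_x$ are the expected ones coming from $V$) combined with the explicit computation of $\zeta^C$ in terms of the Sen operator and the twisting element (Lemma \ref{rhoC}, and the $\GL_2$ example computed in section \ref{sec_Cgr} generalised to $\GL_n$); since $\chi$ and the infinitesimal character of $V$ are both \emph{morphisms of affinoid algebras} $R_V^{\rig} \to$ (structure sheaf) — the latter being constant — agreeing on a Zariski-dense set and $R_V$ being reduced, they agree everywhere. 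With all three hypotheses in place, Theorem \ref{main1} yields that $\Pi_\infty[\mathfrak{m}_y]$ has infinitesimal character $\chi_y = \zeta^C_{\rho^C,y} = \zeta^C_{\rho^C_x}$ (the last equality because $\zeta^C$ depends only on the restriction of $\rho^C$, hence only on $\rho^C_x$, since $R_\infty$ is a power series ring over $R^\Box_{\overline{\rho}}$ and the Sen operator only sees the image in $\GL_n(\kappa(x))$ — more precisely it factors through the local deformation ring). This finishes the proof. A final bookkeeping point worth spelling out is that one should pass through Theorem \ref{infinitesimal} rather than the simplified Theorem \ref{main1} if the cleanest formulation of hypothesis (1) for $M_\infty$ is the ``finite projective over a power series ring'' one rather than a bare regular sequence, but the two are equivalent here.
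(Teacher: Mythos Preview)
Your proposal is correct and takes essentially the same approach as the paper: apply Theorem \ref{infinitesimal} with $M = M_\infty$, $R = R_\infty$, $\zeta = \zeta^C_{\rho^C}$, and verify the three hypotheses from the properties of the patching construction in \cite{6auth}. Two small corrections worth noting: hypothesis (iii) is verified \emph{directly} at every point of $\Sigma_V$ via \cite[Prop.\,4.33]{6auth} (which shows that every such point already gives a crystalline $\rho_x$ with Hodge--Tate cocharacter $\lambda + \tilde{\delta}$, so your additional Zariski-density argument is unnecessary), and the result linking Hodge--Tate weights to the infinitesimal character in this purely local setting is Proposition \ref{inf_HT}, not Proposition \ref{C-alg-inf}, which compares $\zeta^C$ with the \emph{archimedean} infinitesimal characters of a global automorphic representation.
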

If $F=\Qp$ and $n=2$ then the $p$-adic Langlands correspondence
$\rho\mapsto \Pi(\rho)$, envisioned by Breuil, has been established by
local means in \cite{CDP}, \cite{image}, building on the monumental
work of Colmez \cite{colmez}, and the infinitesimal character of
$\Pi(\rho)^{\la}$ has been calculated by one of us (G.D.) in
\cite{GDinf} in terms of the characteristic polynomial of the Sen
operator of $\rho$. We give a new proof of this result in section
\ref{padicLL}, using the patched module and recent results of Tung
\cite{Tung1}, \cite{Tung2}.

\subsection{Beyond $l_0=0$ case}
We end this long introduction by noting that one cannot hope to prove
the conjecture \ref{dream_padic_intro} by applying theorems
\ref{infinitesimal} and \ref{infinitesimal_central} to the completed
cohomology directly, since the localisation of completed homology
$\widetilde{H}_n$ at a maximal ideal of the Hecke algebra is not
expected to be projective over $\OO[\![K_p]\!]$. However, one might
hope to be able to apply our results to the patched homology groups
obtained via the patching method of Calegary--Geraghty
\cite{cale-ger}. The most accessible case, when weakly non-Eisenstein
maximal ideal are not expected to exist, is when $G= \PGL_2$ over a
quadratic imaginary field $F$, such that $p$ splits completely in $F$,
studied by Gee--Newton in \cite{gee-newton}. It follows from
\cite[Prop.\,5.3.1]{gee-newton} and its proof that under the
assumptions made there the patched homology satisfies the conditions
of Theorem \ref{infinitesimal}. We do not pursue the matter here,
since in this specific setting instead of applying Theorem
\ref{infinitesimal} it might be easier to use local--global
compatibility at $p$ and appeal to the results on the infinitesimal
character in the $p$-adic Langlands correspondence for $\GL_2(\Qp)$,
see Theorem \ref{tung}, but hope to come back to these questions in
future work.

\subsection{Overview of sections} We will review the content of each section. 

In section \ref{Lgroup} we review $L$-groups, $C$-groups, admissible
Galois representations and introduce their variants twisted by a
torsor in subsection \ref{sec_balaur}.

In section \ref{sec_Sen_ban} we review Sen theory for Galois
representations valued in $\GL_n$ of a Banach algebra.

In section \ref{families_gal} we define the characters $\zeta_{\rho}$
and $\zeta_{\rho}^C$. The main difficulty in this section is to come
up with the right definition, when $G$ is not split over $F$. At first
reading one should ignore that and assume that $G$ is split over $F$
and the torsor $P$ is trivial.

In section \ref{sec_HT} we relate $\zeta_{\rho}$ and $\zeta_{\rho}^C$
to the Hodge--Tate cocharacter, when $\rho$ is Hodge--Tate. We also
explain how to attach algebraic representations to $\rho$, when the
Hodge--Tate weights are regular in a setting, when $G$ is (possibly)
not split over $F$. In subsection \ref{sec_auto_reps} we check that if
$\rho$ is a Galois representation associated to a cohomological
automorphic representation $\pi$ then $\zeta_{\rho}^C$ is essentially
the infinitesimal character of the archimedean component of $\pi$.

In section \ref{FibreCM} we prove a commutative algebra result, which
is used in section \ref{sec_fam_Ban}.

In section \ref{sec_density_algebraic} we prove a density theorem,
which is used in section \ref{sec_fam_Ban}.

In section \ref{sec_fam_Ban} we prove the existence of infinitesimal
character in an abstract setting, modelled on the representations that
appear in completed cohomology, when the mod $p$ Hecke eigenvalues
contribute to only one cohomological degree, and its patched version.

In section \ref{sec_global} we first review completed cohomology
following \cite{emerton_icm}.  However, we do not make the assumption
that the maximal $\QQ$-split torus and the maximal $\R$-split torus in
the centre of $G$ coincide, since we want to handle Shimura
curves. The main result is proved in subsection \ref{main_result} by
showing that we may apply the results of section \ref{sec_fam_Ban} if
certain conditions are satisfied. We then check that these conditions
are satisfied in the settings of modular curves, Shimura curves,
definite unitary groups, compact unitary Shimura varieties by
appealing to the results already available in the literature.  In
subsection \ref{sec_patch} we show that we may apply the results of
section \ref{sec_fam_Ban} in the setting of the patched module defined
in \cite{6auth}. In subsection \ref{padicLL} we reprove results on the
infinitesimal character in the $p$-adic local Langlands correspondence
for $\GL_2(\Qp)$.  In subsection \ref{eyeswideshut} we formulate a
general conjecture regarding the action of the center of the universal
enveloping algebra on the locally analytic vectors in the Hecke
eigenspaces in completed cohomology.

\subsection{Acknowledgements} We would like to acknowledge the
influence of the ideas of Michael Harris, regarding the connections
between the representation theory of real Lie groups and the $p$-adic
representation theory of $p$-adic groups. In particular, the question
about the infinitesimal character was raised by him in 2003 in Luminy,
where VP was a participant.  We would like to thank Ana Caraiani,
Matthew Emerton, Toby Gee, Florian Herzig, David Loeffler and Timo
Richarz for the stimulating correspondence regarding various aspects
of this paper. VP would like to thank especially his colleague Jochen
Heinloth for several illuminating discussions regarding geometric
group theory. Parts of the paper were written, when VP visited the
Morning Side Center for Mathematics in Beijing in March 2019.  VP
would like to thank Yongquan Hu for the invitation and stimulating
discussions and the Morning Side Center for Mathematics for providing
excellent working conditions.  GD and BS would like to thank
Christophe Breuil, Laurent Clozel, Pierre Colmez and Olivier Ta\"ibi
for useful discussions on the topics of this paper. BS would like to acknowledge the support of the A.N.R.~through the project CLap-CLap ANR-18-CE40-0026.
We would like to 
thank Toby Gee and James Newton for their comments on the earlier version of 
the paper. 


\section{The $L$-group}\label{Lgroup}

We fix algebraic closure $\Qpbar$ of $\Qp$ and a finite extension $F$
of $\Qp$ contained in $\Qpbar$.  Let $G$ be a connected reductive
group defined over $F$. Let $E\subset \Qpbar$ be a finite Galois
extension of $F$ such that $G$ splits over $E$.  We follow \cite[\S
I.2]{borel_corvalis}, \cite[\S 2.1]{beegee} and \cite{zhu} to define
the dual group $\Ghat$, the $L$-group $\LG$ and the $C$-group
$\CG$. We discuss admissible representations of
$\Gal_F\coloneqq \Gal(\Qpbar/F)$ with values in these groups. In the
last subsection we consider versions of these constructions twisted
along a torsor.

We fix a maximal split torus $T$ of $G$ defined over $E$. Let
$X= X^*(T)$ be the group of characters of $T$, $\Phi$ the set of
roots, $X^{\vee}=X_*(T)$ the group of cocharacters of $T$ and
$\Phi^{\vee}$ the set of coroots. We denote the natural pairing
$X\times X^{\vee} \rightarrow \ZZ$ by $\langle \cdot, \cdot\rangle$.
The $4$-tuple $R(G,T)=(X, \Phi, X^{\vee}, \Phi^{\vee})$ together with
the pairing $\langle \cdot, \cdot\rangle$ is a reduced \textit{root
  datum} in the sense of \cite[Def.\,1.3.3]{bcnrd} by
\cite[Prop.\,5.1.6]{bcnrd}.

We further fix a Borel subgroup $B$ of $G$ defined over $E$ and
containing $T$.  To the triple $(G_E, T, B)$ one may attach a
\textit{based root datum}, which is a $6$-tuple
$(X, \Phi, \Delta, X^{\vee}, \Phi^{\vee}, \Delta^{\vee})$, where
$\Delta$ is the set of positive simple roots corresponding to $B$,
$\Delta^{\vee}$ is the set of coroots $a^{\vee}$ for $a\in \Delta$,
see \cite[\S 1.5]{bcnrd}.

Let $\Aut(G)$ be the group of automorphisms of $G$ as an algebraic
group over $E$. Then $\Aut(G)$ acts on the set of such pairs $(B, T)$
as above. If $\phi\in \Aut(G)$ then the pairs $(\phi(B),\phi(T))$ and
$(B, T)$ are conjugate by an element $g_\phi\in G(E)$, which is
uniquely determined by $\phi$ up to an element of $T(E)$, see
\cite[Prop.\,6.2.11 (2)]{bcnrd} and its proof.  This induces a group
homomorphism $\Aut(G)\rightarrow \Aut(R(G, T), \Delta)$, defined by
$\phi\mapsto\Ad(g_\phi)\circ \phi$, where $\Aut(R(G, T), \Delta)$ is
the subgroup of the group of automorphisms of the root data $R(G,T)$,
consisting of those automorphisms, which map $\Delta$ to itself. It
follows from \cite[Prop. 7.1.6]{bcnrd} that this induces an exact
sequence:
\begin{equation}\label{exact}
  1 \rightarrow (G/Z_G)(E) \rightarrow \Aut(G)\rightarrow \Aut(R(G,T), \Delta)\rightarrow 1,
\end{equation}
where $Z_G$ is the centre of $G$ and the first non-trivial arrow is
given by conjugation. Let $\Aut(G, T, B)$ be the set of
$\phi\in \Aut(G)$ such that $\phi(T)=T$ and $\phi(B)=B$.  If
$\phi\in \Aut(G, T, B)$ then
$\phi(a)\coloneqq a\circ\phi^{-1}\in \Delta$ for all $a\in \Delta$ and
$\phi$ induces an isomorphism $\phi: U_a \rightarrow U_{\phi(a)}$ for
all $a\in \Delta$, where $U_a$ is the root subgroup corresponding to
$a\in \Delta$. For each $a\in \Delta$ we fix an isomorphism
$p_a: \mathbb{G}_a\overset{\cong}{\longrightarrow}U_{a}$ over $L$. The
data $\{p_a\}_{a\in \Delta}$ is called a \textit{pinning} of
$(G, T, B)$. The group $(T/Z_G)(E)$ acts simply transitively on the
set of pinnings of $(G, T,B)$.  Let
$\Aut(G, T, B, \{p_a\}_{a\in \Delta})$ be the set of
$\phi\in \Aut(G, T, B)$ such that $p_{\phi(a)} = \phi\circ p_a$ for
all $a\in \Delta$. It is shown in \cite[Prop.\,7.1.6]{bcnrd} that
\eqref{exact} induces an isomorphism
$\Aut(G, T, B, \{p_a\}_{a\in \Delta}) \overset{\cong}{\rightarrow}
\Aut(R(G,T), \Delta)$.  Thus if we fix a pinning then there is a
natural isomorphism:
\begin{equation}\label{splitting}
  \Aut(G)\cong (G/Z_G)(E) \rtimes \Aut(R(G,T), \Delta).
\end{equation}
 
By \cite[Thm.\,6.1.17]{bcnrd} there is a unique (up to unique
$\ZZ$-isomorphism) pinned split reductive group
$(\Ghat, \That, \Bhat, \{p'_{a^{\vee}}\}_{a^{\vee}\in \Delta^{\vee}})$
over $\ZZ$ such that its based root datum
$(R(\Ghat, \That), \Delta^{\vee})$ is isomorphic to
$(X^{\vee}, \Phi^{\vee}, \Delta^{\vee}, X, \Phi, \Delta)$, which is
the based root datum dual to $(R(G, T), \Delta)$.  We will refer to
$\Ghat$ as the dual group.
  
We will now give a linear algebra definition of the action
\begin{equation}\label{muG}
  \mu_G: \Gal_F\rightarrow \Aut(R(G,T), \Delta),
\end{equation}
defined in \cite[\S I.1.3]{borel_corvalis}. Let $\mathfrak g$ be the
Lie algebra of $G$ as an algebraic group over $F$. Then
$\mathfrak g_E\coloneqq \mathfrak g \otimes_F E$ is the Lie algebra of
$G_E$. For $\gamma\in \Gal_F$ let
$\gamma: \mathfrak g_E \rightarrow \mathfrak g_E$ be the map
$x\otimes \lambda \mapsto x\otimes \gamma(\lambda)$ for
$x\in \mathfrak g$ and $\lambda\in E$. Let
$\mathfrak{t}\subset \mathfrak{b} \subset \mathfrak{g}_E$ be the Lie
algebras of $T$ and $B$ respectively and let $\Ad$ denote the action
of $G$ on $\mathfrak g$ by conjugation. Then $\gamma(\mathfrak t)$ is
a Cartan subalgebra of $\mathfrak{g}_E$ and $\gamma(\mathfrak b)$ is a
Borel subalgebra of $\mathfrak g_E$.  Thus there is $g\in (G/Z_G)(E)$
such that $\Ad(g)(\gamma(\mathfrak t))= \mathfrak t$ and
$\Ad(g)(\gamma(\mathfrak b))= \mathfrak b$. The map
$\Ad(g) \circ \gamma$ maps the set of $\mathfrak t$-eigenspaces for
the adjoint action on $\mathfrak b$ to itself, thus there is a unique
$\sigma\in \Aut(R(G,T), \Delta)$ such that
$\Ad(g)(\gamma(\mathfrak g_{E, \alpha}))= \mathfrak g_{E,
  \sigma(\alpha)}$ for all $\alpha\in \Delta$. To show that the map
$\gamma\mapsto \sigma$ is a group homomorphism it is enough to check
that $\sigma$ does not depend on the choice of $g$. This is the case,
because any two choices differ by an element of $T(E)$, which does not
change the eigenspaces. It follows from the construction that
\eqref{muG} factors through $\Gal(E/F)$.  By using the splitting in
\eqref{splitting} we obtain a group homomorphism
$\Gal_F\rightarrow \Aut(G)$, which induces an $E$-linear action of
$\Gal_F$ on $\mathfrak g_E$.

The automorphism groups of $(R(G, T),\Delta)$ and
$(R(\Ghat, \That), \Delta^{\vee})$ are canonically isomorphic. Using
the pinning and \cite[Thm.\,7.1.9 (3)]{bcnrd} we obtain a group
homomorphism $\Gal_F\rightarrow \Aut(\Ghat)$.  We note that the
resulting action of $\Gal_F$ on $\Ghat$ is defined over $\ZZ$.  The
$\Ghat/ Z_{\Ghat}$-conjugacy class of this homomorphism is canonical
and depends only on $G$. We define the $L$-group of $G$ as a
semidirect product:
\[\LG\coloneqq  \Ghat\rtimes \Gal_F.\]
In particular, $\LG$ is a split reductive group over $\ZZ$ with
identity component $\Ghat$ and the component group $\Gal_F$.  The map
$g\mapsto(1, g)$ identifies $\Ker \mu_G$ with a normal subgroup
$\LG$. We let $\LGf\coloneqq \LG/ \Ker \mu_G$.

\subsection{$C$-groups}\label{sec_Cgr_sub}
We follow the exposition in \cite{zhu}. Let $\delta$ be a half sum of
positive roots, thus $2\delta\in X^*(T)= X_*(\That)$. Since
$\Ghat/Z_{\Ghat}$ is of adjoint type there is a unique
$\delta_{\mathrm{ad}}\in X_*(\That/Z_{\Ghat})$, such that the image of
$2\delta$ in $X_*(\That/Z_{\Ghat})$ is equal to
$2\delta_{\mathrm{ad}}$. We define an action of $\Gm$ on $\Ghat$ by
\[ \Ad \delta_{\mathrm{ad}}: \Gm
  \overset{\delta_{\mathrm{ad}}}{\longrightarrow} \That/Z_{\Ghat}
  \overset{\Ad}{\longrightarrow} \Aut(\Ghat).\] Since
$\delta_{\mathrm{ad}}$ is $\Gal_F$-invariant, the action of $\Gm$
commutes with the action of $\Gal_F$. We let
\[\Ghat^T\coloneqq \Ghat \rtimes \Gm, \quad \CG\coloneqq \Ghat^T
  \rtimes \Gal_F\cong \Ghat \rtimes( \Gm \times \Gal_F).\] Let
$\CG_f\coloneqq \CG/\Ker \mu_G$ and let $d: \CG\rightarrow \Gm$ denote
the projection map. Then we have an exact sequence of group schemes
over $\ZZ$:
\begin{equation}\label{LGCG}
  1\rightarrow \LG \rightarrow \CG \overset{d}{\rightarrow}\Gm \rightarrow 1
\end{equation}
and a similar sequence with $\LG_f$ and $\CG_f$.

Following Remark 1 in \cite{zhu}, we may regard $\Ghat^T$ as the dual
group of of a reductive group $G^T$, which is a central extension of
$G$ by $\Gm$ over $F$, and then regard $\CG= \leftidx{^L}G^T$ as the
usual Langlands dual group of $G^T$.  This is the definition given in
\cite[Def.\,5.3.2]{beegee}.

\subsection{Admissible representations}\label{adm_reps}
Let $N$ be a topological group and let $\Aut(N)$ be the group of
continuous automorphisms of $N$, which we equip with discrete
topology. Let $\theta: \Gal_F \rightarrow \Aut(N)$ be a continuous
group homomorphism. We may then form the semi-direct product
$N \rtimes \Gal_F$ and let $\pi: N\rtimes \Gal_F \rightarrow \Gal_F$
denote the projection.
\begin{defi} A continuous representation
  $\rho: \Gal_F \rightarrow N\rtimes \Gal_F$ is admissible if
  $\pi \circ \rho$ induces the identity on $\Gal_F$. Two admissible
  representations $\rho$, $\rho'$ are equivalent if there is $n\in N$
  such that $\rho'(\gamma)= n \rho(\gamma) n^{-1}$, for all
  $\gamma\in \Gal_F$.
\end{defi}

Let $Z^1_{\cont}(\Gal_F, N)$ be the set of continuous functions
$c: \Gal_F \rightarrow N$, $\gamma\mapsto c_\gamma$ satisfying the
cocycle condition $c_{\gamma \beta}= c_\gamma( \gamma \cdot c_\beta)$,
where $\cdot$ denotes the action of $\Gal_F$ on $N$. Two cocycles $c$
and $c'$ are cohomologous if there is $n\in N$ such that
$c'_{\gamma}=n c_\gamma (\gamma\cdot n^{-1})$ for all
$\gamma \in \Gal_F$. This relation is an equivalence relation and we
denote the set of equivalence classes by $H^1_{\cont}(\Gal_F, N)$.

\begin{lem}\label{cocycle} The rule 
  $\rho(\gamma)= (c_{\gamma}, \gamma)$ defines a bijection between the
  set of admissible representations
  $\rho: \Gal_F\rightarrow N\rtimes \Gal_F$ and
  $Z^1_{\cont}(\Gal_F, N)$, which induces a bijection between the
  respective equivalence classes.
\end{lem}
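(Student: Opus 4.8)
The plan is to unwind the semidirect product structure and check that the cocycle condition corresponds exactly to the requirement that $\rho$ be a homomorphism. Recall that in $N \rtimes \Gal_F$ the multiplication is $(n,\gamma)(m,\beta) = (n(\gamma\cdot m), \gamma\beta)$. Given an admissible $\rho$, the condition $\pi\circ\rho = \id$ means precisely that for each $\gamma$ we may write $\rho(\gamma) = (c_\gamma, \gamma)$ for a unique $c_\gamma \in N$; continuity of $\rho$ (with $\Aut(N)$ discrete, so that $N\rtimes\Gal_F \to \Gal_F$ is continuous and the fibre over the open identity-neighbourhoods is $N$ with its topology) is equivalent to continuity of $\gamma\mapsto c_\gamma$. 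So the first step is just to observe that $\rho \leftrightarrow (c_\gamma)_\gamma$ is a bijection between admissible maps-of-sets-with-$\pi\circ\rho=\id$ and arbitrary continuous functions $\Gal_F\to N$.

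Next I would impose that $\rho$ is a group homomorphism and translate: $\rho(\gamma\beta) = \rho(\gamma)\rho(\beta)$ becomes $(c_{\gamma\beta},\gamma\beta) = (c_\gamma,\gamma)(c_\beta,\beta) = (c_\gamma(\gamma\cdot c_\beta), \gamma\beta)$, which holds if and only if $c_{\gamma\beta} = c_\gamma(\gamma\cdot c_\beta)$, i.e. exactly the cocycle condition defining $Z^1_{\cont}(\Gal_F,N)$. (One should also note $\rho(1)=(1,1)$ forces $c_1=1$, which is anyway a consequence of the cocycle relation.) This shows $\rho\mapsto (c_\gamma)$ restricts to a bijection between admissible \emph{representations} and $Z^1_{\cont}(\Gal_F,N)$.

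Finally I would check the statement about equivalence classes. Two admissible representations $\rho,\rho'$ are equivalent iff there is $n\in N$ with $\rho'(\gamma) = (n,1)\rho(\gamma)(n,1)^{-1}$ for all $\gamma$ — here I use that conjugation by $(n,1)\in N\rtimes\Gal_F$ coincides with conjugation by $n\in N$ in the definition, and that $(n,1)^{-1} = (n^{-1},1)$. Computing, $(n,1)(c_\gamma,\gamma)(n^{-1},1) = (nc_\gamma, \gamma)(n^{-1},1) = (nc_\gamma(\gamma\cdot n^{-1}), \gamma)$, so $\rho'\sim\rho$ iff $c'_\gamma = nc_\gamma(\gamma\cdot n^{-1})$ for all $\gamma$, which is precisely the relation ``cohomologous'' defining $H^1_{\cont}(\Gal_F,N)$. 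Hence the bijection descends to one on equivalence classes.

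This lemma is essentially a bookkeeping exercise, so I do not anticipate a genuine obstacle; the only point requiring a little care is the topological claim — that continuity of $\rho$ as a map of topological groups is equivalent to continuity of $\gamma\mapsto c_\gamma$ — which uses that $\Aut(N)$ is given the discrete topology (so that $N$ sits as an open-and-closed, topologically embedded subgroup of each fibre of $\pi$, with the semidirect product carrying the product topology). Once that is granted, everything else is the direct computation above. I would write the proof as: (1) set-theoretic bijection and the continuity remark; (2) homomorphism $\Leftrightarrow$ cocycle; (3) equivalence $\Leftrightarrow$ cohomologous.
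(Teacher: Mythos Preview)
Your proposal is correct and follows essentially the same approach as the paper's proof: write $\rho(\gamma)=(c_\gamma,\gamma)$, translate the homomorphism condition into the cocycle condition, and compute $(n,1)(c_\gamma,\gamma)(n,1)^{-1}$ to identify equivalence with being cohomologous. Your version is simply more detailed, in particular making explicit the continuity point and the inverse $(n,1)^{-1}=(n^{-1},1)$, which the paper leaves implicit.
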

\begin{proof} Let $\rho: \Gal_F\rightarrow N\rtimes \Gal_F$ be an
  admissible representation. Then we may write
  $\rho(\gamma)= (c_\gamma, \gamma)$ with $c_{\gamma} \in N$ for all
  $\gamma\in \Gal_F$. The condition
  $\rho(\gamma\beta)=\rho(\gamma) \rho(\beta)$ translates into the
  cocycle condition. Conversely, if $c\in Z^1_{\cont}(\Gal_F, N)$ then
  $\rho(\gamma)\coloneqq (c_{\gamma}, \gamma)$ defines an admissible
  representation $\Gal_F\rightarrow N\rtimes \Gal_F$.  If $n\in N$
  then
  $(n, 1)(c_{\gamma}, \gamma)(1, n)^{-1}= ( n c_{\gamma} (\gamma\cdot
  n^{-1}), g)$. Thus two admissible representations are equivalent if
  and only if the corresponding cocycles are cohomologous.
\end{proof}
 
If $A$ is a topological algebra then the topology on $A$ induces a
topology on $\Ghat(A)$ and $\LG(A)= \Ghat(A)\rtimes \Gal_F$, where the
topology on $\Gal_F$ is the Krull topology. Thus we may apply the
above discussion to $N=\Ghat(A)$. The description of admissible
representations in terms of cocycles shows that admissible
representations with values in $\LG_f(A)$ coincide with admissible
representations with values in $\LG(A)$. We will exploit that $\LG_f$
is of finite type over $\ZZ$. The same discussion applies to $\CG(A)$
and $\CG_f(A)$.
  
\subsection{Twisted families of $L$- and
  $C$-parameters}\label{sec_balaur}
We want to allow more general families of Galois representations than
considered in the previous section. A motivating example appears in
Chenevier's thesis \cite{chenevier_thesis}, where he constructs an
eigenvariety $X$ for a unitary group compact at $\infty$ and split at
$p$ associated to a division algebra over a quadratic extension $E$ of
$\QQ$, and a pseudo-representation $t: \Gal_E\rightarrow \OO_X(X)$,
which at points corresponding to classical automorphic forms
interpolates the traces of corresponding Galois representations. The
locus $X_{\mathrm{irr}}$, where $t$ is absolutely irreducible, is an
open rigid subvariety of $X$ and Chenevier shows that over this locus,
$t$ gives rise to a representation
$\rho: \Gal_E\rightarrow \mathcal A^*$, where $\mathcal A$ is an
Azumaya algebra over $X_{\mathrm{irr}}$, see
\cite[Thm.\,E]{chenevier_thesis}. A similar type of example arising in
the deformation theory of pseudo-representations appears in
\cite[\S4.2]{chenevier_det}.  Although, in these cases
$\Ghat= \LG_f=\GL_n$, the Galois representation takes values not in
$\GL_n(\OO_X(X_{\mathrm{irr}}))$, but in an inner form of it. We would
like our setup to cover this example and also the situations where the
action of $\Gal_F$ on $\Ghat$ is non-trivial. In this more general
setting it is not obvious what \textit{admissibility} for a Galois
representation should mean.
 
We keep the discussion deliberately very general, since it applies in
different contexts. Let $X$ be a space with a sheaf of topological
rings $\OO_X$ for some Grothendieck topology on $X$. (In the
application, ''space" will mean a rigid analytic space over $\Qp$, but
it could also be, for example, schemes over $\mathbb F_\ell$, rigid
spaces over $\mathbb{Q}_\ell$, or differentiable manifolds.) Let $H$
be a group over $X$ and let $P$ be an $H$-torsor, by which we mean a
space $P\rightarrow X$, together with the left $H$-action
$H\times_X P\rightarrow P$, such that the map
\[ H\times_X P \rightarrow P\times_X P, \quad (g,p)\mapsto (gp, p)\]
is an isomorphism. We require in addition that there is a family of
local sections $s_i:U_i\rightarrow P$ for some open cover
$\mathcal U=\{U_i\}_{i\in I}$ of $X$. We note that this implies that
the map $H\times U_i \rightarrow P\times_X U_i$ is an isomorphism, so
that the restriction of $P$ to $U_i$ is a trivial
$H$-torsor. Following \cite{breen_messing}, we let $\Pad$ be the sheaf
of $H$-equivariant automorphisms of $P$, which leave the base $X$
fixed.
 
Recall that $\LG_f= \Ghat \rtimes \Gamma$, where
$\Gamma= \Gal_F/\Ker \mu_G$. Since $\Ghat$ and the action
$\mu_G: \Gal_F \rightarrow \Aut(\Ghat)$ are defined over $\ZZ$, we may
view $\Ghat$, $\LG_f$, $\Gamma$ as groups over $X$, so that
$\Ghat_X(U)=\Ghat(\OO_X(U))$. We have an isomorphism of groups over
$X$:
\begin{equation}\label{semidirect}
  \LG_{f, X} \cong \Ghat_X \rtimes \Gamma_X.
\end{equation}
Let $P$ be a $\Ghat$-torsor and let
$P_f\coloneqq \LG_{f, X} \times^{\Ghat_X} P$. It follows from
\eqref{semidirect} that the quotient $\Ghat_X \backslash P_f$ is a
$\Gamma_X$-torsor on $X$ together with a global trivialisation
$t: \Ghat_X \backslash P_f \cong \Gamma_X$.

\begin{lem} We have an exact sequence of sheaves of groups on $X$:
  \[ 1\rightarrow \Pad \rightarrow P_f^{\mathrm{ad}} \rightarrow
    (\Ghat_X \backslash P_f)^{\mathrm{ad}}\rightarrow 1, \] where we
  consider $P$ as a $\Ghat_X$-torsor, $P_f$ as an $\LG_{f, X}$-torsor,
  $\Ghat_X \backslash P_f$ as a $\Gamma_X$-torsor.
\end{lem}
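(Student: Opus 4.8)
The plan is to prove exactness locally on $X$ and then observe that everything is compatible with restriction, so the local statements glue to an exact sequence of sheaves. Since $P$ is a $\Ghat_X$-torsor that is locally trivial for the chosen Grothendieck topology, I would choose an open cover $\mathcal U = \{U_i\}$ over which $P$ admits sections $s_i : U_i \to P$; restricting to one such $U_i$ we may assume $P \cong \Ghat_X$ is trivial, and then $P_f \cong \LG_{f,X}$ is trivial as an $\LG_{f,X}$-torsor, and $\Ghat_X\backslash P_f \cong \Gamma_X$ is the trivial $\Gamma_X$-torsor (this last identification being exactly the global trivialisation $t$ recalled just before the lemma). Under these trivialisations the adjoint sheaves become the corresponding groups: $P^{\mathrm{ad}} \cong (\Ghat_X)^{\mathrm{ad}} = (\Ghat/Z_{\Ghat})_X$ (the sheaf of $\Ghat_X$-equivariant base-fixing automorphisms of $\Ghat_X$ acting on itself by left translation is inner automorphisms, i.e. $\Ghat_X/Z_{\Ghat,X}$), similarly $P_f^{\mathrm{ad}} \cong \LG_{f,X}/Z(\LG_{f,X})$-type object — more precisely the sheaf of $\LG_{f,X}$-equivariant automorphisms of $\LG_{f,X}$ — and $(\Ghat_X\backslash P_f)^{\mathrm{ad}} \cong \Gamma_X^{\mathrm{ad}}$, the inner automorphism sheaf of $\Gamma_X$ (here $\Gamma$ is finite).

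The heart of the matter is therefore the purely group-theoretic fact: for the extension $1 \to \Ghat_X \to \LG_{f,X} \to \Gamma_X \to 1$ of \eqref{semidirect}, the sheaf of equivariant automorphisms fits into an exact sequence $1 \to (\Ghat_X)^{\mathrm{ad}} \to (\LG_{f,X})^{\mathrm{ad}} \to \Gamma_X^{\mathrm{ad}} \to 1$. I would establish this by analysing an $\LG_{f,X}$-equivariant base-fixing automorphism $\phi$ of $\LG_{f,X}$: such a $\phi$ is left translation by an element $g$ of $\LG_{f,X}(U)$, well-defined up to the centre, hence the automorphism sheaf is $\LG_{f,X}/Z(\LG_{f,X})$; projecting $g$ to $\Gamma_X$ gives the map to $\Gamma_X^{\mathrm{ad}} = \Gamma_X/Z(\Gamma_X)$, and the kernel consists of those $g$ lying in $\Ghat_X$ modulo centre, which is exactly $(\Ghat_X)^{\mathrm{ad}}$ since $\Ghat$ has trivial centre in the adjoint picture — more carefully, one checks $Z_{\Ghat}$ is contained in $Z(\LG_f)$-computations and that the image is all of $\Gamma_X^{\mathrm{ad}}$ because the extension is split. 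Surjectivity on the right is where one genuinely uses the semidirect-product splitting \eqref{semidirect}: a lift of any inner automorphism of $\Gamma_X$ is given by left translation by (a lift of) the corresponding element of $\Gamma_X \subset \LG_{f,X}$. Since $P$, $P_f$ and $\Ghat_X\backslash P_f$ are all cut out from $\Ghat_X$, $\LG_{f,X}$, $\Gamma_X$ by twisting along the same $\Ghat_X$-valued cocycle, and the formation of $(-)^{\mathrm{ad}}$ is functorial and compatible with twisting, the local exact sequences are canonically identified on overlaps $U_i \cap U_j$, hence glue.

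The step I expect to be the main obstacle is the \emph{right-exactness}, i.e. surjectivity of $P_f^{\mathrm{ad}} \to (\Ghat_X\backslash P_f)^{\mathrm{ad}}$ as a map of \emph{sheaves}: one must be careful that, although surjectivity of sections need not hold on every open, the map is locally surjective, which is precisely what the global trivialisation $t$ and the splitting \eqref{semidirect} provide — over each $U_i$ any local automorphism of the $\Gamma_X$-torsor lifts. A secondary subtlety is pinning down that an $H$-equivariant base-fixing automorphism of a trivial $H$-torsor $H$ is the same as left translation by an element of $H$ modulo the centre $Z_H$, so that $H^{\mathrm{ad}} = (H/Z_H)_X$; for $H = \Ghat$ with $Z_{\Ghat}$ nontrivial this identification of $P^{\mathrm{ad}}$ with $\Ghat_X/Z_{\Ghat,X}$ rather than $\Ghat_X$ itself is exactly what makes the left-hand term come out right, and is worth stating explicitly. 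Everything else — compatibility with restriction maps, the cocycle condition — is routine and I would not spell it out.
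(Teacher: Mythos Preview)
Your overall strategy---reduce to a cover trivialising $P$, verify exactness there, and observe that the local sequences glue---is exactly what the paper does. But there is a concrete error in your computation of the adjoint sheaf for a trivial torsor, and it propagates through the rest of the argument.

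If $H$ acts on itself by left translation (the trivial $H$-torsor), an $H$-equivariant automorphism $\phi$ satisfies $\phi(hx)=h\phi(x)$; putting $x=e$ gives $\phi(h)=h\cdot\phi(e)$, so $\phi$ is right translation by $\phi(e)\in H$. Hence the sheaf of $H$-equivariant base-fixing automorphisms of the trivial torsor is $H$ itself, not $H/Z_H$. (The paper uses exactly this identification later: see Remark~\ref{torsor_top}, where $\Pad(U_i)$ is identified with $\Ghat(\OO_{U_i}(U_i))$, and Lemma~\ref{fairwell_balaur}, where $P_f^{\mathrm{ad}}(U_i)\cong \LG_f(\OO_X(U_i))$.) You have confused $H$-equivariant automorphisms of the $H$-torsor $H$ with automorphisms of the \emph{group} $H$; the latter would give inner automorphisms and a quotient by the centre, but that is not what $(\cdot)^{\mathrm{ad}}$ means here.

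With the correct identifications, the trivial case of the lemma is literally the sequence
\[
1 \longrightarrow \Ghat_X \longrightarrow \LG_{f,X} \longrightarrow \Gamma_X \longrightarrow 1,
\]
which is \eqref{semidirect}; this is why the paper's proof is two lines. Your version with $\Ghat_X/Z_{\Ghat,X}$, $\LG_{f,X}/Z(\LG_{f,X})$, $\Gamma_X/Z(\Gamma_X)$ is a different statement and would require extra bookkeeping (e.g.\ comparing $Z_{\Ghat}^{\Gamma}$ with $Z(\LG_f)\cap\Ghat$) that is neither needed nor obviously true in general. Once you fix the identification, all of your worries about surjectivity and centres evaporate: surjectivity on the right is just surjectivity of $\LG_{f,X}\to\Gamma_X$, and the gluing is routine.
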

\begin{proof} If $P$ is a trivial torsor then the assertion follows
  from \eqref{semidirect}. In general, one may reduce to this case by
  choosing a covering which trivialises $P$.
\end{proof}
By taking global sections we get an exact sequence of pointed sets
\begin{equation}\label{pointed}
  1\rightarrow \Pad(X)\rightarrow P_f^{\mathrm{ad}}(X) \overset{\pi}{\longrightarrow} \Gamma_X(X) \rightarrow H^1(X, \Pad),
\end{equation}
where we used the trivialisation $t$ to identify $\Gamma_X(X)$ with
$(\Ghat_X \backslash P_f)^{\mathrm{ad}}(X)$.  There is an injective
group homomorphism
$\Gamma\hookrightarrow \Gamma_X(X), \gamma \mapsto r_{\gamma}$, where
$r_{\gamma}( (\beta, x))= (\beta\gamma^{-1}, x)$.
 
\begin{defi}\label{twistL} Let $P$ be a $\Ghat_X$-torsor over $X$. We
  define the $L$-group of $G$ twisted by $P$ to be
  \[\LPad(X)\coloneqq \{ \varphi\in P_f^{\mathrm{ad}}(X): \pi(\varphi)
    \in \Gamma \subset \Gamma_X(X)\},\] where $\pi$ is the projection
  in \eqref{pointed}, equipped with the topology as explained below.
\end{defi}
We have an exact sequence of groups
\begin{equation}
  1\rightarrow \Pad(X)\rightarrow \LPad(X)\overset{\pi}{\longrightarrow} \Gamma.
\end{equation}
Since $\Gamma$ is finite it is enough to define a topology on
$\Pad(X)$. If $P$ is a trivial $\Ghat_X$-torsor over $X$ then we
topologize $\Pad(X)$ by identifying it with $\Ghat(\OO_X(X))$.  Any
two such identifications differ by a conjugation by an element of
$\Ghat(\OO_X(X))$, hence the topology on $\Pad(X)$ does not depend on
a choice of section.  In general, we choose a covering
$\mathcal U=\{U_i\}_{i\in I}$ of $X$ trivialising $P$, topologize
$\Pad(U_i)$ as above and use the sheaf property to put a subspace
topology on $\Pad(X)$. In Remark \ref{torsor_top} we show that if $X$
is a rigid analytic space over $\Qp$ and $P$ is a torsor on $X$ for
the \'etale or the analytic topology then the topology on $\Pad(X)$
does not depend on a choice of a covering.
 
\begin{examp} If $P=\Ghat_X$ then
  $\LPad(X)=\LG_f(\OO_X(X))= \Ghat(\OO_X(X))\rtimes \Gamma$.
\end{examp}
\begin{remar} One may also describe $\LPad(X)$ as the set of pairs
  $(\varphi, \gamma)\in P_f^{\mathrm{ad}}(X) \times \Gamma$, such that
  the diagram
  \[ \xymatrix@1{ P_f\ar[r]\ar[d]_-{\varphi} & \Ghat_X \backslash P_f\ar[d]^-{r_{\gamma}}\\
      P_f\ar[r] & \Ghat_X \backslash P_f}\] commutes.
\end{remar}
\begin{defi}\label{defi_twisted} Let $P$ be $\Ghat_X$-torsor over
  $X$. A continuous representation
  \[ \rho: \Gal_F \rightarrow \LPad(X)\] is admissible, if
  $\pi(\rho(\gamma))= \gamma (\Ker \mu_G)$ for all $\gamma\in \Gal_F$.
 
 Two admissible representations $\rho$ and $\rho'$ are equivalent if
 there is $\varphi\in \Pad(X)$ such that
 $\rho'(\gamma) = \varphi \circ \rho(\gamma)\circ \varphi^{-1}$ for
 all $\gamma \in \Gamma$.
\end{defi}
 
\begin{remar} If $P=\Ghat_X$ then we recover the definition of
  admissible representations and their equivalence classes for Galois
  representations valued in $\LG_f(\OO_X(X))$.
\end{remar}
 
\begin{remar} For a general $P$ the map
  $\pi: \LPad(X)\rightarrow \Gamma$ might not be surjective; the
  obstruction will lie in $H^1(X, \Pad)$. In this case, there will be
  no admissible representations of $\Gal_F$ with values in
  $\LPad(X)$. However, we expect that as the theory develops examples
  of admissible representations with values in $\LPad(X)$ will arise
  in the context of eigenvarieties, similar to
  \cite[Thm.\,E]{chenevier_thesis}.
\end{remar}
\begin{defi}\label{base_change} If $Y\rightarrow X$ is a map of spaces
  and $\rho: \Gal_F \rightarrow \LPad(X)$
  is an admissible representation then we define an admissible representation
  \[\rho_Y: \Gal_F\rightarrow \LPad(Y)\]
  by composing $\rho$ with the natural map
  $P_f^{\mathrm{ad}}(X)\rightarrow (P_{f}\times_X
  Y)^{\mathrm{ad}}(Y)$.
\end{defi}
 
\begin{lem}\label{fairwell_balaur} Let
  $\mathcal{U}=\{U_i\rightarrow X\}_{i\in I}$ be a cover of $X$
  together with local sections $\{s_i: U_i\rightarrow P\}_{i\in
    I}$. Let $g_{ij} \in \Ghat(\OO_X(U_{ij}))$ be such that
  $s_i= g_{ij} s_j$ for all $i, j\in I$. Then to give an admissible
  representation $\rho: \Gal_F \rightarrow \LPad(X)$ is equivalent to
  giving a family of admissible representations
\[\rho_{U_i}: \Gal_F\rightarrow \LG(\OO_X(U_i)), \quad \forall i\in I,\]
such that 
\begin{equation}\label{transform_balaur}
  \rho_{U_i}(\gamma)= g_{ij} \rho_{U_j}(\gamma) g_{ij}^{-1},  \quad \forall \gamma\in \Gal_F, \quad \forall i,j\in I,
\end{equation} 
where the equality takes place in $\LG(\OO_X(U_{ij}))$.
\end{lem}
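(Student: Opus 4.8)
The plan is to unwind the definitions on both sides and check that the bijection is an instance of the standard fact that a torsor, presented by a cover together with transition functions, can be described by gluing local data. First I would observe that an admissible representation $\rho\colon\Gal_F\rightarrow\LPad(X)$ is by definition a map into $P_f^{\mathrm{ad}}(X)$ landing in the preimage of $\Gamma$; since $\Pad(U_i)\cong\Ghat(\OO_X(U_i))$ canonically via the section $s_i$, and the exact sequence \eqref{pointed} identifies, over $U_i$, the group $P_f^{\mathrm{ad}}(U_i)$ with $\LG(\OO_X(U_i))$, one gets by restriction a family $\rho_{U_i}\colon\Gal_F\rightarrow\LG(\OO_X(U_i))$ which is automatically admissible in the sense of Definition \ref{adm_reps} (the $\Gal_F\rightarrow\Gamma$ component is forced by the admissibility of $\rho$). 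Conversely, a compatible family $(\rho_{U_i})$ glues: on each $U_{ij}$ the two descriptions of the restriction of $\rho$ differ precisely by the change-of-section element $g_{ij}$, so \eqref{transform_balaur} is exactly the condition that the local maps into $\LG(\OO_X(U_i))$ agree in $P_f^{\mathrm{ad}}(U_{ij})$ after transporting along the $g_{ij}$, hence by the sheaf property of $P_f^{\mathrm{ad}}$ they patch to a single section over $X$.

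The key steps, in order, would be: (1) fix the trivialisations $s_i$ and record the resulting identifications $\Pad(U_i)\cong\Ghat(\OO_X(U_i))$ and $P_f^{\mathrm{ad}}(U_i)\cong\LG(\OO_X(U_i))$, noting how the $g_{ij}$ enter the change-of-chart isomorphism on $U_{ij}$ (this is where one uses that $s_i=g_{ij}s_j$ and that the induced automorphism of $P$ acts on $P_f=\LG_{f,X}\times^{\Ghat_X}P$ by conjugation by $g_{ij}$). (2) Given $\rho$, define $\rho_{U_i}$ as the restriction along $U_i\rightarrow X$ composed with the chart isomorphism, and check admissibility is inherited. (3) Verify the cocycle-type relation \eqref{transform_balaur} by comparing the two chart descriptions of $\rho|_{U_{ij}}$. (4) For the converse, take a compatible family, use \eqref{transform_balaur} to see the local sections of $P_f^{\mathrm{ad}}$ agree on overlaps, invoke the sheaf condition to glue to $\varphi\in P_f^{\mathrm{ad}}(X)$, and check $\pi(\varphi(\gamma))\in\Gamma$ so that $\varphi$ actually lands in $\LPad(X)$ and is admissible. (5) Observe the two constructions are mutually inverse, since restriction followed by gluing (and vice versa) is the identity by the sheaf axioms.

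I expect the main obstacle to be purely bookkeeping: getting the conjugation by $g_{ij}$ to appear on the correct side and with the correct exponent, i.e.\ making sure the identification $P_f^{\mathrm{ad}}(U_i)\cong\LG(\OO_X(U_i))$ induced by $s_i$ versus that induced by $s_j$ differ by $\Ad(g_{ij})$ and not $\Ad(g_{ij}^{-1})$ or $\Ad(g_{ji})$, so that \eqref{transform_balaur} comes out exactly as stated. This is a question of tracking how a $\Ghat_X$-equivariant automorphism of $P$ acts on the associated $\LG_{f,X}$-torsor $P_f$ and how that translates through the global trivialisation $t$ of $\Ghat_X\backslash P_f$; once the conventions from the discussion preceding \eqref{pointed} are pinned down this is mechanical, but it is the step most prone to sign/side errors. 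Everything else — admissibility being local, the gluing itself — is a direct application of the sheaf property already built into the definition of $P_f^{\mathrm{ad}}$ and of the fact, noted just before Definition \ref{twistL}, that the restriction of $P$ to each $U_i$ is a trivial torsor.
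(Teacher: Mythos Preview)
Your proposal is correct and follows essentially the same approach as the paper: trivialise $P_f$ over each $U_i$ via $s_i$, identify $P_f^{\mathrm{ad}}(U_i)\cong\LG_f(\OO_X(U_i))$, and use the sheaf property to pass between global $\rho$ and compatible local $(\rho_{U_i})$. The paper packages the gluing step by citing the description of $P_f^{\mathrm{ad}}(X)$ in \cite[\S 1, (1.1.4)]{breen_messing} and fixes the bookkeeping issue you flag by declaring the isomorphism $P_f^{\mathrm{ad}}(U_i)\to\LG_f(\OO_X(U_i))$ via $u_i(s_i)=h_i^{-1}s_i$, which is exactly the convention that makes \eqref{transform_balaur} come out with $g_{ij}$ on the correct side.
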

\begin{proof} We note that the cover $\mathcal U$ also trivialises
  $P_f$ and the gluing data for $P_f$ is also given by
  $\{g_{ij}\}_{i, j\in I}$. The assertion follows from the description
  of $P^{\mathrm{ad}}_f(X)$ in terms of the glueing data in \cite[\S
  1, (1.1.4)]{breen_messing}. Let us just indicate how to obtain
  $\rho_{U_i}$ from $\rho$ and vice versa, leaving the rest of the
  details for the interested reader.
 
  If $u_i\in P^{\mathrm{ad}}_f(U_i)$ then there is a unique
  $h_i\in \LG_f(\OO_X(U_i))$ such that $u_i(s_i)= h_i^{-1} s_i$. The
  map
  $P_f^{\mathrm{ad}}(U_i)\rightarrow \LG_f(\OO_X(U_i)), u_i\mapsto
  h_i$ is a group isomorphism, which is in fact a homeomorphism (by
  construction). We let $\rho_{U_i}$ be the composition
  \[\rho_{U_i}: \Gal_F\overset{\rho}{\longrightarrow} \LPad(X) \subset
    P^{\mathrm{ad}}_f(X)\rightarrow
    P^{\mathrm{ad}}_f(U_i)\overset{\cong}{\longrightarrow}
    \LG_f(\OO_X(U_i)).\] Conversely, if we start with the family
  $\{\rho_{U_i}\}_{i\in I}$ then for each $\gamma\in \Gal_F$,
  \eqref{transform_balaur} implies that
  $\{\rho_{U_i}(\gamma)\}_{i\in I}$ glue to
  $\rho(\gamma)\in P^{\mathrm{ad}}_f(X)$. Since the glueing data for
  $P_f$ is given by elements of $\Ghat(\OO_X(U_i))$, we get that
  $\rho(\gamma)$ lies in $\LPad(X)$. The admissibility of $\rho_{U_i}$
  then implies that $\rho: \Gal_F \rightarrow \LPad(X)$,
  $\gamma \mapsto \rho(\gamma)$ is admissible.
\end{proof}

\begin{remar}\label{CPad} We make the analogous definitions for
  $C$-groups as follows.  If $P$ is a $\Ghat_X$-torsor on $X$ then
  $P^T\coloneqq \Ghat^T_X\times^{\Ghat_X} P$ is $\Ghat^T_X$-torsor on
  $X$. By interpreting the $C$-group $\CG_f$ as an $L$-group $\LG^T_f$
  and using $P^T$ instead of $P$ we may make the same definitions for
  $C$-groups, so that $\CPad(X)={}^L P^{T, \mathrm{ad}}(X)$, and an
  admissible representation $\rho: \Gal_F\rightarrow \CPad(X)$ is just
  a representation
  $\rho: \Gal_F\rightarrow {}^L P^{T, \mathrm{ad}}(X)$, which is
  admissible in the sense of Definition \ref{defi_twisted}. Moreover,
  the exact sequence \eqref{LGCG} induces an exact sequence of groups
  \begin{equation}\label{twisted_d}
    1 \rightarrow \LPad(X)\rightarrow \CPad(X)\overset{d}{\longrightarrow} \OO_X(X)^{*}.
  \end{equation}
\end{remar}
 
\section{Sen theory}\label{sec_Sen_ban}

Let $F$ be a finite extension of $\Q_p$ and let
$F_{\infty}=F(\mu_{p^{\infty}})=\cup_{n} F_n$, where
$F_n=F(\mu_{p^n})$. Let $H_F={\rm Gal}(\bar{F}/F_{\infty})$ and
$\Gamma_F={\rm Gal}(F_{\infty}/F)\simeq {\rm Gal}_F/H_F$.
    
Let $A$ be a $\Qp$-Banach algebra and let $V$ be a finite free
$A$-module with a continuous $A$-linear action of ${\rm Gal}_F$ (one
could as well assume that $V$ is locally free over $A$, but in
applications $V$ will actually be free).  Define
\[\widetilde{D}_{\rm Sen}(V)\coloneqq (\mathbb{C}_p\wtimes_{\Q_p}
  V)^{H_F}\] and define $D_{\rm Sen}(V)$ as the set of
$\Gamma_F$-finite vectors in $\widetilde{D}_{\rm Sen}(V)$, i.e.~those
$v\in \widetilde{D}_{\rm Sen}(V)$ for which $A[\Gamma_F]v$ is a
finitely generated $A$-module. Observe that
$\widetilde{D}_{\rm Sen}(V)$ is an
$\widehat{F}_{\infty}\widehat{\otimes}_{\Q_p} A$-module and that
$D_{\rm Sen}(V)$ is an $F_{\infty}\otimes_{\Q_p} A$-submodule of
$\widetilde{D}_{\rm Sen}(V)$. The following result is classical when
$A=L$, but somewhat curiously it is not easy to find it in the
literature in the form below:
    
\begin{thm}\label{Sen}
  With the above notations the $F_{\infty}\otimes_{\Q_p} A$-module
  $D_{\rm Sen}(V)$ is free and the natural map
  \begin{equation}\label{Sen_compare}
    (\mathbb{C}_p\wtimes_{\Q_p} A)\otimes_{F_{\infty}\otimes_{\Q_p} A} D_{\rm Sen}(V) \to \mathbb{C}_p\wtimes_{\Qp} V
  \end{equation} is an isomorphism. Moreover, 
  for all $x\in D_{\rm Sen}(V)$ the limit 
  \begin{equation}\label{theta_limit}
    \theta(x)=\lim_{\gamma\to 1} \frac{\gamma.x-x}{\chi_{\rm cyc}(\gamma)-1}
  \end{equation}
  exists in $D_{\rm Sen}(V)$ and
  $\theta\in {\rm End}_{F_{\infty}\otimes_{\Q_p} A} (D_{\rm Sen}(V))$.
\end{thm}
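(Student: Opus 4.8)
The plan is to bootstrap from the well-known case $A = L$ (a finite extension of $\Qp$), or more precisely from Sen's original decompletion theorem, by a descent/approximation argument. The key structural input is that $\Gamma_F$ is a $p$-adic Lie group of dimension one which contains an open pro-$p$ subgroup $\Gamma_n = \Gal(F_\infty/F_n)$ topologically generated by a single element $\gamma_0$. First I would recall the Tate--Sen axioms: there is a normalised ``trace'' or Tate projector $R_n : \widehat{F}_\infty \to F_n$ with $\|R_n\|$ uniformly bounded, and the element $\gamma_0 - 1$ becomes invertible on the complement of the $F_n$-part with inverse of controlled norm. Tensoring these maps with $A$ over $\Qp$ (completed tensor product where necessary) and using that they are $\Qp$-linear and continuous, one obtains maps $R_n \wtimes \id_A$ on $\C_p \wtimes_{\Qp} A$ with the same norm bounds; crucially the bounds do not depend on $A$ because they come from the archimedean geometry of $F_\infty/F$ alone. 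This is exactly the content of the Berger--Colmez formalism \cite{BC}, which axiomatises running Sen's descent over a Banach coefficient ring, so the bulk of the argument is to check that our $(\C_p, \Gamma_F, A)$ situation satisfies those axioms and then quote \cite{BC}.

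Next I would carry out the descent in three steps. \textbf{Step 1: construction of $D_{\rm Sen}(V)$ and the comparison isomorphism.} Using the Tate--Sen method with the operators $R_n \wtimes \id_A$, one shows that for $n$ large enough there is a finite free $F_n \otimes_{\Qp} A$-submodule $D_n \subset (\C_p \wtimes_{\Qp} V)^{H_F}$, stable under $\Gamma_F$, such that the natural map $(\C_p \wtimes_{\Qp} A) \otimes_{F_n \otimes_{\Qp} A} D_n \to \C_p \wtimes_{\Qp} V$ is an isomorphism; this is the standard successive-approximation argument, now run with $A$-coefficients, the point being that each approximation step only loses a bounded factor in norm so the series converges in the $A$-Banach module $\C_p \wtimes_{\Qp} V$. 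One then sets $D_{\rm Sen}(V) = F_\infty \otimes_{F_n} D_n$ and checks it is independent of $n$ and equals the $\Gamma_F$-finite vectors; freeness over $F_\infty \otimes_{\Qp} A$ is inherited from freeness of $D_n$ over $F_n \otimes_{\Qp} A$. \textbf{Step 2: existence of $\theta$.} On the finite free module $D_n$ over the Banach algebra $F_n \otimes_{\Qp} A$, the element $\gamma_0$ acts by a matrix close to $1$ (after shrinking $\Gamma_n$), so $\log \gamma_0$ converges as an $A$-linear endomorphism; dividing by $\log \chi_{\rm cyc}(\gamma_0) \in \Zp^\times \subset A^\times$ (a unit, after possibly enlarging $n$ so this is a unit with positive valuation) produces an operator whose action matches the limit \eqref{theta_limit}. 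One checks that the limit in \eqref{theta_limit} is independent of how $\gamma \to 1$ and extends $F_\infty \otimes_{\Qp} A$-linearly to all of $D_{\rm Sen}(V)$, and that it commutes with the residual $\Gamma_F$-action in the appropriate twisted sense, which is all formal once convergence is established. \textbf{Step 3: compatibility/uniqueness.} Finally one verifies that the construction is compatible with the case $A = L$ under any continuous $L$-algebra map $A \to L'$, which pins down $D_{\rm Sen}$ and $\theta$ up to canonical isomorphism and justifies calling $\theta$ the Sen operator.

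The main obstacle, I expect, is \textbf{Step 1}, specifically getting the approximation series to converge in the $A$-Banach module rather than just formally. Over a field the relevant estimates are elementary, but over a general $\Qp$-Banach algebra $A$ one must be careful that (i) the completed tensor product $\C_p \wtimes_{\Qp} A$ has a submultiplicative norm for which $R_n \wtimes \id_A$ is bounded with a constant independent of $n$ and of $A$, and (ii) the ``approximate eigenvector'' produced at each stage genuinely lies in a finite free $F_n \otimes_{\Qp} A$-submodule and not merely in some completion — this is where one needs $A$ to be a Banach algebra rather than an arbitrary topological ring, and where one invokes that finite free modules over $F_n \otimes_{\Qp} A$ are closed in $\widehat{F}_\infty \wtimes_{\Qp} A$-modules. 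All of this is handled by the Tate--Sen axiom machinery of Berger--Colmez \cite{BC} once one feeds in the correct normalising constants; so in practice the ``hard part'' reduces to a careful but routine verification of the axioms in our setting, plus tracking that the module $D_{\rm Sen}(V)$ so produced is exactly the $\Gamma_F$-finite vectors and is free. I would present the argument by first stating the Tate--Sen axioms we need, then citing \cite{BC} for the general decompletion statement, and only then specialising to extract freeness, the comparison isomorphism, and the operator $\theta$.
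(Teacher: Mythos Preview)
Your plan is on the right track and matches the paper's approach in its core idea---invoke the Berger--Colmez Tate--Sen machinery \cite{BC} to produce $D_n$, then define $\theta$ via $\log$ of the matrix of a topological generator---but there is a genuine step you are glossing over.

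The Berger--Colmez result you want to invoke (their Prop.\,4.1.2) does not apply to an arbitrary $A$-linear $\Gal_F$-representation $V$: it requires an integral lattice $T\subset V$ on which the Galois action is already ``small'' (trivial on $T/12pT$). Your phrase ``for $n$ large enough'' only shrinks the cyclotomic quotient $\Gamma_F$; it does nothing to tame the action of $H_F$, and without that you cannot conclude that $\widetilde D_{\Sen}(V)=(\Cp\wtimes_{\Qp} V)^{H_F}$ is finite free over $\widehat F_\infty\wtimes_{\Qp} A$, which is exactly the input the decompletion step needs. The paper fixes this by first choosing a finite Galois extension $E/F$ such that $\Gal_E$ acts trivially on $T/12pT$ (possible by continuity), applying \cite[Prop.\,4.1.2]{BC} over $E$, and then running an explicit Galois descent for the finite group $\Gal(E_\infty/F_\infty)$ (Lemma~\ref{zwischen} in the paper) to pull the result back to $F$; freeness over $F_\infty\otimes_{\Qp} A$ then follows by faithful flatness of $E_\infty\otimes_{\Qp} A$ over $F_\infty\otimes_{\Qp} A$. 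This descent step is not difficult, but it is the main thing your outline is missing.

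A second, smaller point: the identification of the module $F_\infty\otimes_{F_n} D_n$ produced by decompletion with the intrinsically defined $\Gamma_F$-finite vectors in $\widetilde D_{\Sen}(V)$ is not quite automatic over a general Banach $A$; the paper appeals to \cite[Lem.\,4.2.7]{bellovin} for this. Finally, note that $\log\chi_{\cyc}(\gamma_0)$ lies in $p\Zp$, not $\Zp^\times$; it is invertible in $\Qp\subset A$ but not a unit of $\Zp$. Your Step~2 is otherwise exactly what the paper does.
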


\begin{proof} We will deduce this by descent from the results of Berger--Colmez \cite[Prop.\,4.1.2]{BC}. For this we need the following lemma:

\begin{lem}\label{zwischen}
  If $E$ is a finite Galois extension of $F$ and $V_{E}$ is the
  restriction of $V$ to ${\rm Gal}_E$, then the natural map
  $(E_{\infty}\otimes_{\Q_p} A)\otimes_{F_{\infty}\otimes_{\Q_p} A}
  D_{\rm Sen}(V)\to D_{\rm Sen}(V_E)$ is an isomorphism.
\end{lem}

\begin{proof} Consider the finite group
  $G={\rm Gal}(E_{\infty}/F_{\infty})$. We clearly have
  $\widetilde{D}_{\rm Sen}(V)=\widetilde{D}_{\rm Sen}(V_E)^{G}$.  We
  claim that a vector $v\in \widetilde{D}_{\rm Sen}(V)$ is
  $\Gamma_F$-finite if and only if it is $\Gamma_E$-finite, or
  equivalently $v$ is a $G$-invariant $\Gamma_E$-finite vector of
  $\widetilde{D}_{\rm Sen}(V_E)$. In other words
  $D_{\rm Sen}(V)\simeq D_{\rm Sen}(V_E)^{G}$. Thus given the claim,
  the result follows then by Galois descent for the finite Galois
  extension $E_{\infty}/F_{\infty}$.

  To prove the claim, we observe that one direction is trivial and the
  other follows from Cayley--Hamilton. Namely, let $e_1, \ldots, e_n$
  be some generators of $A[\Gamma_F]v$ as an $A$-module, let $\gamma$
  be a topological generator of $\Gamma_F$ and $M=(a_{ij})$ be a
  matrix such that $\gamma e_i= \sum_{j=1}^n a_{ij} e_j$ for
  $1\le i \le n$. It follows from Cayley--Hamilton that, for all
  $k\ge 0$, $M^{mk}$ is an $A$-linear combination of $M^{mi}$ for
  $0\le i \le n-1$. If $\gamma^m$ is a topological generator of
  $\Gamma_E$, we deduce that $\gamma^{mi} v$ for $0\le i \le n-1$
  generate $A[\Gamma_E]v$ as an $A$-module.
\end{proof}

Coming back to the proof of the Theorem, let $A^+$ be the unit ball in
$A$. By continuity, there is a finite Galois extension $E$ of $F$ and
a free $A^+$-module $T\subset V$ such that $A\otimes_{A^+} T\simeq V$,
$T$ is stable under ${\rm Gal}_E$ and ${\rm Gal}_E$ acts trivially on
$T/(12pT)$. By a result of Berger--Colmez \cite[Prop.\,4.1.2]{BC}, for
$n$ sufficiently large we can find a free $E_n\otimes_{\Q_p} A$-module
$D_n\subset (\mathbb{C}_p\wtimes_{\Q_p} A)\otimes_A V$ such that
\begin{itemize}
 \item $D_n$ is stable under ${\rm Gal}_E$, fixed by $H_E$ and has a
   basis $\mathcal{B}$ over $E_n\otimes_{\Q_p} A$ in which the matrix
   $M_{\gamma}$ of each $\gamma\in \Gamma_E$ satisfies
   $||M_{\gamma}-1||<1$, i.e. the entries of $M_{\gamma}-1$ have
   positive valuations.

 \item the natural map
   $ (\mathbb{C}_p\wtimes_{\Q_p} A)\otimes_{ E_n\otimes_{\Q_p} A}
   D_n\to (\mathbb{C}_p\wtimes_{\Q_p} A)\otimes_A V$ is an
   isomorphism.
\end{itemize}
We claim that
$D_{\rm Sen}(V_E)=(E_{\infty}\otimes_{\Qp}
A)\otimes_{E_n\otimes_{\Q_p} A} D_n$. This immediately implies that
the theorem holds with $F$ replaced by $E$. But then the previous
lemma combined with the fact that $E_{\infty}\otimes_{\Q_p} A$ is
faithfully flat over $F_{\infty}\otimes_{\Q_p} A$ allows one to
conclude.
 
To prove the claim, first take $H_E$-invariants in the
isomorphism
\[(\mathbb{C}_p\wtimes_{\Q_p} A)\otimes_{ E_n\otimes_{\Q_p} A}
  D_n\simeq (\mathbb{C}_p\wtimes_{\Q_p} A)\otimes_A V\] to deduce that
$\widetilde{D}_{\rm Sen}(V_E)\simeq
(\widehat{E}_{\infty}\widehat{\otimes}_{\Q_p}
A)\otimes_{E_n\otimes_{\Q_p} A} D_n$, where $\widehat{E}_{\infty}$ is
the closure of $E_{\infty}$ in $\Cp$.  This implicitly uses that
$ (\mathbb{C}_p\wtimes_{\Q_p} A)\otimes_A V\simeq
\Cp\widehat{\otimes}_{\Q_p} V$ (since $V$ is finite over $A$) and
$(\mathbb{C}_p\widehat{\otimes}_{\Q_p} A)^{H_E}\simeq
\widehat{E}_{\infty}\widehat{\otimes}_{\Q_p} A$ (pick an orthonormal
basis of $A$ over $\Q_p$ and use Ax-Sen-Tate).  Now
\cite[Lem.\,4.2.7]{bellovin} implies that $\Gamma_E$-finite vectors in
$\widetilde{D}_{\mathrm{Sen}}(V_E)$ coincide with
$(E_{\infty}\otimes_{\Qp} A)\otimes_{E_n\otimes_{\Q_p} A}
D_n=E_{\infty}\otimes_{E_n} D_n$.

We will now show that the operator $\theta$ is well defined. Since 
\[D_{\Sen}(V)= D_{\Sen}(V_E)^{\Gal(E_{\infty}/F_{\infty})}\] by Lemma
\ref{zwischen}, it is enough to compute the limit inside
$D_{\Sen}(V_E)$. Since
\[D_{\Sen}(V_E)=E_{\infty} \otimes_{E_n} D_n = \varinjlim_{m\ge n} E_m
  \otimes_{E_n} D_n\] it is enough to compute the limit inside $D_n$.
If $\gamma\in \Gamma_{E_n}$ then the series
\[\log(M_{\gamma})= - \sum_{m=1}^{\infty} \frac{(1-M_{\gamma})^m}{m}\]
converges in $\End_{E_n \otimes A}(D_n)$. Since
$\log(M_{\gamma^k})=\log(M_{\gamma}^k)=k \log(M_{\gamma})$ and
$\Gamma_E$ is pro-cyclic the operator
$\theta'\coloneqq \log(M_{\gamma})(\log_p (\chi_{\cyc}(\gamma))^{-1}$
is independent of $\gamma$. Moreover, the $\Gamma_{E_n}$-action on
$D_n$ is given by
$\gamma \mapsto \exp (\log_p(\chi_{\cyc}(\gamma))
\theta')=M_{\gamma}$.  Thus the limit exists and $\theta=\theta'$.
\end{proof} 

\begin{defi}\label{Sen_op} The Sen operator of $V$ is the
  $\mathbb{C}_p\widehat{\otimes}_{\Q_p} A$-linear endomorphism
  $\Theta_{\Sen, V}$ of $\mathbb{C}_p\widehat{\otimes}_{\Q_p} V$
  obtained from the endomorphism $\theta$ of $D_{\rm Sen}(V)$ by
  linearity using \eqref{Sen_compare}.
\end{defi}
\begin{remar} It follows from the proof of Theorem \ref{Sen} that the
  Sen operator does not change if we restrict the Galois
  representation to the Galois group of a finite extension.
\end{remar}
    
\begin{lem}\label{transf_tannaka} Let $V$ and $W$ be free $A$-modules
  of finite rank with continuous $\Gal_F$-action.
  Then
  \[\Theta_{\Sen, V\oplus W}= \Theta_{\Sen, V} \oplus \Theta_{\Sen,
      W}\] and
  \[\Theta_{\Sen, V\otimes_A W}= \Theta_{\Sen, V} \otimes \id_W +
    \id_V \otimes \Theta_{\Sen,W}.\]
\end{lem}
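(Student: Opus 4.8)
The plan is to reduce both identities to the explicit description of $D_{\Sen}$ and of the operator $\theta$ given in the proof of Theorem \ref{Sen}, and then transport everything to $\Theta_{\Sen}$ through the comparison isomorphism \eqref{Sen_compare}. Since the Sen operator is insensitive to restriction to an open subgroup (the remark after Definition \ref{Sen_op}) and $D_{\Sen}$ commutes with restriction up to flat base change (Lemma \ref{zwischen}), I would first replace $F$ by a finite Galois extension $E$ over which the Berger--Colmez decompletion applies simultaneously to $V$ and $W$: there are an integer $n$ and finite free $E_n\otimes_{\Qp}A$-submodules $D_n^V\subset(\Cp\wtimes_{\Qp}A)\otimes_A V$ and $D_n^W\subset(\Cp\wtimes_{\Qp}A)\otimes_A W$, stable under $\Gal_E$, fixed by $H_E$, with comparison map to $\Cp\wtimes_{\Qp}(-)$ an isomorphism, and in which every $\gamma\in\Gamma_E$ acts by a matrix $M_\gamma$ with $\|M_\gamma-1\|<1$; then $D_{\Sen}(V_E)=E_\infty\otimes_{E_n}D_n^V$ and likewise for $W$, and on these modules $\theta=\log(M_\gamma)\bigl(\log_p\chi_{\cyc}(\gamma)\bigr)^{-1}$.

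The direct sum statement is then essentially formal: $(\Cp\wtimes_{\Qp}A)^{H_F}$ is additive and $\Gamma_F$-finiteness of a vector of $\widetilde D_{\Sen}(V\oplus W)=\widetilde D_{\Sen}(V)\oplus\widetilde D_{\Sen}(W)$ can be tested on each summand, so $D_{\Sen}(V\oplus W)=D_{\Sen}(V)\oplus D_{\Sen}(W)$ compatibly with the $\Gamma_F$-action; the limit \eqref{theta_limit} is therefore computed componentwise, and \eqref{Sen_compare} yields $\Theta_{\Sen,V\oplus W}=\Theta_{\Sen,V}\oplus\Theta_{\Sen,W}$.

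For the tensor product I would verify that $D_n^V\otimes_{E_n\otimes_{\Qp}A}D_n^W$, sitting inside $(\Cp\wtimes_{\Qp}A)\otimes_A(V\otimes_A W)$ with the diagonal action, is again a datum of the kind produced in the proof of Theorem \ref{Sen}: it is finite free over $E_n\otimes_{\Qp}A$, stable under $\Gal_E$ and fixed by $H_E$; the comparison map to $\Cp\wtimes_{\Qp}(V\otimes_A W)$ is an isomorphism, being obtained by base change from the isomorphisms for $V$ and $W$ (all tensor products here are ordinary, $V$ and $W$ being finite free over $A$); and a short ultrametric estimate shows that the matrix $M_\gamma^V\otimes M_\gamma^W$ of $\gamma\in\Gamma_E$ still satisfies $\|M_\gamma^V\otimes M_\gamma^W-1\|<1$. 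Hence $D_{\Sen}((V\otimes_A W)_E)=E_\infty\otimes_{E_n}(D_n^V\otimes_{E_n}D_n^W)=D_{\Sen}(V_E)\otimes_{E_\infty\otimes_{\Qp}A}D_{\Sen}(W_E)$, and writing $M_\gamma^V\otimes M_\gamma^W=(M_\gamma^V\otimes 1)(1\otimes M_\gamma^W)$ as a product of commuting operators gives $\log(M_\gamma^V\otimes M_\gamma^W)=\log(M_\gamma^V)\otimes 1+1\otimes\log(M_\gamma^W)$; dividing by $\log_p\chi_{\cyc}(\gamma)$ yields $\theta_{V\otimes W}=\theta_V\otimes\id+\id\otimes\theta_W$ on $D_{\Sen}((V\otimes_AW)_E)$. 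Finally, Lemma \ref{zwischen} together with faithfully flat descent along $E_\infty\otimes_{\Qp}A\to F_\infty\otimes_{\Qp}A$ carries both the module identification and this formula for $\theta$ back to $D_{\Sen}(V\otimes_A W)$, and one more application of \eqref{Sen_compare} gives the second displayed identity.

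The one genuinely delicate point I anticipate is the comparison-map clause in the tensor-product verification — the compatibility of the completed tensor product $\Cp\wtimes_{\Qp}(V\otimes_A W)$ with the ordinary $A$-tensor product, so that $D_n^V\otimes_{E_n\otimes A}D_n^W$ really computes the Sen module of $V\otimes_A W$; this is harmless only because $V$ and $W$ are finite free over $A$, and it is the one place requiring genuine care rather than formal bookkeeping of Kronecker products and matrix logarithms. One could alternatively phrase the whole argument as the assertion that $V\mapsto D_{\Sen}(V)$ is a tensor functor carrying a functorial endomorphism $\theta$ that acts as a derivation on tensor products, which is the Tannakian reason behind the lemma, but the concrete argument above is shorter given what has already been set up.
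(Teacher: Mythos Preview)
Your proposal is correct and follows the same overall architecture as the paper --- establish that $D_{\Sen}$ is compatible with $\oplus$ and $\otimes$, then compute $\theta$ on the resulting description --- but your implementation is more laborious than necessary. For the tensor product, the paper does not reverify the Berger--Colmez conditions for $D_n^V\otimes D_n^W$ and then descend from $E$; instead it observes that the natural map $D_{\Sen}(V)\otimes_{F_\infty\otimes_{\Qp}A}D_{\Sen}(W)\to D_{\Sen}(V\otimes_A W)$ becomes an isomorphism after base change to $\Cp\wtimes_{\Qp}A$ (by applying \eqref{Sen_compare} to $V$, $W$, and $V\otimes_A W$ separately), and concludes by faithful flatness of $\Cp\wtimes_{\Qp}A$ over $F_\infty\otimes_{\Qp}A$. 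Similarly, rather than the matrix-logarithm identity $\log(M^V_\gamma\otimes M^W_\gamma)=\log(M^V_\gamma)\otimes 1+1\otimes\log(M^W_\gamma)$, the paper applies the limit formula \eqref{theta_limit} directly to a simple tensor $x\otimes y$ and reads off the Leibniz rule. Your route works and makes the Tannakian structure explicit, but the paper's argument avoids the passage to $E$, the norm estimate on Kronecker products, and the descent step entirely. (Incidentally, your phrase ``faithfully flat descent along $E_\infty\otimes_{\Qp}A\to F_\infty\otimes_{\Qp}A$'' has the arrow reversed.)
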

\begin{proof} The first assertion is trivial. To prove the second
  assertion we note that the natural map
  $D_{\Sen}(V)\otimes_{F_{\infty}\otimes_{\Qp} A} D_{\Sen}(W)
  \rightarrow D_{\Sen}(V\otimes_A W)$ becomes an isomorphism after
  extending scalars to $\Cp\wtimes_{\Qp} A$. Since
  $\Cp\wtimes_{\Qp} A$ is faithfully flat over
  $F_{\infty}\otimes_{\Qp} A$ we conclude that the map is an
  isomorphism. The assertion then follows from \eqref{theta_limit}
  applied to an element of the form $x\otimes y$ with
  $x\in D_{\Sen}(V)$ and $y\in D_{\Sen}(W)$.
\end{proof}

\begin{lem}\label{extend_scalars} Let $A\rightarrow B$ be a continuous
  map of $\Qp$-Banach algebras. There is a natural isomorphism
  $D_{\Sen}(B\otimes_{A} V)\simeq
  D_{\Sen}(V)\otimes_{F_{\infty}\otimes A} (F_{\infty}\otimes B)$
  inducing an identification
  \[\Theta_{\Sen, V\otimes_A B}= \Theta_{\Sen, V}\otimes \id.\]
\end{lem}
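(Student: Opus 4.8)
The plan is to deduce this from the construction of $D_{\Sen}$ by a base-change argument, mirroring the proof of Theorem \ref{Sen} but keeping track of the coefficient ring. The key point is that $D_{\Sen}(V)$ was produced by descent from a Berger--Colmez module $D_n$, which is finite free over $E_n \otimes_{\Qp} A$, and such a module behaves well under the base change $A \to B$.

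First I would reduce to the case where $\Gal_F$ acts through a finite quotient after restriction, exactly as in the proof of Theorem \ref{Sen}: pick a finite Galois extension $E/F$ and a free $A^+$-lattice $T \subset V$ stable under $\Gal_E$ with trivial action on $T/12pT$, and invoke \cite[Prop.\,4.1.2]{BC} to get a free $E_n \otimes_{\Qp} A$-module $D_n$ with $(\Cp \wtimes_{\Qp} A) \otimes_{E_n \otimes_{\Qp} A} D_n \iso \Cp \wtimes_{\Qp} V$ and on which the $\Gamma_E$-action is given by matrices close to the identity. Since the image of $T$ in $V$ generates, the same lattice (base-changed to $B^+$) works for $B \otimes_A V$, so one may take $D_n' = B \otimes_A D_n$ (more precisely $(E_n \otimes_{\Qp} B) \otimes_{E_n \otimes_{\Qp} A} D_n$) as the Berger--Colmez module for $B \otimes_A V$ over $\Gal_E$; the two listed properties are preserved because tensoring with $B$ is a ring map and does not increase norms past $1$ on the unit ball, so the bound $\|M_\gamma - 1\| < 1$ persists. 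Then the identification $D_{\Sen}(V_E) = E_\infty \otimes_{E_n} D_n$ from the proof of Theorem \ref{Sen} gives $D_{\Sen}((B \otimes_A V)_E) = E_\infty \otimes_{E_n} D_n' = D_{\Sen}(V_E) \otimes_{F_\infty \otimes_{\Qp} A} (F_\infty \otimes_{\Qp} B)$, using that $E_\infty \otimes_{E_n} (E_n \otimes_{\Qp} B) = E_\infty \otimes_{\Qp} B$.

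To descend from $E$ back to $F$, I would use Lemma \ref{zwischen}, which identifies $D_{\Sen}(V)$ with $D_{\Sen}(V_E)^{G}$ for $G = \Gal(E_\infty/F_\infty)$, together with the analogous statement for $B \otimes_A V$. Taking $G$-invariants commutes with the flat base change $F_\infty \otimes_{\Qp} A \to F_\infty \otimes_{\Qp} B$ because $G$ is finite and $D_{\Sen}(V_E)$ is finite (hence finitely presented) over $F_\infty \otimes_{\Qp} A$ — more concretely, $G$-invariants is a finite limit, which commutes with the flat (even faithfully flat, but flatness suffices here) base change $- \otimes_{F_\infty \otimes_{\Qp} A} (F_\infty \otimes_{\Qp} B)$ applied to a finitely presented module. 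This yields the asserted natural isomorphism $D_{\Sen}(B \otimes_A V) \simeq D_{\Sen}(V) \otimes_{F_\infty \otimes_{\Qp} A} (F_\infty \otimes_{\Qp} B)$. Finally, the identification of Sen operators is immediate from the explicit formula $\theta = \log(M_\gamma)/\log_p(\chi_{\cyc}(\gamma))$ appearing at the end of the proof of Theorem \ref{Sen}: the matrix $M_\gamma$ of $\gamma$ in a basis of $D_n$ is unchanged (as an element of $\mathrm{M}_d(E_n \otimes_{\Qp} A)$ mapped into $\mathrm{M}_d(E_n \otimes_{\Qp} B)$) when we base-change, so $\theta$ for $B \otimes_A V$ is $\theta_V \otimes \id$, and extending scalars along \eqref{Sen_compare} gives $\Theta_{\Sen, V \otimes_A B} = \Theta_{\Sen, V} \otimes \id$.

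The main obstacle I anticipate is purely bookkeeping: verifying that the Berger--Colmez module for $B \otimes_A V$ can genuinely be taken to be the base change of the one for $V$ (rather than merely some module with the same formal properties), so that the basis $\mathcal{B}$ and the matrices $M_\gamma$ transport literally. This requires being a little careful that the choice of lattice $T$ and of $n$ made for $V$ still satisfies the hypotheses of \cite[Prop.\,4.1.2]{BC} for $B \otimes_A V$; since those hypotheses are about a $\Gal_E$-stable lattice acting trivially modulo $12p$ and about norms of cocycle matrices, all of which are inherited under the ring map $A^+ \to B^+$, there is no real difficulty, but it is the one place where one must resist the temptation to wave hands. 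Everything else is formal commutative algebra (finite limits versus flat base change) and the already-established structure theory of $D_{\Sen}$.
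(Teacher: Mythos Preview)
Your approach works, but it is more hands-on than the paper's, which simply repeats the argument of Lemma \ref{transf_tannaka}: there is a natural map $D_{\Sen}(V)\otimes_{F_\infty\otimes A}(F_\infty\otimes B)\to D_{\Sen}(B\otimes_A V)$ (a $\Gamma_F$-finite vector over $A$ stays $\Gamma_F$-finite over $B$), and after extending scalars to $\Cp\wtimes_{\Qp} B$ this becomes an isomorphism by two applications of \eqref{Sen_compare}; since $\Cp\wtimes_{\Qp} B$ is faithfully flat over $F_\infty\otimes_{\Qp} B$, the original map was already an isomorphism, and the Sen operators match via the limit formula \eqref{theta_limit}. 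This route never revisits the Berger--Colmez module. Your route has the merit of being explicit (one sees the basis and the matrices), at the cost of tracking more data.

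There is one genuine slip in your descent step. You invoke flatness of $F_\infty\otimes_{\Qp} A\to F_\infty\otimes_{\Qp} B$ to commute $G$-invariants with base change, but no flatness of $A\to B$ is assumed, and it can fail (e.g.\ $A=\Qp\langle T\rangle$, $B=\Qp$, $T\mapsto 0$). The correct justification is that $|G|$ is invertible in characteristic zero, so taking $G$-invariants is implemented by the averaging idempotent $\tfrac{1}{|G|}\sum_{g\in G}g$ and therefore commutes with \emph{any} base change. A smaller point: a continuous algebra map $A\to B$ need not have operator norm $\leq 1$, so the inequality $\|M_\gamma-1\|<1$ need not survive base change as written; you have to enlarge $n$ so that $\|M_\gamma-1\|_A$ falls below the reciprocal of the operator norm of $A\to B$. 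With these two repairs your argument goes through.
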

\begin{proof} This is proved in the same way as the previous lemma.
\end{proof}
    
We may identify
$\End_{\Cp\wtimes A}(\Cp\wtimes V)= \Cp\wtimes \End_A(V)$ and define
two commuting, $A$-linear actions of $\Gal_F$ on it via
\[\gamma*(z\otimes X)=\gamma(z)\otimes X,\quad \gamma \cdot(z\otimes
  X)=z\otimes \Ad(\rho(\gamma))(X),\] for all $z\in \Cp$ and
$X\in \End_A(V)$, where $\rho$ denotes the action of $\Gal_F$ on $V$.

\begin{lem}\label{hallelujah} We have
  $\gamma\cdot (\gamma \ast \Theta_{\Sen, V})= \Theta_{\Sen, V}$, for
  all $\gamma\in \Gal_F$.
\end{lem}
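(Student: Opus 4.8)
The plan is to recognize that the combined operation $X \mapsto \gamma\cdot(\gamma\ast X)$ on $\End_{\Cp\wtimes A}(\Cp\wtimes V) = \Cp\wtimes\End_A(V)$ is nothing other than conjugation by the Galois action $\sigma_\gamma$ on $\Cp\wtimes V$ (the one that is $\gamma$-semilinear on $\Cp$ and acts through $\rho(\gamma)$ on $V$), and then to use that $\Theta_{\Sen,V}$ is "defined over" the $H_F$-invariant submodule $D_{\Sen}(V)$, on which $\theta$ visibly commutes with the relevant $\Gamma_F$-action.

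First I would check, by evaluating on an elementary tensor $z\otimes Y$, that $\sigma_\gamma\circ(z\otimes Y)\circ\sigma_\gamma^{-1} = \gamma(z)\otimes\Ad(\rho(\gamma))(Y) = \gamma\cdot(\gamma\ast(z\otimes Y))$; hence the operation in question is conjugation by $\sigma_\gamma$, and it suffices to prove $\sigma_\gamma\circ\Theta_{\Sen,V}\circ\sigma_\gamma^{-1} = \Theta_{\Sen,V}$. Conjugation by $\sigma_\gamma$ preserves $\Cp\wtimes A$-linearity, and by Theorem \ref{Sen} the submodule $D_{\Sen}(V)$ generates $\Cp\wtimes V$ over $\Cp\wtimes A$, so it is enough to verify the identity after restriction to $D_{\Sen}(V)$. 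Now every vector of $\widetilde{D}_{\Sen}(V) = (\Cp\wtimes V)^{H_F}$ is fixed by $H_F$, so $\sigma_\gamma$ acts on it only through the image $\bar\gamma \in \Gamma_F = \Gal_F/H_F$; in particular $\sigma_\gamma$ preserves the submodule $D_{\Sen}(V)$ of $\Gamma_F$-finite vectors and restricts on it to the element $\bar\gamma$ of the $\Gamma_F$-action used to define $\theta$.

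It then remains to show that $\theta$ commutes with the $\Gamma_F$-action on $D_{\Sen}(V)$, i.e.\ $\bar\gamma\circ\theta = \theta\circ\bar\gamma$ for all $\bar\gamma\in\Gamma_F$. This is immediate from the defining formula \eqref{theta_limit}: $\bar\gamma$ is continuous and additive, the scalars $\chi_{\cyc}(\gamma')-1 \in \Zp$ are fixed by $\bar\gamma$, and $\Gamma_F$ is abelian so $\bar\gamma$ commutes with each $\gamma'$ occurring in the limit; thus $\bar\gamma$ passes through the limit. Since $\Theta_{\Sen,V}$ restricts to $\theta$ on $D_{\Sen}(V)$ by Definition \ref{Sen_op}, we get $\sigma_\gamma\circ\Theta_{\Sen,V}\circ\sigma_\gamma^{-1} = \theta = \Theta_{\Sen,V}$ on $D_{\Sen}(V)$, hence on all of $\Cp\wtimes V$.

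I expect the only genuinely delicate point to be the conceptual identification of the "combined action" $\gamma\cdot(\gamma\ast(-))$ with honest conjugation by $\sigma_\gamma$, after which the statement reduces to two soft facts: that $D_{\Sen}(V)$ consists of $H_F$-fixed vectors (so $\sigma_\gamma$ acts there through $\Gamma_F$) and that $\Gamma_F$ is abelian (so $\bar\gamma$ and $\theta$ commute). An equivalent, slightly more structural route would avoid the restriction step by transporting everything through the isomorphism \eqref{Sen_compare}, writing $\Theta_{\Sen,V} = \mathrm{id}_{\Cp\wtimes A}\otimes\theta$ and checking that this isomorphism is $\Gal_F$-equivariant; but the argument above seems the most economical.
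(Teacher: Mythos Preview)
Your proof is correct and follows essentially the same approach as the paper: identify $\gamma\cdot(\gamma\ast(-))$ with conjugation by the diagonal action $\sigma_\gamma=\gamma\otimes\rho(\gamma)$, reduce to $D_{\Sen}(V)$ by $\Cp\wtimes A$-linearity, and observe that $\sigma_\gamma$ commutes with the limit defining $\theta$. You have spelled out more explicitly why the limit commutes with $\bar\gamma$ (continuity, $\Qp$-linearity of the scalars, abelianness of $\Gamma_F$), which the paper leaves implicit.
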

\begin{proof} We observe that the $*$-action maybe described as
  follows: every $\gamma\in \Gal_F$ induces a semi-linear map
  $\gamma\otimes\id: \Cp\wtimes V\rightarrow \Cp \wtimes V$,
  $z\wtimes v\mapsto \gamma(z)\wtimes v$, and
\[\gamma* \varphi= (\gamma\otimes \id) \circ \varphi\circ (\gamma \otimes \id)^{-1}\]
for all $\varphi \in \End_{\Cp\wtimes A}(\Cp\wtimes V)$. Thus the
assertion of the lemma is equivalent to the equality
\[ (\gamma \otimes \rho(\gamma))\circ \Theta_{\Sen, V}\circ (\gamma
  \otimes \rho(\gamma))^{-1}= \Theta_{\Sen, V}.\] Note that
$\gamma \mapsto \gamma \otimes \rho(\gamma)$ is just the diagonal
action of $\Gal_F$ on $\Cp\wtimes V$. Since both operators are
$\Cp\wtimes A$-linear, it is enough to show that they agree on
$D_{\Sen}(V)$. Since the action of $\Gal_F$ on $D_{\Sen}(V)$ commutes
with the limit in \eqref{theta_limit}, we obtain the assertion.
\end{proof} 
  
\section{Families of Galois representations and infinitesimal
  characters}\label{families_gal}

Recall that we fix algebraic closure $\Qpbar$ of $\Qp$ and a finite
extension $F$ of $\Qp$ contained in $\Qpbar$.  Let $G$ be a connected
reductive group defined over $F$. Let $E\subset \Qpbar$ be a finite
Galois extension of $F$ such that $G_E$ is split and and let
$\Gamma=\Gal(E/F)$. Let $L$ be a further finite extension of $\Qp$,
let $\OO$ be the ring of integers of $L$ with residue field $k$ and a
uniformizer $\varpi$. We assume that $L$ is large enough so that there
are $[E:\Qp]$ field embeddings of $E$ into $L$. In particular,
$G\times_{F, \sigma} L$ is split for any embedding
$\sigma: F\hookrightarrow L$.

\subsection{Topology on the coefficients}\label{sec_top} Let
$\mathcal T$ be a Grothendieck topology on the category of rigid
analytic spaces
over $L$, such that the following hold:
\begin{itemize}
\item the functor $X\mapsto \OO_X(X)$ is a sheaf on $\mathcal T$;
\item every $X$ admits a covering in $\mathcal T$ by affinoids;
\item every covering $\{U_i\rightarrow X\}_{i\in I}$ with $U_i$ and
  $X$ affinoid admits a finite subcover.
\end{itemize}
An example of such topology is the analytic topology, \cite[\S
9.3.1]{BGR}, or the \'etale topology, \cite[\S 8.2]{FP}. We want to
topologize the rings $\OO_X(X)$.  If $X=\Sp(A)$ then $\OO_X(X)=A$,
which is naturally an $L$-Banach space. Moreover, a map between
affinoids induces a continuous map on the ring of functions.

\begin{lem}\label{Banach_open} Let $\{U_i\}_{i\in I}$ be a covering of
  $X$ in $\mathcal T$. If $X$ and $U_i$ for $i\in I$ are affinoid then
  the map
\[\OO_X(X)\rightarrow \prod_{i\in I} \OO_{U_i}(U_i)\] 
is a homeomophism onto its image for the product topology on the
target and Banach space topology for the rings of global sections.
\end{lem}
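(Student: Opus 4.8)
The plan is to reduce to the case of a finite cover, where the sheaf axiom on $\mathcal T$ presents $\OO_X(X)$ as the equalizer of a pair of continuous maps between finite products of $L$-affinoid algebras, and then to conclude via the open mapping theorem for $L$-Banach spaces.

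First I would record continuity of the map $\phi\colon \OO_X(X)\to \prod_{i\in I}\OO_{U_i}(U_i)$. Each component is the restriction homomorphism attached to the morphism of affinoids $U_i\to X$, hence continuous (a morphism of $L$-affinoid algebras is automatically bounded), and a map into a product is continuous once all its components are. Next, by the third hypothesis on $\mathcal T$ the covering $\{U_i\to X\}_{i\in I}$ admits a finite subcover $\{U_j\to X\}_{j\in J}$, which is again a covering in $\mathcal T$. Writing $p$ for the projection $\prod_{i\in I}\OO_{U_i}(U_i)\to \prod_{j\in J}\OO_{U_j}(U_j)$ and $\psi=p\circ\phi$, it suffices to prove the statement for $\psi$: indeed $\phi$ is continuous and, once $\psi$ is injective, so is $\phi$, and for $y$ in the image of $\phi$ one has $\phi^{-1}(y)=\psi^{-1}(p(y))$, a composition of the continuous maps $p$ and (the inverse of) $\psi$, so $\phi$ is a homeomorphism onto its image as soon as $\psi$ is. Thus we may assume $I$ finite.

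With $I$ finite, the fibre product $U_{ij}\coloneqq U_i\times_X U_j$ is again affinoid — it is the spectrum of $\OO_{U_i}(U_i)\wtimes_{\OO_X(X)}\OO_{U_j}(U_j)$ — so $\OO_{U_{ij}}(U_{ij})$ is an $L$-affinoid algebra and the two restriction maps $\prod_i\OO_{U_i}(U_i)\rightrightarrows \prod_{i,j}\OO_{U_{ij}}(U_{ij})$ are continuous, each of their components being a restriction map of affinoid algebras. By the sheaf axiom for $\mathcal T$ applied to $\{U_i\to X\}_{i\in I}$, the map $\phi$ identifies $\OO_X(X)$ with the equalizer $E$ of this pair of maps; since the target is Hausdorff, $E$ is closed in the finite product $\prod_i\OO_{U_i}(U_i)$, which is an $L$-Banach space, so $E$ with its subspace topology is itself an $L$-Banach space. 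The induced map $\OO_X(X)\to E$ is then a continuous $L$-linear bijection between $L$-Banach spaces, hence a homeomorphism by the open mapping theorem (valid over $L$, see e.g.\ \cite{BGR}); composing with the closed topological embedding $E\hookrightarrow \prod_i\OO_{U_i}(U_i)$ gives the claim for $\psi$, and with the reduction above for $\phi$.

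The only place where anything beyond formal bookkeeping happens is the last step: the sheaf axiom yields only an \emph{algebraic} identification of $\OO_X(X)$ with $E$, and one must know that the affinoid topology on $\OO_X(X)$ coincides with the subspace topology from $\prod_i\OO_{U_i}(U_i)$; this is precisely what the open mapping theorem delivers, once the reduction to a finite cover has made $E$ a Banach space. The two minor points to watch are that a ``finite subcover'' must be understood as a finite subfamily which is still a covering in $\mathcal T$ (so that the sheaf axiom applies to it), and that the open mapping theorem is being invoked for Banach spaces over the $p$-adic field $L$, which is legitimate since $L$ is a finite extension of $\Qp$.
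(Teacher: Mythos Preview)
Your proof is correct and follows essentially the same approach as the paper: pass to a finite subcover, realize $\OO_X(X)$ as a closed equalizer inside a finite product of affinoid algebras via the sheaf axiom, and conclude by the open mapping theorem. The only cosmetic difference is that the paper covers each $U_i\times_X U_j$ by affinoids $V_{ijk}$ rather than using directly (as you do) that a fibre product of affinoids over an affinoid is affinoid, and the paper organizes the reduction as a sandwich $\beta\to\sigma\to\sigma'$ between three topologies rather than your factorization $\phi^{-1}=\psi^{-1}\circ p$; neither difference is substantive.
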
 
\begin{proof} 
  For each $i,j\in I$ we choose a covering by affinoids
  $\{V_{ijk} \rightarrow U_i \times_X U_j\}_k$.  The maps in the
  equalizer diagram
\begin{equation} 
\begin{tikzcd}
  \OO_X(X) \arrow[r] & \prod\limits_i \OO_{U_i }(U_i)
  \arrow[r,yshift=-2pt] \arrow[r,yshift=2pt]&
  \prod\limits_{i,j,k}\OO_{V_{ijk}}(V_{ijk})
\end{tikzcd}
\end{equation}
are continuous and hence $\OO_X(X)$ is closed in
$\prod_{i\in I} \OO_{U_i}(U_i)$ for the subspace topology, which we
denote by $\sigma$. Let $F\subset I$ be finite such that
$\{U_i\}_{i\in F}$ is a cover of $X$. Then by the same argument
$\OO_X(X)$ is closed in $\prod_{i\in F} \OO_{U_i}(U_i)$ for the
subspace topology, which we denote by $\sigma'$. Since $F$ is finite,
$\sigma'$ is a Banach space topology. Since by the Open Mapping
theorem every continuous bijection between Banach spaces is a
homeomorphism, $\beta$ and $\sigma'$ coincide. The projection map
$\prod_{i\in I} \OO_{U_i}(U_i)\rightarrow \prod_{i\in F}
\OO_{U_i}(U_i)$ induces continuous maps
$(\OO_X(X), \beta)\overset{\id}{\rightarrow} (\OO_X(X),
\sigma)\overset{\id}{\rightarrow} (\OO_X(X), \sigma')$, thus $\sigma$
and $\beta$ coincide.
\end{proof}

For an arbitrary $X$ we define the topology $\tau$ on $\OO_X(X)$ to be
the coarsest topology such that for all coverings
$\{U_i\rightarrow X\}_i$ by affinoids the map
$\OO_X(X) \rightarrow \prod_i \OO_{U_i}(U_i)$ is continuous, where we
put the Banach space topology on each $\OO_{U_i}(U_i)$ and the product
topology on the target.

\begin{lem}\label{topology} If $\{U_i\rightarrow X\}_{i\in I}$ is any covering of $X$ in $\mathcal T$ by affinoids then the subspace topology on $\OO_X(X)$
  induced by $\OO_X(X)\rightarrow \prod_{i\in I} \OO_{U_i}(U_i)$
  coincides with $\tau$, for the for the Banach space topology on
  $\OO_{U_i}(U_i)$ and the product topology on the target.
\end{lem}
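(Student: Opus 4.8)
The plan is to establish the two inclusions between $\tau$ and the subspace topology $\sigma_{\mathcal U}$ on $\OO_X(X)$ determined by the fixed affinoid covering $\mathcal U=\{U_i\to X\}_{i\in I}$, i.e.\ the map $\OO_X(X)\to \prod_{i\in I}\OO_{U_i}(U_i)$. One inclusion is formal: by construction $\tau$ is the coarsest (hence the initial) topology making $\OO_X(X)\to\prod_j\OO_{V_j}(V_j)$ continuous for \emph{every} affinoid covering $\mathcal V=\{V_j\to X\}_{j\in J}$, so in particular it makes the map attached to $\mathcal U$ continuous; since an initial topology with respect to a family of maps is finer than the initial topology with respect to any subfamily, we get $\sigma_{\mathcal U}\subseteq \tau$. (Because $\OO_X$ is a sheaf on $\mathcal T$ these maps are injective, so $\sigma_{\mathcal U}$ really is a subspace topology, but this is not needed for the argument.)

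For the reverse inclusion $\tau\subseteq\sigma_{\mathcal U}$ it suffices, by minimality of $\tau$, to check that $\sigma_{\mathcal U}$ already makes $\OO_X(X)\to\prod_{j\in J}\OO_{V_j}(V_j)$ continuous for an arbitrary affinoid covering $\mathcal V=\{V_j\to X\}_{j\in J}$, and by the definition of the product topology this reduces to proving that each component map $\OO_X(X)\to\OO_{V_j}(V_j)$ is $\sigma_{\mathcal U}$-continuous. Here I pass to a common refinement: for each $i$ choose an affinoid covering $\{W_{ijk}\to U_i\times_X V_j\}_k$ in $\mathcal T$. Since the topology $\mathcal T$ is stable under base change (along $V_j\to X$) and under composition of covers, $\{W_{ijk}\to V_j\}_{i,k}$ is an affinoid covering of $V_j$. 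Now apply Lemma \ref{Banach_open} with $V_j$ playing the role of $X$: the restriction map $\OO_{V_j}(V_j)\to\prod_{i,k}\OO_{W_{ijk}}(W_{ijk})$ is a homeomorphism onto its image. Therefore $\OO_X(X)\to\OO_{V_j}(V_j)$ is $\sigma_{\mathcal U}$-continuous if and only if the composite $\OO_X(X)\to\prod_{i,k}\OO_{W_{ijk}}(W_{ijk})$ is, and the $(i,k)$-component of this composite is the restriction $\OO_X(X)\to\OO_{W_{ijk}}(W_{ijk})$, which factors as $\OO_X(X)\to\OO_{U_i}(U_i)\to\OO_{W_{ijk}}(W_{ijk})$. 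The first arrow is a coordinate projection of the defining map of $\sigma_{\mathcal U}$, hence $\sigma_{\mathcal U}$-continuous; the second is continuous because it is induced by a morphism of affinoids. So the composite is $\sigma_{\mathcal U}$-continuous in every coordinate, hence into the product, which yields the claim. Combining the two inclusions gives $\tau=\sigma_{\mathcal U}$, as asserted.

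The argument is almost entirely formal manipulation of initial topologies; the one point I would spell out carefully — and the only place where something could go wrong — is the bookkeeping in the middle step: verifying that $\{W_{ijk}\to V_j\}_{i,k}$ genuinely is a covering in $\mathcal T$ (stability under base change and composition), and that Lemma \ref{Banach_open} is available here with a possibly infinite index set, which it is, since it is stated for an arbitrary covering $\{U_i\}_{i\in I}$ of an affinoid by affinoids. Everything else, including the existence of $\tau$, is automatic.
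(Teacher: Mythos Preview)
Your proof is correct and follows essentially the same approach as the paper: both arguments pass to a common affinoid refinement of two coverings via the fibre products $U_i\times_X V_j$, then invoke Lemma~\ref{Banach_open} to control the topology. The only organisational difference is that the paper shows directly that any two affinoid coverings induce the same subspace topology (hence $\tau$, being their supremum, equals each of them), whereas you phrase the same computation as verifying that $\sigma_{\mathcal U}$ satisfies the defining universal property of $\tau$; the mathematical content is identical.
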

\begin{proof} Let $\{U'_j\rightarrow X\}_{j\in J}$ be another covering
  in $\mathcal T$ of $X$ by affinoids. Then
  $\{U_i \times_X U_j' \rightarrow U_i\}_i$ and
  $\{U_i \times_X U_j' \rightarrow U_j'\}_j$ are both coverings in
  $\mathcal T$. For each $i,j$ we let $\{V_{ijk}\}_k$ be a cover of
  $U_i \times_X U_j'$ by affinoids. It follows from Lemma
  \ref{Banach_open} that the maps
  \[ \prod_{i} \OO_{U_i}(U_i)\rightarrow
    \prod_{i,j,k}\OO_{V_{ijk}}(V_{ijk}), \quad \prod_{j}
    \OO_{U'_j}(U'_j)\rightarrow \prod_{i,j,k}\OO_{V_{ijk}}(V_{ijk})\]
  are homeomorphisms onto their images. Since both maps coincide, when
  restricted to $\OO_X(X)$, we deduce that the subspace topologies on
  $\OO_X(X)$ induced by the coverings are the same.
\end{proof}

\begin{cor} If $X$ is an affinoid then $\tau$ coincides with the
  Banach space topology on $\OO_X(X)$.
\end{cor}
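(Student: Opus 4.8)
The plan is to read this off immediately from Lemma \ref{topology} by taking the covering of $X$ to be $X$ itself. First I would observe that, since $X$ is affinoid, the one-element family $\{\id_X\colon X\rightarrow X\}$ is a covering of $X$ in $\mathcal T$ by affinoids: in any Grothendieck (pre)topology the identity map is a covering, and here its single member $X$ is affinoid by hypothesis. Applying Lemma \ref{topology} to this covering, the subspace topology on $\OO_X(X)$ induced by the resulting map $\OO_X(X)\rightarrow \OO_X(X)$, where the target carries its Banach space topology, coincides with $\tau$.

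But this map is the identity, so the subspace topology it induces on $\OO_X(X)$ is nothing but the Banach space topology. Hence $\tau$ agrees with the Banach space topology on $\OO_X(X)$, which is the assertion. There is no real obstacle here; the only thing worth recording is the admissibility of the trivial covering $\{X\rightarrow X\}$ in $\mathcal T$, and this is immediate from the axioms for a Grothendieck topology (it is also explicit for the analytic and the \'etale topology). One could alternatively argue directly from the definition of $\tau$ as the coarsest topology making all the maps $\OO_X(X)\rightarrow\prod_i\OO_{U_i}(U_i)$ continuous, noting that the map attached to the trivial covering is already a homeomorphism onto its (dense, in fact full) image, but invoking Lemma \ref{topology} is cleaner.
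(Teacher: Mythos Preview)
Your proof is correct and takes essentially the same approach as the paper: the paper's proof is a single sentence observing that $X\overset{\id}{\rightarrow} X$ is a covering of $X$ by affinoids, which is exactly the trivial covering you use together with Lemma \ref{topology}.
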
 
\begin{proof} This follows from the fact that
  $X\overset{\id}{\rightarrow} X$ is a covering of $X$ by affinoids.
\end{proof} 

\begin{remar} If a covering of $X$ by admissible open affinoids for
  the analytic topology is also a covering in $\mathcal T$ then Lemma
  \ref{topology} implies that both topologies on $\OO_X(X)$
  coincide. In particular, there is no difference for the topology on
  $\OO_X(X)$ in the analytic and \'etale topologies.
\end{remar} 

\begin{remar} \label{torsor_top} We may identify $\Ghat$ with a
  Zariski closed subset of $\mathbb{A}^n$ for some $n$.  Then
  $\Ghat(\OO_{U_i}(U_i))$ is a closed subset of
  $\OO_{U_i}(U_i)^{\oplus n}$. If $P$ is a $\Ghat$-torsor over $X$ for
  $\mathcal T$ and $\mathcal U=\{U_i\}_{i\in I}$ is a covering of $X$
  by affinoids trivialising $P$, then we may identify $\Pad(U_i)$ with
  $\Ghat(\OO_{U_i}(U_i))$.  This identification is canonical up to
  conjugation by elements of $\Ghat(\OO_{U_i}(U_i))$, thus the induced
  topology on $\Pad(U_i)$ does not depend on choices.  The argument in
  Lemma \ref{topology} shows that the subspace topology on $\Pad(X)$
  induced by the inclusion $\Pad(X) \subset\prod_i \Pad(U_i)$ does not
  depend on the choice of the cover $\mathcal U$.
\end{remar}

\subsection{The Sen operator in the affinoid case}\label{sec_Sen}

Let $U=\Sp(A)$ be an affinoid and let
\[\rho_U: \Gal_F \rightarrow \LG_f(A)=\Ghat(A) \rtimes \Gamma\] be an
admissible representation.  If $r: \LG_f \rightarrow \GL(V)$ is an
algebraic representation of $\LG_f$ on a finite dimensional $L$-vector
space $V$, we let
\[\Theta_{\mathrm{Sen}, r, U}\in \End_{\Cp \wtimes A}( (\Cp \wtimes A)\otimes_{L} V)\]
be the Sen operator $\Theta_{\Sen, V\otimes_L A}$ defined in
Definition \ref{Sen_op} for the representation
$r\circ \rho_U: \Gal_F\to \GL(V)$.

Let $\ghat$ be the Lie algebra of $\Ghat$, which we identify with the
Lie algebra of $\LG_f$.
 
\begin{lem}\label{tannaka} There is a unique
  $\Theta_{\mathrm{Sen}, U}\in (\Cp \wtimes A)\otimes_{L} \ghat$ such
  that
  \[ \mathrm{Lie}(r)( \Theta_{\mathrm{Sen}, U})= \Theta_{\mathrm{Sen},
      r, U}\] for every algebraic representation $(r, V)$ of $\LG_f$
  over $L$. Moreover, if $U'=\Sp(A') \rightarrow U$ is a map of
  affinoid varieties then the map
\[(\Cp \wtimes A)\otimes_{L} \ghat \rightarrow (\Cp \wtimes
  A')\otimes_{L} \ghat,\] induced by extension of scalars, sends
$\Theta_{\mathrm{Sen}, U}$ to $\Theta_{\mathrm{Sen}, U'}$.
\end{lem}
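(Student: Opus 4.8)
The plan is to recognize the family $\{\Theta_{\mathrm{Sen}, r, U}\}_r$ as a \emph{tensor derivation} of the standard fibre functor on $\Rep_L(\LGf)$, and then to invoke Tannakian formalism to identify tensor derivations with $(\Cp \wtimes A)$-valued points of $\Lie(\LGf) = \ghat$.

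First I would repackage the Sen operator as a functor. For a finite free $A$-module $V$ carrying a continuous $\Gal_F$-action, $V \mapsto (\Cp \wtimes_{\Qp} V,\, \Theta_{\mathrm{Sen}, V})$ is functorial for $A$-linear $\Gal_F$-equivariant morphisms (clear from the construction of $D_{\mathrm{Sen}}$ and from the limit \eqref{theta_limit} commuting with such morphisms), it sends the trivial representation $A$ to $0$, and by Lemma \ref{transf_tannaka} it is monoidal in the sense that $\Theta_{\mathrm{Sen}, V \otimes_A W} = \Theta_{\mathrm{Sen}, V} \otimes \id + \id \otimes \Theta_{\mathrm{Sen}, W}$. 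Composing with the functor $V \mapsto V \otimes_L A$ where $\Gal_F$ acts through $r \mapsto r \circ \rho_U$, and using $(V \otimes_L A) \otimes_A (W \otimes_L A) = (V \otimes_L W) \otimes_L A$, one obtains the standard base-changed fibre functor $\omega \colon \Rep_L(\LGf) \to \Mod_{\Cp \wtimes A}$, $V \mapsto (\Cp \wtimes A) \otimes_L V$, together with a family of $(\Cp \wtimes A)$-linear endomorphisms $\Theta_{\mathrm{Sen}, r, U}$ of $\omega(V)$ which is natural in $V$, kills the unit object, and obeys the Leibniz rule for the monoidal structure; this is exactly a tensor derivation of $\omega$.

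The Tannakian step is as follows. View $H \coloneqq \LGf$ as an affine group scheme of finite type over $L$ by base change; since $\Ghat$ is reductive and $\Gamma$ is finite, $H$ is smooth (we are in characteristic $0$, so disconnectedness is harmless) and $\Lie H = \ghat$. For any commutative $L$-algebra $R$ the fibre functor $\omega_R \colon V \mapsto R \otimes_L V$ has $\underline{\Aut}^{\otimes}(\omega_R) = H_R$ (over an arbitrary base ring, since for every $R$-algebra $S$ the tensor automorphisms of $\omega_S = \omega_R \otimes_R S$ are, by Tannaka reconstruction for $H$, the $S$-points of $H$). Passing to Lie algebras — i.e.\ looking at tensor automorphisms of $\omega_{R[\epsilon]/(\epsilon^2)}$ that reduce to the identity modulo $\epsilon$, which are precisely $\id + \epsilon\cdot(\text{tensor derivation})$ — identifies the set of tensor derivations of $\omega_R$ with $\Lie(H_R)(R) = \Lie(H)\otimes_L R$, the bijection carrying $X$ to the family $\{\mathrm{Lie}(r)_R(X)\}_r$. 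Taking $R = \Cp \wtimes A$ and feeding in the tensor derivation from the previous paragraph yields the required element $\Theta_{\mathrm{Sen}, U} \in (\Cp\wtimes A) \otimes_L \ghat$ with $\mathrm{Lie}(r)(\Theta_{\mathrm{Sen}, U}) = \Theta_{\mathrm{Sen}, r, U}$ for all $r$. Uniqueness is built into the bijection; alternatively, if $\mathrm{Lie}(r)(X) = 0$ for a faithful $r$ then $X = 0$, because $\mathrm{Lie}(r)$ is injective in characteristic $0$ and remains so after the flat base change $L \to \Cp\wtimes A$.

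For the compatibility with a morphism $U' = \Sp(A') \to U = \Sp(A)$, note that $\rho_{U'}$ is $\rho_U$ composed with $\LGf(A) \to \LGf(A')$, so by Lemma \ref{extend_scalars} the operator $\Theta_{\mathrm{Sen}, r, U'}$ is the image of $\Theta_{\mathrm{Sen}, r, U}$ under base change along $\Cp \wtimes A \to \Cp \wtimes A'$. Hence the image of $\Theta_{\mathrm{Sen}, U}$ in $(\Cp \wtimes A') \otimes_L \ghat$ has the characterizing property of $\Theta_{\mathrm{Sen}, U'}$ and therefore equals it by uniqueness. The only point I expect to require real care is the Tannakian identification of tensor derivations of $\omega_R$ with $\Lie(H) \otimes_L R$ over the ring $R = \Cp \wtimes_{\Qp} A$, which is not a field; the rest is formal, resting on Lemmas \ref{transf_tannaka} and \ref{extend_scalars}.
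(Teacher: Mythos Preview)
Your proposal is correct and follows essentially the same route as the paper. The paper also packages the compatibilities from Lemma~\ref{transf_tannaka} via the dual-numbers trick, considering $1+\varepsilon\,\Theta_{\mathrm{Sen},r,U}$ as a tensor automorphism of the fibre functor over $(\Cp\wtimes A)[\varepsilon]$ reducing to the identity modulo $\varepsilon$, and then invokes Tannaka duality for $\LGf$ (citing Deligne--Milne and Milne) to obtain the element of $(\Cp\wtimes A)\otimes_L\ghat$; the base-change statement is likewise deduced from Lemma~\ref{extend_scalars}.
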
 
\begin{proof} As explained in \cite[Lem.\,2.2.5]{patrikis}, given the
  transformation properties of the Sen operator with respect to the
  direct sum and tensor product proved in Lemma \ref{transf_tannaka},
  the assertion follows from the Tannaka duality for Lie
  algebras. Patrikis refers for this to \cite{HC}. We note that
  alternatively one could follow Milne \cite[Prop.\,6.11]{milne1} and
  deduce the statement from the Tannaka duality for algebraic
  groups. Informally, it says that if $R$ is an $L$-algebra then to
  give an element of $\LG_f(R)$ is equivalent to giving a compatible
  family $\{ g_V\in \Aut_R(R\otimes_L V)\}_V$ indexed by algebraic
  representation $V$ of $\LG_f$ defined over $L$.  The statement is
  made precise in the references cited below. We may identify
 \begin{equation}
   R \otimes_{L} \ghat = \Ker( \LG_f(R[\varepsilon])\rightarrow \LG_f(R)),
 \end{equation}
 where $R[\varepsilon]=R[X]/(X^2)$ is the algebra of dual numbers over
 $R$, and consider
 \begin{equation}\label{dual_numbers}
   1+ \varepsilon \Theta_{\mathrm{Sen}, r, U} \in \Aut_{ (\Cp \wtimes A)[\varepsilon]}(  (\Cp \wtimes A)[\varepsilon] \otimes_{L} V).
 \end{equation}
 The existence and uniqueness of $\Theta_{\mathrm{Sen}, U}$ follows
 from \cite[Prop\,.2.8]{DM}, see also \cite[\S 2.10]{milne1}. The last
 assertion follows from Lemma \ref{extend_scalars}.
\end{proof}

The natural $A$-linear action of $\Gal_F$ on $\Cp \wtimes A$ induces
an action of $\Gal_F$ on $(\Cp \wtimes A)\otimes_{L} \ghat$, defined
by
\begin{equation} \label{staraction} \gamma*(z\otimes
  X)=\gamma(z)\otimes X
\end{equation}
for $z\in \Cp\wtimes A$, $X\in \ghat$ and $\gamma\in \Gal_F$.
     
On the other hand, the group $\LG_f(\Cp \wtimes A)$ acts on
$(\Cp \wtimes A)\otimes_{L} \ghat$ via the adjoint action of $\LG_f$
on its Lie algebra $\ghat$. Since $\LG_f$ is defined over $\Qp$, the
action of $\Gal_F$ on $\Cp \wtimes A$ induces an action of $\Gal_F$ on
$\LG_f(\Cp \wtimes A)$, denoted $(\gamma, g)\to \gamma(g)$.  These
three actions are related by
\begin{equation}\label{adj}
  \gamma*({\rm Ad}(g)(X))={\rm Ad}(\gamma(g))(\gamma*X)
\end{equation}
for $\gamma\in \Gal_F$, $g\in \LG_f(\Cp \otimes A)$,
$X\in (\Cp \otimes A)\otimes_{L} \ghat$.  Indeed, it suffices to check
this after an $L$-embedding of $\LG_f$ in some $\GL_n$, where it
becomes obvious. The above relation \eqref{adj} shows in particular
that the adjoint action of
$\LG_f(A)\subset \LG_f(\Cp \wtimes A)^{\Gal_F}$ commutes with the
action of $\Gal_F$.
   
Finally, we define another action of $\Gal_F$ on
$(\Cp \otimes A)\otimes_{L} \ghat$, via
\begin{equation} \label{dotaction} \gamma\cdot X= {\rm
    Ad}(\mu_G(\bar{\gamma}))(X),
\end{equation}
where $\bar{\gamma}$ is the image of $\gamma$ in $\Gamma$. Since
$\mu_G(\bar{\gamma})\in \LG_f(A)$, the discussion in the previous
paragraph shows that this action commutes with the star action
$(\gamma, X)\to \gamma*X$ of $\Gal_F$ on
$(\Cp \otimes A)\otimes_{L} \ghat$.
   
For all $\gamma\in \Gal_F$ we write
$\rho_U(\gamma)= (c_{\gamma},\bar{\gamma})$ with
$c_{\gamma} \in \widehat{G}(A)$ as in subsection \ref{adm_reps}.

\begin{lem}\label{equiv}
  For all $\gamma\in \Gal_F$ we have
  \[{\rm Ad}(c_{\gamma})(\gamma*(\gamma\cdot\Theta_{\mathrm{Sen},
      U}))=\Theta_{\mathrm{Sen}, U}.\]
\end{lem}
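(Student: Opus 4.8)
The plan is to reduce the identity to the $\GL_n$-case already proved in Lemma \ref{hallelujah}, using the Tannakian characterisation of $\Theta_{\mathrm{Sen}, U}$ from Lemma \ref{tannaka}. First I would rewrite the left-hand side purely in terms of the adjoint action of $\LG_f$. Writing $\rho_U(\gamma)=(c_\gamma,\bar\gamma)=(c_\gamma,1)\cdot(1,\bar\gamma)$ in $\LG_f(A)=\Ghat(A)\rtimes\Gamma$, and recalling that by definition $\gamma\cdot X=\Ad(\mu_G(\bar\gamma))(X)$ is the adjoint action of the element $(1,\bar\gamma)\in\LG_f(A)$, relation \eqref{adj} (applied with $g=\mu_G(\bar\gamma)$, which is $\Gal_F$-fixed in $\LG_f(\Cp\wtimes A)$ since it has coefficients in $A$) shows that the $*$-action commutes with $\gamma\cdot(-)$. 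Hence
\[
\Ad(c_\gamma)\bigl(\gamma*(\gamma\cdot\Theta_{\mathrm{Sen}, U})\bigr)
=\Ad(c_\gamma)\Ad(\mu_G(\bar\gamma))\bigl(\gamma*\Theta_{\mathrm{Sen}, U}\bigr)
=\Ad(\rho_U(\gamma))\bigl(\gamma*\Theta_{\mathrm{Sen}, U}\bigr),
\]
so it suffices to prove $\Ad(\rho_U(\gamma))(\gamma*\Theta_{\mathrm{Sen}, U})=\Theta_{\mathrm{Sen}, U}$.

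By the uniqueness clause of Lemma \ref{tannaka}, this may be checked after applying $\Lie(r)$ for every algebraic representation $(r,V)$ of $\LG_f$ over $L$ (equivalently, for one faithful $r$). Here I would invoke two standard compatibilities. First, $\Lie(r)\colon(\Cp\wtimes A)\otimes_L\ghat\to\End_{\Cp\wtimes A}((\Cp\wtimes A)\otimes_L V)$ is $(\Cp\wtimes A)$-linear and restricts to the fixed $L$-linear map $\ghat\to\End_L(V)$; since the $*$-action on source and target just acts on the coefficient in $\Cp\wtimes A$, $\Lie(r)$ intertwines the two $*$-actions. Second, $\Lie(r)\circ\Ad(g)=\Ad(r(g))\circ\Lie(r)$ for $g\in\LG_f(\Cp\wtimes A)$; this follows from functoriality of $r$ on dual numbers exactly as in \cite[Prop.\,2.8]{DM} (the argument used in the proof of Lemma \ref{tannaka}). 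Applying both with $g=\rho_U(\gamma)\in\LG_f(A)$ and using $\Lie(r)(\Theta_{\mathrm{Sen}, U})=\Theta_{\mathrm{Sen},r,U}$, the left-hand side becomes $\Ad\bigl((r\circ\rho_U)(\gamma)\bigr)\bigl(\gamma*\Theta_{\mathrm{Sen},r,U}\bigr)$.

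Now $\Theta_{\mathrm{Sen},r,U}$ is, by its definition in Section \ref{sec_Sen}, the Sen operator $\Theta_{\mathrm{Sen},V\otimes_L A}$ of the free $A$-module $V\otimes_L A$ endowed with the continuous $A$-linear $\Gal_F$-action $r\circ\rho_U$. Lemma \ref{hallelujah} applied to this representation gives $\gamma\cdot(\gamma*\Theta_{\mathrm{Sen},r,U})=\Theta_{\mathrm{Sen},r,U}$, where now $\gamma\cdot(-)=\Ad((r\circ\rho_U)(\gamma))$ acts on the endomorphism factor; combining this with the commutativity of the two actions inside $\GL(V)$ (again an instance of \eqref{adj}) yields $\Ad((r\circ\rho_U)(\gamma))(\gamma*\Theta_{\mathrm{Sen},r,U})=\Theta_{\mathrm{Sen},r,U}=\Lie(r)(\Theta_{\mathrm{Sen}, U})$. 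As $(r,V)$ was arbitrary, Lemma \ref{tannaka} forces $\Ad(\rho_U(\gamma))(\gamma*\Theta_{\mathrm{Sen}, U})=\Theta_{\mathrm{Sen}, U}$, which by the reduction in the first paragraph is the assertion.

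The steps are all of \emph{bookkeeping} nature; the one I expect to require the most care is keeping track of the three $\Gal_F$-actions and the identification of $\mu_G(\bar\gamma)$ with $(1,\bar\gamma)\in\LG_f(A)$, so that "$\Ad(c_\gamma)$ followed by $\gamma\cdot(-)$" is genuinely $\Ad(\rho_U(\gamma))$ — together with the verification of the compatibility $\Lie(r)\circ\Ad(g)=\Ad(r(g))\circ\Lie(r)$ over the non-reduced base $(\Cp\wtimes A)[\varepsilon]$. Once these are set up, the statement is a direct transcription of Lemma \ref{hallelujah} through the Tannakian dictionary.
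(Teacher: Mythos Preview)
Your proposal is correct and follows essentially the same route as the paper's proof: both first use the commutativity of the $*$- and $\cdot$-actions (via \eqref{adj}, since $\mu_G(\bar\gamma)\in\LG_f(A)$) to rewrite the left-hand side as $\Ad(\rho_U(\gamma))(\gamma*\Theta_{\mathrm{Sen},U})$, then verify the resulting identity after applying $\Lie(r)$ for a faithful $r$ using the compatibilities of $\Lie(r)$ with $\Ad$ and with the $*$-action, and finally invoke Lemma~\ref{hallelujah}. The only minor redundancy is your appeal to ``commutativity of the two actions inside $\GL(V)$'' at the end: Lemma~\ref{hallelujah} already states $\gamma\cdot(\gamma*\Theta_{\Sen,V})=\Theta_{\Sen,V}$ with $\gamma\cdot(-)=\Ad((r\circ\rho_U)(\gamma))$, so no further commutation is needed there.
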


\begin{proof}  

  For simplicity let $\Theta=\Theta_{\mathrm{Sen}, U}$ and let
  $\rho=\rho_U$. We identify $\Gamma$ and $\Ghat$ with their images in
  $\LG_f$, so that $\rho(\gamma)=c_{\gamma} \bar{\gamma}$. By the
  discussion preceding the lemma
  \[{\rm Ad}(c_{\gamma})(\gamma*(\gamma\cdot\Theta))={\rm
      Ad}(c_{\gamma}) (\gamma\cdot (\gamma*\Theta))={\rm
      Ad}(c_{\gamma} \mu_G(\bar{\gamma}))(\gamma*\Theta)= {\rm
      Ad}(\rho(\gamma))(\gamma*\Theta).\]

  Thus we need to check that
  ${\rm Ad}(\rho(\gamma))(\gamma*\Theta)=\Theta$.  It suffices to
  check the equality after applying $\mathrm{Lie}(r)$ to both sides,
  where $r: \LG_f\to \GL(V)$ is a faithful representation of
  $\LG_f$. Note that for all $g\in \LG_f, \gamma\in \Gal_F$ and
  $u\in (\Cp \wtimes A)\otimes_{L} \ghat$ we have
  \[\mathrm{Lie}(r)({\rm Ad}(g)(u))={\rm
      Ad}(r(g))(\mathrm{Lie}(r)(u)), \quad
    \mathrm{Lie}(r)(\gamma*u)=\gamma*\mathrm{Lie}(r)(u).\] Letting
  $\Theta_r=\Theta_{\mathrm{Sen}, r, U}= \mathrm{Lie}(r)(\Theta)$, we
  need to check that
  ${\rm Ad}(r\circ \rho(\gamma))(\gamma*\Theta_r)= \Theta_r$ for
  $r\circ \rho: \Gal_F\to \GL(V\otimes A)$.  This assertion is proved
  in Lemma \ref{hallelujah}.
\end{proof}

Let $S(\ghat^*)$ be the symmetric algebra in the $L$-linear dual of
$\ghat$. We think of $S(\ghat^*)$ as polynomial functions on
$\ghat$. Let $S(\ghat^*)^{\Ghat}$ be the subring $\Ghat$-invariant
polynomial functions of $\ghat$.

If $V$ is a finite dimensional $L$-vector space and $R$ is an
$L$-algebra then
\[\Hom_{L\text{-}\alg}(S(V^*), R)\cong \Hom_{L}(V^*, R)\cong V\otimes_L R.\]
If $\lambda \in V\otimes_L R$ then we will denote the corresponding
$L$-algebra homomorphism by $\ev_{\lambda}$.  To give a homomorphism
of $L$-algebras from $S(\ghat^*)$ to an $L$-algebra $R$ is the same as
to give an $L$-linear map from $\ghat^*$ to $R$, which is the same as
to give an element of $\ghat\otimes_L R$.  Thus the elements
$\Theta_{\mathrm{Sen}, U}$ give us a compatible system of homomorphism
of $L$-algebras $\theta_U: S(\ghat^*)\rightarrow \Cp \wtimes A$, which
we may restrict to the subrings considered above.

\begin{lem}\label{thetan} The map
  $\theta_U: S(\ghat^*)^{\Ghat} \rightarrow \Cp \wtimes A$ takes
  values in $E\otimes A$. Moreover,
\begin{equation}\label{transform2}
  \gamma(\theta_U(f)) = \theta_U ( \gamma\cdot f), \quad \forall \gamma\in \Gal_F, \quad \forall f\in S(\ghat^*)^{\Ghat},
\end{equation}
where $\gamma\cdot$ denotes the $L$-linear action of $\Gal_F$ on
$S(\ghat^*)$ via $\mu_G$.
\end{lem}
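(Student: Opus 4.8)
The plan is to extract both assertions from Lemma \ref{equiv}, which already packages all the Galois-equivariance of $\Theta_{\mathrm{Sen},U}$; I just need to feed it through the evaluation map. Throughout write $\Theta\coloneqq\Theta_{\mathrm{Sen},U}\in(\Cp\wtimes A)\otimes_L\ghat$, so that $\theta_U=\ev_\Theta$ restricted to $S(\ghat^*)^{\Ghat}$.

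First I would record three routine compatibilities of $\ev_{(-)}$ with the actions at play. (i) For $\gamma\in\Gal_F$, the action of $\gamma$ on $\Cp\wtimes A$ fixes $A$, hence fixes $L\subseteq A$, so $\gamma$ is an automorphism of $\Cp\wtimes A$ \emph{as an $L$-algebra}; the operator $\gamma\otimes\id_\ghat$ on $(\Cp\wtimes A)\otimes_L\ghat$ is exactly the $*$-action of \eqref{staraction}. Consequently $\gamma\circ\ev_Z$ is again an $L$-algebra homomorphism $S(\ghat^*)\to\Cp\wtimes A$, and since it agrees with $\ev_{\gamma*Z}$ on degree-one elements $\ghat^*\subseteq S(\ghat^*)$ the two are equal; that is, $\gamma(\ev_Z(h))=\ev_{\gamma*Z}(h)$ for all $h\in S(\ghat^*)$. (ii) Since the $\cdot$-action $Z\mapsto\Ad(\mu_G(\bar\gamma))(Z)$ of \eqref{dotaction} acts $L$-linearly through the $\ghat$-factor, functoriality gives $\ev_{\gamma\cdot Z}=\ev_Z\circ(\gamma^{-1}\cdot)$, where $\gamma\cdot$ denotes the $\mu_G$-action on $S(\ghat^*)$ from the statement. (iii) Since $c_\gamma\in\Ghat(A)\subseteq\Ghat(\Cp\wtimes A)$ and $f\in S(\ghat^*)^{\Ghat}$, one has $\ev_{\Ad(c_\gamma)(Z)}(f)=\ev_Z(f)$ for every $Z$.

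Then \eqref{transform2} should follow by chaining these with Lemma \ref{equiv}. Starting from $\theta_U(f)=\ev_\Theta(f)$, rewrite $\Theta=\Ad(c_\gamma)\bigl(\gamma*(\gamma\cdot\Theta)\bigr)$, strip off $\Ad(c_\gamma)$ by (iii), strip off $\gamma*$ by (i), and convert the $\cdot$ by (ii) — noting that $\gamma^{-1}\cdot f$ is still $\Ghat$-invariant because $\Gamma$ normalises $\Ghat$ — to arrive at $\theta_U(f)=\gamma\bigl(\theta_U(\gamma^{-1}\cdot f)\bigr)$. Replacing $f$ by $\gamma\cdot f$ and using that $\gamma\mapsto(\gamma\cdot)$ is a group action yields $\gamma(\theta_U(f))=\theta_U(\gamma\cdot f)$, which is \eqref{transform2}. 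The first assertion is then immediate: $\mu_G$ factors through $\Gamma=\Gal(E/F)$, so every $\gamma\in\Gal_E$ acts trivially on $S(\ghat^*)$, whence \eqref{transform2} gives $\theta_U(f)\in(\Cp\wtimes_{\Qp}A)^{\Gal_E}$ for $f\in S(\ghat^*)^{\Ghat}$; applying Ax--Sen--Tate coordinatewise with respect to an orthonormal $\Qp$-basis of $A$ (exactly as in the proof of Theorem \ref{Sen}) identifies this ring with $E\otimes_{\Qp}A$, since $E/\Qp$ is finite.

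The only genuinely delicate point is (i) — keeping straight the interplay between the $L$-structure on $\ghat$ (equivalently, the $L$-coefficients of the polynomials $f$) and the Galois action on the scalar ring $\Cp\wtimes A$. The slick way past it is precisely the remark that $\gamma$ fixes $L$ inside $\Cp\wtimes A$, so that $\gamma\circ\ev_Z$ remains an $L$-algebra map and one never has to expand $f$ into coordinates; granting that, everything else is a formal manipulation of Lemma \ref{equiv}.
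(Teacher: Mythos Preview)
Your argument is correct and is essentially the paper's own proof: both feed Lemma \ref{equiv} through the identities $f(\gamma*u)=\gamma(f(u))$, $(\gamma\cdot f)(u)=f(\gamma^{-1}\cdot u)$, and $\Ghat$-invariance to obtain \eqref{transform2}. You spell out the Ax--Sen--Tate step for the first assertion (taking $\gamma\in\Gal_E$ and using $(\Cp\wtimes A)^{\Gal_E}=E\otimes A$), which the paper leaves entirely implicit; this is a welcome clarification rather than a different approach.
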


\begin{proof}
  Let $\Theta=\Theta_{\mathrm{Sen}, U}$.  Thinking of elements of
  $S(\ghat^*)^{\Ghat}$ as functions on $\ghat$, we need to prove that
  $\gamma(f(\Theta))=(\gamma\cdot f)(\Theta)$ for all
  $f\in S(\ghat^*)^{\Ghat}$.  Since
  $(\gamma\cdot f)(u)=f(\gamma^{-1}\cdot u)$, for all
  $u\in \Cp\wtimes A\otimes_L\ghat$, it is enough to prove that
  $\gamma(f(\Theta))=f(\gamma^{-1}\cdot\Theta)$.  By observing that
  $f(\gamma*u)=\gamma(f(u))$ for all
  $u\in \Cp\wtimes A\otimes_L\ghat$, we obtain
  \[f(\Theta)=f(c_{\gamma}\cdot (\gamma*(\gamma\cdot
    \Theta)))=f(\gamma*(\gamma\cdot
    \Theta))=\gamma(f(\gamma\cdot\Theta)),\] where the first equality
  follows from Lemma \ref{equiv} and the second from the
  $\Ghat$-invariance of $f$.  Replacing $\gamma$ by $\gamma^{-1}$
  yields the result.
\end{proof}

\subsection{The Sen operator and twisted families of
  $L$-parameters}\label{sen_gauge}
Let $X$ be a rigid analytic variety and let $P$ be a $\Ghat$-torsor on
$X$ with respect to a Grothendieck topology satisfying the conditions
in Section \ref{sec_top}.  Let $\rho: \Gal_F\rightarrow \LPad(X)$ be
an admissible representation, see section \ref{sec_balaur}.

Let $\mathcal U=\{U_i\rightarrow X\}_{i\in I}$ be a covering of $X$ by
affinoids together with local sections $s_i: U_i\rightarrow P$.  Lemma
\ref{fairwell_balaur} gives a family of admissible representations
$\rho_{U_i}: \Gal_F \rightarrow \LG(\OO_X(U_i))$ satisfying
\[\rho_{U_i}(\gamma)= g_{ij} \rho_{U_j}(\gamma) g_{ij}^{-1}, \quad \forall \gamma\in \Gal_F, \quad \forall i,j\in I,\]
where $g_{ij}\in \Ghat(\OO_X(U_{ij}))$ come from the glueing data for
the torsor.  For each $U_i$, Lemma \ref{tannaka} gives us
$\Theta_{\Sen, U_i}\in (\Cp\wtimes \OO_X(U_i))\otimes_L \ghat$, which
behave well under maps between affinoid varieties. The uniqueness of
the Sen operator implies that
\[\Theta_{\Sen, U_i}= g_{ij} \Theta_{\Sen, U_j} g_{ij}^{-1}, \quad
  \forall i,j\in I\] in $(\Cp\wtimes \OO_X(U_{ij}))\otimes_L
\ghat$. Hence, they glue to
$\Theta_{\Sen, \rho}\in \Cp \wtimes (\Lie \Pad)(X)$, use
\eqref{dual_numbers}.
\begin{lem} If $f:Y\rightarrow X$ is a map of rigid varieties over
  $L$, and $\rho_Y: \Gal_F \rightarrow \LPad(Y)$ is the extension of
  scalars of $\rho$ to $Y$, see Definition \ref{base_change}, then
  $\Theta_{\Sen, \rho_Y}$ is the image $\Theta_{\Sen, \rho}$ in
  $\Cp \wtimes (\Lie \Pad)(Y)$.
\end{lem}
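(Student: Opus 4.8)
The plan is to deduce the lemma from the affinoid case, which is the last assertion of Lemma~\ref{tannaka}, by passing to compatible trivialising covers of $X$ and $Y$.

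First I would fix a covering $\mathcal{U}=\{U_i\rightarrow X\}_{i\in I}$ of $X$ by affinoids with local sections $s_i:U_i\rightarrow P$ and cocycle $g_{ij}\in\Ghat(\OO_X(U_{ij}))$ satisfying $s_i=g_{ij}s_j$, exactly as in the construction of $\Theta_{\Sen,\rho}$ preceding the statement. Since $Y$ and the fibre products $U_i\times_X Y$ need not be affinoid, I would then choose for each $i$ an affinoid covering $\{W_{ik}\rightarrow U_i\times_X Y\}_k$, so that $\mathcal{W}=\{W_{ik}\rightarrow Y\}_{i,k}$ becomes an affinoid covering of $Y$. Pulling back the sections $s_i$ shows $\mathcal{W}$ trivialises $P\times_X Y$, with cocycle obtained by pulling back the $g_{ij}$ along the structure maps $W_{ik}\rightarrow U_i$; that the resulting Sen operator is insensitive to these auxiliary choices is guaranteed by Remark~\ref{torsor_top} and the uniqueness statement in Lemma~\ref{tannaka}.

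Next I would check that the local representation $\rho_{Y,W_{ik}}:\Gal_F\rightarrow\LG(\OO_Y(W_{ik}))$ produced by Lemma~\ref{fairwell_balaur} from $\rho_Y$ (relative to the section induced by $s_i$) is precisely the base change of $\rho_{U_i}$ along the affinoid map $W_{ik}\rightarrow U_i$. This is a diagram chase: the map $P_f^{\mathrm{ad}}(X)\rightarrow(P_f\times_X Y)^{\mathrm{ad}}(W_{ik})$ underlying Definition~\ref{base_change} factors through $P_f^{\mathrm{ad}}(U_i)$, and all identifications with $\LGf$ are taken with respect to compatible local sections, so the two descriptions of $\rho_{Y,W_{ik}}$ coincide. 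Granting this, Lemma~\ref{tannaka} applied to $W_{ik}\rightarrow U_i$ gives that $\Theta_{\Sen,W_{ik}}$, computed for $\rho_{Y,W_{ik}}$, is the image of $\Theta_{\Sen,U_i}$ under extension of scalars $(\Cp\wtimes\OO_X(U_i))\otimes_L\ghat\rightarrow(\Cp\wtimes\OO_Y(W_{ik}))\otimes_L\ghat$.

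Finally I would conclude using the sheaf property. By its very construction $\Theta_{\Sen,\rho_Y}$ is the unique element of $\Cp\wtimes(\Lie\Pad)(Y)$ (where $\Pad$ now denotes the automorphism sheaf of the pulled-back torsor $P\times_X Y$) that restricts to $\Theta_{\Sen,W_{ik}}$ over each $W_{ik}$; on the other hand, the image of $\Theta_{\Sen,\rho}$ under the natural pullback $\Cp\wtimes(\Lie\Pad)(X)\rightarrow\Cp\wtimes(\Lie\Pad)(Y)$ restricts over $W_{ik}$ to the image of $\Theta_{\Sen,U_i}$, since $\Theta_{\Sen,\rho}$ restricts to $\Theta_{\Sen,U_i}$ over $U_i$. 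By the previous step these two families of restrictions agree for all $i,k$, hence the two global sections are equal. The geometric content is entirely in Lemma~\ref{tannaka}; I expect the only real obstacle to be organising the bookkeeping with the cocycle data and the section-dependent identifications cleanly enough that the diagram chase of the third paragraph stays transparent rather than tedious.
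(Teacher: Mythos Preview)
Your proposal is correct and follows exactly the approach the paper takes: the paper's proof is the single sentence ``One can check this on an affinoid cover trivialising $P$, and there the assertion follows from the last part of Lemma~\ref{tannaka},'' and your argument is a careful unpacking of precisely that. The extra care you take in refining $U_i\times_X Y$ to affinoids and tracking the section-dependent identifications is the bookkeeping the paper leaves implicit.
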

\begin{proof} One can check this on an affinoid cover trivialising
  $P$, and there the assertion follows from the last part of Lemma
  \ref{tannaka}.
\end{proof}
 
\begin{lem}\label{theta_balaur} The maps $\theta_{U_i}: S(\ghat^*)^{\Ghat} \rightarrow  E \otimes_{\Qp} \OO_X(U_i)$ given by Lemma \ref{thetan} glue to 
  a homomorphism of $L$-algebras
\begin{equation}\label{def_theta}
  \theta: S(\ghat^*)^{\Ghat} \rightarrow  E \otimes_{\Qp} \OO_X(X),
\end{equation}
which satisfies the transformation property in
\eqref{transform2}. Moreover, $\theta$ depends only on the equivalence
class of $\rho$.
\end{lem}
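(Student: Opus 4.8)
The plan is to deduce everything from the local picture on an affinoid cover $\mathcal U=\{U_i\to X\}_{i\in I}$ trivialising $P$, using the $\Ghat$-invariance of the elements of $S(\ghat^*)^{\Ghat}$ to absorb the transition cocycle $\{g_{ij}\}$, and then to invoke the sheaf axioms from Section~\ref{sec_top}.

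First I would check that the maps $\theta_{U_i}$ agree on overlaps. By Lemma~\ref{tannaka} (functoriality of the Sen operator under the restriction maps $U_i\to U_{ij}$) together with the relation recorded just before the statement, one has $\Theta_{\Sen,U_i}|_{U_{ij}}=g_{ij}\,\Theta_{\Sen,U_j}|_{U_{ij}}\,g_{ij}^{-1}$ in $(\Cp\wtimes\OO_X(U_{ij}))\otimes_L\ghat$, with $g_{ij}\in\Ghat(\OO_X(U_{ij}))$. Viewing $f\in S(\ghat^*)^{\Ghat}$ as a polynomial function on $\ghat$ invariant under the adjoint action of $\Ghat$, this gives $\theta_{U_i}(f)|_{U_{ij}}=f(\Theta_{\Sen,U_i}|_{U_{ij}})=f(\Theta_{\Sen,U_j}|_{U_{ij}})=\theta_{U_j}(f)|_{U_{ij}}$. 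By Lemma~\ref{thetan} both sides already lie in $E\otimes_{\Qp}\OO_X(U_{ij})$, and since $E$ is finite dimensional over $\Qp$ the functor $E\otimes_{\Qp}(-)$ is exact and commutes with products, so $U\mapsto E\otimes_{\Qp}\OO_X(U)$ is again a sheaf on $\mathcal T$; hence the family $(\theta_{U_i}(f))_{i}$ glues to a unique $\theta(f)\in E\otimes_{\Qp}\OO_X(X)$. This $\theta$ is an $L$-algebra homomorphism because each $\theta_{U_i}$ is, and the sheaf condition identifies $E\otimes_{\Qp}\OO_X(X)$ with a subring of $\prod_i E\otimes_{\Qp}\OO_X(U_i)$. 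Independence of the chosen cover and sections follows by passing to a common affinoid refinement trivialising $P$: the last part of Lemma~\ref{tannaka} makes the $\Theta_{\Sen,U_i}$, hence the $\theta_{U_i}$, compatible with the refinement maps, so the two glued homomorphisms coincide. (Equivalently, one can phrase the construction intrinsically: the $\Ghat$-invariant functions descend along $\Lie\Pad=P\times^{\Ghat}\ghat$, and $\theta$ is the pullback of $f$ along the global section $\Theta_{\Sen,\rho}\in\Cp\wtimes(\Lie\Pad)(X)$ already constructed via \eqref{dual_numbers}, whose formation does not involve a cover.)

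The transformation property \eqref{transform2} is then immediate: the relevant Galois action acts on $E$ through $\Gal(E/F)$ and trivially on $\OO_X$, hence commutes with the restriction maps, so it suffices to verify $\gamma(\theta_{U_i}(f))=\theta_{U_i}(\gamma\cdot f)$ on each affinoid $U_i$, which is precisely \eqref{transform2} in Lemma~\ref{thetan}. Finally, for the dependence only on the equivalence class: if $\rho'=\varphi\circ\rho\circ\varphi^{-1}$ with $\varphi\in\Pad(X)$, then on a cover trivialising $P$ Lemma~\ref{fairwell_balaur} expresses $\varphi$ by a family $h_i\in\Ghat(\OO_X(U_i))$ with $\rho'_{U_i}(\gamma)=h_i\,\rho_{U_i}(\gamma)\,h_i^{-1}$. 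For a faithful algebraic representation $r$ of $\LG$, the $\OO_X(U_i)$-linear isomorphism $v\mapsto r(h_i)^{-1}v$ intertwines $r\circ\rho'_{U_i}$ with $r\circ\rho_{U_i}$, so by naturality of the Sen operator (Lemmas~\ref{transf_tannaka} and~\ref{extend_scalars}) and the uniqueness clause of Lemma~\ref{tannaka} one gets $\Theta_{\Sen,U_i}'=h_i\,\Theta_{\Sen,U_i}\,h_i^{-1}$; applying the $\Ghat$-invariant $f$ gives $\theta'_{U_i}(f)=\theta_{U_i}(f)$, hence $\theta'=\theta$ after gluing.

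I expect the main obstacle to be the first step — verifying that the local homomorphisms $\theta_{U_i}$ are genuinely compatible with the restriction maps to double overlaps. This forces one to combine the functoriality of the Sen operator (to pass to $U_{ij}$) with the $\Ghat$-invariance of $f$ (to cancel $g_{ij}$), and to know that $U\mapsto E\otimes_{\Qp}\OO_X(U)$ is a sheaf so the values can be glued and the result is cover-independent; the conjugation-equivariance of the Sen operator needed in the last step is a minor extra point, most cleanly extracted from the uniqueness in Lemma~\ref{tannaka}.
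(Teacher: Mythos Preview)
Your proposal is correct and follows essentially the same route as the paper: compatibility on overlaps via $\Ghat$-invariance of $f$ applied to the conjugate Sen operators, gluing by the sheaf property, the transformation property checked locally, and independence of the equivalence class via local conjugation of the $\rho_{U_i}$. Your write-up is more explicit than the paper's (you spell out that $E\otimes_{\Qp}\OO_X$ is a sheaf and how conjugation-equivariance of the Sen operator follows from uniqueness in Lemma~\ref{tannaka}), and you add the intrinsic description via $\Theta_{\Sen,\rho}\in\Cp\wtimes(\Lie\Pad)(X)$, but the underlying argument is the same.
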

\begin{proof} Since the restrictions of $\Theta_{\Sen, U_i}$ and
  $\Theta_{\Sen, U_j}$ to $U_{ij}$ are conjugate by an element of
  $\Ghat(\OO_X(U_{ij}))$, the evaluation maps at $\Theta_{\Sen, U_i}$
  and $\Theta_{\Sen, U_j}$ coincide when restricted to
  $S(\ghat^*)^{\Ghat}$ and hence glue. The transformation property in
  \eqref{transform2} holds, since it holds on a cover.

  If we choose different sections $s_i'= g_i s_i$ then
  $\Theta_{\Sen, U_i}$ get replaced by
  $g_i \Theta_{\Sen, U_i} g_i^{-1}$, so we obtain the same
  $\theta_{U_i}$ and hence the same $\theta$. Clearly, $\theta$ does
  not change if we refine the cover. Thus $\theta$ does not depend on
  the choices made at the beginning of the subsection.

  If $\rho$ and $\rho'$ are conjugate by an element $u\in \Pad(X)$
  then the representations $\rho'_{U_i}$ and $\rho_{U_i}$ are also
  conjugate. More precisely,
  $\rho'_{U_i}(\gamma)= q_i \rho_{U_i}(\gamma) q_i^{-1}$, for all
  $\gamma \in \Gal_F$ and $i\in I$, where $q_i\in \Ghat(\OO_X(U_i))$
  are uniquely determined by $u(s_i)=q_i^{-1} s_i$, see \cite[\S 1,
  (1.1.4)]{breen_messing}. Hence, we obtain the same $\theta$.
\end{proof} 

\begin{lem}\label{base_change_theta}
  If $f:Y\rightarrow X$ is a map of rigid varieties over $L$ and
  $\theta_Y: S(\ghat^*)^{\Ghat} \rightarrow E \otimes_{\Qp} \OO_Y(Y)$
  is the map \eqref{def_theta} for $\rho_Y$, see Definition
  \ref{base_change}, then $\theta_Y$ is equal to $\theta$ composed
  with the map
  $\id \otimes f^{\sharp}: E\otimes_{\Qp} \OO_X(X)\rightarrow
  E\otimes_{\Qp} \OO_Y(Y)$.
\end{lem}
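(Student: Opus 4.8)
The plan is to reduce the statement to the local (affinoid) case already handled by Lemma \ref{tannaka}, using the fact that all the objects involved are defined by gluing from an affinoid cover trivialising the torsor. First I would fix a covering $\mathcal U=\{U_i\rightarrow X\}_{i\in I}$ of $X$ by affinoids together with local sections $s_i: U_i\rightarrow P$, as in the setup of subsection \ref{sen_gauge}. Pulling back along $f: Y\rightarrow X$ gives a covering $\{U_i\times_X Y\rightarrow Y\}_{i\in I}$ of $Y$ by spaces trivialising $P_f\times_X Y$; refining it we may assume it is an affinoid cover $\{V_j\rightarrow Y\}_{j\in J}$ with each $V_j$ mapping into some $U_{i(j)}$, and the local sections $s_{i(j)}$ pull back to local sections of $P\times_X Y$ over $V_j$. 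With these choices, by Definition \ref{base_change} the representation $(\rho_Y)_{V_j}$ is simply the composition of $\rho_{U_{i(j)}}$ with the structure map $\OO_X(U_{i(j)})\rightarrow \OO_Y(V_j)$, i.e.~the extension of scalars along the corresponding map of affinoid algebras.

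Next I would invoke the last assertion of Lemma \ref{tannaka}: for a map of affinoid varieties $V_j\rightarrow U_{i(j)}$, the extension-of-scalars map $(\Cp\wtimes \OO_X(U_{i(j)}))\otimes_L\ghat \rightarrow (\Cp\wtimes\OO_Y(V_j))\otimes_L\ghat$ sends $\Theta_{\Sen,U_{i(j)}}$ to $\Theta_{\Sen, V_j}$ (computed for $(\rho_Y)_{V_j}$). Evaluation of a $\Ghat$-invariant polynomial function $f\in S(\ghat^*)^{\Ghat}$ is functorial with respect to such base change, so $\theta_{V_j}(f)$ is the image of $\theta_{U_{i(j)}}(f)$ under $\id\otimes f^\sharp_j: E\otimes_{\Qp}\OO_X(U_{i(j)})\rightarrow E\otimes_{\Qp}\OO_Y(V_j)$; this is exactly the compatibility already recorded inside Lemma \ref{thetan} (which is proven locally and is stable under base change). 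Since $\theta$ and $\theta_Y$ are defined by gluing the $\theta_{U_i}$ and $\theta_{V_j}$ respectively (Lemma \ref{theta_balaur}), and $E\otimes_{\Qp}\OO_X(-)$ is a sheaf on the relevant Grothendieck topology (Lemma \ref{Banach_open}, Lemma \ref{topology}), it suffices to check equality after restricting to each $V_j$, where it has just been verified.

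The only genuinely delicate point is bookkeeping with the torsor: one must check that the gluing data $\{g_{ij}\}$ for $P$ pulls back to the gluing data for $P\times_X Y$ (which is immediate from the description in \cite[\S 1, (1.1.4)]{breen_messing}), so that the local representations $(\rho_Y)_{V_j}$ transform under $f^\sharp(g_{ij})$ exactly as required in Lemma \ref{fairwell_balaur}; this guarantees that the locally-defined maps $\theta_{V_j}$ do glue to $\theta_Y$ and that this glued map is indeed the one attached to $\rho_Y$. Once this compatibility of cocycle data is in place, the rest is a formal diagram chase: I would write

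\begin{equation}
\theta_Y(f)|_{V_j} = \theta_{V_j}(f) = (\id\otimes f^\sharp_j)(\theta_{U_{i(j)}}(f)) = (\id\otimes f^\sharp_j)\bigl(\theta(f)|_{U_{i(j)}}\bigr) = \bigl((\id\otimes f^\sharp)(\theta(f))\bigr)|_{V_j},
\end{equation}

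and then conclude by the sheaf property that $\theta_Y(f) = (\id\otimes f^\sharp)(\theta(f))$ for all $f\in S(\ghat^*)^{\Ghat}$. I expect the verification that one may choose the cover of $Y$ compatibly with both the cover of $X$ and the trivialisations of the torsor to be the main (though still routine) obstacle; everything else follows from functoriality of the Sen operator and of polynomial evaluation, both of which are already available.
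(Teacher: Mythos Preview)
Your proposal is correct and follows essentially the same approach as the paper: reduce to an affinoid cover trivialising the torsor and use the base-change compatibility of the Sen operator. The paper's one-line proof cites Lemma~\ref{extend_scalars} directly, whereas you cite the last assertion of Lemma~\ref{tannaka} (whose proof in turn rests on Lemma~\ref{extend_scalars}); these are the same input, and your version simply spells out the cover-refinement and gluing bookkeeping that the paper leaves implicit.
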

\begin{proof} It is enough to check it on the cover, where it follows
  from Lemma \ref{extend_scalars}.
\end{proof}

\subsection{Chevalley's restriction theorem} Let $\that$ be the Lie
algebra of $\That$ and let $W$ be the Weyl group associated to the
root system.  We note that $W$ is also the Weyl group of the dual root
system, \cite[\S 2.9]{hump_cox}.

\begin{prop}[Chevalley's restriction theorem]\label{chev} The
  restriction to $\that$ induces an isomorphism of
  rings \begin{equation}\label{eqn_chev}
    S(\ghat^*)^{\Ghat}\overset{\cong}{\longrightarrow} S(\that^*)^W.
  \end{equation}
\end{prop}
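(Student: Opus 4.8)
The plan is to deduce this from the classical Chevalley restriction theorem applied to the split reductive Lie algebra $\ghat$ over $L$. Recall that $\ghat$ is the Lie algebra of the split reductive group $\Ghat$ over $\ZZ$, base-changed to $L$; it comes equipped with the Cartan subalgebra $\that = \Lie(\That)$, and the Weyl group $W = W(\Ghat,\That)$, which by \cite[\S 2.9]{hump_cox} coincides with the Weyl group $W(G,T)$ of the original root datum (the root datum and its dual have the same Weyl group). Since we are in characteristic zero and $\ghat$ is reductive, the classical Chevalley restriction theorem (see e.g. the standard references on invariant theory of reductive Lie algebras) states precisely that restriction of polynomial functions from $\ghat$ to $\that$ induces an isomorphism $S(\ghat^*)^{\Ghat} \iso S(\that^*)^W$.

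Concretely, first I would note that restriction of functions is dual to the inclusion $\that \hookrightarrow \ghat$, giving a ring homomorphism $\mathrm{res}\colon S(\ghat^*) \to S(\that^*)$. Since $\That$ acts trivially on $\that$ and the action of the normalizer $N_{\Ghat}(\That)$ on $\that$ factors through $W$, the image of $S(\ghat^*)^{\Ghat}$ lands in $S(\that^*)^W$, so $\mathrm{res}$ restricts to a well-defined homomorphism $S(\ghat^*)^{\Ghat} \to S(\that^*)^W$. Injectivity follows because $\Ghat \cdot \that$ is Zariski dense in $\ghat$ (every element of $\ghat$ is $\Ghat$-conjugate to an element with nonzero semisimple part arbitrarily close to it, and regular semisimple elements are conjugate into $\that$; more robustly, the set of semisimple elements contains a dense open subset and each is conjugate into $\that$), so a $\Ghat$-invariant polynomial vanishing on $\that$ vanishes identically. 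Surjectivity is the substantive point: given a $W$-invariant polynomial on $\that$, one must extend it to a $\Ghat$-invariant polynomial on $\ghat$. This is handled by Chevalley's classical argument — one can invoke it directly, as it applies verbatim to any reductive Lie algebra over a field of characteristic zero, and $\ghat_L$ is such.

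The main obstacle, to the extent there is one, is purely bookkeeping: making sure that the $W$ appearing on the dual side is legitimately identified with the Weyl group of $(\Ghat,\That)$, and that the characteristic-zero hypothesis is in force so that the classical statement applies — both of which are already secured by the setup ($L$ has characteristic zero; $W$ is Weyl-group-of-the-dual by \cite[\S 2.9]{hump_cox}). I would therefore present the proof in two or three sentences: the map is well-defined and injective by density of $\Ghat\cdot\that$ in $\ghat$, and it is surjective by the classical Chevalley restriction theorem for the reductive Lie algebra $\ghat_L$, since $W(\Ghat,\That) = W$. No genuinely new argument is needed beyond citing the classical result; the content of the proposition is simply the observation that it applies to the dual group's Lie algebra.
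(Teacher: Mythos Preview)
Your proposal is correct and matches the paper's approach: the paper simply cites the classical result in \cite[\S 23.1 and Appendix to \S 23]{hump} for the semisimple case and notes that the same argument carries over to the reductive case. Your version is somewhat more explicit about the structure of the argument (well-definedness, injectivity via density, surjectivity via the classical theorem), but the strategy is identical.
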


\begin{proof} If $\ghat$ is semi-simple both assertions are proved in
  \S 23.1 and the Appendix to \S 23 in \cite{hump}. The same proof
  carries over when $\ghat$ is reductive.
\end{proof}
 
Recall that the group $\Gamma=\Gal(E/F)$ acts on the based root
datum. Recall that by choosing a pinning the group $\Gamma$ acts
through $\Aut(\Ghat, \That)$. If $\theta\in \Aut(\Ghat, \That)$ and
$g\in N_{\Ghat}(\That)$ then
$ \theta(g) t \theta(g)^{-1}= \theta( g \theta^{-1}(t) g^{-1}) \in
\That$ for all $t\in \That$ and thus $\Aut(\Ghat, \That)$ normalizes
$N_{\Ghat}(\That)$ and thus acts on $S(\that^*)^W$.  The isomorphism
\eqref{eqn_chev} is $\Gamma$-equivariant, since it is obtained by
restriction of functions on $\ghat$ to $\that$.
 
\subsection{Harish-Chandra homomorphism.} Since $G_E$ is split we may
assume that the triple $(G_E,B,T)$ is defined over $E$. Let
$\mathfrak{t}$ be the Lie algebra of $T$.  We will now recall the
construction of Harish-Chandra homomorphism
\begin{equation}\label{HC}
  \psi: Z(\mathfrak{g}_E)\overset{\cong}{\longrightarrow} S(\mathfrak{t})^W.
\end{equation} 

Let $(h_i)_{1\leq i \leq n}$ be an $E$-basis of $\mathfrak{t}$. We
extend it to a basis of $\mathfrak{g}_E$,
$\{h_i, x_{\beta}, y_{\beta} : 1\leq i \leq n, \beta\in \Phi^+\}$,
such that $t x_{\beta} t^{-1}= \beta(t) x_{\beta}$ and
$t y_{\beta} t^{-1}= \beta^{-1}(t) y_{\beta}$ for all
$\beta\in \Phi^+$ and all $t\in T$.  By PBW theorem $U(\mathfrak g_E)$
has an $E$-basis consisting of monomials
$\prod_{\beta\in \Phi^+} y_{\beta}^{i_{\alpha}} \prod_{i=1}^n
h_i^{k_i} \prod_{\beta\in \Phi^+} x_{\beta}^{j_{\alpha}}$ with
$i_{\alpha}, k_i, j_{\alpha}\ge 0$.  Let
\[\xi: U(\mathfrak{g}_E)\rightarrow S(\mathfrak{t})\]
be a linear map, which sends all the monomials with $i_{\alpha}>0$ or
$j_{\alpha}>0$ to zero and is identity on the monomials with
$i_{\alpha}=j_{\alpha}=0$.

Let $\lambda\in X^*(T)_+$ and let $V(\lambda)$ be an irreducible
representation of $G_E$ of highest weight $\lambda$. The highest
weight theory implies that $V(\lambda)$ is absolutely irreducible as
representation of $G_E$ and $\mathfrak{g}_E$. In particular,
$\End_{\mathfrak{g}_E}(V(\lambda))=E$ and hence the action of
$U(\mathfrak{g}_E)$ on $V(\lambda)$ induces a ring homomorphism
\begin{equation}\label{central}
 \chi_{\lambda}: Z(\mathfrak{g}_E)\rightarrow E.
\end{equation}
As explained 
in \cite[\S VIII.5]{Knapp} we have
\begin{equation}\label{need} 
  \Lie(\lambda)(\xi(z))=\chi_{\lambda}(z), \quad \forall z\in Z(\mathfrak{g}_E), \quad \forall \lambda\in X^*(T)_+.
\end{equation}
Let $\mathfrak{t}\rightarrow S(\mathfrak t)$ be the $E$-linear map
sending $h\mapsto h-\delta(h)1$ for $h\in\mathfrak{t}$ with
$\delta=\tfrac{1}{2}\sum_{\alpha\in\Phi^+}\alpha$.  This induces an
isomorphism of $E$-algebras
$\eta: S(\mathfrak t)\overset{\cong}{\longrightarrow} S(\mathfrak
t)$. We define
$\psi\coloneqq \eta\circ \xi: Z(\mathfrak g_E)\rightarrow
S(\mathfrak{t})$.  It follows from \cite[Th.~8.18]{Knapp} that $\psi$
induces an isomorphism of $E$-algebras
$Z(\mathfrak{g}_E)\cong S(\mathfrak{t})^W$ which is the isomorphism
\eqref{HC}.

Each $\lambda\in X^*(T)\otimes_\ZZ E= \mathfrak t^*$ defines a linear
map $\mathfrak{t}\rightarrow E$, $t\mapsto \Lie(\lambda)(t)$ and hence
a ring homomorphism $S(\mathfrak{t})\rightarrow E$, which we denote by
$\ev_\lambda$. We note it follows from the construction of $\psi$ that
\begin{equation}\label{Verma}
  \ev_{\lambda+\delta}(\psi(z))= \chi_{\lambda}(z), \quad \forall \lambda\in X^*(T)_+, \quad \forall z\in Z(\mathfrak{g}_E),
\end{equation}
where $\delta$ is a half sum of positive roots. The ring
$S(\mathfrak{t})$ is reduced and the set of closed points
$\ev_\lambda: S(\mathfrak{t})\rightarrow E$, for
$\lambda\in X^*(T)_+$, is Zariski dense in $\Spec
S(\mathfrak{t})$. Thus $\psi$ is the unique isomorphism \eqref{HC}
such that \eqref{Verma} holds.

\begin{lem}\label{HCeq} The isomorphism $\psi$ defined in \eqref{HC}
  is $\Gamma$-equivariant.
\end{lem}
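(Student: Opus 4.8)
The plan is to deduce the $\Gamma$-equivariance of $\psi$ from its characterisation, established in the paragraph following \eqref{Verma}, as the \emph{unique} $E$-algebra isomorphism \eqref{HC} satisfying the normalisation property \eqref{Verma}. Fix $\gamma\in\Gamma$. The action of $\Gamma$ on $\mathfrak{g}_E$ constructed in Section \ref{Lgroup} preserves $\mathfrak{t}$ and $\mathfrak{b}$ (once the pinning is fixed it factors through $\Aut(G,T,B,\{p_a\})$), so it induces $E$-linear algebra automorphisms of $U(\mathfrak{g}_E)$, of $Z(\mathfrak{g}_E)$ and of $S(\mathfrak{t})$; moreover $\Gamma$ normalises $W\subset\Aut(\mathfrak{t})$, hence preserves $S(\mathfrak{t})^W$. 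Define $\psi^{\gamma}\colon Z(\mathfrak{g}_E)\to S(\mathfrak{t})^W$ by $\psi^{\gamma}(z)=\gamma^{-1}\cdot\psi(\gamma\cdot z)$. This is again an $E$-algebra isomorphism of the form \eqref{HC}, so by the quoted uniqueness it suffices to check that $\psi^{\gamma}$ satisfies \eqref{Verma}; then $\psi^{\gamma}=\psi$, i.e. $\psi(\gamma\cdot z)=\gamma\cdot\psi(z)$ for all $z$, which is exactly the assertion.

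Two elementary compatibilities feed into this. First, $\Gamma$ acts on $X^*(T)$ preserving $\Delta$, hence preserving the dominant cone $X^*(T)_+$ and fixing the half-sum $\delta$; and, directly from the definition of $\ev_{\mu}$ and of the $\Gamma$-action on $S(\mathfrak{t})$ coming from $\mathfrak{t}$, one has $\ev_{\gamma\cdot\mu}(\gamma\cdot f)=\ev_{\mu}(f)$ for all $\mu\in\mathfrak{t}^*=X^*(T)\otimes_{\ZZ}E$ and $f\in S(\mathfrak{t})$. Second, for $\lambda\in X^*(T)_+$ the twist of the irreducible representation $V(\lambda)$ by the automorphism $\gamma$ of $G_E$ is again irreducible and, since $\gamma$ preserves $(B,T)$, has highest weight $\gamma\cdot\lambda$; tracking the $Z(\mathfrak{g}_E)$-action through this twist (the induced action on $U(\mathfrak{g}_E)$ is the one coming from $\gamma$ on $\mathfrak{g}_E$) yields the transformation law $\chi_{\gamma\cdot\lambda}(\gamma\cdot z)=\chi_{\lambda}(z)$ for all $z\in Z(\mathfrak{g}_E)$.

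Granting these, for $\lambda\in X^*(T)_+$ and $z\in Z(\mathfrak{g}_E)$ one computes, successively using the first compatibility, $\gamma\cdot\delta=\delta$, property \eqref{Verma} for the dominant weight $\gamma\cdot\lambda$, and the transformation law for $\chi$:
\[
\ev_{\lambda+\delta}\bigl(\psi^{\gamma}(z)\bigr)=\ev_{\gamma\cdot(\lambda+\delta)}\bigl(\psi(\gamma\cdot z)\bigr)=\ev_{\gamma\cdot\lambda+\delta}\bigl(\psi(\gamma\cdot z)\bigr)=\chi_{\gamma\cdot\lambda}(\gamma\cdot z)=\chi_{\lambda}(z).
\]
Hence $\psi^{\gamma}$ satisfies \eqref{Verma}, so $\psi^{\gamma}=\psi$ and $\psi$ is $\Gamma$-equivariant. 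The only genuinely delicate point is the transformation law for central characters in the second paragraph: one must keep careful track of whether twisting $V(\lambda)$ is done by $\gamma$ or $\gamma^{-1}$ and of the matching direction of the induced action on $Z(\mathfrak{g}_E)$; the rest is bookkeeping with the Harish-Chandra normalisation. An alternative route would be to choose the PBW basis underlying $\xi$ in a $\Gamma$-stable way and verify equivariance of $\xi$ and of the shift $\eta$ (which is transparent since $\delta$ is $\Gamma$-fixed) separately, but the uniqueness argument sidesteps these choices entirely.
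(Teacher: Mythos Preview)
Your proof is correct and follows essentially the same route as the paper: both arguments identify the twisted central character via $V(\lambda)^{\gamma}\simeq V(\gamma\cdot\lambda)$, invoke the normalisation \eqref{Verma}, use that $\delta$ is $\Gamma$-fixed, and then conclude by Zariski density of the points $\ev_{\lambda+\delta}$ (packaged in your version as the uniqueness of $\psi$). The only difference is cosmetic: you define $\psi^{\gamma}$ and show it satisfies \eqref{Verma}, whereas the paper computes $\ev_{\lambda+\delta}(\gamma(\psi(z)))$ directly and matches it with $\ev_{\lambda+\delta}(\psi(\gamma(z)))$.
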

\begin{proof} 
  The group $\Aut(G_E)$ acts on $\mathfrak{g}_E$ and hence on
  $U(\mathfrak{g}_E)$ and $Z(\mathfrak{g}_E)$. If $V$ is an algebraic
  representation of $G_E$, then for $\gamma\in\Gamma$ we define
  $V^\gamma$ to be the action of $G_E$ on $V$ given by
  $(g,v)\mapsto \gamma^{-1}(g)\cdot v$. For $\lambda\in X^*(T)_+$, we
  have $V(\lambda)^\gamma\simeq V(\gamma\cdot\lambda)$. We deduce from
  relation \eqref{Verma} that
  \[\ev_{\lambda+\delta}(\gamma(\psi(z)))=\ev_{\gamma^{-1}\cdot\lambda+\delta}(\psi(z))=\chi_{\gamma^{-1}\cdot\lambda}(z),
    \quad \forall z\in Z(\mathfrak{g}_E).\] It follows from
  \eqref{need} that $Z(\mathfrak{g}_E)$ acts on
  $V(\gamma^{-1}\cdot\lambda)\simeq V(\lambda)^{\gamma^{-1}}$ via
  $\chi_\lambda(\gamma(z))$. Finally we have
  \[ \ev_{\lambda+\delta}(\gamma(\psi(z)))=\chi_\lambda(\gamma(z))\]
  for all $z\in Z(\mathfrak{g}_E)$, this implies
  $\gamma\circ\psi=\psi\circ\gamma$ and the result.
\end{proof}

Let $\mathfrak g$ be the Lie algebra of $G$ as an algebraic group over
$F$, let $U(\mathfrak{g})$ be its universal enveloping algebra and let
$Z(\mathfrak{g})$ be the center of $U(\mathfrak{g})$. Then
$\mathfrak g_E$, where the subscript $E$-denotes the extension of
scalars from $F$ to $E$, is the Lie algebra of $G_E$. It follows from
its universal property \cite[Lem.\,2.1.3]{dix} that
$U(\mathfrak g_E)= U(\mathfrak g)_E$.

\begin{lem}\label{centre_extend} Let $A,B$ be central algebras over a
  field $F$. The natural map
  \[Z(A)\otimes_F Z(B)\to Z(A\otimes_F B)\] is an isomorphism. In
  particular, the centre $Z(\mathfrak{g}_E)$ of $U(\mathfrak g_E)$ is
  equal to $Z(\mathfrak{g})_E$.
\end{lem}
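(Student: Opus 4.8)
The plan is to prove the purely ring-theoretic statement that for \emph{any} two unital associative $F$-algebras $A$ and $B$ the natural map $Z(A)\otimes_F Z(B)\to Z(A\otimes_F B)$ is an isomorphism, and then to deduce the assertion about enveloping algebras as the special case $A=U(\mathfrak g)$, $B=E$.

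For injectivity I would use that $F$ is a field, so that every $F$-module is flat: tensoring the inclusions $Z(A)\hookrightarrow A$ and $Z(B)\hookrightarrow B$ then shows that $Z(A)\otimes_F Z(B)\to A\otimes_F B$ is injective, and since its image obviously consists of central elements the map $Z(A)\otimes_F Z(B)\to Z(A\otimes_F B)$ is injective as well.

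For surjectivity, take $z\in Z(A\otimes_F B)$ and write $z=\sum_{i=1}^{m}a_i\otimes b_i$ with $b_1,\dots,b_m$ linearly independent over $F$. For each $a\in A$ the relation $(a\otimes 1)z=z(a\otimes 1)$ becomes $\sum_i(aa_i-a_ia)\otimes b_i=0$; since over a field a relation $\sum_i x_i\otimes b_i=0$ with the $b_i$ linearly independent forces all $x_i=0$, this gives $a_i\in Z(A)$ for every $i$, hence $z\in Z(A)\otimes_F B$. I would then re-expand $z=\sum_{j=1}^{n}c_j\otimes d_j$ with $c_1,\dots,c_n\in Z(A)$ now linearly independent over $F$ and run the symmetric argument with the elements $1\otimes b$, $b\in B$, to conclude $d_j\in Z(B)$ for all $j$, so that $z\in Z(A)\otimes_F Z(B)$.

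Finally, for the displayed consequence I would combine this with the identification $U(\mathfrak g_E)=U(\mathfrak g)\otimes_F E=U(\mathfrak g)_E$ recalled just above from \cite[Lem.\,2.1.3]{dix}: since $E$ is a field, $Z(E)=E$, and applying the isomorphism with $A=U(\mathfrak g)$ and $B=E$ yields $Z(\mathfrak g_E)=Z(U(\mathfrak g_E))=Z(U(\mathfrak g))\otimes_F E=Z(\mathfrak g)_E$. I do not expect any real obstacle here; the only step that genuinely uses that $F$ is a field rather than an arbitrary base ring is the ability to write any tensor with one family of its components linearly independent, which is exactly what makes the two commutator computations conclude.
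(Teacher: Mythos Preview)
Your proof is correct and follows essentially the same approach as the paper: expand a central element of $A\otimes_F B$ with one set of tensor factors linearly independent (the paper uses a full basis, you use a linearly independent family), commute with elements of the other factor to force the coefficients into the centre, and then repeat with the roles of $A$ and $B$ exchanged. The only differences are cosmetic: the paper runs the two steps in the opposite order and does not spell out injectivity or the final deduction about $U(\mathfrak g_E)$, both of which you handle correctly.
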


\begin{proof} Let $x\in Z(A\otimes_F B)$ and let $(a_i)_{i\in I}$ be a
  basis of $A$ over $F$. Writing $x=\sum_{i\in I} a_i\otimes b_i$ and
  imposing $x(1\otimes b)=(1\otimes b)x$ yields
  $\sum_{i\in I} a_i\otimes (bb_i-b_ib)=0$, thus $b_i\in Z(B)$ for all
  $i\in I$ and $x\in A\otimes_F Z(B)$. Now pick a basis of $Z(B)$ over
  $F$ and repeat the argument to get $x\in Z(A)\otimes_F Z(B)$.
\end{proof}

\subsection{Definition of infinitesimal character}\label{Definf} Let us fix an embedding 
$\sigma: F\hookrightarrow L$ and choose an embedding
$\tau: E\hookrightarrow L$, such that $\tau|_F=\sigma$.

We have canonical identifications $\mathfrak{t}=X_*(T)\otimes_{\ZZ} E$
and $\that=X_*(\That)\otimes_\ZZ L= X^*(T)\otimes_{\ZZ} L$. Thus an
isomorphism of $L$-vector spaces
\[\mathfrak{t}\otimes_{E, \tau} L \cong \that^*,\] which 
is equivariant for $\Gamma$ and $W$ actions. By base changing
\eqref{HC} along $\tau$ and using Lemma \ref{centre_extend} we obtain
an isomorphism of $L$-algebras
\begin{equation}\label{dazed}
  \kappa_{\tau}: Z(\mathfrak{g})\otimes_{F, \sigma} L \overset{\cong}{\longrightarrow} S(\that^*)^W.
\end{equation}

Since $T$ is split over $E$ we have canonically
$X^*(T)= X^*(T\times_{E, \tau} L)$. If $V$ is an irreducible
representation of $G\times_{F, \sigma} L$ and $\lambda$ is the highest
weight of $V$ with respect to $(B, T)\times_{E, \tau} L$ then the
above identifications allows to view $\lambda_{\tau}$ as an element of
$\that$, and thus induces a homomorphism
$\ev_{\lambda_{\tau}}: S(\that^*)\rightarrow L$. Since
$G\times_{F, \sigma} L$ is split $V$ is absolutely irreducible and
thus by the same argument as in \eqref{central} we obtain a
homomorphism
$\chi_{V}: Z(\mathfrak g)\otimes_{F, \sigma} L \rightarrow L$.

\begin{lem}\label{chara_kappa_tau} The isomorphism $\kappa_{\tau}$ is
  uniquely characterised by the property
  \[ \ev_{\lambda_{\tau}+ \delta}\circ \kappa_{\tau} = \chi_{V}\] for
  all irreducible representations $V$ of $G\times_{F, \sigma} L$.
\end{lem}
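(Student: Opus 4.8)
The plan is to observe that, by its very construction in \eqref{dazed}, the map $\kappa_{\tau}$ is nothing but the Harish--Chandra isomorphism $\psi$ of \eqref{HC} transported along $\tau\colon E\hookrightarrow L$: one base changes $\psi$ along $\tau$, uses Lemma \ref{centre_extend} to identify $Z(\mathfrak g_E)\otimes_{E,\tau}L$ with $Z(\mathfrak g)\otimes_{F,\sigma}L$, and uses Chevalley's theorem (Proposition \ref{chev}) together with the canonical $W$-equivariant identification $\mathfrak t\otimes_{E,\tau}L\cong\that^{*}$ to identify $S(\mathfrak t)^{W}\otimes_{E,\tau}L$ with $S(\that^{*})^{W}$. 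From this point of view the asserted characterisation is simply the base change along $\tau$ of the defining relation \eqref{Verma} of $\psi$, together with the uniqueness clause recorded immediately after \eqref{Verma}. So two things need checking: that \eqref{Verma} base changes to the identity $\ev_{\lambda_{\tau}+\delta}\circ\kappa_{\tau}=\chi_{V}$, and that the Zariski density used for uniqueness survives the base change.

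For the first point, I would fix an irreducible representation $V$ of $G\times_{F,\sigma}L$ with highest weight $\lambda\in X^{*}(T)_{+}$ for $(B,T)\times_{E,\tau}L$. Since $G_{E}$ is already split, the highest-weight module $V(\lambda)$ over $E$ remains irreducible, with the same highest weight, after $-\otimes_{E,\tau}L$, so $V\cong V(\lambda)\otimes_{E,\tau}L$; hence the action of $Z(\mathfrak g)\otimes_{F,\sigma}L=Z(\mathfrak g_{E})\otimes_{E,\tau}L$ on $V$ is the base change along $\tau$ of the action of $Z(\mathfrak g_{E})$ on $V(\lambda)$, which gives $\chi_{V}=\chi_{\lambda}\otimes_{E,\tau}1$. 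Under $\mathfrak t\otimes_{E,\tau}L\cong\that^{*}$ the linear form $\lambda+\delta\in\mathfrak t^{*}$, with $\delta=\tfrac12\sum_{\alpha\in\Phi^{+}}\alpha$, corresponds to $\lambda_{\tau}+\delta\in\that$, so $\ev_{\lambda+\delta}$ base changes to $\ev_{\lambda_{\tau}+\delta}$. Applying $-\otimes_{E,\tau}L$ to \eqref{Verma} and using $\kappa_{\tau}=\psi\otimes_{E,\tau}1$ after the identifications above then yields $\ev_{\lambda_{\tau}+\delta}\circ\kappa_{\tau}=\chi_{\lambda}\otimes_{E,\tau}1=\chi_{V}$.

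For the uniqueness, I would argue as follows. Suppose $\kappa'\colon Z(\mathfrak g)\otimes_{F,\sigma}L\to S(\that^{*})^{W}$ is another $L$-algebra homomorphism with $\ev_{\lambda_{\tau}+\delta}\circ\kappa'=\chi_{V}$ for all such $V$, i.e.\ for all $\lambda\in X^{*}(T)_{+}$. Then for each $z$ the difference $\kappa'(z)-\kappa_{\tau}(z)$, viewed inside $S(\that^{*})$ via the inclusion $S(\that^{*})^{W}\hookrightarrow S(\that^{*})$ of Proposition \ref{chev}, vanishes at the point $\lambda_{\tau}+\delta$ for every $\lambda\in X^{*}(T)_{+}$. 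The excerpt already records that $\{\ev_{\lambda}:\lambda\in X^{*}(T)_{+}\}$ is Zariski dense in $\Spec S(\mathfrak t)$; since $S(\that^{*})$ is a polynomial ring over $L$, in particular reduced, base changing this density along the finite extension $\tau\colon E\to L$ (a polynomial $\sum_{i}f_{i}\otimes c_{i}$ vanishing at all $\ev_{\lambda}$ forces each $f_{i}=0$) shows that $\{\lambda_{\tau}:\lambda\in X^{*}(T)_{+}\}$, and hence its translate by the fixed vector $\delta$, is Zariski dense in $\Spec S(\that^{*})$. Therefore $\kappa'(z)-\kappa_{\tau}(z)$ vanishes on a dense set, hence is nilpotent, hence is $0$; so $\kappa'=\kappa_{\tau}$.

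I expect no real obstacle: the argument is essentially bookkeeping about the functor $-\otimes_{E,\tau}L$. The two places that warrant a little care are the claim that the identifications entering \eqref{dazed} are canonical enough for $\kappa_{\tau}$ to be literally $\psi$ base changed — this is precisely the $W$- and $\Gamma$-equivariance of Chevalley's isomorphism and of $\psi$ (Proposition \ref{chev} and Lemma \ref{HCeq}) — and the identity $\chi_{V}=\chi_{\lambda}\otimes_{E,\tau}1$, which uses base-change stability of irreducible highest-weight modules for split reductive groups; both are standard.
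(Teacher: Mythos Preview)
Your proof is correct and follows exactly the paper's approach: the paper's one-line proof simply cites \eqref{Verma} and the Zariski density argument recorded immediately after it, and you have carefully unpacked what these become after base change along $\tau$. The invocations of Proposition~\ref{chev} and Lemma~\ref{HCeq} in your last paragraph are unnecessary here (for fixed $\tau$ only the $W$-equivariance of $\mathfrak t\otimes_{E,\tau}L\cong\that^{*}$ matters; $\Gamma$-equivariance enters only in Lemma~\ref{transform_kappa}), but this does not affect correctness.
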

\begin{proof} The assertion follows from \eqref{Verma} and Zariski
  density as explained after \eqref{Verma}.
\end{proof}

\begin{lem}\label{transform_kappa} For every $\gamma\in \Gamma$ the composition 
  \[ Z(\mathfrak{g})\otimes_{F, \sigma}
    L\overset{\kappa_{\tau}}{\longrightarrow} S(\that^*)^W
    \overset{\gamma\cdot}{\longrightarrow} S(\that^*)^W\] is equal to
  $\kappa_{\tau\circ\gamma^{-1}}$, where $\cdot$ denotes the action of
  $\Gamma$ on $S(\that^*)^W$.
\end{lem}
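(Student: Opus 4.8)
The plan is to exploit the characterisation of $\kappa_\tau$ established in Lemma~\ref{chara_kappa_tau}. Applying that lemma with $\tau$ replaced by $\tau\circ\gamma^{-1}$ (which still restricts to $\sigma$ on $F$, since $\gamma\in\Gamma=\Gal(E/F)$ fixes $F$), the map $\kappa_{\tau\circ\gamma^{-1}}$ is the \emph{unique} isomorphism of $L$-algebras $Z(\mathfrak g)\otimes_{F,\sigma}L\iso S(\that^*)^W$ satisfying $\ev_{\lambda_{\tau\circ\gamma^{-1}}+\delta}\circ\kappa_{\tau\circ\gamma^{-1}}=\chi_V$ for every irreducible representation $V$ of $G\times_{F,\sigma}L$, where $\lambda_{\tau\circ\gamma^{-1}}\in X^*(T)$ denotes the highest weight of $V$ relative to $(B,T)\times_{E,\tau\circ\gamma^{-1}}L$. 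Since $\gamma\cdot\circ\kappa_\tau$ is again an isomorphism of $L$-algebras onto $S(\that^*)^W$ (the action of $\gamma$ preserves $S(\that^*)^W$), it suffices to check that it obeys this same family of identities and then invoke uniqueness.

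Next I would compute how $\gamma\cdot$ interacts with the evaluation maps. The action of $\gamma$ on $S(\that^*)^W$ is the one induced by the $\mu_G$-action of $\gamma$ on $\that=X^*(T)\otimes_\ZZ L$ (viewing $S(\that^*)$ as polynomial functions on $\that$), so that $\ev_\nu\circ(\gamma\cdot)=\ev_{\gamma^{-1}\cdot\nu}$ for all $\nu\in\that$, the dot on the right being the $\mu_G$-action on $X^*(T)$. Moreover $\delta$ is fixed by that action, because $\mu_G(\gamma)$ preserves $\Delta$ and hence permutes $\Phi^+$ and fixes its half-sum. Combining these, one obtains
\[\ev_{\lambda_{\tau\circ\gamma^{-1}}+\delta}\circ(\gamma\cdot\circ\kappa_\tau)=\ev_{(\gamma^{-1}\cdot\lambda_{\tau\circ\gamma^{-1}})+\delta}\circ\kappa_\tau ,\]
and by Lemma~\ref{chara_kappa_tau} the right-hand side equals $\chi_V$ as soon as $\gamma^{-1}\cdot\lambda_{\tau\circ\gamma^{-1}}=\lambda_\tau$, i.e.\ as soon as $\lambda_{\tau\circ\gamma^{-1}}=\mu_G(\gamma)\cdot\lambda_\tau$. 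Thus the entire statement reduces to this single weight identity.

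It remains to prove $\lambda_{\tau\circ\gamma^{-1}}=\mu_G(\gamma)\cdot\lambda_\tau$, i.e.\ that replacing the embedding $\tau$ by $\tau\circ\gamma^{-1}$ twists the highest weight by $\mu_G(\gamma)$; this is the technical core. I would argue by comparing, inside $G\times_{F,\sigma}L$, the Borel pair $(B,T)\times_{E,\tau\circ\gamma^{-1}}L$ with the base change along $\tau$ of the Galois-conjugate pair $\bigl(\gamma^{-1}(B),\gamma^{-1}(T)\bigr)$ in $G_E$: these coincide, since their defining equations are obtained from those of $(B,T)$ by applying $\tau\circ\gamma^{-1}$ to the coefficients. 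By the construction of $\mu_G$ recalled in Section~\ref{Lgroup}, an element of $(G/Z_G)(E)$ conjugates $\bigl(\gamma^{-1}(B),\gamma^{-1}(T)\bigr)$ back to $(B,T)$, and the resulting identification of character lattices realises $\mu_G(\gamma)$; as this conjugation is inner it does not affect $V$, so the two highest weights differ precisely by $\mu_G(\gamma)$. The main obstacle is exactly this bookkeeping: one must keep the $\gamma$-semilinear Galois action on $G_E$ (which moves $(B,T)$) rigorously separated from the $E$-linear $\mu_G$-action via pinned automorphisms (which fixes $(B,T)$), and be careful about which of $\gamma$, $\gamma^{-1}$ occurs at each stage; the conventions and the direction of the twist are most safely pinned down by running the argument first in the case $G=\Res_{E/F}\Gm$. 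Everything else is formal, using Lemma~\ref{chara_kappa_tau}, the invariance of $\delta$, and the $\Gamma$-equivariance already built into the construction.
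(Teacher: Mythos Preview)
Your proposal is correct and follows essentially the same approach as the paper: both arguments reduce, via the uniqueness characterisation of Lemma~\ref{chara_kappa_tau}, to the weight identity $\lambda_{\tau\circ\gamma^{-1}}=\mu_G(\gamma)\cdot\lambda_\tau$, which the paper states tersely and you justify in more detail through the comparison of Borel pairs. Your treatment is more explicit about the intermediate steps (the behaviour of evaluation maps under $\gamma\cdot$, the invariance of $\delta$), but the underlying strategy is identical.
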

\begin{proof} Let $V$ be an irreducible representation of
  $G\times_{F, \sigma} L$. If $\lambda$ is the highest weight of $V$
  with respect to $(B, T)\times_{E, \tau} L$ then $\gamma(\lambda)$ is
  the highest weight of $V$ with respect to
  $(B, T)\times_{E, \tau\circ \gamma^{-1}} L$. This assertion follows
  from the definition of the action of $\Gamma$ on $X^*(T)$ after
  identifying $E$ and $L$ via $\tau$. The claim then follows from
  Lemma \ref{chara_kappa_tau}.
\end{proof}

\begin{cor} If $\lambda\in \that^{\Gamma}$ then the composition 
  \[ Z(\mathfrak{g})\otimes_{F, \sigma}
    L\overset{\kappa_{\tau}}{\longrightarrow} S(\that^*)^W
    \overset{\ev_\lambda}{\longrightarrow} L\] is independent of the
  choice of embedding $\tau$.
\end{cor}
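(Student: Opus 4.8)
The plan is to deduce this corollary directly from Lemma \ref{transform_kappa}, combined with two elementary facts: that the embeddings $\tau\colon E\hookrightarrow L$ extending $\sigma$ form a torsor under $\Gamma=\Gal(E/F)$, and that $\ev_\lambda$ is $\Gamma$-invariant precisely when $\lambda\in\that^{\Gamma}$.

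First I would observe that, since $E/F$ is Galois and $L$ admits $[E:\Qp]$ embeddings of $E$, once we fix one embedding $\tau$ with $\tau|_F=\sigma$ every other such embedding is of the form $\tau\circ\gamma^{-1}$ for a unique $\gamma\in\Gamma$. Thus it suffices to prove that $\ev_\lambda\circ\kappa_{\tau\circ\gamma^{-1}}=\ev_\lambda\circ\kappa_{\tau}$ for every $\gamma\in\Gamma$. By Lemma \ref{transform_kappa} we have $\kappa_{\tau\circ\gamma^{-1}}=(\gamma\cdot)\circ\kappa_{\tau}$, where $\gamma\cdot$ denotes the $\Gamma$-action on $S(\that^*)^W$. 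Since that action is induced by the linear action of $\Gamma$ on $\that$ (via $\mu_G$, i.e.\ the action on $X^*(T)$), a polynomial function $f$ on $\that$ satisfies $(\gamma\cdot f)(x)=f(\gamma^{-1}x)$, and hence $\ev_\lambda(\gamma\cdot f)=f(\gamma^{-1}\lambda)=\ev_{\gamma^{-1}\lambda}(f)$. When $\lambda\in\that^{\Gamma}$ this yields $\ev_\lambda\circ(\gamma\cdot)=\ev_\lambda$, so that $\ev_\lambda\circ\kappa_{\tau\circ\gamma^{-1}}=\ev_\lambda\circ(\gamma\cdot)\circ\kappa_{\tau}=\ev_\lambda\circ\kappa_{\tau}$, which is exactly the claimed independence.

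There is essentially no serious obstacle here; the only delicate point is bookkeeping the conventions — which side $\Gamma$ acts on $\that$ and $S(\that^*)^W$, and whether the embeddings extending $\sigma$ are indexed by $\gamma$ or $\gamma^{-1}$ — but all of this has already been settled in Lemma \ref{transform_kappa} and in the compatibility of the $\Gamma$-actions on $\mathfrak t$, $\that$ and $S(\that^*)^W$ recorded in the preceding subsections, so the proof is a one-line formal manipulation once those identifications are in place.
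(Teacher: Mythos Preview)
Your proposal is correct and follows exactly the intended approach: the paper states the corollary without proof, treating it as an immediate consequence of Lemma~\ref{transform_kappa}, and your argument spells out precisely the one-line deduction (torsor structure of the embeddings extending $\sigma$, plus $\Gamma$-invariance of $\ev_\lambda$ when $\lambda\in\that^{\Gamma}$) that the reader is meant to supply.
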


\begin{lem}\label{comp_ind} The composition 
  \[ Z(\mathfrak{g})\otimes_{F, \sigma}
    L\overset{\kappa_{\tau}}{\longrightarrow} S(\that^*)^W
    \underset{\eqref{eqn_chev}}{\overset{\cong}{\longrightarrow}}
    S(\ghat^*)^{\Ghat}
    \underset{\eqref{def_theta}}{\overset{\theta}{\longrightarrow}}
    E\otimes \OO_X(X) \overset{m_\tau}{\longrightarrow} \OO_X(X),\]
  where the last arrow is the map $x\otimes a \mapsto \tau(x) a$, is
  independent of the choice of $\tau$ above $\sigma$.
\end{lem}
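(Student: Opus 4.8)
The plan is to reduce the statement to a comparison between the composition built from $\tau$ and the one built from $\tau\circ\gamma^{-1}$ for a single $\gamma\in\Gamma$, and then to track how each of the four arrows in the composition transforms under $\gamma$. First I would observe that since $E/F$ is Galois, any two embeddings $\tau,\tau'\colon E\hookrightarrow L$ lying above $\sigma$ have the same image in $L$, so $\tau^{-1}\circ\tau'$ is an $F$-automorphism of $E$; writing it as $\gamma^{-1}$ for a unique $\gamma\in\Gamma$ we get $\tau'=\tau\circ\gamma^{-1}$. Hence it is enough to fix $\gamma\in\Gamma$ and show that the composition in the statement for $\tau\circ\gamma^{-1}$ equals the one for $\tau$.

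Next I would go through the four maps in turn. By Lemma \ref{transform_kappa}, $\kappa_{\tau\circ\gamma^{-1}}=(\gamma\cdot)\circ\kappa_{\tau}$, where $\gamma\cdot$ is the action of $\Gamma$ on $S(\that^*)^W$. By the remark following Proposition \ref{chev}, the Chevalley isomorphism \eqref{eqn_chev} is $\Gamma$-equivariant, so its inverse intertwines the $\Gamma$-actions on $S(\that^*)^W$ and on $S(\ghat^*)^{\Ghat}$; combined with the previous identity, after passing through \eqref{eqn_chev} the map attached to $\tau\circ\gamma^{-1}$ is obtained from the one attached to $\tau$ by post-composing with the action of $\gamma$ on $S(\ghat^*)^{\Ghat}$ (the one via $\mu_G$). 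By the transformation property \eqref{transform2} for $\theta$ (Lemma \ref{theta_balaur}, cf.\ Lemma \ref{thetan}), $\theta(\gamma\cdot f)=\gamma(\theta(f))$, where $\gamma$ acts on $E\otimes\OO_X(X)$ through its action on the first factor via $\Gamma$; hence after applying $\theta$ the map for $\tau\circ\gamma^{-1}$ is obtained from the one for $\tau$ by post-composing with $\gamma\otimes\id_{\OO_X(X)}$. Finally, directly from its definition, $m_{\tau\circ\gamma^{-1}}(x\otimes a)=\tau(\gamma^{-1}(x))\,a$, so $m_{\tau\circ\gamma^{-1}}\circ(\gamma\otimes\id)=m_{\tau}$. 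Stringing these four observations together shows that the two compositions coincide.

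I do not expect a real obstacle here: the argument is a diagram chase once the transformation rule \eqref{transform2}, Lemma \ref{transform_kappa}, and the $\Gamma$-equivariance of Chevalley's isomorphism are available. The one point that needs genuine care — and where an inverse could slip in — is checking that the three occurrences of the $\Gamma$-action (on $S(\that^*)^W$ in Lemma \ref{transform_kappa}, on $S(\ghat^*)^{\Ghat}$ defined via the pinning, and on $E\otimes\OO_X(X)$ through the $E$-factor) are all normalized compatibly, so that the equivariance statements chain up; and tracking the direction, i.e.\ verifying that replacing $\tau$ by $\tau\circ\gamma^{-1}$ produces precisely the $\gamma$ (and not $\gamma^{-1}$) that is cancelled by $m_{\tau\circ\gamma^{-1}}$ at the last step.
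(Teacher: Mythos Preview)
Your proposal is correct and is exactly the argument the paper has in mind: the paper's own proof is the single sentence ``This follows from Lemma \ref{transform_kappa} and $\Gamma$-equivariance of \eqref{eqn_chev} and \eqref{def_theta},'' and you have simply unpacked that sentence, tracking each of the four arrows and verifying the cancellation $m_{\tau\circ\gamma^{-1}}\circ(\gamma\otimes\id)=m_\tau$ at the end. Your caution about the direction of the $\Gamma$-action is well placed but, as your own checks confirm, the signs line up.
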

\begin{proof} This follows from Lemma \ref{transform_kappa} and
  $\Gamma$-equivariance of \eqref{eqn_chev} and \eqref{def_theta}.
\end{proof} 

\begin{defi}\label{def_ringhom_sigma} Let $P$ be a $\Ghat_X$-torsor over $X$ and let $\rho:\Gal_F\rightarrow \LPad(X)$ be an admissible representation.
  If $\sigma: F\hookrightarrow L$ is an embedding of fields then we
  define a homomorphism of $L$-agebras
\[ \zeta_{\rho, \sigma}: Z(\mathfrak g)\otimes_{F, \sigma} L \rightarrow \OO_X(X)\]
as the composition of the maps in Lemma \ref{comp_ind}.
\end{defi}
\begin{remar} We remind the reader if $P= \Ghat_X$ then
  $\LPad(X)=\LG_f(\OO_X(X))$ and $\rho$ is an admissible
  representation in the sense of section \ref{adm_reps}.
\end{remar}

Let $\Res_{F/\Qp}\mathfrak{g}$ be the Lie algebra of $\Res_{F/\Qp}
G$. We may identify $\Res_{F/\Qp}\mathfrak{g}$ with $\mathfrak g$ as a
$\Qp$-Lie algebra. Since
\[(\Res_{F/\Qp} G)\times_{\Qp} L\cong \prod_{\sigma: F \hookrightarrow
    L} G\times_{F, \sigma} L,\] using Lemma \ref{centre_extend}, we
obtain isomorphisms of $L$-algebras
\[U(\Res_{F/\Qp}\mathfrak g)\otimes_{\Qp} L\cong \bigotimes_{\sigma: F
    \hookrightarrow L} U(\mathfrak g)\otimes_{F, \sigma} L\]
\[ Z(\Res_{F/\Qp}\mathfrak g)\otimes_{\Qp} L\cong \bigotimes_{\sigma:
    F \hookrightarrow L} Z(\mathfrak g)\otimes_{F, \sigma} L.\]

\begin{defi}\label{def_ringhom_abs} Let $P$ be a $\Ghat_X$-torsor over $X$ and let $\rho:\Gal_F\rightarrow \LPad(X)$ be an admissible representation.
  Then we define an $L$-algebra homomorphism
  \[\zeta_{\rho}: Z(\Res_{F/\Qp}\mathfrak g)\otimes_{\Qp} L
    \rightarrow \OO_X(X)\] as
  $\zeta_{\rho}\coloneqq \otimes_{\sigma} \zeta_{\rho, \sigma}$ using
  the isomorphism above and Definition \ref{def_ringhom_sigma}.
\end{defi}

\begin{lem}\label{base_change_L} If $Y\rightarrow X$ is a map of rigid varieties over $L$ then 
  $\zeta_{\rho_Y}$ is equal to $\zeta_{\rho}$ composed with the map
  $\OO_X(X)\rightarrow \OO_Y(Y)$.
\end{lem}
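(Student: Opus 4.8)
The statement is a compatibility of the construction $\rho \mapsto \zeta_\rho$ with base change along a morphism $f \colon Y \to X$ of rigid analytic varieties over $L$. Unwinding Definitions \ref{def_ringhom_abs} and \ref{def_ringhom_sigma}, the map $\zeta_\rho$ is built $\sigma$-by-$\sigma$ (over the embeddings $\sigma \colon F \hookrightarrow L$) as a tensor product of the maps $\zeta_{\rho,\sigma}$, and each $\zeta_{\rho,\sigma}$ is the composite appearing in Lemma \ref{comp_ind}, namely
\[
Z(\mathfrak g)\otimes_{F,\sigma} L \xrightarrow{\ \kappa_\tau\ } S(\that^*)^W \xrightarrow{\ \eqref{eqn_chev}\ } S(\ghat^*)^{\Ghat} \xrightarrow{\ \theta\ } E \otimes \OO_X(X) \xrightarrow{\ m_\tau\ } \OO_X(X),
\]
for any choice of $\tau$ above $\sigma$. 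The plan is simply to observe that, in this four-step composite, the first two arrows ($\kappa_\tau$ and Chevalley's restriction) are purely algebraic and have nothing to do with $X$, the last arrow $m_\tau$ is functorial in the coefficient ring in the obvious way, and the only arrow that ``sees'' the rigid variety is $\theta$. So the whole lemma reduces to the base change compatibility of $\theta$.

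First I would fix, once and for all, an embedding $\tau \colon E \hookrightarrow L$ above each $\sigma$, so that every $\zeta_{\rho,\sigma}$ is represented by the above composite. Then I would invoke Lemma \ref{base_change_theta}, which states precisely that if $\theta_Y \colon S(\ghat^*)^{\Ghat} \to E \otimes_{\Qp} \OO_Y(Y)$ is the map \eqref{def_theta} attached to $\rho_Y$ (in the sense of Definition \ref{base_change}), then $\theta_Y = (\id_E \otimes f^\sharp) \circ \theta$, where $f^\sharp \colon \OO_X(X) \to \OO_Y(Y)$ is the pullback on global functions. Postcomposing both sides with $m_\tau$, and using that $m_\tau \circ (\id_E \otimes f^\sharp) = f^\sharp \circ m_\tau$ (both send $x \otimes a \mapsto \tau(x)\, f^\sharp(a)$, since $f^\sharp$ is an $L$-algebra map), and precomposing with the $X$-independent maps $\kappa_\tau$ and \eqref{eqn_chev}, I get $\zeta_{\rho_Y,\sigma} = f^\sharp \circ \zeta_{\rho,\sigma}$ for every $\sigma$. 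Finally, taking the tensor product over all $\sigma \colon F \hookrightarrow L$ and using the identifications $Z(\Res_{F/\Qp}\mathfrak g)\otimes_{\Qp} L \cong \bigotimes_\sigma Z(\mathfrak g)\otimes_{F,\sigma} L$ recorded just before Definition \ref{def_ringhom_abs}, together with the fact that tensoring the target maps $f^\sharp$ over $\sigma$ simply gives back $f^\sharp \colon \OO_X(X)\to \OO_Y(Y)$ applied after the tensor product, yields $\zeta_{\rho_Y} = f^\sharp \circ \zeta_\rho$, which is the claim.

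There is no real obstacle here: the lemma is a bookkeeping exercise, and the substantive content has already been isolated in Lemma \ref{base_change_theta} (itself a consequence of Lemma \ref{extend_scalars}, i.e.\ of the behaviour of the Sen operator under extension of scalars of Banach algebras). The only mild point worth spelling out is the very last step, where one must check that forming $\zeta_\rho$ as $\otimes_\sigma \zeta_{\rho,\sigma}$ is genuinely compatible with the stated identification of the centres and that the various target ring maps glue correctly to a single $f^\sharp$; this is immediate since $f^\sharp$ does not depend on $\sigma$ and tensor product of $L$-algebra homomorphisms into the same target $\OO_Y(Y)$ composed with multiplication is the map one expects. I would therefore write the proof in one short paragraph: fix $\tau$ above $\sigma$, apply Lemma \ref{base_change_theta}, note $m_\tau$ intertwines $\id_E \otimes f^\sharp$ with $f^\sharp$, conclude $\sigma$-wise, then tensor over $\sigma$.
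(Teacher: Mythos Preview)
Your proposal is correct and follows essentially the same approach as the paper: the paper's proof is the single sentence ``This follows from Lemma \ref{base_change_theta},'' and you have simply unwound the bookkeeping that makes that citation suffice. The additional details you spell out (that $\kappa_\tau$ and Chevalley restriction are $X$-independent, that $m_\tau$ intertwines $\id_E\otimes f^\sharp$ with $f^\sharp$, and that one then tensors over $\sigma$) are exactly what the paper leaves implicit.
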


\begin{proof} This follows from Lemma \ref{base_change_theta}.
\end{proof}
 
\subsection{Modification for $C$-groups}\label{sec_Cgr}
There are two issues at hand. Firstly, one expects that the Galois
representations attached to $C$-algebraic automorphic forms take
values in the $C$-group and not the $L$-group, see
\cite[Conj.\,5.3.4]{beegee}. We would like to consider a local
$p$-adic analog of this situation, but our Definition
\ref{def_ringhom_abs} does not cover it. Secondly, if we take
$F=L=\Qp$, $X$ a point, $G=\GL_2$, so that $\LGf= \GL_2$, and
\[\rho: \Gal_{\Qp} \rightarrow \GL_2(\Qp), \quad g\mapsto \bigl
  (\begin{smallmatrix} \chi_{\cyc}(g)^{a} & 0 \\ 0 &
    \chi_{\cyc}(g)^b\end{smallmatrix}\bigr),\] where $a > b$ are
integers, then the Sen operator is the matrix
$\bigl (\begin{smallmatrix} a & 0 \\ 0 & b\end{smallmatrix}\bigr)$,
but the character $\zeta_{\rho}: Z(\mathfrak g)\rightarrow \Qp$ is not
an infinitesimal character of an algebraic representation of
$\GL_2$. The problem is caused is by the shift by $\delta$ in
\eqref{Verma}. We will resolve both of these issues by modifying
Definition \ref{def_ringhom_abs}.

We use the notation introduced in section \ref{sec_Cgr_sub}. Let
$\ghat^T$ be the Lie algebra of $\Ghat^T$. We may identify
$\ghat^T= \mathfrak \ghat \oplus \Lie \Gm$ as $F$-vector spaces. Since
taking square roots out of $2\delta$ on the Lie algebra level is just
dividing by $2$, the map
\[\ghat^T\rightarrow \mathfrak \ghat \oplus \Lie \Gm, \quad (g, t)
  \mapsto (g+t\delta, t)\] is a $\Gamma$-equivariant isomorphism of
Lie algebras. Let $\alpha: \ghat^T\rightarrow \ghat$ be the
composition of the above map with the projection to $\ghat$. The map
induces a $\Ghat^T$-equivariant homomorphism of $L$-algebras
$\alpha: S(\ghat^*)\rightarrow S((\ghat^T)^*)$.  Since $\Gm$ acts
trivially on $\That$, it follows from the Chevalley's restriction
theorem that $S(\ghat^*)^{\Ghat}= S(\ghat^*)^{\Ghat^T}$. We thus
obtain a $\Gamma$-equivariant homomorphism of $L$-algebras
\[\alpha: S(\ghat^*)^{\Ghat} \rightarrow S((\ghat^T)^*)^{\Ghat^T}.\]
Let $P$ be a $\Ghat_X$-torsor on $X$ and let
$\rho: \Gal_F \rightarrow \CPad(X)$ be an admissible representation.
We define
\begin{equation}\label{def_theta_prime}
  \theta': S(\ghat^*)^{\Ghat}\overset{\alpha}{\longrightarrow} S((\ghat^T)^*)^{\Ghat^T}
  \overset{\theta^T}\longrightarrow
  E\otimes \OO_X(X)
\end{equation}
where $\theta^T$ is the map defined in \eqref{def_theta} with $G^T$
instead of $G$ and regarding $\rho$ as a representation of
$\rho: \Gal_F\rightarrow {}^L P^{T, \mathrm{ad}}(X)$, see Remark
\ref{CPad}. We note that $\theta'$ is $\Gamma$-equivariant since both
$\alpha$ and $\theta^T$ are.

\begin{defi}\label{def_ringhom_C} If $\rho: \Gal_F\rightarrow
  \CPad(X)$ is an admissible representation and $\sigma:
  F\hookrightarrow L$ is  an embedding of fields then we define
  a homomorphism of $L$-algebras
  \[ \zeta_{\rho, \sigma}^C: Z(\mathfrak g)\otimes_{F, \sigma} L
    \rightarrow \OO_X(X)\] as the composition of the maps in Lemma
  \ref{comp_ind}, but using $\theta'$ instead of $\theta$.  We define
  a homomorphism of $L$-algebras
  \[\zeta_{\rho}^C: Z(\Res_{F/\Qp} \mathfrak g)\otimes_{\Qp} L
    \rightarrow \OO_X(X),\] as
  $\zeta_{\rho}^C\coloneqq \otimes_{\sigma} \zeta_{\rho, \sigma}^C$.
\end{defi} 

\begin{remar} We remind the reader if $P= \Ghat_X$ then
  $\CPad(X)=\CG_f(\OO_X(X))$ and $\rho$ is an admissible
  representation in the sense of section \ref{adm_reps}.
\end{remar}

\begin{remar}\label{dcyc} Although the definition makes sense for all admissible $\rho$, we will 
  apply it to those $\rho$, where the composition
  $d\circ \rho: \Gal_F \rightarrow \OO_X(X)^{*}$, where $d$ is defined
  in \eqref{twisted_d}, is equal to $\chi_{\cyc}$, since the Galois
  representations associated to $C$-algebraic automorphic forms should
  satisfy this condition according to \cite[Conj.\,5.3.4]{beegee}.  In
  this case, if $U=\Sp A\rightarrow X$ is an affinoid such that $P|_U$
  is trivial, the Sen operator
  $\Theta^T_{\mathrm{Sen}, U} \in (\Cp\wtimes A)\otimes_L \ghat^T$ is
  of the form $(M, 1\otimes 1)$, with
  $M\in (\Cp\wtimes A)\otimes_L \ghat$,
  $1\otimes 1 \in (\Cp\wtimes A)\otimes \Lie \Gm$ and
  $\id\otimes \alpha$ maps $\Theta^T_{\mathrm{Sen}, U}$ to
  $M+1\otimes \delta \in (\Cp\wtimes A)\otimes_L \ghat$.
\end{remar}

\begin{lem}\label{base_change_C} If $Y\rightarrow X$ is a map of rigid varieties over $L$ then 
  $\zeta_{\rho_Y}^C$ is equal to $\zeta_{\rho}^C$ composed with the
  map $\OO_X(X)\rightarrow \OO_Y(Y)$.
\end{lem}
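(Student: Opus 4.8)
The plan is to reduce the assertion to the already-established base change compatibility for the auxiliary maps $\theta$ and $\kappa_\tau$. First I would unwind the definition: by Definition \ref{def_ringhom_C}, $\zeta^C_{\rho,\sigma}$ is the composition $Z(\mathfrak g)\otimes_{F,\sigma}L \xrightarrow{\kappa_\tau} S(\that^*)^W \xrightarrow{\eqref{eqn_chev}} S(\ghat^*)^{\Ghat} \xrightarrow{\theta'} E\otimes \OO_X(X) \xrightarrow{m_\tau} \OO_X(X)$, where $\theta' = \theta^T\circ\alpha$ and $\theta^T$ is the map \eqref{def_theta} for the group $G^T$ applied to $\rho$ regarded as a representation $\Gal_F\to {}^LP^{T,\mathrm{ad}}(X)$. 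The maps $\kappa_\tau$, \eqref{eqn_chev} and $\alpha$ do not involve $X$ at all, so the only place where $\OO_X(X)$ enters is through $\theta^T$. Hence it suffices to prove that $\theta^T_Y$ (the version for $\rho_Y$) equals $\theta^T$ composed with $\id\otimes f^\sharp\colon E\otimes_{\Qp}\OO_X(X)\to E\otimes_{\Qp}\OO_Y(Y)$, and then to note that $m_\tau$ is manifestly compatible with the transition map, i.e.\ the square
\[
\begin{tikzcd}
E\otimes_{\Qp}\OO_X(X)\ar[r,"m_\tau"]\ar[d,"\id\otimes f^\sharp"'] & \OO_X(X)\ar[d,"f^\sharp"]\\
E\otimes_{\Qp}\OO_Y(Y)\ar[r,"m_\tau"'] & \OO_Y(Y)
\end{tikzcd}
\]
commutes.

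The second step is the compatibility of $\theta^T$ with base change. Here I would simply invoke Lemma \ref{base_change_theta}, which states exactly that for $\theta$ attached to any admissible representation (of any reductive group over $F$, in particular $G^T$), its version for $\rho_Y$ is $\theta$ followed by $\id\otimes f^\sharp$. One needs the observation that forming $\rho_Y$ as in Definition \ref{base_change} is compatible with the identification $\CPad(X)={}^LP^{T,\mathrm{ad}}(X)$ of Remark \ref{CPad}; this is immediate since the torsor $P^T = \Ghat^T_X\times^{\Ghat_X}P$ base-changes along $f$ to $(P^T)_Y = \Ghat^T_Y\times^{\Ghat_Y}(P_Y)$, so the construction $P\mapsto P^T$ commutes with pullback. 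Composing with $\alpha$ (which is independent of the space) then gives $\theta'_Y = \theta'$ followed by $\id\otimes f^\sharp$, and hence $\zeta^C_{\rho_Y,\sigma} = \zeta^C_{\rho,\sigma}$ followed by $f^\sharp$.

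Finally I would pass from the $\sigma$-components to the full character: by Definition \ref{def_ringhom_C}, $\zeta^C_\rho = \otimes_\sigma \zeta^C_{\rho,\sigma}$ under the isomorphism $Z(\Res_{F/\Qp}\mathfrak g)\otimes_{\Qp}L\cong\bigotimes_\sigma Z(\mathfrak g)\otimes_{F,\sigma}L$, which does not depend on $X$. Since a tensor product of $L$-algebra homomorphisms is compatible with post-composition by $f^\sharp$ on the common target, the equality $\zeta^C_{\rho_Y} = f^\sharp\circ\zeta^C_\rho$ follows from the $\sigma$-component statements. There is no real obstacle here; the only point requiring a little care is checking that all the auxiliary identifications (Remark \ref{CPad}, the decomposition $(\Res_{F/\Qp}G)\times_{\Qp}L\cong\prod_\sigma G\times_{F,\sigma}L$, and the isomorphism on centres from Lemma \ref{centre_extend}) are functorial in the space, which they visibly are since they involve only the group-theoretic data and not $\OO_X$. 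The proof is therefore a one-line reduction: \emph{this follows from Lemma \ref{base_change_theta}}, exactly parallel to the proof of Lemma \ref{base_change_L}.
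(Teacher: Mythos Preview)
Your proposal is correct and takes essentially the same approach as the paper: the paper's proof is the single line ``This follows from Lemma \ref{base_change_L}'', and your detailed unwinding---reducing to the base-change compatibility of $\theta^T$ via Lemma \ref{base_change_theta} applied to $G^T$, with all other maps in the composition being independent of $X$---is precisely what that one-line reference encodes. Your concluding remark that the argument is ``exactly parallel to the proof of Lemma \ref{base_change_L}'' is on the nose.
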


\begin{proof}
  This follows from Lemma \ref{base_change_L}.
\end{proof}

If there is $\tilde{\delta}\in X_*(\That)$ such that its image in
$X_*(\That/Z_{\Ghat})$ is equal to $\delta_{\mathrm{ad}}$ then we have
an isomorphism
\begin{equation}\label{twisting_el} 
  \tw_{\tilde{\delta}}: \Ghat^T\cong \Ghat \times \Gm,\quad (g, t) \mapsto (g \tilde{\delta}(t), t)
\end{equation}
If $\tilde{\delta}$ is $\Gamma$-invariant then \eqref{twisting_el}
yields $\CG_f \cong \LG_f\times \Gm$,
$P_f^T\cong P_f \times \mathbb{G}_{m, X}$ and
\begin{equation}\label{twisting_el2}
  \tw_{\tilde{\delta}}: \CPad(X)\cong \LPad(X)\times \OO_X(X)^*
\end{equation}
 
If $\rho: \Gal_F \rightarrow \LPad(X)$ is an admissible representation
then we may define $\rho^C: \Gal_F \rightarrow \CPad(X)$ via
$\rho^C\coloneqq \tw_{\tilde{\delta}}^{-1} \circ (\rho\boxtimes
\chi_{\cyc})$. We note that $d\circ \rho^C =\chi_{\cyc}$ and, if
$U=\Sp A\rightarrow X$ is an affinoid such that $P|_U$ is trivial,
using Remark \ref{dcyc} we obtain that
\begin{equation}\label{CL}
  (\id\otimes \alpha)(\Theta^T_{\mathrm{Sen},U})= \Theta_{\mathrm{Sen},U} - 1\otimes \tilde{\delta}+ 1\otimes \delta,
\end{equation}
where $\Theta_{\mathrm{Sen},U}$ is the Sen operator attached to $\rho$
in Lemma \ref{tannaka}, and we consider
$\delta, \tilde{\delta}\in \that$ via the identification
$\that=X_*(\That)\otimes_\ZZ L$.

Going back to the $\GL_2$ example at the beginning of the subsection,
we see that if we choose
$\tilde{\delta}(t)\coloneqq \bigl (\begin{smallmatrix} t^{n+1} & 0 \\
  0 & t^n\end{smallmatrix}\bigr)$ for some $n\in \ZZ$ then
$\zeta^C_{\rho^C}$ is equal to the infinitesimal character of
$\Sym^{a-b-1}\otimes \det^{b-n}$.

\subsection{The archimedean case}\label{sec:real}

We check that our definition is compatible with the archimedean case.

Assume now that $F$ is an archimedean local field and let $\overline{F}$ be
some algebraic closure of $F$, so that $[\overline{F}:F]$ is $1$ or $2$. We
recall that the Weil group of $F$ is the semidirect product
$W_F=\overline{F}^\times\rtimes\Gal_F$. If $F=\R$, we denote by $c$ the non
trivial element of $\Gal_F$.

Let $\rho : W_F\rightarrow {}^LG(\C)$ be some admissible
representation. The restriction of $\rho$ to $\overline{F}^\times$
takes values in $\Ghat(\C)$ and, up to conjugate by an element of
$\Ghat(\C)$, we can assume that
$\rho(\overline{F}^\times)\subset\That(\C)$. Let $\sigma_1$ and
$\sigma_2$ be the two isomorphisms of $\overline{F}$ with
$\C$. Identifying $X_*(\That)\otimes\C$ with the Lie algebra of
$\That(\C)$, we see that there exists two elements
$\lambda_{\sigma_1}$ and $\lambda_{\sigma_2}$ in $X_*(\That)\otimes\C$
such that
\[
  \rho(z)=\exp(\log(\sigma_1(z))\lambda_{\sigma_1}+\log(\sigma_2(z))\lambda_{\sigma_2})\]
for $z\in\overline{F}^\times$. We can assume that
$\lambda_{\sigma_1}-\lambda_{\sigma_2}\in X_*(\That)$ so that this
quantity doesn't depend on the choice of the branch of $\log$.

If $F=\R$, then we have $\lambda_\tau=\Ad(\rho(c))\lambda_\sigma$ so
that $\lambda_\tau$ and $\lambda_\sigma$ defines the same element of
$(X_*(\That)\otimes\C/W)$. Evaluation on this element gives us a linear map
$\theta : S(\hat{\mathfrak{t}}^*)^W\rightarrow \C$ which, by
composition with Harish-Chandra isomorphism
$Z(\mathfrak{g})_{\C}\simeq S(\hat{\mathfrak{t}}^*)^W$ gives rise to a
character $\zeta_\rho : Z(\mathfrak{g})_{\C}\rightarrow\C$.

If $F=\C$, then we use the elements $\lambda_\sigma$ and
$\lambda_\tau$ in $X_*(\That)\otimes\C$ to define a character
$\zeta_{\rho,\sigma} :
Z(\mathfrak{g})\otimes_{\overline{F},\sigma}\C\rightarrow\C$ for each
embedding $\sigma$ of $\overline{F}$ in $\C$. We finally define
\[ \zeta_\rho :
  Z(\Res_{F/\R}\mathfrak{g})_{\C}\xrightarrow{\zeta_{\rho,\sigma_1}\otimes\zeta_{\rho,\sigma_2}}\C.\]

\begin{remar}\label{rm:localglobalR}
  If $\rho$ is associated to an irreducible representation of $G(F)$,
  then the center $Z(\Res_{F/\R}\mathfrak{g})$ acts on $\pi$ by
  $\zeta_\rho$ (see Propositions 7.4 and 7.10 in \cite{VoganLL} which
  follow from \cite{LanglandsRparam}).
\end{remar}

\section{Hodge--Tate representations}\label{sec_HT} 
We will study Hodge--Tate representations following \cite{serreHT},
\cite{BC} and \cite[\S 2.4]{beegee}\footnote{We note that the
  published version of \S 2.4 in \cite{beegee} has been updated in
  \url{https://arxiv.org/pdf/1009.0785.pdf}}.  Let $L$ be a finite
extension of $\Qp$ and let $V$ be a finite dimensional $L$-vector
space with a continuous $L$-linear action of $\Gal_{F}$. Consider the
semi-linear diagonal action of $\Gal_F$ on $W=V\otimes_{\Qp}\Cp$ and
let $W(i)$ be the $i$th Tate twist of $W$. The natural map
\[\bigoplus_{i\in \ZZ} \Cp(i)\otimes_{\Qp} W(-i)^{\Gal_{F}}\to W\]
is injective and we say that $V$ is Hodge--Tate if this map is an isomorphism. Suppose that this is the case and let  
$W_i$ be the image of $\Cp(i)\otimes_{\Qp} (W(-i))^{\Gal_{F}}$ in 
$W$, i.e. the $\Cp$-span of the set of $v\in W$ such that 
\[g v= \chi^i(g) v, \quad \forall g\in \Gal_{F},\] where $\chi$ is the
$p$-adic cyclotomic character. Then $W=\oplus_{i\in \Z} W_i$ and the
Hodge--Tate cocharacter $\mu_V$ of $V$ is the algebraic morphism
$\mathbb{G}_{m,\Cp}\rightarrow \GL(W)$ defined by
\[ \mu_V(z)=\sum_{i\in\Z}z^i\id_{W_i}.\] Note that the Sen operator of
$V$ is simply multiplication by $i$ on $W_i$, in particular
\begin{equation} \label{HTcoc1}
\Lie(\mu_V)(1)=\Theta_{\Sen, V}
\end{equation}

Let $H$ be a reductive group defined over $L$ and let $\Rep(H)_L$ be
the category of algebraic representations of $H$ defined over $L$.
Let $\rho: \Gal_{F} \rightarrow H(L)$ be a continuous
representation. We say that $\rho$ is Hodge--Tate if the action of
$\Gal_{F}$ on $V$ obtained by composing $\rho$ with
$r: H\rightarrow \GL(V)$ is Hodge--Tate for every
$(r,V)\in \Rep(H)_L$.

Let $\rho: \Gal_{F} \rightarrow H(L)$ be a Hodge--Tate representation.
Given a $\Cp$-algebra $R$ and $z\in R^{\times}$, for each
$(r,V)\in \Rep(H)_L$ we obtain an automorphism
${\rm id}\otimes \mu_V(z)$ of
$R\otimes_{\Cp}(\Cp\otimes_{\Qp} V)\simeq R\otimes_{\Qp} V\simeq
(R\otimes_{\Qp} L)\otimes_L V$, compatible with tensor products when
we vary $V$. It follows that there is a unique
$\mu_{\rho}(z)\in H(R\otimes_{\Qp} L)$ such that
${\rm id}\otimes \mu_V(z)=r(\mu_{\rho}(z))$ for all
$(V,r)\in \Rep(H)_L$. Varying $R$ and observing that
$ H(R\otimes_{\Qp} L)= (\Res_{L/\Qp}H)_{\Cp}(R)$, we obtain a
cocharacter
\[\mu_\rho : \mathbb{G}_{m,\Cp}\rightarrow (\Res_{L/\Qp}H)_{\Cp}.\]
Using the decomposition
\[
  (\Res_{L/\Qp}H)_{\Cp}\simeq\prod_{\upsilon\in\Hom(L,\Cp)}H\times_{L,
    \upsilon} \Cp,\] we can write
$\mu_\rho=(\mu_{\rho,\upsilon})_{\upsilon\in\Hom(L,\Cp)}$ with
$\mu_{\rho,\upsilon}: \mathbb{G}_{m,\Cp}\rightarrow H\times_{L,
  \upsilon} \Cp$. Note that since $\Gm$ is connected,
$\mu_{\rho,\upsilon}$ factors through the connected component of
$H\times_{L, \upsilon} \Cp$.

Arguing as in the proof of lemma \ref{tannaka} one associates to {\it
  any} continuous representation $\rho: \Gal_{F} \rightarrow H(L)$ a
Sen operator
$\Theta_{\rm Sen, \rho}\in (\Cp\otimes_{\Qp} L)\otimes_{L}
\mathfrak{h}\simeq \Cp\otimes_{\Qp} \mathfrak{h}$, where
$\mathfrak{h}$ is the Lie algebra of $H$. When $\rho$ is Hodge--Tate
relation $(1)$ combined with the Tannakian description of both
$\Theta_{\rm Sen, \rho}$ and $\mu_\rho$ yields\footnote{Once we
  identify the $\Qp$-Lie algebras $\Lie({\rm Res}_{L/\Qp}(H))$ and
  $\mathfrak{h}$.}
\begin{equation} \label{HTcoc2}
\Lie(\mu_\rho)(1)=\Theta_{\rm Sen, \rho}
\end{equation}

\begin{lem}\label{charHT}
  A continuous representation $\rho: \Gal_{F} \rightarrow H(L)$ is
  Hodge--Tate if and only if there is
  $\mu: \mathbb{G}_{m,\Cp}\rightarrow (\Res_{L/\Qp}H)_{\Cp}$ such that
  $\Theta_{\rm Sen, \rho}=\Lie(\mu)(1)$, in which case we necessarily
  have $\mu=\mu_{\rho}$.
\end{lem}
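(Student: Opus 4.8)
The plan is to bootstrap from the classical $\GL_n$-case, which is built into the Hodge--Tate formalism recalled above, using the Tannakian characterisation of $\Theta_{\rm Sen,\rho}$, and then to promote the statement to reductive $H$ by means of a faithful representation. The "only if" direction is immediate: if $\rho$ is Hodge--Tate the cocharacter $\mu_\rho$ has already been constructed, and \eqref{HTcoc2} says precisely that $\Lie(\mu_\rho)(1)=\Theta_{\rm Sen,\rho}$, so one may take $\mu=\mu_\rho$.

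For the converse, suppose we are given a cocharacter $\mu\colon\Gm_{\Cp}\to(\Res_{L/\Qp}H)_{\Cp}$ with $\Lie(\mu)(1)=\Theta_{\rm Sen,\rho}$, and fix $(r,V)\in\Rep(H)_L$; we must show that $V$, with the action of $\Gal_F$ obtained from $r\circ\rho$, is Hodge--Tate. First I would push $\mu$ forward along $(\Res_{L/\Qp}r)_{\Cp}\colon(\Res_{L/\Qp}H)_{\Cp}\to(\Res_{L/\Qp}\GL(V))_{\Cp}$ to obtain a cocharacter $\mu_V$, and compute its derivative at $1$: using that the differential of $\Res_{L/\Qp}r$ is $\Lie(r)$ under the standard identification of $\Qp$-Lie algebras, together with the defining property $\Lie(r)(\Theta_{\rm Sen,\rho})=\Theta_{\rm Sen,V}$ of $\Theta_{\rm Sen,\rho}$ (which is how it is constructed, arguing as in Lemma \ref{tannaka}), one gets $\Lie(\mu_V)(1)=\Theta_{\rm Sen,V}$. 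Reading off the weight space decomposition of $\mu_V$, this exhibits $\Theta_{\rm Sen,V}$ as semisimple with eigenvalues in $\ZZ$, and the classical theory of Sen (\cite{serreHT}, \cite{BC}, \cite[\S 2.4]{beegee}) then forces $V$ to be Hodge--Tate. Since $(r,V)$ was arbitrary, $\rho$ is Hodge--Tate.

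For uniqueness, once $\rho$ is Hodge--Tate it suffices to show that a cocharacter $\nu$ of $(\Res_{L/\Qp}H)_{\Cp}$ is determined by $\Lie(\nu)(1)$, and then to apply this to $\mu$ and $\mu_\rho$, both of which induce $\Theta_{\rm Sen,\rho}$ (the latter by \eqref{HTcoc2}). I would pick a faithful $(r_0,V_0)\in\Rep(H)_L$; then $\Res_{L/\Qp}r_0$ is a closed immersion, so $(\Res_{L/\Qp}r_0)_{\Cp}$ is a monomorphism, and it is enough to know $(\Res_{L/\Qp}r_0)_{\Cp}\circ\nu$, whose derivative at $1$ is $\Lie(r_0)(\Lie(\nu)(1))$. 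Finally, a cocharacter of a general linear group is determined by the operator it induces on the Lie algebra: its image lies in a torus, hence it commutes with that operator, preserves its eigenspaces, and acts by $z\mapsto z^{n}\cdot\mathrm{id}$ on the $n$-eigenspace. Hence $\mu=\mu_\rho$.

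The only input that is not pure formalism is the classical converse used in the second paragraph — that a $p$-adic representation whose Sen operator is semisimple with integral eigenvalues is Hodge--Tate — and the main point demanding care is keeping $\Theta_{\rm Sen,\rho}$ and $\Lie(\mu)(1)$ in the same space via the identification $\Lie(\Res_{L/\Qp}H)\simeq\mathfrak h$ of $\Qp$-Lie algebras, so that all the reductions to $\GL(V)$ respect restriction of scalars correctly.
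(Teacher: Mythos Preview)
Your proof is correct and follows essentially the same approach as the paper: for the converse direction you push $\mu$ forward through an arbitrary $(r,V)$, observe that this forces $\Theta_{\Sen,V}$ to be semisimple with integer eigenvalues, and invoke the classical Sen result (the paper cites the corollary to Theorem~6 in \cite{SenInv} rather than the references you chose). The paper leaves the uniqueness clause implicit, whereas you spell out an argument via a faithful representation and the fact that a cocharacter of $\GL(W)$ is determined by its derivative at $1$; this extra detail is fine and fills in what the paper takes for granted.
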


\begin{proof} The only fact which hasn't already been explained is
  that the existence of $\mu$ forces $\rho$ being Hodge--Tate. Let
  $(r,V)\in {\rm Rep}_L(H)$. We want to prove that $V$ endowed with
  the action of $\Gal_F$ via composition with $\rho$ isHodge--Tate.
  The cocharacter $\mu$ and the representation $r$ give rise to an
  algebraic morphism
  $r\circ \mu: \Cp^{\times}\to
  (\Res_{L/\Qp}H)_{\Cp}(\Cp)=H(\Cp\otimes_{\Qp} L)\to
  \GL(\Cp\otimes_{\Qp} V)$ and
  $\Theta_{\Sen, V}=\Lie(r)(\Theta_{\rm Sen, \rho})(1)=\Lie(r\circ
  \mu)(1)$. It follows that $\Theta_{\Sen, V}$ is semi-simple with
  integer eigenvalues and $V$ is Hodge--Tate by the corollary to
  theorem $6$ in \cite{SenInv}.
\end{proof}

\subsection{Galois representations valued in $L$-groups}
Now we assume that $G$ is a connected reductive group over $F$ and let
$\rho : \Gal_{F}\rightarrow \LG(L)$ be an admissible representation,
so that we are in the situation of Section \ref{families_gal} with $X$
equal to a point, so that $\Gamma(X, \OO_X)=L$. We assume that $\rho$
is Hodge--Tate and apply the above discussion to $H=\LGf$.  Let
$T\subset G_{\overline{F}}$ a maximal split torus and let
$\widehat{T}$ be a dual torus in $\widehat{G}$. For each $\upsilon$,
the cocharacter $\mu_{\rho,\upsilon}$ is conjugate inside
$\widehat{G}(\Cp)$ to a cocharacter
$\mathbb{G}_{m,\Cp}\rightarrow \widehat{T}_{\Cp}$, which is uniquely
determined up to the action of the Weyl group $W$. Consequently we
obtain a well defined element
\[ \nu_{\rho,\upsilon}\in X^*(T)/W=X_*(\widehat{T})/W.\]
We may think of $\nu_{\rho, \upsilon}$ as a $W$-orbit of an element in 
$\that\otimes_{L, \upsilon} \Cp= X_*(\widehat{T})\otimes_{\ZZ} \Cp$.

\begin{prop}\label{HTcochar} The composition 
\begin{equation}\label{drinking_beer_on_public_trains}
  S(\that^*)^W 
  \underset{\eqref{eqn_chev}}{\overset{\cong}{\longrightarrow}} S(\ghat^*)^{\Ghat}
  \underset{\eqref{def_theta}}{\overset{\theta}{\longrightarrow}} \Cp\otimes L
  \overset{m_\upsilon}{\longrightarrow}\Cp
\end{equation}
where the last arrow is given by $x\otimes y\mapsto x\upsilon(y)$, is
equal to the evaluation map at
$\nu_{\rho, \upsilon} \in (\that\otimes_{L, \upsilon} \Cp))/W$.
\end{prop}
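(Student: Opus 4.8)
The plan is to unwind each of the three maps in the composition \eqref{drinking_beer_on_public_trains} and recognize the result as an instance of Chevalley's restriction theorem combined with $\Ghat$-invariance. First I would observe that, since $X$ is a point, the map $\theta$ of \eqref{def_theta} is simply the restriction to $S(\ghat^*)^{\Ghat}$ of the evaluation homomorphism $\ev_{\Theta_{\Sen,\rho}}\colon S(\ghat^*)\to\Cp\otimes_{\Qp}L$ at the Sen operator $\Theta_{\Sen,\rho}\in(\Cp\otimes_{\Qp}L)\otimes_L\ghat$. Decomposing along $\Cp\otimes_{\Qp}L=\prod_{\upsilon\in\Hom(L,\Cp)}\Cp$, the map $m_\upsilon$ picks out the $\upsilon$-component, and since $m_\upsilon$ is an $L$-algebra homomorphism (for the $L$-structure on $\Cp$ given by $\upsilon$) one gets $m_\upsilon\circ\theta=\ev_{\Theta_{\Sen,\rho,\upsilon}}$, where $\Theta_{\Sen,\rho,\upsilon}\in\ghat\otimes_{L,\upsilon}\Cp$ is the $\upsilon$-component of $\Theta_{\Sen,\rho}$, and the evaluation is taken as an $L$-algebra map $S(\ghat^*)\to\Cp$ along $\upsilon$.

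Next I would bring in the Hodge--Tate hypothesis. By \eqref{HTcoc2} we have $\Theta_{\Sen,\rho}=\Lie(\mu_\rho)(1)$, so comparing $\upsilon$-components of $\mu_\rho=(\mu_{\rho,\upsilon})_\upsilon$ gives $\Theta_{\Sen,\rho,\upsilon}=\Lie(\mu_{\rho,\upsilon})(1)$. Since $\mu_{\rho,\upsilon}$ factors through the identity component $\Ghat\times_{L,\upsilon}\Cp$ and is $\Ghat(\Cp)$-conjugate to a cocharacter $\mathbb{G}_{m,\Cp}\to\That_{\Cp}$ whose $W$-orbit is $\nu_{\rho,\upsilon}$, passing to Lie algebras shows that $\Theta_{\Sen,\rho,\upsilon}$ is $\Ghat(\Cp)$-conjugate to an element $Y\in\that\otimes_{L,\upsilon}\Cp$ representing $\nu_{\rho,\upsilon}$.

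Finally, for $f\in S(\ghat^*)^{\Ghat}$ — which, being invariant under the algebraic group $\Ghat$, is in particular invariant under $\Ghat(\Cp)$ — conjugation-invariance gives $\ev_{\Theta_{\Sen,\rho,\upsilon}}(f)=\ev_Y(f)$; and since the Chevalley isomorphism \eqref{eqn_chev} sends $f$ to its restriction $f|_{\that}$ and $Y$ lies in $\that\otimes_{L,\upsilon}\Cp$, this equals $\ev_Y(f|_{\that})$, which is by definition the evaluation at the $W$-orbit $\nu_{\rho,\upsilon}$ of the image of $f$ under \eqref{eqn_chev}. Chaining these equalities (and using that \eqref{eqn_chev} is an isomorphism, so every element of $S(\that^*)^W$ arises as such an $f|_{\that}$) yields the claim.

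I do not anticipate a serious obstacle: the content is essentially a repackaging of facts already established in the excerpt — \eqref{HTcoc2}, the Chevalley restriction isomorphism, the Tannakian construction of both $\Theta_{\Sen,\rho}$ and $\mu_\rho$, and the definition of $\nu_{\rho,\upsilon}$. The only point requiring genuine care is the bookkeeping in the first step: keeping straight over which field each object lives and verifying that $m_\upsilon\circ\theta$ is indeed evaluation at $\Theta_{\Sen,\rho,\upsilon}$ with the correct $\upsilon$-twisted $L$-algebra structure on $\Cp$; the accompanying remark that invariance of $f$ under the algebraic group $\Ghat$ suffices for $\Ghat(\Cp)$-invariance is routine but worth stating explicitly so that the conjugation argument is legitimate over $\Cp$.
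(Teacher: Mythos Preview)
Your proposal is correct and follows essentially the same approach as the paper: identify the composition $m_\upsilon\circ\theta$ as evaluation at the $\upsilon$-component $\Theta_{\Sen,\rho,\upsilon}=\Lie(\mu_{\rho,\upsilon})(1)$ of the Sen operator via \eqref{HTcoc2}, then use that $\mu_{\rho,\upsilon}$ is $\Ghat(\Cp)$-conjugate to a cocharacter representing $\nu_{\rho,\upsilon}$ together with $\Ghat$-invariance and the Chevalley restriction isomorphism. The paper's proof is simply a terser version of exactly this argument, leaving the unwinding of $m_\upsilon\circ\theta$ and the final appeal to Chevalley restriction implicit.
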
 

\begin{proof} Let
  $\Theta_{\mathrm{Sen}}\in (\Cp\otimes L)\otimes_L \ghat=
  \Cp\otimes_{\Qp} \ghat$ be the Sen operator defined in Lemma
  \ref{tannaka}. Since $X$ is a point we we will suppress the affinoid
  $U$ from the notation. Since
  $\Theta_{\mathrm{Sen}} =\Lie(\mu_{\rho})(1)$, the image of
  $\Theta_{\mathrm{Sen}}$ in $\ghat\otimes_{L, \upsilon} \Cp$ is equal
  to $\Lie(\mu_{\rho, \upsilon})(1)$.  Since $\mu_{\rho, \upsilon}$
  and $\nu_{\rho, \upsilon}$ are conjugate by an element of
  $\Ghat(\Cp)$, the proposition is proved.
\end{proof}

Recall that the group $\Gal_F$ acts on the pinned root datum, which
induces an action on $X_*(\That)/W$, which we denote by $\cdot$. We
can now reprove \cite[Lem.\,2.4.1]{beegee}.

\begin{prop}\label{beegee_eq}
  For $g\in\Gal_F$, we have
  $\nu_{\rho,g \upsilon}=g\cdot \nu_{\rho,\upsilon}$ in
  $X_*(\widehat{T})/W$.
\end{prop}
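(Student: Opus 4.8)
The plan is to deduce this from Proposition \ref{HTcochar}, which identifies, for each embedding $\upsilon\colon L\hookrightarrow\Cp$, the class $\nu_{\rho,\upsilon}$ with the functional on $S(\that^{*})^{W}$ obtained by composing the inverse of the Chevalley isomorphism \eqref{eqn_chev}, the homomorphism $\theta$ of \eqref{def_theta} (here $X$ is a point, so $\OO_X(X)=L$ and $\theta$ lands in $E\otimes_{\Qp}L$), and the map $m_{\upsilon}\colon E\otimes_{\Qp}L\to\Cp$, $x\otimes y\mapsto x\,\upsilon(y)$. Hence it suffices to compare these functionals for $\upsilon$ and for $g\upsilon$, and the only new input needed is how the Galois actions in play interact.

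First I would record the elementary identity $m_{g\upsilon}=g\circ m_{\upsilon}\circ(g^{-1}\otimes\id)$, where $g^{-1}\otimes\id$ is the automorphism of $\Cp\otimes_{\Qp}L$ induced by $g^{-1}$ on the $\Cp$-factor and the outer $g$ is the action of $\Gal_F$ on $\Cp$. Applying this to $\theta(\phi(f))$ for $f\in S(\that^{*})^{W}$, with $\phi$ the inverse of \eqref{eqn_chev}, then using the transformation property of $\theta$ from Lemma \ref{thetan} in the form $(g^{-1}\otimes\id)\theta(h)=\theta(g^{-1}\cdot h)$ for $h\in S(\ghat^{*})^{\Ghat}$ (where $\cdot$ is the $\Gamma$-action through $\mu_G$), together with the $\Gamma$-equivariance of \eqref{eqn_chev} noted after Proposition \ref{chev}, I obtain
\[ \ev_{\nu_{\rho,g\upsilon}}(f)=g\bigl(\ev_{\nu_{\rho,\upsilon}}(g^{-1}\cdot f)\bigr)\qquad\text{for all }f\in S(\that^{*})^{W}, \]
each side being the evaluation taken after base change of $f$ along the relevant embedding of $L$.

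To conclude, I would unwind the right-hand side. Since $\rho$ is Hodge--Tate, $\nu_{\rho,\upsilon}$ and $\nu_{\rho,g\upsilon}$ are $W$-orbits of \emph{integral} cocharacters of $\That$. For an $L$-polynomial $f\in S(\that^{*})^{W}$ and an integral cocharacter $\nu$, one checks straight from the definitions that the base change $f^{\upsilon}$ of $f$ along $\upsilon$ satisfies $(g^{-1}\cdot f)^{\upsilon}(\nu)=f^{\upsilon}(\bar g\cdot\nu)$ (the $\Gamma$-action being defined over $\ZZ$, and $(g\cdot f)(x)=f(g^{-1}\cdot x)$) and $g\bigl(f^{\upsilon}(\bar g\cdot\nu)\bigr)=f^{g\upsilon}(\bar g\cdot\nu)$ (since $\bar g\cdot\nu$ is integral and $g\circ\upsilon=g\upsilon$ on coefficients). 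Feeding this into the displayed identity gives $\ev_{\nu_{\rho,g\upsilon}}(f)=\ev_{\bar g\cdot\nu_{\rho,\upsilon}}(f)$ for all $f\in S(\that^{*})^{W}$; since $W$-invariant polynomials separate $W$-orbits in characteristic zero, this forces $\nu_{\rho,g\upsilon}=\bar g\cdot\nu_{\rho,\upsilon}=g\cdot\nu_{\rho,\upsilon}$ in $X_{*}(\That)/W$.

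The main obstacle is organisational rather than conceptual: one must keep three distinct $\Gal_F$-actions apart — the permutation of the coefficient embeddings $\upsilon$ hidden in the semilinear action on $\Cp\otimes_{\Qp}L$, the $\cdot$-action of $\Gamma$ on $\ghat$ (hence $\that$) through $\mu_G$, and the $\Ghat(\Cp)$-conjugacy used to define $\nu_{\rho,\upsilon}$ — and verify that the base-change maps $(-)\otimes_{L,\upsilon}\Cp$ intertwine them compatibly. Each individual step is routine. (Alternatively one could avoid Proposition \ref{HTcochar} and argue directly from the Tannakian description of the Hodge--Tate cocharacter $\mu_{\rho}$ and the $\Gal_F$-stability of the graded pieces $W_{i}$, reproving \cite[Lem.\,2.4.1]{beegee} in the spirit of its original proof; the bookkeeping is of the same nature.)
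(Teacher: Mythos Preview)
Your proof is correct, and it takes a genuinely different route from the paper's. The paper argues directly with the Sen operator: it decomposes $W=(\Cp\otimes L)\otimes_L\ghat\simeq\prod_{\upsilon}W_{\upsilon}$, writes $\Theta=\Ad(g)(X)$ with $X_{\upsilon}=1\otimes\Lie(\nu_{\rho,\upsilon})(1)$, applies Lemma~\ref{equiv} together with the compatibility \eqref{adj} to obtain $\Ad(h)(\gamma*(\gamma\cdot X))=X$, and then projects this identity into $W_{\gamma\upsilon}$ to see that $\gamma\cdot\nu_{\rho,\upsilon}$ and $\nu_{\rho,\gamma\upsilon}$ are $\Ghat(\Cp)$-conjugate. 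In other words, the paper works on the ``element side'' of $\ghat$.

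You instead work on the ``function side'': you invoke Proposition~\ref{HTcochar} to characterise $\nu_{\rho,\upsilon}$ via the evaluation functional $m_{\upsilon}\circ\theta\circ\phi$ on $S(\that^{*})^{W}$, and then compare the two functionals for $\upsilon$ and $g\upsilon$ using the elementary identity $m_{g\upsilon}=g\circ m_{\upsilon}\circ(g^{-1}\otimes\id)$, the transformation rule of Lemma~\ref{thetan}, and the $\Gamma$-equivariance of the Chevalley isomorphism. Since Lemma~\ref{thetan} is itself derived from Lemma~\ref{equiv}, both proofs rest on the same equivariance of the Sen operator; but your argument absorbs that input higher up the chain and then reduces the statement to the standard fact that $W$-invariants separate $W$-orbits. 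This buys a cleaner and more modular argument once Proposition~\ref{HTcochar} is available, at the cost of depending on it; the paper's version is more self-contained but requires tracking the explicit conjugating elements in $\Ghat(\Cp\otimes L)$.
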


\begin{proof} 
  Let $W=(\Cp\otimes L)\otimes_L \ghat$. We refer to the discussion
  preceding lemma \ref{equiv} for the definitions of the various
  actions of $\Gal_F$ and $\Ghat(\Cp\otimes L)$ on $W$ used in this
  proof.  For each embedding $\upsilon: L\to \Cp$ let
  $W_v=\Cp\otimes_{L,\upsilon} \ghat$.  Let $X_\upsilon\in W_\upsilon$
  be the image of $X\in W$ via the natural isomorphism
  $W\simeq \prod_{\upsilon: L\to \Cp} W_\upsilon$. One easily checks
  that the map
  $\tilde{\gamma}_\upsilon: W_\upsilon\to W_{\gamma \upsilon}$ defined
  by $\tilde{\gamma}_\upsilon(x\otimes y)=\gamma(x)\otimes y$
  satisfies
  \begin{equation} \label{key}
   (\gamma*X)_{\gamma \upsilon}=\tilde{\gamma}_\upsilon(X_\upsilon) 
 \end{equation}
 In particular, if
 $X=(1\otimes u_\upsilon)_{\upsilon}\in \prod_{\upsilon: L\to \Cp}
 W_\upsilon\simeq W$, with $u_\upsilon\in \ghat$, then for all
 $\gamma\in \Gal_F$ we have
 $[\gamma*(\gamma\cdot X)]_{\gamma \upsilon}=1\otimes\gamma\cdot
 u_\upsilon$, where $\gamma\cdot u_\upsilon$ is the action of $\gamma$
 on $u_\upsilon$ induced by $\mu_G$.

 Now let $\Theta=\Theta_{\Sen, U}$ and consider
 \[X=(1\otimes \Lie(\nu_{\rho, \upsilon})(1))_\upsilon\in
   \prod_{\upsilon: L\to \Cp} W_v\simeq W.\] By definition of
 $\nu_{\rho,\upsilon}$ and relation \eqref{HTcoc2}, there is
 $g\in \Ghat(\Cp\otimes L)\simeq \prod_{\upsilon: L\to \Cp}
 \Ghat(\Cp)$ such that $\Theta={\rm Ad}(g)(X)$. Lemma \ref{equiv}
 combined with relation \eqref{adj} show that there exists
 $h\in \Ghat(\Cp\otimes L)$ (more precisely
 $h=g^{-1}c_{\gamma} sgs^{-1}$, with $s=\mu_G(\bar{\gamma})$) such
 that
\begin{equation}\label{ugly} 
{\rm Ad}(h)(\gamma*(\gamma\cdot X))=X.
\end{equation}
Writing
$h=(h_\upsilon)_\upsilon\in \prod_{\upsilon: L\to \Cp} \Ghat(\Cp)$,
projecting relation \eqref{ugly} in $W_{\gamma \upsilon}$ and taking
into account the discussion above yields
\[{\rm Ad}(h_{\gamma \upsilon})(1\otimes \gamma\cdot \Lie(\nu_{\rho, \upsilon})(1))=1\otimes \Lie(\nu_{\rho, \gamma \upsilon})(1).\]
It follows that $\gamma\cdot \nu_{\rho, \upsilon}$ and 
$\nu_{\rho, \gamma \upsilon}$ are the same in $X_*(\That)/W$.
\end{proof}

\begin{remar}\label{orbits} Let us note that $\Gal_F$-orbits of embeddings $\upsilon: L\hookrightarrow \Cp$ are 
  canonically in bijection with the set of embeddings
  $\tau: F\hookrightarrow L$. Concretely, $F=\upsilon(L)^{\Gal_F}$ and
  $\tau= \upsilon^{-1}|_F$. Similarly, $\Gal_E$-orbits of embeddings
  $\upsilon: L\hookrightarrow \Cp$ are in bijection with the set of
  embeddings $\tau: E\hookrightarrow L$. It follows from Proposition
  \ref{beegee_eq} that if $\upsilon, \upsilon': L\hookrightarrow \Cp$
  lie in the same $\Gal_E$-orbit then
  $\nu_{\rho, \upsilon}=\nu_{\rho, \upsilon'}$.
\end{remar}

\subsection{Galois representations valued in $C$-groups}
We will consider the set up of \S \ref{sec_Cgr_sub}. Since $\Gm$ acts
trivially on $\That$, $\That\times \Gm$ is a maximal torus of
$\Ghat^T$, and $\Bhat\rtimes \Gm$ is a Borel subgroup of $\Ghat^T$. We
also recall that $\CG$ is the $L$-group of $G^T$, which is a central
extension of $G$ by $\Gm$ over $F$, so that the previous discussion in
this section applies.

\begin{prop}\label{inf_HT} Let $\rho: \Gal_F\rightarrow \CG_f(L)$ be an admissible representation such that 
  $d\circ \rho= \chi_{\cyc}$, let $\sigma: F\hookrightarrow L$,
  $\tau: E\hookrightarrow L$, $\upsilon: L\hookrightarrow \Cp$ be
  field embeddings such that $\upsilon\circ\tau=\id_E$ and
  $\tau|_F=\sigma$.

  If $\rho$ is Hodge--Tate and the image of $\nu_{\rho, \upsilon}$
  under
  \begin{equation}\label{S6}
    X_*(\That\times \Gm)/W\rightarrow X_*(\That)/W=X^*(T)/W=X^*(T\times_{E, \tau} L)/W
  \end{equation}
  contains a character $\lambda_{\tau}$, which is dominant with
  respect to $B\times_{E, \tau} L$, then $\zeta_{\rho, \sigma}^C$,
  defined in Definition \ref{def_ringhom_C}, is an infinitesimal
  character of irreducible representation of highest weight
  $\lambda_{\tau}$ with respect to $B\times_{E, \tau} L$.

  If $\zeta^C_{\rho, \sigma}$ is an infinitesimal character of
  irreducible algebraic representation $V_{\sigma}$ of
  $G\times_{F, \sigma} L$ for every $\sigma: F\hookrightarrow L$, and
  the highest weight $\lambda_{\tau}$ of $V_{\sigma}$ with respect to
  $B\times_{E, \tau} L$ is regular for all $\tau$ (equivalently, for
  at least one $\tau$ above every $\sigma$) then $\rho$ is Hodge--Tate
  and $\nu_{\rho, \upsilon}$ maps to the orbit of $\lambda_{\tau}$
  under \eqref{S6}, where $\upsilon\circ \tau|_E=\id_E$.
\end{prop}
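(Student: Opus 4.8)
The plan is to reduce both implications to Proposition \ref{HTcochar} applied to the group $G^T$ (whose $L$-group is $\CG$), combined with the Harish--Chandra characterisation of $\kappa_\tau$ in Lemma \ref{chara_kappa_tau}. First I would unwind Definition \ref{def_ringhom_C}: since $X$ is a point, $\OO_X(X)=L$ and $\zeta^C_{\rho,\sigma}=m_\tau\circ\theta^T\circ\alpha\circ\iota^{-1}\circ\kappa_\tau$, where $\iota$ is the Chevalley isomorphism \eqref{eqn_chev} and $\theta^T$ is the map \eqref{def_theta} for $G^T$. As $\zeta^C_{\rho,\sigma}$ is $L$-valued and $\upsilon$ is injective, it suffices to compute $\upsilon\circ\zeta^C_{\rho,\sigma}$; and because $\upsilon\circ\tau=\id_E$, the map $\upsilon\circ m_\tau\colon E\otimes_{\Qp}L\to\Cp$ is the restriction to $E\otimes L$ of the map $m_\upsilon$ of Proposition \ref{HTcochar}. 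Hence $\upsilon\circ\zeta^C_{\rho,\sigma}(z)=\tilde g\bigl(\alpha(\Theta^T_{\Sen,\upsilon})\bigr)$, where $\tilde g=\iota^{-1}(\kappa_\tau(z))\in S(\ghat^*)^{\Ghat}$ and $\Theta^T_{\Sen,\upsilon}\in\ghat^T\otimes_{L,\upsilon}\Cp$ is the $\upsilon$-component of the Sen operator of $\rho$, by Proposition \ref{HTcochar} for $G^T$. Two structural facts drive everything: $\alpha(\tilde g)\in S((\ghat^T)^*)^{\Ghat^T}$ is $\Ghat^T$-invariant, so the expression depends only on the $\Ghat^T(\Cp)$-conjugacy class of $\Theta^T_{\Sen,\upsilon}$; and by Remark \ref{dcyc} (using $d\circ\rho=\chi_{\cyc}$) applying $\alpha$ adds the shift by $\delta$ to the $\ghat$-part, which will exactly match the shift built into Harish--Chandra in Lemma \ref{chara_kappa_tau}.

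For the first implication, suppose $\rho$ is Hodge--Tate. Then $\Theta^T_{\Sen,\upsilon}=\Lie(\mu_{\rho,\upsilon})(1)$ for the Hodge--Tate cocharacter, so it is $\Ghat^T(\Cp)$-conjugate to $\Lie(\nu_{\rho,\upsilon})(1)$ for a representative of the orbit $\nu_{\rho,\upsilon}$ lying in $\that^T\otimes_{L,\upsilon}\Cp$ with $\Gm$-component $1$. The hypothesis that the image of $\nu_{\rho,\upsilon}$ under \eqref{S6} contains the dominant weight $\lambda_\tau$ says that $\nu_{\rho,\upsilon}$ has such a representative whose projection to $X_*(\That)$ is $\lambda_\tau$; choosing it, $\alpha$ sends it to $\lambda_\tau+\delta\in\that\otimes_{L,\upsilon}\Cp$. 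By the first paragraph and $\Ghat^T$-invariance, $\upsilon\circ\zeta^C_{\rho,\sigma}(z)=\tilde g(\lambda_\tau+\delta)=\ev_{\lambda_\tau+\delta}(\kappa_\tau(z))=\chi_V(z)$ by Lemma \ref{chara_kappa_tau}, $V$ the irreducible representation of highest weight $\lambda_\tau$. Since this holds for all $z$, $\zeta^C_{\rho,\sigma}=\chi_V$.

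For the converse, assume $\zeta^C_{\rho,\sigma}=\chi_{V_\sigma}$ with $V_\sigma$ of highest weight $\lambda_\tau$, now regular. By the first paragraph, every $\Ghat$-invariant polynomial takes the same value at $\alpha(\Theta^T_{\Sen,\upsilon})$ and at $\lambda_\tau+\delta\in\that\otimes_{L,\upsilon}\Cp$. Because $\lambda_\tau$ is regular dominant, $\lambda_\tau+\delta$ is regular semisimple, so the fibre of the adjoint quotient over its image is a single $\Ghat(\Cp)$-orbit (Kostant); hence $\alpha(\Theta^T_{\Sen,\upsilon})$ is $\Ghat(\Cp)$-conjugate to $\lambda_\tau+\delta$, in particular semisimple. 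I would then run a Jordan decomposition argument: writing $\Theta^T_{\Sen,\upsilon}=\Theta_s+\Theta_n$, functoriality of Jordan decomposition under $d\colon\CG_f\to\Gm$ (and $d\circ\rho=\chi_{\cyc}$) forces the $\Gm$-component of $\Theta_n$ to vanish, so $\Theta_n$ lies in the $\ghat$-factor and $\alpha(\Theta_n)=\Theta_n$; comparing Jordan decompositions under the Lie algebra homomorphism $\alpha$ gives $\alpha(\Theta_n)=0$, whence $\Theta^T_{\Sen,\upsilon}$ is semisimple. Conjugating it into $\that^T\otimes_{L,\upsilon}\Cp$ (with $\Gm$-component $1$), $\alpha$ of the result lies in the $\Ghat(\Cp)$-orbit of $\lambda_\tau+\delta$; removing $\delta$ identifies the $\that$-part, up to the relevant Weyl action, with $\lambda_\tau$. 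Thus $\Theta^T_{\Sen,\upsilon}=\Lie(\mu)(1)$ for a cocharacter $\mu$ of $\Ghat^T_{\Cp}$, so $\rho$ is Hodge--Tate by Lemma \ref{charHT}, and $\nu_{\rho,\upsilon}$ maps to the orbit of $\lambda_\tau$ under \eqref{S6}.

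The main obstacle is the bookkeeping between $G$ and $G^T$: one must track how the Chevalley isomorphism, the map $\alpha$, and the shift $\delta$ interact on the tori $\that\subset\ghat$ and $\that^T\subset\ghat^T$ — in particular that the composite $\iota_{G^T}\circ\alpha\circ\iota_G^{-1}$ is pullback along $(h,t)\mapsto h+t\delta$ and is Weyl-equivariant for the corresponding (shifted) action — and the difference between the Weyl-group actions on $X_*(\That)$ and on $X_*(\That\times\Gm)$. In the converse, the regularity hypothesis is precisely what upgrades equality of invariants to $\Ghat(\Cp)$-conjugacy, and the Jordan-decomposition step (exploiting that $\alpha$ is a Lie algebra morphism factoring as an isomorphism followed by a projection, together with $d$) is the second delicate point.
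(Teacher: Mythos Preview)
Your proposal is correct and follows essentially the same route as the paper: unwind Definition \ref{def_ringhom_C} via Remark \ref{dcyc} to see that $\zeta^C_{\rho,\sigma}$ is evaluation of $\iota^{-1}\circ\kappa_\tau$ at $M_\upsilon+\delta$, invoke Proposition \ref{HTcochar} for $G^T$ and Lemma \ref{chara_kappa_tau} for the forward direction, and for the converse use that regularity of $\lambda_\tau+\delta$ forces the relevant element to be semisimple (Jordan decomposition plus Kostant's description of the adjoint-quotient fibre), then conclude with Lemma \ref{charHT}.

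The one noteworthy difference is organisational. The paper decomposes $\Theta^T_{\Sen,\upsilon}=(M_\upsilon,1)$ at the outset and runs the Jordan decomposition \emph{inside $\ghat$} on $M_\upsilon$ directly (citing \cite[Prop.\,10.3.1]{Dmod}), so the cocharacter is produced immediately from $M_\upsilon$ being $\Ghat(\Cp)$-conjugate to $\lambda_\tau\in X_*(\That)$. You instead perform the Jordan decomposition \emph{inside $\ghat^T$} on $\Theta^T_{\Sen,\upsilon}$ and then push it down through the Lie algebra surjection $\alpha$, using $d$ to force $\Theta_n\in\ghat$ and $\alpha|_{\ghat}=\id$ to kill it. Both arguments are valid; the paper's is a little shorter because it never has to discuss equivariance of $\alpha$ or the interaction of Jordan decomposition with $\alpha$ and $d$. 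Your closing paragraph correctly identifies the genuine subtlety here, namely the bookkeeping between the $W$-actions on $X_*(\That)$ and on $X_*(\That\times\Gm)$ (the latter is \emph{not} the product action, since the roots of $\Ghat^T$ are $(\alpha^\vee,\mathrm{ht}(\alpha^\vee))$), which is exactly what makes the phrase ``removing $\delta$ identifies the $\that$-part, up to the relevant Weyl action, with $\lambda_\tau$'' require care.
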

\begin{proof} We first recall the definition of
  $\zeta_{\rho, \sigma}^C$. Let
  $\Theta_{\mathrm{Sen}}^T\in (\Cp\otimes L)\otimes_L \ghat^T$. As
  explained in Remark \ref{dcyc}, $\Theta_{\mathrm{Sen}}^T$ is of the
  form $(M, 1\otimes 1)$ with $M\in (\Cp\otimes L)\otimes_L
  \ghat$. Let $\Theta^T_{\mathrm{Sen},\upsilon}$ be the image of
  $\Theta_{\mathrm{Sen}}^T$ in $\ghat^T\otimes_{L, \upsilon}\Cp$ and
  let $M_{\upsilon}$ be the image of $M$ in
  $\ghat^T\otimes_{L, \upsilon}\Cp$ under the base change along
  $m_{\upsilon}: \Cp\otimes L\rightarrow \Cp$, so that
  $\Theta_{\mathrm{Sen}, \upsilon}^T=(M_{\upsilon}, 1\otimes 1)$.

  Then $\zeta_{\rho, \sigma}^C$ is equal to the composition
  \[ Z(\mathfrak{g})\otimes_{F, \sigma}
    L\overset{\kappa_{\tau}}{\longrightarrow} S(\that^*)^W
    \overset{\cong}{\longrightarrow} S(\ghat^*)^{\Ghat}
    \overset{\ev_{M_{\upsilon}+\delta}}{\xrightarrow{\hspace*{1cm}}}
    \upsilon(L) \overset{\upsilon^{-1}}{\longrightarrow} L.\] Let us
  point out that \eqref{drinking_beer_on_public_trains} in our
  situation is the map
  \[S((\that\oplus \Lie \Gm)^*)^W \overset{\cong}{\longrightarrow}
    S((\ghat \oplus \Lie \Gm)^*)^{\Ghat^T}
    \overset{\ev_{(M_{\upsilon}, 1)}}{\xrightarrow{\hspace*{1cm}}}
    \upsilon(L). \]

  Let us assume $\rho$ is Hodge--Tate and that
  $\lambda^T\in \nu_{\rho, \upsilon}$ maps to a dominant weight
  $\lambda \in X^*(T\times_{E, \tau} L)$ with respect to
  $B\times_{E, \tau} L$.  We will denote the images of $\lambda$ in
  $X_*(\That)$ and $X^*(T)$ by the same letter.  It follows from
  Proposition \ref{HTcochar} that
  $\zeta_{\rho, \sigma}^C= \ev_{\lambda+\delta}\circ\kappa_{\tau}$ and
  Lemma \ref{chara_kappa_tau} implies that $\zeta_{\rho, \sigma}^C$ is
  the infinitesimal character of $V(\lambda)$.

  Let us assume that $\zeta_{\rho, \sigma}^C$ is equal to the
  infinitesimal character of irreducible representation $V_{\sigma}$
  of $G\times_{E, \sigma} L$, which is of highest weight
  $\lambda_{\tau}$ with respect to $B\times_{E, \tau} L$. Let us
  additionally assume that $\lambda_{\tau}$ is regular.  Lemma
  \ref{chara_kappa_tau} implies that the maps $\ev_{\lambda_{\tau}}$
  and $\ev_{M_{\upsilon}}$ coincide, when restricted to
  $S(\ghat^*)^{\Ghat}$. This implies that if we write
  $M_{\upsilon}= x_s + x_u\in \ghat\otimes _{L, \upsilon}\Cp$, where
  $x_s$ is semisimple, $x_u$ is nilpotent and $[x_s, x_u]=0$, then
  $x_s$ is conjugate to $\lambda_{\tau}$ by an element of
  $\Ghat(\Cp)$, see for example \cite[Prop.\,10.3.1]{Dmod}. Since
  $\lambda_{\tau}$ is regular, $x_u=0$ and thus $M_{\upsilon}$ is
  conjugate to $\lambda_{\tau}$ by an element of $\Ghat(\Cp)$.  This
  implies that there is
  $\mu'_{\upsilon}: \mathbb{G}_{m,\Cp} \rightarrow \Ghat^T\times_{L,
    \upsilon}\Cp$, such that $d\circ \mu'_{\upsilon}= \id_{\Gm}$ and
  $\Lie(\mu'_{\upsilon})= \Theta^{T}_{\mathrm{Sen},\upsilon}$. Thus
  $\mu\coloneqq (\mu'_{\upsilon})_{\upsilon}: \mathbb{G}_{m,
    \Cp}\rightarrow (\Res_{L/ \Qp} \Ghat^T)_{\Cp}$ satisfies
  $\Lie(\mu')=\Theta_{\mathrm{Sen}}^T$. It follows from lemma
  \ref{charHT} that $\rho$ is Hodge--Tate and $\mu'=\mu_{\rho}$.  On
  then checks as in the proof of Proposition \ref{HTcochar} that
  $\nu_{\rho, \upsilon}$ maps to the $W$-orbit of $\lambda_{\tau}$
  under \eqref{S6}.
\end{proof}

\begin{lem} \label{deptrace} Let $\rho, \rho': \Gal_F\to \CG_f(L)$ be
  admissible representations, such that
  $d\circ \rho=d\circ\rho'=\chi_{\cyc}$. We write
  $\rho(\gamma)=(c_{\gamma}, \bar{\gamma}),
  \rho'(\gamma)=(c'_{\gamma}, \bar{\gamma})$ for all
  $\gamma\in \Gal_F$, where $c_{\gamma}, c'_{\gamma}\in \Ghat^T(L)$.
  Assume that
  \[\tr_V(r(c_{\gamma}))=\tr_V(r(c'_{\gamma})), \quad \forall \gamma \in \Gal_E,\] 
  for all algebraic representations $(r,V)$ of $\Ghat^T$. Then
  $\zeta_{\rho}^C=\zeta_{\rho'}^C$.
\end{lem}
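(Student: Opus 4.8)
The plan is to show that $\zeta^C_\rho$ and $\zeta^C_{\rho'}$ are assembled from evaluation maps that depend only on $\rho|_{\Gal_E}$ through its characteristic polynomials in the algebraic representations of $\Ghat^T$, and then to observe that these characteristic polynomials agree by hypothesis.

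First I would unwind Definitions \ref{def_ringhom_C} and \ref{def_theta_prime}. Each of $\zeta^C_{\rho,\sigma}$ and $\zeta^C_{\rho',\sigma}$ is obtained from the homomorphism $\theta^T\colon S((\ghat^T)^*)^{\Ghat^T}\to E\otimes L$ of \eqref{def_theta_prime} (the map \eqref{def_theta} for $G^T$ in place of $G$), which is the evaluation of $\Ghat^T$-invariant polynomial functions on $\ghat^T$ at the Sen operator $\Theta^T_{\mathrm{Sen}}$ of $\rho$, resp.\ $\rho'$, by pre- and post-composing with the maps $\alpha$, $\kappa_\tau$, Chevalley's isomorphism \eqref{eqn_chev} and $m_\tau$, none of which involve $\rho$. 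Hence it suffices to prove that every $f\in S((\ghat^T)^*)^{\Ghat^T}$ takes the same value at $\Theta^T_{\mathrm{Sen},\rho}$ and at $\Theta^T_{\mathrm{Sen},\rho'}$ (both elements of $(\Cp\wtimes L)\otimes_L\ghat^T$). By the remark following Definition \ref{Sen_op} the Sen operator depends only on the restriction of the Galois representation to $\Gal_E$, and by admissibility $\rho|_{\Gal_E}$ is the genuine homomorphism $\Gal_E\to\Ghat^T(L)$, $\gamma\mapsto c_\gamma$, landing in the neutral component (and likewise for $\rho'$).

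The key step: fix an algebraic representation $(r,V)$ of $\Ghat^T$. The $L[\Gal_E]$-modules $(r\circ\rho|_{\Gal_E},V)$ and $(r\circ\rho'|_{\Gal_E},V)$ have the same trace function by hypothesis, so their semisimplifications are isomorphic, since two semisimple $L[\Gal_E]$-modules with equal trace function are isomorphic ($L$ has characteristic zero). Next, the characteristic polynomial of the Sen operator $\Theta_{\mathrm{Sen},W}$ of a finite free $L[\Gal_E]$-module $W$ depends only on $W^{\mathrm{ss}}$: choosing a $\Gal_E$-stable filtration of $W$ with irreducible graded pieces, the isomorphism \eqref{Sen_compare} and exactness of $\Cp\wtimes_{\Qp}(-)$ show that $\Theta_{\mathrm{Sen},W}$ preserves the induced filtration of $\Cp\wtimes W$, with associated graded endomorphisms the $\Theta_{\mathrm{Sen},\gr_iW}$ (by naturality of $\theta$ in Theorem \ref{Sen}), whence $\operatorname{charpoly}(\Theta_{\mathrm{Sen},W})=\prod_i\operatorname{charpoly}(\Theta_{\mathrm{Sen},\gr_iW})=\operatorname{charpoly}(\Theta_{\mathrm{Sen},W^{\mathrm{ss}}})$ by Lemma \ref{transf_tannaka}. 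Combining this with Lemma \ref{tannaka} for $\Ghat^T$, which identifies $\Lie(r)(\Theta^T_{\mathrm{Sen},\rho})$ with the Sen operator of $(r\circ\rho,V)$, we obtain that $\Lie(r)(\Theta^T_{\mathrm{Sen},\rho})$ and $\Lie(r)(\Theta^T_{\mathrm{Sen},\rho'})$ have the same characteristic polynomial in $(\Cp\wtimes L)[T]$, for every algebraic representation $(r,V)$ of $\Ghat^T$.

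To finish I would invoke the fact that, for the connected reductive group $\Ghat^T$, the coefficients of the characteristic polynomials of $\Lie(r)$, as $(r,V)$ runs over all algebraic representations, generate $S((\ghat^T)^*)^{\Ghat^T}$ as an $L$-algebra — equivalently, two elements of $\ghat^T\otimes_L R$ ($R$ a commutative $L$-algebra) with the same characteristic polynomial in every representation have the same image in the adjoint quotient $\ghat^T/\!/\Ghat^T$. By Chevalley's restriction theorem (Proposition \ref{chev} for $\Ghat^T$) it is enough to check this on a Cartan subalgebra $\that^T$, where these functions become the power sums $\sum_{\mu}\langle\mu,-\rangle^k$ over the weights $\mu$ of $V$; since every character of $\That\times\Gm$ occurs as a weight of some representation of $\Ghat^T$ (for instance $\chi$ occurs in $V(\chi+\chi')\otimes V(\chi')^*$ for $\chi'$ dominant with $\chi+\chi'$ dominant), symmetrizing $k$-th powers of weights over $W$-orbits yields a spanning family of $S((\that^T)^*)^W$. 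Applying this with $R=\Cp\wtimes L$ (legitimate, as $\Cp\wtimes L$ is a finite product of copies of $\Cp$, over each of which everything decomposes and Jordan decomposition is available) gives $f(\Theta^T_{\mathrm{Sen},\rho})=f(\Theta^T_{\mathrm{Sen},\rho'})$ for all invariant $f$, hence $\theta^T_\rho=\theta^T_{\rho'}$ and $\zeta^C_\rho=\zeta^C_{\rho'}$. I expect this last group-theoretic fact to be the only delicate point; everything else is formal, given the results of Sections \ref{sec_Sen_ban} and \ref{families_gal}.
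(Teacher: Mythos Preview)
Your proof is correct and follows essentially the same route as the paper: reduce to showing that the two Sen operators $\Theta^T_{\Sen,\rho}$ and $\Theta^T_{\Sen,\rho'}$ give the same value on every $\Ghat^T$-invariant polynomial, use that the Sen operator is unchanged by restriction to $\Gal_E$ and that its ``spectrum'' in each representation depends only on the semisimplification (via exactness of $D_{\Sen}$), and then appeal to the fact that such invariants are detected by representation-theoretic trace data together with Chevalley's restriction theorem. The only cosmetic difference is that the paper uses the functions $x\mapsto \tr_V(\Lie(r)(x)^k)$ and asserts directly (via Chevalley) that they \emph{span} $S((\ghat^T)^*)^{\Ghat^T}$ as an $L$-vector space, whereas you phrase the same conclusion through coefficients of characteristic polynomials and argue generation on the Cartan side; the two are equivalent by Newton's identities, and your decomposition of $\Cp\wtimes L$ as a product of copies of $\Cp$ is exactly what the paper does implicitly when it passes to each embedding $\upsilon$.
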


\begin{proof} Let
  $\Theta=\Theta^T_{\Sen, \rho}, \Theta'=\Theta^T_{\Sen, \rho'} \in
  (\Cp\otimes L)\otimes_{L} \ghat^T$ be the respective Sen operators,
  where we consider $\CG_f$ as the $L$-group of $G^T$.  We claim that
  $f(\Theta)=f(\Theta')$ for all $f\in S((\ghat^T)^*)^{\Ghat^T}$. The
  claim implies that the maps $\theta'$ in \eqref{def_theta_prime}
  associated to representations $\rho$ and $\rho'$ coincide, which
  implies the assertion.

  To prove the claim we note that the polynomial function
  $x\mapsto \tr_V(\Lie(r)(x)^k)$ is an element of
  $S((\ghat^T)^*)^{\Ghat^T}$ and it follows from Chevalley's
  restriction theorem that these functions, for all algebraic
  representations $(r, V)$ of $\Ghat^T$ and all $k\ge 1$, span
  $S((\ghat^T)^*)^{\Ghat^T}$ as an $L$-vector space. Thus it suffices
  to show that
  \[\tr (\Lie(r)(\Theta)^k)=\tr (\Lie(r)(\Theta')^k), \quad \forall
    k\geq 1,\] which in turn is equivalent to
  $\tr(\Theta_{{\rm Sen},\, r\circ \rho}^k)=\tr(\Theta_{{\rm Sen}, \,
    r\circ \rho'}^k)$ for all $k\ge 1$.  By assumption the Galois
  representations $r\circ \rho$ and $r\circ \rho'$ restricted to
  $\Gal_E$ have the same semi-simplification. Note that the
  restriction to $\Gal_E$ does not change the Sen operator. It follows
  from \eqref{Sen_compare} that the functor $W\mapsto D_{\Sen}(W)$ is
  exact. This implies that for every embedding
  $\upsilon: L\hookrightarrow \Cp$ the semi-simple parts of
  $\Theta_{\Sen, r\circ \rho, \upsilon}$ and
  $\Theta_{\Sen, r\circ \rho', \upsilon}$ are conjugate in
  $\GL(V\otimes_{L,\upsilon} \Cp)$ and the assertion follows.
\end{proof}

\subsection{Algebraic representations and regular Hodge--Tate weights}\label{sec_HT_reg}
We would like to spell out a part of the proof of Proposition
\ref{inf_HT}, since this is important for the $p$-adic Langlands
correspondence.

We assume the setup at the beginning of section \ref{families_gal} and
let $\rho: \Gal_F \rightarrow \CG_f(L)$ be a continuous admissible
representation, satisfying $d\circ \rho=\chi_{\cyc}$ We assume that
$\rho$ is Hodge--Tate. For each embedding
$\upsilon: L\hookrightarrow \Cp$ we get a $W$-orbit
$\nu^T_{\rho, \upsilon}\in X_*(\That\times \Gm)$. Each
$\lambda^T\in \nu^T_{\rho, \upsilon}$ is of the form
$(\lambda, \id_{\Gm})$ with $\lambda \in X_*(\That)$. We let
$\nu_{\rho, \upsilon}^{C}$ be the image of $\nu^T_{\rho, \upsilon}$ in
$X_*(\That)$. It follows from Remark \ref{orbits} that
$\nu_{\rho, \upsilon}^{C}$ depends only on $\Gal_E$-orbit of
$\upsilon$ and these are canonically in bijection with the set of
embeddings $\tau: E\hookrightarrow L$. Let us reindex the $W$-orbits
by the embeddings $\tau: E\hookrightarrow L$.  We may identify
$X_*(\That)=X^*(T)=X^*(T\times_{E, \tau} L)$, so that
$\nu^C_{\rho, \tau} \subset X^*(T\times_{E, \tau} L)$.

\begin{defi} We say that $\rho$ has regular Hodge--Tate weighs if for
  each $\tau$, $\nu^C_{\rho, \tau}$ contains
  $\lambda_{\tau} \in X^*(T\times_{E, \tau} L)$, which is dominant
  with respect to $B\times_{E, \tau} L$.
\end{defi}
\begin{remar} Since the action of $\Gal_F$ on the root system permutes
  the positive roots, Corollary \ref{beegee_eq} implies that it is
  enough to check this condition for at least one $\tau$ above every
  $\sigma: F\hookrightarrow L$.
\end{remar}

\begin{defi}\label{pi_alg} If $\rho$ has regular Hodge--Tate weights
  then we define an irreducible algebraic representation
  $\pi_{\alg}(\rho)$ of
  $(\Res_{F/\Qp} G)_L\cong \prod_{\sigma} G\times_{F, \sigma} L$ by
  \[ \pi_{\alg}(\rho)\coloneqq \bigotimes_{\sigma: F\hookrightarrow L}
    V_{\sigma},\] where $V_{\sigma}$ is an irreducible algebraic
  representation of $G\times_{F, \sigma} L$ of highest weight
  $\lambda_{\tau}$ with respect to $B\times_{E, \tau} L$, for any
  $\tau: E\hookrightarrow L$ above $\sigma$.
\end{defi}

\begin{remar} The definition of $V_{\sigma}$ is independent of the
  choice of $\tau$: any two lie in the same $\Gal_F$-orbit and
  Corollary \ref{beegee_eq} together with the proof of Lemma
  \ref{transform_kappa} imply the assertion.
\end{remar}

\begin{remar} It follows from the proof of Proposition \ref{inf_HT}
  that under the above assumptions $\zeta^C_{\rho}$, defined in
  Definition \ref{def_ringhom_C}, is the infinitesimal character of
  $\pi_{\alg}(\rho)$.
\end{remar}

We will discuss the notion of regular Hodge--Tate weights in the
presence of the twisting element, see the end of subsection
\ref{sec_Cgr}. Recall that
$\delta_{\mathrm{ad}}\in X_*(\That/Z_{\Ghat})$ is the unique element
such that $2 \delta_{\mathrm{ad}}$ is equal to the image of
$2\delta$. We assume that there is $\tilde{\delta}\in X_*(\That)$ such
that its image in $X_*(\That/Z_{\Ghat})$ is equal to
$\delta_{\mathrm{ad}}$. Let $\rho: \Gal_F \rightarrow \LG(L)$ be an
admissible representation, which we assume to be Hodge--Tate. Let
$\nu_{\rho, \tau}$ be the $W$-orbit corresponding to $\Gal_E$-orbit of
$\upsilon: L\hookrightarrow \Cp$.

\begin{lem}\label{rhoC} Let $\rho^C: \Gal_F\rightarrow \CG_f(L)$ be the representation \[\rho^C\coloneqq \tw_{\tilde{\delta}}^{-1}\circ(\rho \boxtimes \chi_{\cyc}),\] where $\tw_{\tilde{\delta}}$ is defined in \eqref{twisting_el}. Then 
  $\rho^C$ is Hodge--Tate and its Hodge--Tate weights are regular if
  and only if for every $\tau: E\hookrightarrow L$ there is
  $\lambda_{\tau}\in \nu_{\rho, \tau}\subset X^*(\That\times_{E, \tau}
  L)$ such that $\lambda_{\tau}-\tilde{\delta}$ is dominant with
  respect to $B\times_{E, \tau} L$.
\end{lem}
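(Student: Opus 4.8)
The plan is to transport everything through the twisting isomorphism $\tw_{\tilde\delta}\colon\Ghat^T\cong\Ghat\times\Gm$ of \eqref{twisting_el}, which (using that $\tilde\delta$ is $\Gamma$-invariant, as is implicit in \eqref{twisting_el2}) also identifies $\CG_f$ with $\LG_f\times\Gm$.

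First I would check that $\rho^C$ is Hodge--Tate. Since $\tw_{\tilde\delta}$ is an isomorphism of algebraic groups over $L$, a representation $\Gal_F\to(\LG_f\times\Gm)(L)$ is Hodge--Tate if and only if its composition with $\tw_{\tilde\delta}^{-1}$ is, and $\rho\boxtimes\chi_{\cyc}$ is Hodge--Tate: decomposing any $(r,V)\in\Rep(\LG_f\times\Gm)_L$ into $\Gm$-weight spaces, each weight space is, with its $\Gal_F$-action through $\rho\boxtimes\chi_{\cyc}$, the twist by a power of $\chi_{\cyc}$ of a $\Gal_F$-representation obtained by composing $\rho$ with an algebraic representation of $\LG_f$, hence Hodge--Tate. (Alternatively, by Lemma \ref{charHT} it suffices to exhibit a cocharacter $\mu$ of $(\Res_{L/\Qp}\Ghat^T)_{\Cp}$ with $\Lie(\mu)(1)=\Theta^T_{\Sen,\rho^C}$, and $\mu\coloneqq\tw_{\tilde\delta}^{-1}\circ(\mu_\rho\boxtimes\mathrm{id}_{\Gm})$ works, in view of \eqref{CL} and Remark \ref{dcyc}.)

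Next I would compute the sets $\nu^C_{\rho^C,\tau}$. By functoriality of the Hodge--Tate cocharacter under $\tw_{\tilde\delta}^{-1}$, one has $\mu_{\rho^C,\upsilon}(z)=\tw_{\tilde\delta}^{-1}(\mu_{\rho,\upsilon}(z),z)=(\mu_{\rho,\upsilon}(z)\tilde\delta(z)^{-1},z)$ in $\Ghat^T_{\Cp}=\Ghat_{\Cp}\rtimes\Gm_{\Cp}$. Picking $g\in\Ghat(\Cp)$ with $g\,\mu_{\rho,\upsilon}(z)\,g^{-1}=\hat\lambda(z)$ for a chosen representative $\hat\lambda\in\nu_{\rho,\upsilon}$, a direct computation in the semidirect product gives $(g,1)\,\mu_{\rho^C,\upsilon}(z)\,(g,1)^{-1}=(\hat\lambda(z)\tilde\delta(z)^{-1},z)$, a cocharacter of $\That\times\Gm$ corresponding to $(\hat\lambda-\tilde\delta,1)\in X_*(\That\times\Gm)$; conjugating further by $(n_0,1)$ for Weyl representatives $n_0\in N_{\Ghat}(\That)$, one finds — the spurious $\tilde\delta$-factors cancelling — that $\nu^T_{\rho^C,\upsilon}=\{(w\hat\lambda-\tilde\delta,1):w\in W\}$. (Conceptually: the Weyl group of $\Ghat^T$ acts on $X_*(\That\times\Gm)$ by the $\tilde\delta$-twisted formula $w\cdot(\lambda,m)=(w\lambda+m(w-1)\tilde\delta,m)$, so the $W$-orbit of $(\hat\lambda-\tilde\delta,1)$ is exactly this set.) Projecting to $X_*(\That)$, and using Remark \ref{orbits} to see that the answer depends only on the $\Gal_E$-orbit of $\upsilon$, i.e. only on $\tau$, we get $\nu^C_{\rho^C,\tau}=\nu_{\rho,\tau}-\tilde\delta=\{\lambda-\tilde\delta:\lambda\in\nu_{\rho,\tau}\}$ inside $X^*(T\times_{E,\tau}L)=X_*(\That)$.

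Finally, I would unwind the definition of \emph{regular Hodge--Tate weights}: $\rho^C$ has regular Hodge--Tate weights if and only if for every $\tau$ the set $\nu^C_{\rho^C,\tau}=\nu_{\rho,\tau}-\tilde\delta$ contains an element dominant with respect to $B\times_{E,\tau}L$, i.e. if and only if for every $\tau$ there is $\lambda_\tau\in\nu_{\rho,\tau}$ with $\lambda_\tau-\tilde\delta$ dominant — which is the stated condition. (By $\Gal_F$-equivariance, Proposition \ref{beegee_eq}, one $\tau$ above each $\sigma$ would suffice, but this is not needed here.) I expect the only genuinely delicate point to be the computation of $\nu^T_{\rho^C,\upsilon}$: one must keep careful track of the semidirect-product structure of $\Ghat^T$ — equivalently, of the $\tilde\delta$-twisted Weyl action on $X_*(\That\times\Gm)$ — since it is precisely this twist that makes $\nu_{\rho,\tau}-\tilde\delta$ the correct shifted Weyl orbit and makes the equivalence come out on the nose.
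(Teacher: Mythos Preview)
Your proof is correct and follows essentially the same idea as the paper's one-line proof ``The assertion follows from \eqref{CL}.'' Indeed, \eqref{CL} encodes on the Lie-algebra level precisely the shift by $\tilde\delta$ that you obtain by transporting the Hodge--Tate cocharacter through $\tw_{\tilde\delta}^{-1}$; you have simply unpacked this in full, including the (correct) computation of the twisted Weyl action on $X_*(\That\times\Gm)$, which the paper leaves implicit.
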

\begin{proof} The assertion follows from \eqref{CL}. 
\end{proof} 

\begin{examp} Let $G$ be an inner form of $\GL_{n, F}$, $B$ the
  upper-triangular matrices over $E$ and $T$ the diagonal matrices
  over $E$. Then $\Ghat=\GL_{n, L}$, $\Bhat$ the upper-triangular
  matrices over $L$ and $T$ the diagonal matrices over $L$.  Let
  $\rho : \Gal_F\rightarrow\GL_n(L)$ be a Hodge--Tate representation.
  Since $G$ is an inner form, the action of $\Gal_F$ on the root
  system is trivial, so all $\nu_{\rho, \tau}$ above depend only on
  $\tau|_F$.  We may identify $X_*(\That)$ with the set of $n$-tuples
  of integers, where $(k_1, \ldots, k_n)$ corresponds to
  $t\mapsto \mathrm{diag}(t^{k_1}, \ldots, t^{k_n})$. The action of
  $W=S_n$ permutes the entries.  With the above identification
  let \[\tilde{\delta}\coloneqq (0, -1, \ldots, 1-n).\] If
  $\lambda \in \nu_{\rho, \sigma}$ corresponds to an $n$-tuple
  $(k_{1, \sigma}, \ldots, k_{n, \sigma})$ then
  $\lambda - \tilde{\delta}$ is dominant with respect to
  $B\times_{E, \tau} L$ if and only if
  \[ k_{1, \sigma} \ge k_{2, \sigma}+1\ge \ldots \ge k_{n,
      \sigma}+n-1,\] which is equivalent to
  $k_{1, \sigma} > k_{2, \sigma}>\ldots> k_{n,\sigma}$. This is the
  usual notion of regular Hodge--Tate weights in the $p$-adic Hodge
  theory.
\end{examp} 
\begin{remar}\label{twisting_GLn} If $G=\GL_{n, F}$  and $\rho: \Gal_F\rightarrow \GL_n(L)$ is a Hodge--Tate representation with regular Hodge--Tate weights 
  in the traditional sense then the algebraic representation
  $\pi_{\alg}(\rho)$ defined in \cite[\S1.8]{6auth} coincides with
  $\pi_{\alg}(\rho^C)$ defined in Definition \ref{pi_alg} with
  $\tilde{\delta}$ as above. We caution the reader that in order to
  verify this one has to bear in mind that in \cite{6auth} the
  Hodge--Tate weight of the cyclotomic character is $-1$ and in this
  paper it is $1$.
\end{remar} 

\subsection{Automorphic representations}\label{sec_auto_reps}

In this section we fix an embedding $\iota :
L\rightarrow\C$. Now let $F$ be a number field, $G$ a 
connected reductive group over $F$ and $\mathfrak{g}$ its Lie
algebra. Let $\pi$ be some automorphic representation of
$G(\mathbb{A}_F)$ which is $C$-algebraic in the sense of
\cite[Def.~3.1.2]{beegee}. Let $\pi_\infty$ be the representation of
$G(F\otimes_{\Q}\R)$ which is the archimedean part of $\pi$. Then the
center
$Z(\Res_{F/\Q} \mathfrak{g})_{\C}$ of the enveloping algebra
$U(\Res_{F/\Q}\mathfrak{g})_{\C}$ acts on the subset of smooth
vectors of $\pi_\infty$ by a character $\chi$. The decomposition
\[ Z(\Res_{F/\Q} \mathfrak{g})_{\C}\simeq\bigotimes_{\kappa :
    F\hookrightarrow \C} Z(\mathfrak{g})\otimes_{F,\kappa}\C \]
gives a decomposition $\chi=\bigotimes_\kappa \chi_\kappa$ of the
character $\chi$.

\begin{prop}\label{C-alg-inf}
  Let $\rho_{\pi,\iota} : \Gal_F\rightarrow{}^CG(L)$ be a
  continuous admissible representation satisfying the assumptions of
  Conjecture $5.3.4$ in \cite{beegee}. Let $v$ be a place of $F$
  dividing $p$ and let
  $\rho_v\coloneqq\rho_{\pi,\iota}|_{\Gal_{F_v}}$. Then we have
  \[ \zeta^C_{\rho_v}\otimes_{L,\iota}\C=\bigotimes_{\kappa :
      F_v\hookrightarrow\overline{\Q}_p}
    \chi_{\iota\circ\kappa}.\]
\end{prop}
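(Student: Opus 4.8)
The plan is to deduce this from Conjecture 5.3.4 of \cite{beegee}, whose content we must translate into the language of the characters $\zeta^C_{\rho_v}$ via the compatibility results proved earlier in this section, notably Proposition \ref{inf_HT} and the relation \eqref{HTcoc2} between the Hodge--Tate cocharacter and the Sen operator. First I would recall precisely what Conjecture 5.3.4 asserts at the place $v \mid p$: the restriction $\rho_v = \rho_{\pi,\iota}|_{\Gal_{F_v}}$ is de Rham (in particular Hodge--Tate), it satisfies $d\circ\rho_v = \chi_{\cyc}$, and its Hodge--Tate cocharacter at each embedding $\kappa: F_v\hookrightarrow\overline{\Q}_p$ is determined — via $\iota$ — by the infinitesimal character of $\pi_\infty$; more precisely (cf. \cite[Rem.\,5.3.5]{beegee}) the $\Ghat^T$-conjugacy class of the cocharacter $\mu_{\rho_v,\upsilon}$, for $\upsilon$ in the $\Gal_{F_v}$-orbit corresponding to $\kappa$, matches the cocharacter attached to the character $\chi_{\iota\circ\kappa}$ of $Z(\mathfrak{g})\otimes_{F_v,\kappa}\C$ through the Harish-Chandra parametrisation (after transporting via $\iota$).

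Next I would unwind the definition of $\zeta^C_{\rho_v,\sigma}$ from Definition \ref{def_ringhom_C}: it is the composition of $\kappa_\tau$, the Chevalley isomorphism \eqref{eqn_chev}, the map $\theta'$ of \eqref{def_theta_prime}, and the evaluation $m_\tau$; since $X$ is a point, $\theta'$ is evaluation of $\Ghat^T$-invariant polynomials at the element $\Theta^T_{\Sen,\upsilon}$, which by Remark \ref{dcyc} is $(M_\upsilon, 1\otimes 1)$ with $M_\upsilon + \delta$ playing the role of the relevant Sen datum after applying $\alpha$. Since $\rho_v$ is Hodge--Tate, Proposition \ref{inf_HT} (or more directly Proposition \ref{HTcochar} applied to $G^T$, together with \eqref{HTcoc2}) identifies $\zeta^C_{\rho_v,\sigma}$ with the character obtained by evaluating at $\nu^T_{\rho_v,\upsilon}\in X_*(\That\times\Gm)/W$ composed with $\kappa_\tau$ and Chevalley — i.e. it is the infinitesimal character attached, through the Harish-Chandra isomorphism normalised as in \eqref{Verma}, to the Hodge--Tate cocharacter of $\rho_v$ at $\upsilon$. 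On the archimedean side, Remark \ref{rm:localglobalR} (or rather the analogous bookkeeping for $Z(\Res_{F_v/\Q}\mathfrak{g})_\C$, using the description of $\chi$ as $\bigotimes_\kappa\chi_\kappa$ and the fact that $\pi_\infty$ is $C$-algebraic) says exactly that $\chi_{\iota\circ\kappa}$ is the infinitesimal character attached by the same Harish-Chandra recipe to the ``archimedean cocharacter'' $\lambda_{\iota\circ\kappa}$ of $\pi_\infty$ — with the same $\delta$-shift built in, which is precisely why the $C$-group (as opposed to the $L$-group) is the correct receptacle.

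Therefore the proposition reduces to matching, embedding by embedding, the Hodge--Tate cocharacter of $\rho_v$ with the archimedean cocharacter of $\pi_\infty$, both taken modulo $W$ and transported to a common field via $\iota$ — and this matching is exactly the content of Conjecture 5.3.4 at $v\mid p$, once one keeps careful track of the bijection (Remark \ref{orbits}) between $\Gal_{F_v}$-orbits of embeddings $\upsilon: L\hookrightarrow\Cp$ and embeddings $\kappa: F_v\hookrightarrow\overline{\Q}_p$, and of the compatibility of the two Harish-Chandra normalisations. I would then apply $\iota$ to both sides of the equality of characters $\zeta^C_{\rho_v,\sigma}$ and $\chi_{\iota\circ\kappa}$ (for each $\sigma$, summing over the $\kappa$ above it is not needed since $\zeta^C_{\rho_v} = \otimes_\sigma \zeta^C_{\rho_v,\sigma}$ and $\chi = \otimes_\kappa \chi_\kappa$ are both tensor products over the same index set $\Hom(F_v,\overline{\Q}_p)\cong\Hom(F_v,\C)$ via $\iota$), to conclude $\zeta^C_{\rho_v}\otimes_{L,\iota}\C = \bigotimes_\kappa \chi_{\iota\circ\kappa}$. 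The main obstacle is purely one of normalisations and indexing: one must verify that the $\delta$-shift in \eqref{Verma}, the half-sum $\delta_{\mathrm{ad}}$ used to define $\Ghat^T$ in \S\ref{sec_Cgr_sub}, and the ``$+1\otimes\delta$'' in Remark \ref{dcyc} combine to reproduce exactly the shift appearing in the Buzzard--Gee normalisation of the archimedean infinitesimal character (the passage between $C$-algebraic and $L$-algebraic), so that no spurious twist by a power of $\delta$ survives; the geometric content, namely that the two cocharacters agree up to $W$, is granted by Conjecture 5.3.4.
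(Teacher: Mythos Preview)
Your proposal is correct and follows essentially the same route as the paper: invoke Conjecture 5.3.4 for the Hodge--Tate cocharacter at $v$, unwind $\zeta^C_{\rho_v,\kappa}$ via the map $\alpha$ and the Harish-Chandra isomorphism, and compare with the archimedean description of $\chi_{\iota\circ\kappa}$ from Remark \ref{rm:localglobalR}. The paper makes the normalization step you flag at the end explicit by introducing the central cocharacter $\xi(t)=(2\delta(t^{-1}),t^2)$, noting that the Hodge--Tate cocharacter at $\kappa$ is $\lambda_{\iota\circ\kappa}+\tfrac{1}{2}\xi$, and computing that $\alpha$ sends this to $\lambda_{\iota\circ\kappa}$ --- so no spurious $\delta$-shift survives.
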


\begin{proof}
  Let $\xi : \mathbb{G}_m\rightarrow {}^CG$ be the cocharacter defined
  by $t\mapsto (2\delta(t^{-1}),t^2)$.
  
  If
  $v$ is an archimedean place of $F$, the local Langlands
  correspondence gives us an admissible morphism $\rho_v : W_{F_v}
  \rightarrow {}^LG(\C)$. We recalled in
  section \ref{sec:real} how to associate to an embedding $\kappa$ of $F$
  in $\C$ above $v$, an
  element of $\lambda_\kappa \in X_*(\That)\otimes\C$, well defined up to the
  action of the Weyl group. This gives us $\lambda_\kappa\in
  X_*(\That)\otimes\C$ for each embedding $\kappa$ of $F$ in
  $\C$. Saying that $\pi$ is $C$-algebraic is equivalent to the
  condition $\lambda_\sigma+\tfrac{1}{2}\xi\in X_*(\That\times\mathbb{G}_m)$ for each
  $\kappa : F\hookrightarrow\C$. Note that we have
  $\lambda_\kappa \in X_*(\That)\otimes\Q$.
  
  Now let $v$ be a place of $F$
  dividing $p$. According to Conjecture 5.3.4 and Remark 5.3.5 in
  \cite{beegee}, the representation $\rho$ is Hodge--Tate at $v$ and the
  Hodge--Tate cocharacter of $\rho$ associated to an embedding $\kappa :
  F_v\hookrightarrow L$ is
  $\lambda_{\iota\circ\kappa}+\tfrac{1}{2}\xi$.

  Recall from Remark \ref{rm:localglobalR} that for an embedding
  $\kappa : F \hookrightarrow\C$ the character $\chi_\kappa$ is the composite
  \[ Z(\mathfrak{g})_\C \xrightarrow{\sim}
    S(\that_\C^*)^W\xrightarrow{\lambda_\kappa}\C\]
  where the first map is the Harish-Chandra isomorphism over $\C$
  which can be obtained from the Harish-Chandra isomorphism over $L$
  after base change along $\iota : L \hookrightarrow \C$.

  If $\kappa : F_v\hookrightarrow L$, the character
  $\zeta^C_{\rho,\kappa}$ is the composite
  \[ Z(\mathfrak{g})\otimes_{F,\kappa}L \xrightarrow{\sim}
    S(\that^*)^W\xrightarrow{\lambda_\kappa}L\]
  since $\lambda_\kappa$ is exactly the image of
  $(\lambda_\kappa+\tfrac{1}{2}\xi,1)$ in $\ghat_L$ by the map
  $\alpha$ of section \ref{sec_Cgr} (note that since $\lambda_\kappa
  \in X_*(\That)\otimes\Q$, we land in $L\subset E\otimes L$). It follows that
  $\chi_{\iota\circ\kappa}=\zeta^C_{\rho,\kappa}\otimes_{L,\iota}\C$
  and hence the result.
\end{proof}

\section{Fibres of Cohen--Macaulay modules}\label{FibreCM}
                     
Let $(R, \mm)$ be a complete local noetherian $\OO$-algebra with
residue field $k$, which we assume to be $\OO$-torsion free.  Let
$R^{\rig}$ be the ring of global functions on the rigid space $\Xrig$,
associated to the formal scheme $\Spf R$, see \cite[\S 7.1]{dejong}.

\begin{prop}\label{commutative_algebra} Let $M$ be a faithful, finitely generated $R$-module and let $M^{\rig}\coloneqq  M\otimes_R R^{\rig}$.
  If $R$ is reduced and $M$ is Cohen-Macaulay then the natural map
\[M^{\rig}\to \prod_{x\in \Xrig} M^{\rig}\otimes_{R^{\rig}}
  \kappa(x)\] is injective.
 \end{prop}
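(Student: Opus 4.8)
The plan is to reduce the statement to the case where $R$ is a finite free module over a power series ring, where the injectivity becomes transparent, and then use the faithful Cohen--Macaulay hypothesis to transport the conclusion back. First I would recall the key geometric input: by \cite[\S 7.1]{dejong} and the Noether normalisation lemma for complete local noetherian $\OO$-algebras, since $R$ is $\OO$-torsion free of relative dimension $d$ over $\OO$, there is a finite injection $\OO[\![x_1,\dots,x_d]\!]\hookrightarrow R$; write $S=\OO[\![x_1,\dots,x_d]\!]$. Passing to rigid generic fibres, $S^{\rig}$ is the ring of functions on the open unit polydisc of dimension $d$, and $R^{\rig}$ is finite over $S^{\rig}$. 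The crucial classical fact I would invoke is that $S^{\rig}$ injects into $\prod_{x}\kappa(x)$, the product running over its classical (or all) points --- equivalently, a rigid analytic function on the polydisc vanishing at every point is zero; this is a standard consequence of the identity theorem / the fact that the Washnitzer--Monsky or Tate algebras in question are reduced with Zariski-dense set of maximal ideals. More is true and is what I actually need: $S^{\rig}$ is a (countable) product of Jacobson-like behaviour, so that a finite free $S^{\rig}$-module also injects into the product of its fibres.

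The main step is then: because $M$ is Cohen--Macaulay over $R$ and $R$ is finite over $S$, $M$ is a maximal Cohen--Macaulay module over $S$, hence (since $S$ is regular local) $M$ is finite free over $S$. Therefore $M\otimes_S S^{\rig}$ is finite free over $S^{\rig}$, and hence injects into $\prod_{x\in\Xrig_S} (M\otimes_S S^{\rig})\otimes_{S^{\rig}}\kappa(x)$ by the previous paragraph. The one technical point to handle carefully is the comparison between $M\otimes_R R^{\rig}$ and $M\otimes_S S^{\rig}$: one has $R^{\rig}=R\otimes_S S^{\rig}$ (finiteness of $R$ over $S$ means the rigid generic fibre is computed by base change, cf.\ \cite[\S 7.1]{dejong}), so $M^{\rig}=M\otimes_R R^{\rig}=M\otimes_S S^{\rig}$ as $S^{\rig}$-modules; and for a point $x\in\Xrig=(\Spf R)^{\rig}$ lying over $y\in(\Spf S)^{\rig}$, the fibre $M^{\rig}\otimes_{R^{\rig}}\kappa(x)$ is a quotient of $M^{\rig}\otimes_{S^{\rig}}\kappa(y)$ (or at least receives a compatible map from it via $\kappa(y)\to\kappa(x)$ after noting each point of $\Xrig$ lies above some point of the polydisc, since $R^{\rig}$ is finite over $S^{\rig}$, hence integral, so $\mSpec R^{\rig}\to\mSpec S^{\rig}$ is surjective onto a suitable set of points). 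Thus an element of $M^{\rig}$ killed in every fibre over $\Xrig$ is in particular killed in every fibre of $M\otimes_S S^{\rig}$ over the polydisc, forcing it to be zero.

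The hypothesis that $R$ is reduced enters to guarantee that $S^{\rig}\to\prod_x\kappa(x)$ is injective at the level we need: reducedness of $R$ (together with $\OO$-torsion freeness) ensures $R[1/p]$ is reduced, so that its maximal ideals separate points and the nilradical-free structure makes the rigid function ring $R^{\rig}$ embed into the product of residue fields at classical points. In fact the cleanest route is: $R$ reduced and $\OO$-flat $\Rightarrow$ $R^{\rig}$ is reduced $\Rightarrow$ the intersection of all maximal ideals of $R^{\rig}[1/p]\cong R^{\rig}$ (which is Jacobson) is the nilradical $=0$, giving injectivity of $R^{\rig}\hookrightarrow\prod_x\kappa(x)$; finiteness and freeness over $S^{\rig}$ upgrade this to $M^{\rig}$. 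I expect the main obstacle to be the bookkeeping around the precise definition of $(\Spf R)^{\rig}$ and verifying the base-change compatibilities $R^{\rig}=R\otimes_S S^{\rig}$ and the surjectivity of the induced map on point sets --- the algebra (maximal Cohen--Macaulay over regular local $\Rightarrow$ free, via Auslander--Buchsbaum) is routine, and the injectivity of a free module over $S^{\rig}$ into the product of its fibres is essentially the identity theorem for the polydisc.
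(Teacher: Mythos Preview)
Your overall architecture---Noether normalisation $S=\OO[\![x_1,\dots,x_d]\!]\hookrightarrow R$, then Auslander--Buchsbaum to get $M$ free over $S$, then the identity principle on the open polydisc---matches the paper's, and the base-change identifications $R^{\rig}=R\otimes_S S^{\rig}$ and $M^{\rig}=M\otimes_S S^{\rig}$ are exactly the right facts to invoke. But the fibre comparison step contains a genuine logical slip that breaks the argument.

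You correctly observe that for $x\in\Xrig$ lying over $y\in(\Spf S)^{\rig}$, the $R$-fibre $M^{\rig}\otimes_{R^{\rig}}\kappa(x)$ is a \emph{quotient} of the $S$-fibre $M^{\rig}\otimes_{S^{\rig}}\kappa(y)$. But then you conclude that vanishing in every $R$-fibre implies vanishing in every $S$-fibre. This is the wrong direction: if $B$ is a quotient of $A$, an element mapping to zero in $B$ need not map to zero in $A$. Concretely, $M^{\rig}\otimes_{S^{\rig}}\kappa(y)=M\otimes_R(R\otimes_S\kappa(y))$, and the Artinian ring $A_y:=R\otimes_S\kappa(y)$ may well have nilpotents (think of a ramification point of the finite map $\Spec R[1/p]\to\Spec S[1/p]$); in that case the natural map $M\otimes_R A_y\to\prod_{x\mapsto y}M\otimes_R\kappa(x)$ has nonzero kernel, so an element can die in every $\kappa(x)$-fibre over $y$ without dying in the $\kappa(y)$-fibre. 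Your fallback in the last paragraph (``reducedness of $R^{\rig}$ plus freeness over $S^{\rig}$ upgrade this to $M^{\rig}$'') conflates the two fibre systems in the same way: freeness over $S^{\rig}$ controls $S$-fibres, while reducedness of $R^{\rig}$ controls $R^{\rig}$ itself, and neither directly gives the comparison you need.

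This is precisely the place where the paper uses that $R$ is reduced, but in a different way than you do. Because $R[1/p]$ is reduced, the generic fibre of $\Spec R[1/p]\to\Spec S[1/p]$ is geometrically reduced, so there is a nonzero $f\in S[1/p]$ such that $A_y$ is reduced---hence equal to $\prod_{x\mapsto y}\kappa(x)$---for all $y$ in the open locus $\Sigma=\{f\neq 0\}$. On $\Sigma$ your fibre comparison \emph{does} work, and the proof concludes by showing that a free $S^{\rig}$-module element vanishing at all points of $\Sigma$ must vanish: reduce to $M=S$, and note that if $g\in S^{\rig}$ vanishes on $\Sigma$ then $fg$ vanishes everywhere, hence $fg=0$, hence $g=0$ since $S^{\rig}$ is a domain. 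This passage through a generically-reduced-fibre locus is the missing idea in your plan.
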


 \begin{proof} Since $R$ is $\OO$-torsion free we may choose a finite
   injective map of $\OO$-algebras
   $S\coloneqq \OO[\![x_1,\ldots,x_d]\!]\to R$, where necessarily
   $d+1=\dim R$, see \cite[Thm.\,29.4 (iii)]{matsumura} and the Remark
   following it. The assumptions on $M$ imply that $M$ is a faithful,
   Cohen--Macaulay module over $S$. Since $S$ is regular it follows
   from the Auslander--Buchsbaum theorem,
   \cite[Thm.\,19.1]{matsumura}, that $M$ is free over $S$. The finite
   injective map $S[1/p]\to R[1/p]$ induces a finite surjective
   morphism
   \[\varphi: X\coloneqq \Spec R[1/p]\to Y\coloneqq \Spec S[1/p].\] 
   The fibre of the generic point $\eta$ of $Y$ is geometrically
   reduced. Indeed, it suffices to check that it is reduced (since it
   lives in characteristic zero), which is clear since it is a
   localization of the reduced ring $R[1/p]$. Thus Lemma 37.24.4 in
   \cite[\href{https://stacks.math.columbia.edu/tag/0574}{Tag
     0574}]{stacks-project} yields a nonzero $f\in R[1/p]$ such that
   all fibres over the non-vanishing locus of $f$ are reduced.

   Let $\Sigma$ be the set of closed points $y$ of $Y$ such that
   $f(y)\ne 0$. For any $y\in \Sigma$ and any $x\in \varphi^{-1}(y)$
   we have $x\in \mSpec R[1/p]$, thus $x$ corresponds to a point of
   $\Xrig$ with the same residue field, see
   \cite[Lem.\,7.1.9]{dejong}.  Moreover, for such $y$ we have an
   isomorphism
   $\kappa(y)\otimes_{S} R\cong\prod_{\varphi(x)=y} \kappa(x)$, since
   the fibre over $y$ is finite and reduced.  Using the isomorphism
   $R^{\rig}\cong R\otimes_S S^{\rig}$ provided by
   \cite[Lem.\,7.2.2]{dejong}, we obtain an isomorphism
   \[\prod_{\varphi(x)=y} M^{\rig}\otimes_{R^{\rig}} \kappa(x)\cong
     M^{\rig}\otimes_{S^{\rig}} \kappa(y).\] It suffices therefore to
   prove that the natural map
   $M^{\rig}\to \prod_{y\in \Sigma} M^{\rig}\otimes_{S^{\rig}}
   \kappa(y)$ is injective. Since $M$ is finite free over $S$, this
   reduces to the case $M=S$, i.e. we need to prove that a nonzero
   rigid analytic function $g$ on the open unit disc
   $\mathfrak{Y}^{\rig}$ cannot vanish at all $y\in \Sigma$. In that
   case, $fg$ would vanish at all points of $\mathfrak{Y}^{\rig}$. It
   follows from \cite[Prop.\,5.1.3/3]{BGR} that the restriction of
   $fg$ to every closed disc of radius $1/p^{1/n}$ is zero. Since the
   union of such discs for $n\ge 1$ give an admissible covering of the
   open unit disc, we deduce that $fg=0$. Since $f\neq 0$, $g\neq 0$
   and $S^{\rig}$ is a domain, we get a contradiction.
\end{proof}                     

\section{Density of algebraic vectors}\label{sec_density_algebraic}

Let $X$ be a smooth affine scheme over $\Spec \Qp$ of finite type of
dimension $d$ such that $X(\Qp)$ is Zariski dense in $X$.  Let
$A= \Gamma(X, \OO_X)$ be the ring of global sections on $X$. We may
choose a presentation $A= \Qp[T_1,\ldots, T_n]/I$. This allows us to
consider $X(\Qp)$ as a closed subset of $\Qp^n$ with the induced
topology.  If $U$ is an open subset of $X(\Qp)$ we let
$\mathcal C(U, L)$ be the space of continuous functions from $U$ to
$L$. Since $X(\Qp)= \Hom_{\Qp\text{-}\alg}(A, \Qp)$, evaluation
induces a bilinear map $X(\Qp)\times A \rightarrow \Qp$. This induces
a map $A\otimes_{\Qp} L \rightarrow \mathcal C(X(\Qp), L)$ and we
denote the image by $\mathcal C^{\alg}(X(\Qp), L)$.
\begin{defi} Let $U$ be an open and closed subset of $X(\Qp)$. We let
  $\mathcal C^{\alg}(U, L)$ be the image of
  $\mathcal C^{\alg}(X(\Qp), L)$ in $\mathcal C(U, L)$ under the
  restriction map $f\mapsto f|_U$.
\end{defi}

We will equip $X(\Qp)$ with the structure of a locally analytic
manifold. Let $f_1,\dots,f_d$ be a sequence of generators of $I$. They
induce a map $\varphi : \Qp^n\rightarrow\Qp^d$ which is polynomial and
in particular map of locally analytic manifold. The smoothness of $X$
and the Jacobian criterion imply that $\varphi$ is a subimmersion in
the sense of \cite[Part.~II, chap.~III.4)]{serre_Lie} at all points of
$X(\Qp)$. It follows from the Theorem in III.11C) of \emph{loc.~cit.}
that $X(\Qp)=\varphi^{-1}(0)$ is a submanifold of $\Qp^n$ and in
particular carries a canonical structure of locally analytic
manifold. Moreover it follows from III.11.A) in \emph{loc.~cit.} that,
for each point $x\in X(\Qp)$, there exists an open neighborhood $U_x$
of $x$ in $\Qp^n$ and a locally analytic isomorphism
$\alpha_x=(\alpha_{x,1},\dots,\alpha_{x,n})$ from $U_x$ onto an open
subset $V_x$ of $\Qp^r$ such that
\begin{equation}\label{localchar} U_x\cap X(\Qp)=\{y\in U_x \mid
  \alpha_{x,r+1}(y)=\cdots=\alpha_{x,n}(y)=0\}.\end{equation}
It follows that the inverse $\beta_x$ of the map $(\alpha_{x,1},\dots,\alpha_{x,r})$ induces a locally analytic map from $V_x$ to $U_x$ whose image is $U_x\cap X(\Qp)$.

The main result of the section, Theorem \ref{density1}, says that
$\mathcal C^{\alg}(U, L)$ is a dense subspace of
$\mathcal C^{\la}(U, L)$ locally $\Qp$-analytic functions on $U$, when
$U$ is an open and closed subspace of $X(\Qp)$.  This holds if for
example $U$ is open and compact. We then look closer at the example,
when $X$ is a connected reductive group scheme and $U$ is a compact
open subgroup of $X(\Qp)$.

\begin{lem}\label{clopen}
  Let $M$ be a paracompact manifold, $U\subset M$ an open and closed
  subspace, and let $S$ be a dense subset of
  $\mathcal{C}^{\la}(M, L)$. Then the image of $S$ is dense in
  $\mathcal{C}^{\la}(U, L)$.
\end{lem}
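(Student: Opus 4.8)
The plan is to reduce the statement to a purely topological observation about locally analytic functions on a manifold that decomposes as a disjoint union of two clopen pieces. First I would record the basic structural fact: since $U\subset M$ is open and closed, its complement $U'=M\setminus U$ is also open and closed, and $M$ is the topological disjoint union $M=U\sqcup U'$. Because the notion of a locally analytic function is local, restriction induces a topological isomorphism
\[
\mathcal{C}^{\la}(M,L)\;\xrightarrow{\ \sim\ }\;\mathcal{C}^{\la}(U,L)\times \mathcal{C}^{\la}(U',L),
\]
where the right-hand side carries the product topology (this uses that $M$ is paracompact, so each of $U$, $U'$ is again a paracompact manifold and the space of locally analytic functions on a disjoint union is the product; concretely, $f\mapsto (f|_U,f|_{U'})$ is a continuous bijection with continuous inverse given by gluing, which makes sense precisely because $U$ and $U'$ are open). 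Under this isomorphism the restriction map $\mathcal{C}^{\la}(M,L)\to\mathcal{C}^{\la}(U,L)$ becomes the projection onto the first factor.

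Next I would invoke the elementary fact that a continuous open surjection of topological vector spaces carries dense subsets to dense subsets, applied to the projection $\mathcal{C}^{\la}(U,L)\times\mathcal{C}^{\la}(U',L)\to\mathcal{C}^{\la}(U,L)$, which is continuous, surjective and open for the product topology. Since $S$ is dense in $\mathcal{C}^{\la}(M,L)$ and the latter is identified with the product, the image of $S$ under the projection—which is exactly the image of $S$ under restriction to $U$—is dense in $\mathcal{C}^{\la}(U,L)$. Alternatively, and perhaps more transparently, one can argue directly: given $g\in\mathcal{C}^{\la}(U,L)$ and a neighbourhood basis element, extend $g$ by $0$ on $U'$ to obtain $\tilde g\in\mathcal{C}^{\la}(M,L)$ (here again clopenness of $U$ is what guarantees that the extension by zero is still locally analytic), approximate $\tilde g$ by an element $s\in S$, and restrict back to $U$; continuity of restriction turns the approximation of $\tilde g$ by $s$ into an approximation of $g$ by $s|_U$.

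The only point requiring a little care—and the closest thing to an obstacle—is making precise that the restriction map $\mathcal{C}^{\la}(M,L)\to\mathcal{C}^{\la}(U,L)$ is continuous, open and admits the above product description; this is where the paracompactness hypothesis enters, since the locally convex topology on $\mathcal{C}^{\la}$ of a paracompact manifold is defined by patching the charts, and for a disjoint union of two clopen submanifolds this patching visibly splits as a product. Once this identification is in hand the density statement is immediate, so I expect the proof in the paper to be only a few lines, essentially the extension-by-zero argument together with a remark that restriction is continuous.
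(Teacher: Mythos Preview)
Your proposal is correct and matches the paper's proof essentially line for line: both establish the product decomposition $\mathcal{C}^{\la}(M,L)\cong\mathcal{C}^{\la}(U,L)\times\mathcal{C}^{\la}(M\setminus U,L)$ (the paper cites \cite[Prop.\,12.5]{schneider_pLie} for this), identify restriction with the first projection, and conclude that density is preserved. One minor remark: you invoke that the projection is open, but the paper observes that continuity and surjectivity alone already suffice (if $V\subset\mathcal{C}^{\la}(U,L)$ is nonempty open, then $r^{-1}(V)$ is nonempty open, hence meets $S$, so $r(S)$ meets $V$); your extension-by-zero alternative is of course also fine.
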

\begin{proof} Since $U$ is open, it naturally carries a structure of a
  paracompact manifold. Since $M\setminus U$ is also open, we have a
  homeomorphism:
  \[ \mathcal{C}^{\la}(M, L)\cong \mathcal{C}^{\la}(U, L) \times
    \mathcal{C}^{\la}(M\setminus U, L)\] for the product topology on
  the right-hand side, \cite[Prop.\,12.5]{schneider_pLie}.  Hence, the
  restriction map
  $r: \mathcal{C}^{\la}(M, L)\rightarrow \mathcal{C}^{\la}(U, L)$,
  $f\mapsto f|_U$ is continuous and surjective.  Hence, if
  $V \subset \mathcal{C}^{\la}(U, L)$ is non-empty and open, then
  $r^{-1}(V) \subset \mathcal{C}^{\la}(M, L)$ is non-empty and
  open. The intersection $S\cap r^{-1}(V)$ is non-empty, since $S$ is
  dense in $\mathcal{C}^{\la}(M, L)$. Hence, $r(S)\cap V$ is also
  non-empty and so $r(S)$ is dense in $\mathcal{C}^{\la}(U, L)$.
\end{proof}

\begin{lem}\label{denseAn} $\mathcal C^{\alg}(\mathbb A^n(\Qp), L)$ is dense in 
  $\mathcal C^{\la}(\mathbb A^n(\Qp), L)$.
\end{lem}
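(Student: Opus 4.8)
The plan is to deduce density of $\mathcal{C}^{\alg}(\mathbb{A}^n(\Qp),L)$ in $\mathcal{C}^{\la}(\mathbb{A}^n(\Qp),L)$ from Amice's theorem. Recall that $\mathbb{A}^n(\Qp) = \Qp^n = \bigsqcup_{a} (a + p^N\Zp)^n$ is a disjoint union of finitely many translated copies of $(\Zp)^n$ (for any fixed $N$), and $\mathcal{C}^{\la}(\Qp^n,L)$ decomposes as the corresponding finite product; moreover polynomial functions restrict to polynomial functions on each piece and translation by $a\in\Qp^n$ preserves polynomiality. So by Lemma \ref{clopen} (applied with $M = \Qp^n$ and $U$ one of the clopen pieces, together with the finite product decomposition) it suffices to treat the case of $(\Zp)^n$: namely to show that the image of $\Qp[T_1,\dots,T_n]\otimes_{\Qp} L$ in $\mathcal{C}^{\la}((\Zp)^n,L)$ is dense.

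First I would recall the structure of $\mathcal{C}^{\la}((\Zp)^n,L)$. A locally analytic function on $(\Zp)^n$ is one that is given by a convergent power series on each of finitely many cosets $b + p^m(\Zp)^n$ for some $m\geq 0$ depending on the function; the topology is the locally convex inductive limit over $m$ of the spaces of such ``$p^m$-analytic'' functions. On each coset $b + p^m\Zp$ in one variable, Amice's theorem (or simply the density of polynomials in the Tate algebra $L\langle p^{-m}(T-b)\rangle$, i.e.\ truncation of power series) shows that polynomials in $T$ are dense in the space of $p^m$-analytic functions; taking tensor products over the $n$ variables and finite sums over the cosets of $(\Zp)^n/p^m(\Zp)^n$, polynomials in $T_1,\dots,T_n$ are dense in the space of $p^m$-analytic functions on $(\Zp)^n$.

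The key remaining point is to pass from ``dense in each $p^m$-analytic layer'' to ``dense in the inductive limit''. Since $\mathcal{C}^{\la}((\Zp)^n,L)$ carries the inductive limit topology, a subset is dense as soon as its intersection with (the image of) each layer is dense in that layer \emph{and} each layer is dense in the limit; the latter holds because every locally analytic function is $p^m$-analytic for $m$ large, so the union of the layers is all of $\mathcal{C}^{\la}$. Concretely: given $f\in\mathcal{C}^{\la}((\Zp)^n,L)$ and a neighbourhood of $f$, $f$ lies in some layer $V_m$; the neighbourhood contains an open neighbourhood of $f$ in the (finer) topology of $V_m$; polynomials are dense in $V_m$, so we may approximate $f$ there, and the inclusion $V_m \hookrightarrow \mathcal{C}^{\la}$ is continuous, giving the required approximation.

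The main obstacle, such as it is, is purely bookkeeping: making sure the inductive-limit density argument is stated correctly (a polynomial is $p^m$-analytic for every $m$, so it genuinely lies in every layer, which is what lets the approximation computed in $V_m$ be read back in $\mathcal{C}^{\la}$), and checking that translation by $a\in\Qp^n$ and dilation — used to reduce from $\mathbb{A}^n(\Qp)$ to $(\Zp)^n$ — carry polynomials to polynomials, which is clear since these are affine-linear substitutions. No serious analytic input beyond Amice's theorem is needed.
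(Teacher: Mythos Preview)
Your reduction step contains a genuine error: $\Qp^n$ is not compact, so it cannot be written as a disjoint union of \emph{finitely many} translates of $(p^N\Zp)^n$. The cover $\Qp^n = \bigsqcup_a \bigl(a + (p^N\Zp)^n\bigr)$ is necessarily infinite, and correspondingly $\mathcal{C}^{\la}(\Qp^n,L) \cong \prod_a \mathcal{C}^{\la}\bigl(a + (p^N\Zp)^n, L\bigr)$ is an infinite product with the product topology. Density in such a product does \emph{not} reduce to density in each factor separately: a basic open set constrains finitely many factors simultaneously, so you must exhibit a single polynomial that approximates prescribed locally analytic functions on several disjoint balls at once. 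Your invocation of Lemma~\ref{clopen} also goes in the wrong direction (it passes from density on $M$ to density on a clopen $U\subset M$, not conversely) and does not address this simultaneous approximation.

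The paper handles exactly this point: given any finite collection of balls $U_{i_1},\dots,U_{i_k}$, one chooses $m$ with $\bigcup_j U_{i_j} \subset p^{-m}\Zp^n$, applies Amice's theorem on the single compact set $p^{-m}\Zp^n$ (where your inductive-limit argument, or equivalently the paper's Hahn--Banach argument, shows polynomials are dense in $\mathcal{C}^{\la}(p^{-m}\Zp^n,L)$), and only then uses Lemma~\ref{clopen} in the correct direction to restrict to the clopen subset $\bigcup_j U_{i_j}$. Your treatment of the inductive limit on a single compact ball is fine and parallels the paper; the gap lies entirely in the passage from a compact ball to the non-compact $\Qp^n$.
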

\begin{proof} Let $\mathbb A^n(\Qp)=\cup_{i\in I} U_i$ be a disjoint
  covering by open subsets, where each $U_i$ is a closed ball of
  radius $\epsilon$ around some point in $\mathbb A^n(\Qp)$. By
  \cite[Prop.\,12.5]{schneider_pLie} we have a homeomorphism of
  topological vector spaces:
  \[\mathcal C^{\la}(\mathbb A^n(\Qp), L)\cong \prod_{i\in I} \mathcal
    C^{\la}(U_i, L)\] with the product topology on the right-hand
  side. Thus it is enough to show that the image of
  $\mathcal C^{\alg}(\mathbb A^n(\Qp), L)$ in
  $\prod_{i\in J} \mathcal C^{\la}(U_i, L)$ is dense for every finite
  subset $J\subset I$. Given such $J$ there will exist an $m\ge 0$
  such that $\cup_{i\in J} U_i$ is contained in $p^{-m} \Zp^n$.

  It follows from the theorem of Amice, see \cite[(III.1.3.8)]{Lazard}
  that the image of $\mathcal C^{\alg}(\mathbb A^n(\Qp), L)$ is dense
  in the Banach space of locally analytic functions on $p^{-m} \Zp^n$
  of radius of convergence $p^{-h}$, for any $h\ge 0$. These Banach
  spaces are denoted by $\mathcal F_{\mathcal I}(E)$ in
  \cite{schneider_pLie}, with $E=L$ in our situation. The topology on
  $\mathcal C^{\la}(p^{-m}\Zp^n, L)$ is the locally convex inductive
  limit topology defined by the family of Banach spaces
  $\{\mathcal F_{\mathcal I}(L)\}_{\mathcal I}$, see \cite[\S
  II.12]{schneider_pLie}. If
  $\ell: \mathcal C^{\la}(p^{-m}\Zp^n, L)\rightarrow L$ is a
  continuous linear form, then its restriction to
  $\mathcal F_{\mathcal I}(L)$ is continuous and hence if $\ell$
  vanishes on $\mathcal C^{\alg}(\mathbb A^n(\Qp), L)$ it vanishes on
  $\mathcal F_{\mathcal I}(L)$ for all $\mathcal I$, and thus
  $\ell=0$.  Hahn--Banach implies that the image of
  $\mathcal C^{\alg}(\mathbb A^n(\Qp), L)$ is dense in
  $\mathcal C^{\la}(p^{-m}\Zp^n, L)$.

  Since $\cup_{i\in J} U_i$ is both open and compact, it will be
  closed in $p^{-m}\Zp^n$. Lemma \ref{clopen} implies that the image
  of $\mathcal C^{\alg}(\mathbb A^n(\Qp), L)$ in
  $\mathcal C^{\la}(\cup_{i\in J} U_i, L)=\prod_{i\in J} \mathcal
  C^{\la}(U_i, L)$ is dense.
\end{proof} 

\begin{thm}\label{density1} $\mathcal C^{\alg}(X(\Qp), L)$ is dense in 
  $\mathcal C^{\la}(X(\Qp), L)$.
\end{thm}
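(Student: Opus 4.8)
The plan is to reduce the density statement to the affine space case, Lemma~\ref{denseAn}, by working locally on $X(\Qp)$ and exploiting the local coordinate charts described before the theorem. First I would fix the presentation $A=\Qp[T_1,\dots,T_n]/I$ and a generating sequence $f_1,\dots,f_d$ of $I$, and recall that for each $x\in X(\Qp)$ there is an open neighbourhood $U_x\subset\Qp^n$ together with a locally analytic isomorphism $\alpha_x=(\alpha_{x,1},\dots,\alpha_{x,n})$ onto an open subset $V_x\subset\Qp^n$ satisfying \eqref{localchar}, so that $\beta_x:=(\alpha_{x,1},\dots,\alpha_{x,r})^{-1}$ parametrises $U_x\cap X(\Qp)$ by an open subset $V_x'\subset\Qp^r$. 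The key point is that pulling back a regular function on $X$ along $\beta_x$ gives a locally analytic function on $V_x'$, and these together with the chart maps turn the restriction map $\mathcal C^{\alg}(X(\Qp),L)\to\mathcal C^{\la}(U_x\cap X(\Qp),L)$ into something I can analyse via Amice's theorem on $V_x'$.

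The main argument then goes by Hahn--Banach and duality, exactly in the spirit of Lemma~\ref{denseAn}. Since $X(\Qp)$ is paracompact (it is a closed submanifold of $\Qp^n$), it admits a countable disjoint open cover $\{W_j\}_{j}$ by compact open subsets each contained in one of the chart domains $U_{x(j)}\cap X(\Qp)$, and by \cite[Prop.\,12.5]{schneider_pLie} one has a topological isomorphism $\mathcal C^{\la}(X(\Qp),L)\cong\prod_j\mathcal C^{\la}(W_j,L)$. Thus it suffices to show that the image of $\mathcal C^{\alg}(X(\Qp),L)$ in $\prod_{j\in J}\mathcal C^{\la}(W_j,L)$ is dense for every finite $J$. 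Fixing such a $J$ and a continuous linear functional $\ell$ on this finite product that vanishes on the image of $\mathcal C^{\alg}(X(\Qp),L)$, I transport each $W_j$ to its coordinate picture $\beta_{x(j)}(V_j')$ with $V_j'$ a compact open subset of $\Qp^r$. Since $X(\Qp)$ is Zariski dense in $X$, the restriction $A\otimes_{\Qp}L\to\mathcal C(X(\Qp),L)$ is injective and, more importantly, $\mathcal C^{\alg}$ contains the pullbacks of all polynomial functions in the coordinates $\alpha_{x(j),1},\dots,\alpha_{x(j),r}$ (these are regular on $X$ restricted to the chart, or can be globalised after shrinking — here I would use that polynomials in the local coordinates are uniformly approximated on compacts by restrictions of global regular functions, again via Zariski density). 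Amice's theorem, as in the proof of Lemma~\ref{denseAn}, shows that on each Banach piece $\mathcal F_{\mathcal I}(L)$ of $\mathcal C^{\la}(V_j',L)$ the algebraic functions are dense, so $\ell$ restricted to each $\mathcal F_{\mathcal I}(L)$ is continuous and kills a dense subspace, hence vanishes; since $\mathcal C^{\la}(V_j',L)$ carries the inductive limit topology of these Banach spaces, $\ell=0$. Hahn--Banach then gives the density.

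The step I expect to be the main obstacle is making precise that the restrictions to $U_x\cap X(\Qp)$ of \emph{global} regular functions on $X$ suffice to approximate arbitrary locally analytic functions in the local coordinates $\alpha_{x,i}$ — i.e.\ bridging between "algebraic functions coming from $A$" and "polynomials in the analytic chart coordinates". This is where Zariski density of $X(\Qp)$ and smoothness enter essentially: near a smooth point one can choose $r$ of the coordinate functions $T_i$ (after a linear change) to serve as the chart $\alpha_{x,1},\dots,\alpha_{x,r}$, so that the $T_i$ themselves, which lie in the image of $A$, already give local coordinates; then every polynomial in these is in $\mathcal C^{\alg}$, and Amice applies directly. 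I would carry this out by invoking the Jacobian criterion to arrange that, after reindexing, $dT_{1},\dots,dT_{r}$ restrict to a basis of the cotangent space at $x$, so the map $(T_1,\dots,T_r)\colon X(\Qp)\to\Qp^r$ is a local analytic isomorphism near $x$ in the sense of \eqref{localchar}. With that normalisation the rest is a routine repetition of the Hahn--Banach/Amice argument of Lemma~\ref{denseAn}, now combined with Lemma~\ref{clopen} to pass from balls in $\Qp^r$ to the clopen pieces $V_j'$.
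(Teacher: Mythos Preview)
Your reduction to finite products $\prod_{j\in J}\mathcal C^{\la}(W_j,L)$ and the appeal to Hahn--Banach are fine, and you correctly identify that choosing the chart coordinates among the $T_i$ makes the polynomials on a single $V_j'$ lie in the image of $A$. The gap is in the passage from one chart to several. Writing $\ell=\sum_{j\in J}\ell_j$ with $\ell_j$ supported on $\mathcal C^{\la}(W_j,L)$, your hypothesis says only that $\sum_j\ell_j(p|_{W_j})=0$ for every $p\in A\otimes_{\Qp}L$; this does \emph{not} imply that any individual $\ell_j$ annihilates the polynomials on $V_j'$. A global regular function restricts nontrivially to all the $W_j$ at once, so the image of $\mathcal C^{\alg}$ in the product is the diagonal copy of $A\otimes_{\Qp}L$, not a product of dense subspaces. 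Your sentence ``$\ell$ restricted to each $\mathcal F_{\mathcal I}(L)$ kills a dense subspace, hence vanishes'' therefore has no justification when $|J|>1$. Moreover, different $W_j$ may require different subsets of the $T_i$ as local coordinates, and even when one subset works everywhere the projection $(T_{i_1},\dots,T_{i_r})\colon X(\Qp)\to\Qp^r$ need not be injective on $\bigcup_jW_j$; either way you cannot transplant the finite product into a single copy of $\Qp^r$ and invoke Lemma~\ref{denseAn} there.

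The paper sidesteps this entirely by staying in the ambient $\Qp^n$ rather than in intrinsic $r$-dimensional charts. The restriction map $\mathcal C^{\la}(\mathbb A^n(\Qp),L)\to\mathcal C^{\la}(X(\Qp),L)$ is continuous, and it is \emph{surjective}: covering $\Qp^n$ by pairwise disjoint opens of the form $U_x$ as in \eqref{localchar}, the analytic inverses $\beta_x$ you already recalled give locally analytic retractions $U_x\to U_x\cap X(\Qp)$, so any $g\in\mathcal C^{\la}(X(\Qp),L)$ extends chartwise to a locally analytic function on $\Qp^n$. Since the image of a dense subset under a continuous surjection is dense (this is the argument of Lemma~\ref{clopen}), Lemma~\ref{denseAn} on $\Qp^n$ finishes the proof at once, without ever needing to decouple the charts.
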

\begin{proof} We consider a closed embedding
  $X\hookrightarrow \mathbb A^n$ as before.  Since the inclusion
  $X(\Qp) \hookrightarrow \mathbb A^n(\Qp)$ is a map of locally
  analytic manifolds, it induces a continuous map
  \begin{equation}\label{map1}
    \mathcal C^{\la}( \mathbb A^n(\Qp), L) \rightarrow \mathcal C^{\la}(X(\Qp), L),
  \end{equation}
  by \cite[Prop. 12.4 (ii)]{schneider_pLie}. Since
  $\mathcal C^{\alg}(X(\Qp), L)$ is equal to the image
  $\mathcal C^{\alg}(\mathbb A^n(\Qp), L)$ via \eqref{map1} and
  $\mathcal C^{\alg}(\mathbb A^n(\Qp), L)$ is dense in
  $\mathcal C^{\la}( \mathbb A^n(\Qp), L)$ by Lemma \ref{denseAn}, it
  is enough to show that \eqref{map1} is surjective, see the proof of
  Lemma \ref{clopen}.

  To show the surjectivity of \eqref{map1} let
  $\mathbb A^n(\Qp)= \bigcup_{i\in I} U_i$ be a covering by pairwise
  disjoint open subsets such that each $U_i$ is of the form $U_x$ as
  in \eqref{localchar}. It is enough to prove that the induced map
  \begin{equation}\label{map2}
    \mathcal C^{\la}( U_x, L)\rightarrow \mathcal C^{\la}( X(\Qp)\cap U_x, L)
  \end{equation}
  is surjective. After composition with
  $\alpha_x : U_x\xrightarrow{\sim} V_x$ it is induced by a map of the
  form $(x_1,\dots,x_r)\mapsto (x_1,\dots,x_r,0,\dots,0)$ which has a
  locally analytic section. Hence \eqref{map2} is surjective.
\end{proof} 

\begin{cor}\label{dense_clopen} If $U$ is an open and closed subset of $X(\Qp)$ then 
  $\mathcal C^{\alg}(U, L)$ is dense in $\mathcal C^{\la}(U, L)$.
\end{cor}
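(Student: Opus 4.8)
The plan is to derive this directly from Theorem \ref{density1} together with Lemma \ref{clopen}, so essentially no new work is needed. First I would record that $X(\Qp)$ carries the structure of a locally analytic manifold, as established in the discussion preceding Lemma \ref{clopen} (it is a closed submanifold of $\Qp^n$ via the chosen presentation $A=\Qp[T_1,\dots,T_n]/I$), and that this manifold is paracompact: indeed $X(\Qp)$ is a closed subset of $\Qp^n$, which is second countable, locally compact and metrizable, hence so is $X(\Qp)$, and a second countable locally compact Hausdorff space is paracompact.

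Next I would apply Lemma \ref{clopen} with $M=X(\Qp)$, with $U\subset X(\Qp)$ the given open and closed subset, and with $S=\mathcal C^{\alg}(X(\Qp),L)$. By Theorem \ref{density1} the set $S$ is dense in $\mathcal C^{\la}(X(\Qp),L)$, so Lemma \ref{clopen} tells us that the image of $S$ under the restriction map $f\mapsto f|_U$ is dense in $\mathcal C^{\la}(U,L)$. Finally, by the very definition of $\mathcal C^{\alg}(U,L)$ as the image of $\mathcal C^{\alg}(X(\Qp),L)=\mathcal C^{\alg}(\mathbb A^n(\Qp),L)|_{X(\Qp)}$ under this restriction map, this image is precisely $\mathcal C^{\alg}(U,L)$. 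Hence $\mathcal C^{\alg}(U,L)$ is dense in $\mathcal C^{\la}(U,L)$, as claimed.

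There is no real obstacle here; the only point requiring a word of justification is paracompactness of $X(\Qp)$, which is immediate from its realization as a closed subset of $\Qp^n$. Everything else is a formal combination of the two preceding results.
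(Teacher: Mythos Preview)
Your proof is correct and follows exactly the same approach as the paper, which simply says the result follows from Theorem \ref{density1} and Lemma \ref{clopen}. Your additional justification of paracompactness of $X(\Qp)$ is a reasonable detail to include, though the paper leaves it implicit.
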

\begin{proof} This follows from Theorem \ref{density1} and Lemma
  \ref{clopen}.
\end{proof}

Let $G$ be a connected reductive group scheme defined over $\Qp$. It
follows from \cite[Cor.\,18.3]{Borel} that $G(\Qp)$ is dense in $G$.
Let $\Irr_G(L)$ be the set of isomorphism classes of irreducible
algebraic representations of $G_L$.  We assume that $G$ splits over
$L$, which implies that all representations in $\Irr_G(L)$ are
absolutely irreducible.  Let $K$ be a compact open subgroup of
$G(\Qp)$. By evaluating $V$ at $L$ we get an an action on $V$ by
$G(\Qp)$, and hence by $K$, via the embedding
$G(\Qp)\hookrightarrow G(L)$.

\begin{cor}\label{A3} The evaluation map
  \begin{equation}\label{map3}
    \bigoplus_{[V]\in \Irr_G(L)} \Hom_{K}(V, \mathcal C^{\la}(K, L))\otimes V \rightarrow \mathcal C^{\la}(K, L),
  \end{equation}
  is injective and the image is equal to $\mathcal C^{\alg}(K, L)$. In
  particular, the image of \eqref{map3} is a dense subspace of
  $\mathcal C^{\la}(K, L)$.
\end{cor}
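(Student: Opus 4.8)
The plan is to identify the evaluation map \eqref{map3} with the restriction to $K$ of the classical matrix–coefficient decomposition of the coordinate ring $\mathcal O_{G_L}(G_L)$, and then deduce density from Corollary \ref{dense_clopen}.

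First I would record that, by definition, $\mathcal C^{\alg}(K,L)$ is the image of the restriction map $r\colon \mathcal O_{G_L}(G_L)=A\otimes_{\Qp}L\to \mathcal C(K,L)$, $f\mapsto f|_K$. Regular functions on $G_L$ restrict to locally analytic functions on the $p$-adic Lie group $K$, so $r$ factors through $\mathcal C^{\la}(K,L)$ and is $K$-equivariant for the regular action (say $(k\cdot f)(x)=f(xk)$; the left regular case is identical after $k\mapsto k^{-1}$). Moreover $r$ is injective: $K$ is an open subgroup of $G(\Qp)$, which is Zariski dense in $G$ by \cite[Cor.\,18.3]{Borel}; the Zariski closure $H$ of $K$ in $G$ is a closed subgroup whose $\Qp$-points contain the open subset $K$, so $\dim H=\dim G$ and hence $H=G$ as $G$ is connected; thus any $f$ with $f|_K=0$ vanishes on a Zariski dense set, hence is $0$.

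Next I would invoke the algebraic Peter--Weyl theorem: since $G_L$ is split reductive over a field of characteristic zero it is linearly reductive, and the matrix–coefficient map
\[
\bigoplus_{[V]\in\Irr_G(L)} V^{*}\otimes_L V \xrightarrow{\ \sim\ } \mathcal O_{G_L}(G_L),\qquad \phi\otimes v\longmapsto \bigl(g\mapsto \phi(gv)\bigr),
\]
is an isomorphism, equivariant for right translation on the $V$-factor. The crux is then to identify $\Hom_K(V,\mathcal C^{\la}(K,L))$ with $V^{*}$ via $r$: given a $K$-equivariant $\psi\colon V\to\mathcal C^{\la}(K,L)$ and setting $\phi_\psi(v)\coloneqq\psi(v)(1)\in V^{*}$, equivariance forces $\psi(v)(k)=\phi_\psi(kv)$ for all $k\in K$, so $\psi(v)$ is $r$ applied to the matrix coefficient $g\mapsto\phi_\psi(gv)$; conversely this formula produces a $K$-map for every $\phi\in V^{*}$, and $K$-density shows $\phi\mapsto\psi$ is injective. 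In particular every such $\psi$ lands in $\mathcal C^{\alg}(K,L)$, and under these identifications \eqref{map3} becomes the composite of the Peter--Weyl isomorphism with the injection $r$; hence it is injective with image exactly $\mathcal C^{\alg}(K,L)$.

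Finally, $G$ is smooth affine over $\Qp$ with $G(\Qp)$ Zariski dense, and $K$, being open and compact, is an open and closed subset of $G(\Qp)$; so Corollary \ref{dense_clopen} shows that $\mathcal C^{\alg}(K,L)$ is dense in $\mathcal C^{\la}(K,L)$, which gives the last assertion. I expect the only delicate point to be the bookkeeping in identifying $\Hom_K(V,\mathcal C^{\la}(K,L))$ with $V^{*}$---in particular, checking that mere $K$-equivariance (rather than full $G_L$-equivariance) already forces the image into the algebraic functions, and pinning down the translation conventions so that \eqref{map3} genuinely matches the Peter--Weyl map. Everything else is formal.
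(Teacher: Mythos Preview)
Your proposal is correct and follows essentially the same approach as the paper. The paper cites \cite[Prop.\,A.3]{durham} for the injectivity and the identification of the image with $\mathcal C^{\alg}(K,L)$ (stated there for $\mathcal C(K,L)$, which suffices since your argument shows every $K$-equivariant $\psi$ automatically lands in $\mathcal C^{\alg}(K,L)\subset\mathcal C^{\la}(K,L)$), and then invokes Corollary~\ref{dense_clopen} for density; your Peter--Weyl plus Zariski-density argument is exactly a self-contained unpacking of that citation, and your density step is identical to the paper's.
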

\begin{proof} The first part follows from \cite[Prop.\,A.3]{durham},
  where we have shown an analogous statement with
  $\mathcal C^{\la}(K, L)$ replaced with $\mathcal C(K, L)$. The
  density result follows from Corollary \ref{dense_clopen}.
\end{proof}

Let $\tau$ be a continuous representation of $K$ on a finite
dimensional $L$-vector space.  If $V$ is an irreducible representation
of $G_L$ we will write $V(\tau)\coloneqq V\otimes \tau$ with the
diagonal $K$-action.
\begin{cor}\label{summand} Let $\Pi$ be an admissible unitary $L$-Banach space representation of $K$, which 
  is a direct summand of $\mathcal C(K, L)^{\oplus m}$ for some
  $m\ge 1$. Let $\Pi^{\la}$ be the subspace of locally analytic
  vectors in $\Pi$. Then the evaluation map
  \begin{equation}\label{map4}
    \bigoplus_{[V]\in \Irr_G(L)} \Hom_{K}(V(\tau), \Pi^{\la})\otimes V(\tau) \rightarrow \Pi^{\la},
  \end{equation}
  is injective and its image is a dense subspace of $\Pi^{\la}$.
\end{cor}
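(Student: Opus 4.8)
The plan is to reduce the statement to Corollary \ref{A3} (and the density statement of Corollary \ref{dense_clopen}) in two steps: first to the case $\Pi=\mathcal C(K,L)$, and then from an arbitrary finite-dimensional twist $\tau$ to the trivial twist.

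For the first reduction I would use that both sides of \eqref{map4} are additive in $\Pi$ and that the functor $\Pi\mapsto\Pi^{\la}$ preserves finite direct sums and carries a topological direct summand to a topological direct summand. By hypothesis $\Pi$ is the image of a continuous $K$-equivariant idempotent of $\mathcal C(K,L)^{\oplus m}$, so $\Pi^{\la}$ is a topological direct summand of $\mathcal C^{\la}(K,L)^{\oplus m}$ and \eqref{map4} for $\Pi$ is a direct summand of the evaluation map for $\mathcal C(K,L)^{\oplus m}$. Since injectivity of a continuous linear map, and density of its image, are inherited by direct summands — for density one uses that the closure of a direct-summand subspace of a finite product is the corresponding summand of the product of closures — it suffices to treat $\Pi=\mathcal C(K,L)^{\oplus m}$, and then, by additivity once more, $\Pi=\mathcal C(K,L)$ with $\tau$ arbitrary.

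For the second reduction I would invoke the tensor identity $\mathcal C(K,L)\otimes_L\tau^*\cong\mathcal C(K,L)^{\oplus\dim\tau}$ of unitary $K$-representations (realized on $\mathcal C(K,\tau^*)$ by $F\mapsto(x\mapsto\tau^*(x)F(x))$), which also holds on locally analytic vectors, together with the adjunction $\Hom_K(V\otimes\tau,\mathcal C^{\la}(K,L))\cong\Hom_K(V,\mathcal C^{\la}(K,L)\otimes_L\tau^*)$, valid since $\tau$ is finite dimensional. Under these identifications the map \eqref{map4} for $(\mathcal C(K,L),\tau)$ becomes $\ev\otimes\id_\tau$, where $\ev$ is (via the tensor identity) the direct sum of $\dim\tau$ copies of \eqref{map3}, followed by the contraction $c\colon(\mathcal C^{\la}(K,L)\otimes_L\tau^*)\otimes_L\tau\to\mathcal C^{\la}(K,L)$ pairing $\tau^*$ against $\tau$. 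Corollary \ref{A3} shows $\ev$ has dense image; tensoring by the finite-dimensional $\tau$ preserves this, and $c$ is continuous and admits a continuous linear section (as $\tau\neq0$), so the image of \eqref{map4} is dense in $\mathcal C^{\la}(K,L)$. For injectivity, Frobenius reciprocity identifies $\Hom_K(V(\tau),\mathcal C^{\la}(K,L))$ with $V(\tau)^*$, so \eqref{map4} for $\Pi=\mathcal C(K,L)$ becomes the natural map $\bigoplus_{[V]}V(\tau)^*\otimes V(\tau)\to\mathcal C^{\la}(K,L)\subset\mathcal C(K,L)$, which is the content of \cite[Prop.\,A.3]{durham} applied to the $K$-representations $V(\tau)$ rather than to $V$, exactly as in the proof of Corollary \ref{A3}; the case of general $\Pi$ then follows, since under $\Pi^{\la}\hookrightarrow\mathcal C^{\la}(K,L)^{\oplus m}$ the map \eqref{map4} for $\Pi$ is the restriction of $m$ copies of the latter map to the subspace $\bigoplus_{[V]}\Hom_K(V(\tau),\Pi^{\la})\otimes V(\tau)$.

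The step I expect to require the most care is the topological bookkeeping: verifying that the tensor identity and the adjunction are isomorphisms of topological vector spaces (so that "dense image" and "injective" really transfer along them) and that density survives the contraction $c$; the second delicate point is the injectivity assertion with a nontrivial twist, for which I would lean on (the twisted form of) \cite[Prop.\,A.3]{durham}.
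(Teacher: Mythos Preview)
Your proposal is correct and uses the same two key ingredients as the paper's proof: Corollary~\ref{A3} for the case $\tau=\mathbf{1}$, and the projection formula $\mathcal{C}(K,L)\otimes\tau^*\cong\mathcal{C}(K,L)^{\oplus\dim\tau}$ together with the adjunction $\Hom_K(V\otimes\tau,-)\cong\Hom_K(V,-\otimes\tau^*)$ to reduce the general case to $\tau=\mathbf{1}$. The difference is purely organizational. You first pass to $\Pi=\mathcal{C}(K,L)$ by a direct-summand argument and then, for this particular $\Pi$, factor \eqref{map4} explicitly as $c\circ(\ev\otimes\id_\tau)$ and push density through the contraction $c$. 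The paper instead keeps $\Pi$ general throughout: it lets $W$ be the closure of the image of \eqref{map4}, observes that $W$ is characterized by $\Hom_K(V(\tau),W)=\Hom_K(V(\tau),\Pi^{\la})$ for all $V$, rewrites this via the adjunction as $\Hom_K(V,W\otimes\tau^*)=\Hom_K(V,\Pi^{\la}\otimes\tau^*)$, and then applies the $\tau=\mathbf{1}$ case directly to $\Pi\otimes\tau^*$ (which, by the projection formula, is again a direct summand of finitely many copies of $\mathcal{C}(K,L)$) to conclude $W\otimes\tau^*=\Pi^{\la}\otimes\tau^*$, hence $W=\Pi^{\la}$. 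The paper's route has the advantage of sidestepping the ``topological bookkeeping'' you flagged: it never needs to know that the adjunction or the tensor identity are homeomorphisms, only that they are bijections on Hom-sets, because the argument is phrased entirely in terms of closed subspaces and $\Hom_K(V,-)$. Your route is more explicit but requires exactly the checks you identified (all of which are straightforward since $\tau$ is finite dimensional; note also that $c$ continuous surjective already suffices for density, a section is not needed). For injectivity, both the paper and your invocation of \cite[Prop.\,A.3]{durham} implicitly rely on the $V(\tau)$ being pairwise non-isomorphic absolutely irreducible $K$-representations, which is how the corollary is used in section~\ref{sec_fam_Ban}.
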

\begin{proof} We note that the subspace of locally analytic vectors in
  $\mathcal C(K, L)$ is equal to $\mathcal C^{\la}(K, L)$ by
  \cite[Prop.\,3.3.4]{em1}. Thus $\Pi^{\la}$ is a direct summand of
  $\mathcal C^{\la}(K, L)^{\oplus m}$. If $\tau$ is the trivial
  representation then the assertion can be easily deduced from
  Corollary \ref{A3}. We will reduce the assertion to this case.

  Let $W$ be the closure of the image of \eqref{map4} in
  $\Pi^{\la}$. Then $W$ can be characterised as the smallest closed
  subspace of $\Pi^{\la}$ such that
  \[\Hom_{K}(V(\tau), W)= \Hom_{K}(V(\tau), \Pi^{\la}), \quad \forall
    V\in \Irr_{G}(L).\] We deduce that
  \[\Hom_{K}(V, W\otimes \tau^*)= \Hom_{K}( V, \Pi^{\la}\otimes
    \tau^*), \quad \forall V\in \Irr_{G}(L).\] Thus $W\otimes \tau^*$
  contains the closure of the image of the evaluation map
  \begin{equation}\label{434}
    \bigoplus_{[V]\in \Irr_{G}(L)} \Hom_{K}(V, \Pi^{\la}\otimes \tau^*)\otimes  V \rightarrow \Pi^{\la}\otimes \tau^*.
  \end{equation}
  The projection formula gives us an isomorphism
  $\mathcal C(K, L)\otimes \tau^*\cong \mathcal C(K, L)^{\oplus \dim
    \tau}$. Thus $\Pi\otimes \tau^*$ is a direct summand of a direct
  sum of finitely many copies of $\mathcal C(K, L)$.  Since $\tau$ is
  finite dimensional we have
  $(\Pi\otimes \tau^*)^{\la}= \Pi^{\la} \otimes \tau^*$. From the
  special case considered above we deduce that
  $W\otimes \tau^*= \Pi^{\la} \otimes \tau^*$ and hence $W=\Pi^{\la}$.
\end{proof}

We want to have a variant of Corollary \ref{summand} with fixed
central character. Let $Z$ be the centre of $G$ and let
$\psi: K\cap Z(\Qp)\rightarrow \OO^{\times}$ be a continuous group
homomorphism. We let $\mathcal C_{\psi}(K, L)$ be the closed subspace
of $\mathcal C(K,L)$ consisting of functions on which $K\cap Z(\Qp)$
acts by $\psi$. We assume that $\tau$ has central character $\psi$.

\begin{cor}\label{summand_central} Let $\Pi$ be an admissible unitary $L$-Banach space representation of $K$, which 
  is a direct summand of $\mathcal C_{\psi}(K, L)^{\oplus m}$ for some
  $m\ge 1$. Let $\Pi^{\la}$ be the subspace of locally analytic
  vectors in $\Pi$. Then the image of the evaluation map
  \begin{equation}\label{map423}
    \bigoplus_{[V]\in \Irr_{G/Z}(L)} \Hom_{K}(V(\tau), \Pi^{\la})\otimes V(\tau) \rightarrow \Pi^{\la},
  \end{equation}
  is a dense subspace of $\Pi^{\la}$.
\end{cor}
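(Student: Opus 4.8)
The plan is to deduce the corollary from Corollary~\ref{summand} applied to the adjoint quotient $\bar G \coloneqq G/Z$, after twisting by $\tau^{*}$ to trivialise the central character. Put $H \coloneqq K \cap Z(\Qp)$, a compact subgroup of the centre of $K$, and let $\bar K$ denote the image of $K$ under $G(\Qp) \to \bar G(\Qp)$. Since $G \to \bar G$ is a smooth surjection, the induced map on $\Qp$-points is open, so $\bar K$ is a compact open subgroup of $\bar G(\Qp)$ and the surjection $K \to \bar K$ identifies $\bar K$ with $K/H$. As $\bar G$ is a quotient of $G$ it splits over $L$, so the pair $(\bar G, \bar K)$ satisfies the standing hypotheses of this section.

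The first step is to observe that $\Pi \otimes \tau^{*}$ is, as a representation of $\bar K$, a direct summand of a finite direct sum of copies of $\mathcal C(\bar K, L)$, and that it is admissible and unitary. By hypothesis $H$ acts on $\Pi$ through $\psi$ and on $\tau$ through $\psi$, hence trivially on $\Pi \otimes \tau^{*}$, so the $K$-action on $\Pi \otimes \tau^{*}$ factors through $\bar K$. Writing $\mathcal C_{\psi}(K, L)$ as the continuous induction $\Ind_{H}^{K} \psi$ and using $\tau^{*}|_{H} \cong (\psi^{-1})^{\oplus \dim \tau}$, the projection formula gives an isomorphism of $\bar K$-representations
\[\mathcal C_{\psi}(K, L) \otimes \tau^{*} \;\cong\; \Ind_{H}^{K}\bigl(\psi \otimes \tau^{*}|_{H}\bigr) \;\cong\; \Ind_{H}^{K}(L)^{\oplus \dim \tau} \;\cong\; \mathcal C(\bar K, L)^{\oplus \dim \tau},\]
the last isomorphism because $H$ is central, so $H \backslash K = \bar K$. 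Since $\Pi \otimes \tau^{*}$ is a direct summand of $(\mathcal C_{\psi}(K, L) \otimes \tau^{*})^{\oplus m}$, the claim follows. Corollary~\ref{summand}, applied to $(\bar G, \bar K)$, the representation $\Pi \otimes \tau^{*}$ and the trivial type, then shows that the evaluation map
\[\bigoplus_{[V] \in \Irr_{\bar G}(L)} \Hom_{\bar K}\bigl(V, \Pi^{\la} \otimes \tau^{*}\bigr) \otimes V \longrightarrow \Pi^{\la} \otimes \tau^{*}\]
has dense image; here I use $(\Pi \otimes \tau^{*})^{\la} = \Pi^{\la} \otimes \tau^{*}$, as in the proof of Corollary~\ref{summand}.

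It remains to transport this statement through the twist. Let $W \subseteq \Pi^{\la}$ be the closure of the image of~\eqref{map423}; as in the proof of Corollary~\ref{summand}, $W$ is the smallest closed $K$-subrepresentation such that $\Hom_{K}(V(\tau), W) = \Hom_{K}(V(\tau), \Pi^{\la})$ for every $V \in \Irr_{G/Z}(L)$. Since $\tau$ is finite dimensional, $\Hom_{K}(V(\tau), -) = \Hom_{K}(V, - \otimes \tau^{*})$, and as $V$ (inflated from $\bar G$) and $- \otimes \tau^{*}$ both carry trivial $H$-action this equals $\Hom_{\bar K}(V, - \otimes \tau^{*})$. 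Hence $\Hom_{\bar K}(V, W \otimes \tau^{*}) = \Hom_{\bar K}(V, \Pi^{\la} \otimes \tau^{*})$ for all $V \in \Irr_{\bar G}(L)$, so the closed $\bar K$-subspace $W \otimes \tau^{*} \subseteq \Pi^{\la} \otimes \tau^{*}$ contains the closure of the image of the evaluation map of the previous paragraph, which by Corollary~\ref{summand} is all of $\Pi^{\la} \otimes \tau^{*}$. Therefore $W \otimes \tau^{*} = \Pi^{\la} \otimes \tau^{*}$, which forces $W = \Pi^{\la}$, i.e. the image of~\eqref{map423} is dense.

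The only point that goes beyond the argument of Corollary~\ref{summand} is the bookkeeping of the first two paragraphs: checking that $\bar K$ is a compact open subgroup of $\bar G(\Qp)$ with $\bar K = K/H$, and identifying $\mathcal C_{\psi}(K, L) \otimes \tau^{*}$ with a direct sum of copies of $\mathcal C(\bar K, L)$ via the projection formula. I expect no genuine obstacle here; the one thing to be careful about is keeping track of whether each $\Hom$-space and tensor product is formed with respect to the $K$- or the $\bar K$-action, and recording that $\Irr_{G/Z}(L) = \Irr_{\bar G}(L)$ is precisely the set of irreducible algebraic representations of $G$ on which $Z$ acts trivially.
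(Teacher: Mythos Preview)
Your proof is correct and follows essentially the same route as the paper: tensor with $\tau^{*}$ to kill the central character, use the projection formula to identify $\mathcal C_{\psi}(K,L)\otimes\tau^{*}$ with copies of $\mathcal C(K/(K\cap Z(\Qp)),L)$, and then invoke the result for $G/Z$ (the paper cites Corollary~\ref{A3} directly while you cite Corollary~\ref{summand} with trivial type, which amounts to the same thing). Your additional bookkeeping about $\bar K$ being compact open in $(G/Z)(\Qp)$ and $\bar G$ splitting over $L$ is exactly the verification the paper suppresses with the phrase ``the assertion follows from Corollary~\ref{A3} applied to $G/Z$''.
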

\begin{proof} Since the central character of $\tau^*$ is equal to
  $\psi^{-1}$, projection formula gives us an isomorphism
  $\mathcal C_{\psi}(K, L)\otimes \tau^*\cong \mathcal C(K/K\cap
  Z(\Qp), L)^{\oplus \dim \tau}$.  Thus arguing as in the proof of
  previous Corollary we may reduce the assertion to the case when
  $\tau$ and $\psi$ are both trivial. Since the image of $K$ in
  $(G/Z)(\Qp)$ is a compact open subgroup, the assertion follows from
  Corollary \ref{A3} applied to $G/Z$.
\end{proof}

\section{Families of Banach space representations}\label{sec_fam_Ban}

Let $G$ be a connected reductive group over $\Qp$, split over $L$ and
let $K$ be a compact open subgroup of $G(\Qp)$. An example relating
this set up to section \ref{families_gal}, would be
$G=\Res_{F/\Qp} G'$, where $G'$ is a connected reductive group over
$F$, and $K$ compact open subgroup of $G'(F)$.

Let $(R, \mm)$ be a complete local noetherian $\OO$-algebra with
residue field $k$. Let $M$ be a finitely generated
$R[\![K]\!]$-module. If $V$ is an irreducible algebraic representation
of $G_L$ then by evaluating at $L$ we get an an action on $V$ by
$G(\Qp)$, and hence by $K$, via the embedding
$G(\Qp)\hookrightarrow G(L)$. Let $\tau$ be a representation of the
form $\sigma \otimes \eta$, where $\sigma$ is a smooth absolutely
irreducible representation of $K$ and
$\eta: K\rightarrow \OO^{\times}$ is a continuous character. Then
$V(\tau)\coloneqq V \otimes \tau$ is an absolutely irreducible
representation of $K$. Since $K$ is compact there is a $K$-invariant
$\OO$-lattice $\Theta$ in $V(\tau)$. We let
\[M(\Theta)\coloneqq \Hom_{\OO[\![K]\!]}^{\cont}(M, \Theta^d)^d\cong
  \Theta \otimes_{\OO[\![K]\!]} M,\] where
$(\cdot)^d\coloneqq \Hom_{\OO}^{\cont}(\cdot, \OO)$ and in the tensor
product $\Theta$ is considered as a left $\OO[\![K]\!]$-module via
$g\mapsto g^{-1}$.  Since $M$ is a finitely generated
$R[\![K]\!]$-module, $M(\Theta)$ is a finitely generated $A$-module,
see \cite[Prop.\,2.15]{duke}. Moreover, $M(\Theta)[1/p]$ is
independent of the choice of lattice $\Theta$. Let $R_{V(\tau)}$ be
the quotient of $R$, which acts faithfully on $M(\Theta)$ and let
\[\Sigma_{V(\tau)}\coloneqq \mSpec R_{V(\tau)}[1/p].\]
We note that $R_{V(\tau)}$ and $\Sigma_{V(\tau)}$ also depend on $M$. 
Since $M$ is finitely generated over $R[\![K]\!]$, it is compact. Thus 
\[\Pi\coloneqq \Hom_{\OO}^{\cont}(M, L)\] 
equipped with the supremum norm is a unitary $L$-Banach space
representation of $K$. If $M$ is not finitely generated over
$\OO[\![K]\!]$ then $\Pi$ is not admissible. However, one can get
around this by a trick introduced in \cite[\S 2.1]{duke}.  We choose a
presentation $\OO[\![x_1, \ldots, x_s]\!]\twoheadrightarrow R$.  This
induces a surjection
$\OO[\![\Zp^s\times K]\!]\twoheadrightarrow R[\![K]\!]$, and thus
$\Pi$ is an admissible unitary $\Zp^s\times K$-Banach space
representation. Following \cite[Def.\,3.2]{BHS} we define $\Pi^{\Rla}$
as the subspace of of locally analytic vectors for
$\Zp^s \times K$-action on $\Pi$. It will become apparent in the proof
of the Theorem below that it is actually better to think of
$\OO[\![x_1, \ldots, x_s]\!]$ as the completed group algebra of
$((1+2p \Zp)^s, \times)$ instead of $(\Zp^s, +)$.

If $x\in \mSpec R[1/p]$ then we denote by $\mm_x$ the corresponding
maximal ideal and by $\kappa(x)$ its residue field.  Let $\Pi[\mm_x]$
be the subspace of $\Pi$ consisting of vectors killed by
$\mm_x$. Since $R$ acts on $\Pi$ by continuous endomorphisms,
$\Pi[\mm_x]$ is a closed subspace of $\Pi$. It follows from
\cite[Lem.\,2.20, 2.21]{duke} applied with $\md= \kappa(x)$ that
\[\Pi[\mm_x]\cong \Hom_{\OO}^{\cont}(M\otimes_R\OO_{\kappa(x)}, L)\]
and $\Pi[\mm_x]$ is an admissible unitary Banach space representation of $K$.

We will now explain how to put a topology on
$\Hom_K(V(\tau), \Pi^{\Rla})\otimes V(\tau)$. The module
$M(\Theta)\otimes \Theta^d$ is finitely generated over
$R[\![K]\!]$. Hence,
$\Hom_{\OO}^{\cont}(M(\Theta)\otimes \Theta^d, L)$ is an admissible
unitary Banach space representation of $\Zp^s\times K$. It follows
from \cite[Eq.\,(10), (11)]{duke} that we have an isomorphisms of
Banach space representations:
\begin{equation}\label{banach}
\begin{split}
  \Hom_{\OO}^{\cont}(M(\Theta)\otimes \Theta^d, L)&\cong
  \Hom_{\OO[\![K]\!]}^{\cont}(M, V(\tau)^*)\otimes V(\tau)\\&\cong
  \Hom_{K}(V(\tau), \Pi)\otimes V(\tau).
\end{split}
\end{equation}
Since both $\Hom_{K}(V(\tau), \Pi)\otimes V(\tau)$ and $\Pi$ are
admissible unitary Banach space representations of $\Zp^s\times K$,
the evaluation map
\begin{equation}\label{evaluation}
  \Hom_{K}(V(\tau), \Pi)\otimes V(\tau) \rightarrow \Pi
\end{equation}
is continuous and the image is closed in $\Pi$.

\begin{lem} The evaluation map \eqref{evaluation} is injective.
\end{lem}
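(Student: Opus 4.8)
The plan is to reduce the statement to the faithfulness of the action of $R$ on $M(\Theta)$, which is how $R_{V(\tau)}$ was defined, combined with the fact that $M(\Theta)$ is a genuine quotient-type construction built out of $M$ and $\Theta$. First I would dualize: by \eqref{banach}, the source of \eqref{evaluation} is $\Hom^{\cont}_{\OO}(M(\Theta)\otimes\Theta^d, L)$ and the target $\Pi$ is $\Hom^{\cont}_{\OO}(M,L)$, and the evaluation map is the $L$-dual of a continuous $\OO[\![K]\!]$-linear map of compact modules
\[
M \longrightarrow M(\Theta)\otimes_{\OO} \Theta^d = \bigl(\Theta\otimes_{\OO[\![K]\!]} M\bigr)\otimes_{\OO}\Theta^d .
\]
Concretely this is the natural coevaluation-type map $m\mapsto \sum_i (v_i\otimes m)\otimes v_i^*$ for a dual pair of bases $\{v_i\},\{v_i^*\}$ of the finite free $\OO$-module $\Theta$, equivalently the composite $M\to \Theta^d\otimes\Theta\otimes M\to M(\Theta)\otimes\Theta^d$. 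Since $M$ and $M(\Theta)\otimes\Theta^d$ are compact $\OO$-modules and Pontryagin/$\OO$-duality is exact on these (cf. the standard formalism in \cite{duke}, as used repeatedly above), the $L$-dual \eqref{evaluation} is injective if and only if this coevaluation map has dense image, equivalently (since the modules are profinite and the map is $\OO$-linear with closed image, the image being a submodule) if and only if it is surjective.

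So the heart of the matter is to check that $M\to \bigl(\Theta\otimes_{\OO[\![K]\!]}M\bigr)\otimes_{\OO}\Theta^d$ is surjective. The key point is that $\Theta$ is a \emph{finitely generated free} $\OO$-module carrying an action of $K$, hence of $\OO[\![K]\!]$, and the functor $N\mapsto \Theta\otimes_{\OO[\![K]\!]}N$ on compact $\OO[\![K]\!]$-modules, when composed with $(-)\otimes_{\OO}\Theta^d$, receives a natural transformation from the identity. I would verify surjectivity after reducing to the universal case $M=\OO[\![K]\!]$: for a general finitely generated $M$ one picks a surjection $\OO[\![K]\!]^{\oplus r}\twoheadrightarrow M$ and uses right-exactness of $\Theta\otimes_{\OO[\![K]\!]}(-)$ together with functoriality of the coevaluation map, so a commuting square reduces the claim for $M$ to the claim for $\OO[\![K]\!]^{\oplus r}$, hence for $\OO[\![K]\!]$. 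For $M=\OO[\![K]\!]$ one has $\Theta\otimes_{\OO[\![K]\!]}\OO[\![K]\!]=\Theta$ (with the left module structure twisted by $g\mapsto g^{-1}$ as in the definition), and the map becomes $\OO[\![K]\!]\to \Theta\otimes_{\OO}\Theta^d=\End_{\OO}(\Theta)$, which is the algebra map induced by the $K$-action on $\Theta$; its image is the $\OO$-subalgebra generated by the image of $K$, which is all of $\End_{\OO}(\Theta)$ precisely because $V(\tau)$ is absolutely irreducible as a representation of $K$ (so $\Theta[1/p]=V(\tau)$ is absolutely irreducible and Jacobson density / Burnside applies, and then one checks the integral statement is unaffected after inverting $p$, which suffices since we only need density before $\OO$-dualizing, cf. the argument already used for \eqref{banach} in \cite{duke}).

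The main obstacle I anticipate is the bookkeeping around the two twisted module structures on $\Theta$ (the left $\OO[\![K]\!]$-structure via $g\mapsto g^{-1}$ used in the tensor product, versus the structure on $\Theta^d$) and making sure the coevaluation map is the correct $\OO[\![K]\!]$-linear one so that the identification in \eqref{banach} is exactly its $L$-dual — i.e. matching conventions with \cite[Eq.\,(10),(11)]{duke} and \cite[Prop.\,2.15]{duke}. Once the map is correctly identified, surjectivity in the case $M=\OO[\![K]\!]$ is just Burnside's theorem, and the passage to general $M$ and the final duality argument are formal; alternatively, one may avoid the reduction entirely by arguing directly that an element $\varphi\in\Hom_K(V(\tau),\Pi)\otimes V(\tau)$ in the kernel of \eqref{evaluation} corresponds, under \eqref{banach}, to a continuous functional on $M(\Theta)\otimes\Theta^d$ that vanishes on the image of $M$, and then using that the $\OO[\![K]\!]$-submodule generated by that image is everything (again by absolute irreducibility of $V(\tau)$) to conclude $\varphi=0$.
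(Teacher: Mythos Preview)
Your approach is correct but takes a substantially longer route than the paper's. The paper gives a two-line representation-theoretic argument that never leaves the Banach side: as a $K$-representation the source $\Hom_K(V(\tau),\Pi)\otimes V(\tau)$ is a direct sum of copies of the absolutely irreducible representation $V(\tau)$, so the kernel $\mathcal K$ is too; applying $\Hom_K(V(\tau),-)$ to the evaluation map is tautologically an isomorphism (because $\End_K(V(\tau))=L$), hence $\Hom_K(V(\tau),\mathcal K)=0$ and therefore $\mathcal K=0$. No duality, no Burnside, no reduction to $M=\OO[\![K]\!]$.

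Your proof instead dualizes, translating injectivity of \eqref{evaluation} into surjectivity (after inverting $p$) of a coevaluation map $M\to M(\Theta)\otimes_{\OO}\Theta^d$, reducing to $M=\OO[\![K]\!]$, and invoking Burnside/Jacobson density for $\OO[\![K]\!][1/p]\to\End_L(V(\tau))$. This works, with two small caveats. First, your ``if and only if'' between injectivity of the $L$-dual and surjectivity of the coevaluation map is not literally correct (multiplication by $p$ on $\OO$ has closed non-dense image but injective $L$-dual); what you actually need, and what you end up using, is only the implication that a torsion cokernel suffices. Second, as you yourself flag, the identification of your coevaluation map with the dual of \eqref{evaluation} via \cite[Eq.\,(10),(11)]{duke} is genuine bookkeeping that has to be done carefully with the twisted module structures. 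The paper's argument sidesteps both issues entirely, at the cost of being perhaps less conceptual: it uses nothing beyond the isotypic decomposition and Schur's lemma.
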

\begin{proof} Since as $K$-representation
  $\Hom_{K}(V(\tau), \Pi)\otimes V(\tau)$ is isomorphic to a direct
  sum of copies of $V(\tau)$, which is irreducible, the kernel
  $\mathcal K$ is also isomorphic to a direct sum of copies of
  $V(\tau)$.  Since $V(\tau)$ is absolutely irreducible and finite
  dimensional we have $\Hom_K(V(\tau),V(\tau))=L$. By applying the
  functor $\Hom_K(V(\tau), \cdot)$ to \eqref{evaluation} we obtain an
  isomorphism. Hence, $\Hom_K(V(\tau), \mathcal K)=0$ and so
  $\mathcal K=0$.
\end{proof}

By applying the functor of $R$-locally analytic vectors to
\eqref{evaluation} we obtain injection
\begin{equation}\label{swapAan}
  \Hom_{K}(V(\tau), \Pi)^{\Rla}\otimes V(\tau)=(\Hom_K(V(\tau), \Pi)\otimes V(\tau))^{\Rla} \hookrightarrow \Pi^{\Rla}
\end{equation}
where $\Hom_{K}(V(\tau), \Pi)^{\Rla}$ is the subspace of locally
analytic vectors for $\Zp^s$-action on $\Hom_{K}(V(\tau), \Pi)$, thus
is naturally endowed with a topology, see \cite{ST}.

\begin{lem}\label{swap} $\Hom_{K}(V(\tau), \Pi)^{\Rla}\otimes V(\tau)= \Hom_K(V(\tau), \Pi^{\Rla})\otimes V(\tau)$,
\end{lem}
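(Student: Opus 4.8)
The plan is to deduce the lemma from the equality of subspaces
\[
  \Hom_K(V(\tau),\Pi)^{\Rla}=\Hom_K(V(\tau),\Pi^{\Rla})
\]
of $\Hom_K(V(\tau),\Pi)$, after which tensoring with the finite‑dimensional representation $V(\tau)$ and invoking the identity \eqref{swapAan} gives the statement. Here the superscript $\Rla$ on the left means locally analytic vectors for the $\Zp^s$‑action on the Banach space $\Hom_K(V(\tau),\Pi)$, whereas $\Pi^{\Rla}$ means $\Zp^s\times K$‑analytic vectors, and $\Hom_K(V(\tau),\Pi^{\Rla})$ is regarded as a subspace of $\Hom_K(V(\tau),\Pi)$ via $\Pi^{\Rla}\hookrightarrow\Pi$; thus both sides are subspaces of $\Hom_K(V(\tau),\Pi)$, and the claim is that they coincide.

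First I would assemble the routine ingredients. The representation $V(\tau)=V\otimes\sigma\otimes\eta$ is a locally analytic representation of $K$, being a tensor product of an algebraic representation, a smooth one, and the one‑dimensional $\eta$, which is locally analytic because a continuous homomorphism of $p$‑adic Lie groups is locally analytic. After fixing an $L$‑basis $v_1,\dots,v_d$ of $V(\tau)$, the evaluation map $\psi\mapsto(\psi(v_1),\dots,\psi(v_d))$ identifies $\Hom_K(V(\tau),\Pi)$ with a closed topological subspace of $\Pi^{d}$; hence a map from a locally analytic manifold to $\Hom_K(V(\tau),\Pi)$ is locally analytic if and only if each of its $d$ components is. Finally, since the $\Zp^s$‑ and $K$‑actions on $\Pi$ commute, for $\phi\in\Hom_K(V(\tau),\Pi)$, $g_1\in\Zp^s$, $g_2\in K$ and $v\in V(\tau)$ one has $(g_1\phi)(g_2v)=g_1g_2\phi(v)$, and $g_1\phi$ is again $K$‑equivariant.

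For the inclusion ``$\subseteq$'' I would take $\phi$ with $g_1\mapsto g_1\phi$ locally analytic into $\Hom_K(V(\tau),\Pi)$ and, for fixed $v$, write the $\Zp^s\times K$‑orbit map of $\phi(v)$ as the composite of $(g_1,g_2)\mapsto(g_1\phi,\,g_2v)$ — locally analytic because $V(\tau)$ is a locally analytic $K$‑representation — with the evaluation pairing $\Hom_K(V(\tau),\Pi)\times V(\tau)\to\Pi$, which is continuous and bilinear, hence locally analytic; this shows $\phi(v)\in\Pi^{\Rla}$ for all $v$. For ``$\supseteq$'' I would take $\phi$ with $\phi(v_j)\in\Pi^{\Rla}$ for every $j$, restrict the locally analytic map $(g_1,g_2)\mapsto g_1g_2\phi(v_j)$ along the closed analytic submanifold $\Zp^s\times\{e\}\hookrightarrow\Zp^s\times K$ to obtain that $g_1\mapsto g_1\phi(v_j)$ is locally analytic, and conclude via the componentwise criterion that $g_1\mapsto g_1\phi$ is locally analytic, i.e. $\phi\in\Hom_K(V(\tau),\Pi)^{\Rla}$.

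I do not expect a genuine obstacle: everything reduces to the permanence properties of locally analytic maps (stability under composition, under restriction to closed submanifolds, and the fact that continuous bilinear maps are analytic) together with the fact that $V(\tau)$ is a locally analytic $K$‑representation. The only points demanding mild care are keeping track of which group each ``$\Rla$'' refers to, and checking that $\eta$, hence $V(\tau)$, is locally analytic; if one prefers to sidestep the latter, one can instead expand $g_2v$ in the basis $(v_i)$ with locally analytic coefficient functions and argue with finite sums of products of locally analytic scalar‑ and $\Pi$‑valued functions, which is equivalent.
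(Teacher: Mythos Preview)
Your proof is correct and proceeds along a genuinely different line from the paper's. The paper argues inside $\Pi$: it identifies $\Hom_K(V(\tau),\Pi)\otimes V(\tau)$ with its image under the evaluation map, observes that the $R$-locally analytic vectors of this closed invariant subspace are its intersection with $\Pi^{\Rla}$, and then compares the two candidate subspaces $\Hom_K(V(\tau),\Pi^{\Rla})\otimes V(\tau)$ and $(\Hom_K(V(\tau),\Pi)\otimes V(\tau))\cap\Pi^{\Rla}$ by noting that both are $V(\tau)$-isotypic and become equal after applying $\Hom_K(V(\tau),\cdot)$. Your argument instead works directly at the level of $\Hom_K(V(\tau),\Pi)$, proving $\Hom_K(V(\tau),\Pi)^{\Rla}=\Hom_K(V(\tau),\Pi^{\Rla})$ by elementary permanence properties of locally analytic maps (composition with bilinear pairings, restriction to closed submanifolds, products of scalar- and Banach-valued analytic functions). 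Your route is more hands-on and does not invoke the absolute irreducibility of $V(\tau)$ or the isotypic bookkeeping; the paper's route is slicker in this context precisely because that irreducibility is already in play and the same device is reused elsewhere. Both are short, and the only mild care required in yours---that $\eta$ and hence $V(\tau)$ is locally analytic, and that the bilinear pairing with a finite-dimensional factor preserves local analyticity---you have addressed.
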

\begin{proof} We identify $\Hom_K(V(\tau), \Pi)\otimes V(\tau)$ with
  its image in $\Pi$. Then its $R$-locally analytic vectors are equal
  to $(\Hom_K(V(\tau), \Pi)\otimes V(\tau))\cap \Pi^{\Rla}$. The
  inclusions
  \[\Hom_K(V(\tau), \Pi^{\Rla})\otimes V(\tau) \subset
    (\Hom_K(V(\tau), \Pi)\otimes V(\tau))\cap \Pi^{\Rla} \subset
    \Pi^{\Rla}\] become isomorphisms after applying
  $\Hom_K(V(\tau), \cdot)$. Since as representations of $K$ both
  $\Hom_K(V(\tau), \Pi^{\Rla})\otimes V(\tau)$ and
  $(\Hom_K(V(\tau), \Pi)\otimes V(\tau))\cap \Pi^{\Rla}$ are
  isomorphic to a direct sum of copies of $V(\tau)$, we conclude that
  they are equal.
\end{proof}

The Lemma above and \eqref{swapAan} allows us to put a topology on
$\Hom_{K}(V(\tau), \Pi^{\Rla})\otimes V(\tau)$. It follows from the
theory of admissible locally analytic representations, see
\cite[Prop.\,6.4]{ST}, that the image of
$\Hom_{K}(V(\tau), \Pi^{\Rla})\otimes V(\tau)$ under
\eqref{evaluation} is closed in $\Pi^{\Rla}$ and \eqref{evaluation}
induces a homeomorphism between
$\Hom_{K}(V(\tau), \Pi^{\Rla})\otimes V(\tau)$ and its image in
$\Pi^{\Rla}$.

\begin{prop}\label{long_march} If $R_{V(\tau)}$ is reduced and
  $M(\Theta)$ is Cohen-Macaulay then
  \[\bigoplus_{x\in \Sigma_{V(\tau)}}\Hom_{K}(V(\tau), \Pi[
    \mm_x]^{\la})\otimes V(\tau)\] is a dense subspace of
  $\Hom_{K}(V(\tau), \Pi^{\Rla})\otimes V(\tau)$.
\end{prop}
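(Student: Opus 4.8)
The plan is to prove the proposition by duality, reducing it to the commutative‐algebra injectivity statement of Section~\ref{FibreCM}. First I would invoke Schneider--Teitelbaum duality together with Hahn--Banach. Since $\Pi$ is an admissible unitary Banach representation of $\Zp^s\times K$, the spaces $\Hom_K(V(\tau),\Pi^{\Rla})\otimes V(\tau)$ (with the topology introduced via \eqref{swapAan} and Lemma~\ref{swap}) and $\Hom_K(V(\tau),\Pi[\mm_x]^{\la})\otimes V(\tau)$ are admissible locally analytic representations (of $\Zp^s\times K$, resp.\ of $K$); moreover the functors $\Hom_K(V(\tau),-)$ and $(-)\otimes V(\tau)$ are mutually inverse equivalences on the relevant $V(\tau)$--isotypic subcategories because $V(\tau)$ is absolutely irreducible and finite dimensional. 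Hence the asserted density is equivalent to the density of $\bigoplus_{x\in\Sigma_{V(\tau)}}\Hom_K(V(\tau),\Pi[\mm_x]^{\la})$ inside $\Hom_K(V(\tau),\Pi^{\Rla})$, and, dualising, to the injectivity of
\[
  \Hom_K(V(\tau),\Pi^{\Rla})'\longrightarrow \prod_{x\in\Sigma_{V(\tau)}}\Hom_K(V(\tau),\Pi[\mm_x]^{\la})'.
\]

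Next I would identify these strong duals in terms of $M(\Theta)$. From \eqref{banach} the continuous dual of the Banach space $\Hom_K(V(\tau),\Pi)$ is the finitely generated $R[1/p]$--module $M(\Theta)[1/p]$, on which $R$ acts through its quotient $R_{V(\tau)}$. Passing to $R$--locally analytic vectors on the Banach side corresponds on the dual side to applying $\otimes_{R_{V(\tau)}}R_{V(\tau)}^{\rig}$: using $\Pi^{\Rla}=\Pi^{\Sla}$ for $S=\OO[\![x_1,\dots,x_s]\!]\cong\OO[\![(1+2p\Zp)^s]\!]$ (by \cite{BHS}) and the description $(\Pi^{\Rla})'\cong(R^{\rig}\wtimes_L D(K,L))\otimes_{R[\![K]\!]}M$ recalled in the introduction, applied to the $V(\tau)$--isotypic part, one gets $\Hom_K(V(\tau),\Pi^{\Rla})'\cong M(\Theta)\otimes_{R_{V(\tau)}}R_{V(\tau)}^{\rig}$. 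Similarly, using the isomorphism $\Pi^{\Rla}[\mm_x]\cong\Pi[\mm_x]^{\la}$ together with \cite[Lem.\,2.20, 2.21]{duke}, one obtains $\Hom_K(V(\tau),\Pi[\mm_x]^{\la})'\cong M(\Theta)\otimes_{R_{V(\tau)}}\kappa(x)$ for $x\in\Sigma_{V(\tau)}$, and the inclusion $\Pi[\mm_x]^{\la}\hookrightarrow\Pi^{\Rla}$ dualises to the natural reduction map. Finally, $\Sigma_{V(\tau)}=\mSpec R_{V(\tau)}[1/p]$ is canonically identified with $(\Spf R_{V(\tau)})^{\rig}$ by \cite[Lem.\,7.1.9]{dejong}.

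With these identifications, the map whose injectivity we need becomes exactly
\[
  M(\Theta)\otimes_{R_{V(\tau)}}R_{V(\tau)}^{\rig}\longrightarrow \prod_{x\in(\Spf R_{V(\tau)})^{\rig}}M(\Theta)\otimes_{R_{V(\tau)}}\kappa(x),
\]
i.e.\ the map of Proposition~\ref{commutative_algebra} applied to the ring $R_{V(\tau)}$ and the module $M(\Theta)$. Now $M(\Theta)$ is a faithful finitely generated $R_{V(\tau)}$--module by construction; it is Cohen--Macaulay by hypothesis; and $R_{V(\tau)}$ is reduced by hypothesis and, acting faithfully on the $\OO$--torsion free module $M(\Theta)$, is itself $\OO$--torsion free. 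Proposition~\ref{commutative_algebra} then yields the desired injectivity, completing the proof.

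The main obstacle is not the commutative‐algebra input, which is already in hand, but the two dual identifications of the second step: one must check that taking $R$--locally analytic vectors commutes with extracting the $V(\tau)$--isotypic component, and that the resulting strong dual is \emph{literally} $M(\Theta)\otimes_{R_{V(\tau)}}R_{V(\tau)}^{\rig}$ equipped with the reduction maps at rigid points --- not merely abstractly isomorphic to it --- so that dualising the inclusions $\Pi[\mm_x]^{\la}\hookrightarrow\Pi^{\Rla}$ produces precisely the evaluation map of Proposition~\ref{commutative_algebra}. One also has to keep careful track of the relevant topologies (Banach, coadmissible/Fréchet, and the product topology) in order to apply Hahn--Banach, and to justify the $\OO$--torsion freeness of $M(\Theta)$ (hence of $R_{V(\tau)}$) used when invoking Proposition~\ref{commutative_algebra}.
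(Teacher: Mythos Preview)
Your proposal is correct and follows essentially the same route as the paper's proof: reduce to density of $\bigoplus_x\Hom_K(V(\tau),\Pi[\mm_x]^{\la})$ in $\Hom_K(V(\tau),\Pi^{\Rla})$, apply Hahn--Banach, identify the strong dual of $\Hom_K(V(\tau),\Pi^{\Rla})$ with $M(\Theta)\otimes_{R_{V(\tau)}}R_{V(\tau)}^{\rig}$ (the paper does this by viewing $\Hom_K(V(\tau),\Pi)$ via \eqref{banach} as $\Hom_{\OO}^{\cont}(M(\Theta),L)$ and taking $\Zp^s$--locally analytic vectors, which is exactly your computation), identify the dual of each $\mm_x$--part with the fibre $M(\Theta)^{\rig}\otimes\kappa(x)$, and invoke Proposition~\ref{commutative_algebra}. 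The only point you add beyond the paper is the explicit remark that $R_{V(\tau)}$ is $\OO$--torsion free, which the paper leaves implicit.
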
 
\begin{proof} Let us fix $x\in \Sigma_{V(\tau)}$. Since $\Pi[\mm_x]$
  is admissible as $K$-representation \cite[Prop.\,3.8]{BHS} implies
  that $\Pi[\mm_x]^{\la}=\Pi[\mm_x]^{\Rla}$. We thus have an inclusion
  $\Pi[\mm_x]^{\la} \subset \Pi^{\Rla}$.  If
  $x, x_1, \ldots, x_n \in \Sigma_{V(\tau)}$ are pairwise distinct
  then $(\oplus_{i=1}^n \Pi[\mm_{x_i}] ) \cap \Pi[\mm_x]=0$, as
  $1 \in \mm_x + \cap_{i=1}^n \mm_{x_i}$. We thus have an inclusion
  $\oplus_{x\in \Sigma_{V(\tau)}} \Pi[\mm_x]^{\la} \subset
  \Pi^{\Rla}$. By applying $\Hom_K(V(\tau), \cdot)$ we deduce that
\begin{equation}\label{prismatic}
  \bigoplus_{x\in \Sigma_{V(\tau)}}\Hom_{K}(V(\tau), \Pi[ \mm_x]^{\la}) \subset \Hom_K(V(\tau), \Pi^{\Rla}).
\end{equation}
By Hahn--Banach theorem, see \cite[Cor.\,9.3]{schneider_nfa}, it is
enough to show that any continuous linear form on
$\Hom_K(V(\tau), \Pi^{\Rla})$, which vanishes on the left hand-side of
\eqref{prismatic}, is zero. Below we will denote by $W'$ the
continuous linear dual of a locally convex $L$-vector space $W$.

Going back to \eqref{banach} we think of $\Hom_K(V(\tau), \Pi^{\Rla})$
as locally analytic vectors for the $\Zp^s$-action on
$\Hom_{\OO}^{\cont}( M(\Theta), L)$. Thus
\[ \Hom_K(V(\tau), \Pi^{\Rla})'\cong
  M(\Theta)\otimes_{\OO[\![\Zp^s]\!]} D(\Zp^s)\cong
  M(\Theta)\otimes_{R_{V(\tau)}} R_{V(\tau)}^{\rig}=:
  M(\Theta)^{\rig}.\] We may think of
$\Hom_{K}(V(\tau), \Pi[ \mm_x]^{\la})$ as
$\Hom_K(V(\tau), \Pi^{\Rla})[\mm_x]$. Thus
\[\Hom_{K}(V(\tau), \Pi[ \mm_x]^{\la})'\cong M(\Theta)^{\rig}\otimes_{R_{V(\tau)}^{\rig} }\kappa(x).\]
The assertion follows from Proposition \ref{commutative_algebra}.
\end{proof}

\begin{thm}\label{patched_density} Assume that there is an $M$-regular sequence $y_1, \ldots, y_h$ contained in $\mm$, such that 
  $M/(y_1, \ldots, y_h)M$ is a finitely generated projective
  $\OO[\![K]\!]$-module.  Then the evaluation map
  \begin{equation}\label{map412}
    \bigoplus_{[V]\in \Irr_G(L)} \Hom_{K}(V(\tau), \Pi^{\Rla})\otimes V(\tau) \rightarrow \Pi^{\Rla},
  \end{equation}
  is injective and its image is a dense subspace of
  $\Pi^{\Rla}$. Moreover, the map
  \begin{equation}\label{map5}
    \bigoplus_{[V]\in \Irr_G(L)} \bigoplus_{x\in \Sigma_{V(\tau)}}\Hom_{K}(V(\tau), \Pi[ \mm_x]^{\la})\otimes V(\tau) \rightarrow \Pi^{\Rla},
  \end{equation}
  is injective and if additionally $R_{V(\tau)}$ is reduced for all
  $V\in \Irr_G(L)$ then the image of \eqref{map5} is a dense subspace
  of $\Pi^{\Rla}$.
\end{thm}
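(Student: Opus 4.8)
The plan is to deduce the theorem from the two density statements already available, Corollary~\ref{summand} and Proposition~\ref{long_march}, once hypothesis~(i) is used to bring $M$ into a very concrete form. The crucial first step is to show that $M$ is finite projective over $S[\![K]\!]$, where $S\coloneqq\OO[\![x_1,\dots,x_h]\!]$ acts on $M$ through $x_i\mapsto y_i$. Indeed $x_1,\dots,x_h$ is a central regular sequence in $S[\![K]\!]$ with $S[\![K]\!]/(x_1,\dots,x_h)=\OO[\![K]\!]$, the sequence $y_1,\dots,y_h$ is $M$-regular, and $M/(y_1,\dots,y_h)M$ is finite projective over $\OO[\![K]\!]$; so the local criterion of flatness shows that $M$ is flat, hence finite projective, over $S[\![K]\!]$. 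Using $1+2p\Zp\cong\Zp$ one identifies $S[\![K]\!]\cong\OO[\![K']\!]$, where $K'\coloneqq(1+2p\Zp)^h\times K$ is a compact open subgroup of $G'\coloneqq\Gm^h\times G$. I would record two consequences: first, $\Pi$ is an admissible unitary $L$-Banach representation of $K'$ and a direct summand of $\mathcal C(K',L)^{\oplus m}$ for some $m$, and by \cite{BHS} the space $\Pi^{\Rla}$ coincides topologically with the space $\Pi^{\la}$ of locally analytic vectors of $\Pi$ viewed as a $K'$-representation; second, since $\Theta\otimes_{\OO[\![K]\!]}\OO[\![K']\!]$ is finite free over $S$, the module $M(\Theta)=\Theta\otimes_{\OO[\![K]\!]}M$ is a direct summand of a finite free $S$-module, hence finite free over the regular local ring $S$.

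Injectivity of both maps is then essentially formal. Each summand $\Hom_K(V(\tau),\Pi^{\Rla})\otimes V(\tau)$ embeds into $\Pi^{\Rla}$ by \eqref{swapAan} and Lemma~\ref{swap}, and its image is a sum of copies of the irreducible $K$-representation $V(\tau)$; since $V_1\not\simeq V_2$ in $\Irr_G(L)$ forces $V_1(\tau)\not\simeq V_2(\tau)$ (restrict to a small open normal subgroup of $K$ on which $\sigma$ acts trivially, and use that $K$ is Zariski dense in $G$), these images lie in distinct $K$-isotypic subspaces, so \eqref{map412} is injective. For \eqref{map5} one runs the same argument, using additionally the inclusion $\bigoplus_{x\in\Sigma_{V(\tau)}}\Hom_K(V(\tau),\Pi[\mm_x]^{\la})\hookrightarrow\Hom_K(V(\tau),\Pi^{\Rla})$ established in the proof of Proposition~\ref{long_march}.

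To prove density of \eqref{map412}, I would apply Corollary~\ref{summand} to $(G',K')$ and the finite-dimensional $K'$-representation $\tau'\coloneqq\mathbf 1\boxtimes\tau$: its image $\bigoplus_{[V']\in\Irr_{G'}(L)}\Hom_{K'}(V'(\tau'),\Pi^{\la})\otimes V'(\tau')$ is dense in $\Pi^{\la}=\Pi^{\Rla}$. Every $V'\in\Irr_{G'}(L)$ is $\chi\boxtimes V$ with $\chi\in X^*(\Gm^h)$ and $V\in\Irr_G(L)$, and the restriction of $V'(\tau')$ to $K=\{1\}\times K\subset K'$ is isomorphic to $V(\tau)$; hence the image in $\Pi^{\Rla}$ of the $V'$-summand is, as a $K$-subrepresentation, a sum of copies of $V(\tau)$, and is therefore contained in the image of $\Hom_K(V(\tau),\Pi^{\Rla})\otimes V(\tau)$. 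Summing over $V'$ shows that the image of \eqref{map412} contains a dense subspace, hence is dense.

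Finally, for the density of \eqref{map5} under the extra hypothesis that $R_{V(\tau)}$ is reduced for all $V$, I would first use the second consequence above in a short commutative-algebra argument to see that $M(\Theta)$ is Cohen--Macaulay over $R_{V(\tau)}$: it is finite free over $S$ and faithful over $R_{V(\tau)}$, and $S\to R_{V(\tau)}$ is module-finite and local, which forces $\dim R_{V(\tau)}=\dim S=\operatorname{depth}_{R_{V(\tau)}}M(\Theta)=\dim_{R_{V(\tau)}}M(\Theta)$ (and $R_{V(\tau)}$ is $\OO$-torsion free, since it embeds in $\operatorname{End}_S(M(\Theta))$). Proposition~\ref{long_march} then gives, for each $V$, that the image of $\bigoplus_x\Hom_K(V(\tau),\Pi[\mm_x]^{\la})\otimes V(\tau)$ is dense in $\Hom_K(V(\tau),\Pi^{\Rla})\otimes V(\tau)$; summing over $V$, the closure of the image of \eqref{map5} contains $\sum_V\Hom_K(V(\tau),\Pi^{\Rla})\otimes V(\tau)$, which is dense in $\Pi^{\Rla}$ by the density of \eqref{map412}. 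The only genuinely new ingredient is this first step --- extracting finite projectivity of $M$ over $S[\![K]\!]\cong\OO[\![K']\!]$ from hypothesis~(i), and hence finite freeness of $M(\Theta)$ over $S$ --- together with keeping careful track of the topologies and of the $R$- versus $K'$-locally-analytic structures (the role of \cite{BHS}); the remainder is bookkeeping with $K$-isotypic components and the passage from $\Irr_{G'}(L)$ to $\Irr_G(L)$.
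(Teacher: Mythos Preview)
Your proposal is correct and follows the same strategy as the paper: establish that $M$ is finite projective over $S[\![K]\!]\cong\OO[\![K']\!]$, invoke Corollary~\ref{summand} for $(G',K')$ to get density of \eqref{map412}, and then feed the resulting $S$-freeness of $M(\Theta)$ into Proposition~\ref{long_march} for \eqref{map5}. The only difference is cosmetic: where you appeal to a (noncommutative) local flatness criterion for the projectivity of $M$ over $S[\![K]\!]$, the paper argues by hand, passing to a pro-$p$ Sylow $P\subset K$ (so that $S[\![P]\!]$ is genuinely local) and running an induction on $h$ via topological Nakayama; and where you deduce $S$-freeness of $M(\Theta)$ directly from the direct-summand description, the paper cites \cite[Lem.\,4.18\,1)]{6auth}.
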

\begin{proof}
  Since $\Hom_{K}(V(\tau), \Pi^{\Rla})\otimes V(\tau)$ is
  $V(\tau)$-isotypic to prove the injectivity of \eqref{map412} it is
  enough to do so for a single summand. That follows from Lemma
  \ref{swap} and injectivity of \eqref{swapAan}. Since the left
  hand-side of \eqref{map5} is a subspace of the left hand-side of
  \eqref{map412}, see Proposition \ref{long_march}, we deduce that
  \eqref{map5} is injective. We will now show that image of
  \eqref{map412} is dense by exhibiting a dense subspace.

  Let $S=\OO[\![x_1, \ldots, x_h]\!]$. By mapping $x_i$ to $y_i$ we
  obtain a ring homomorphism $S\rightarrow R$, which makes $M$ into a
  finitely generated $S[\![K]\!]$-module. We claim that $M$ is a
  projective as $S[\![K]\!]$-module. To prove the claim it is enough
  to show that $M$ is a free $S[\![P]\!]$-module, where $P$ is a
  pro-$p$ Sylow of $K$. By topological Nakayama's lemma we may choose
  a surjection $S[\![P]\!]^{\oplus m} \twoheadrightarrow M$, which
  becomes an isomorphism after applying the functor
  $k\otimes_{S[\![P]\!]}$, and let $\mathcal K$ denote the kernel. We
  argue by induction on $h$ that such map has to be an isomorphism. If
  $h=0$ then $S=\OO$ and $M$ is projective as $S[\![K]\!]$-module, and
  so the surjection has a section.  Hence,
  $k\otimes_{S[\![P]\!]} \mathcal K=0$ and topological Nakayma's lemma
  implies that $\mathcal K=0$. Let $h$ be arbitrary. Since $x_h$ is
  $M$-regular, we have an exact sequence of $S/(x_h)[\![P]\!]$-modules
  \[0 \rightarrow \mathcal K/x \mathcal K \rightarrow
    S/(x_h)[\![P]\!]^{\oplus m}\rightarrow M/ x_h M\rightarrow 0.\]
  Since $S/(x_h)\cong \OO[\![x_1, \ldots, x_{h-1}]\!]$ and the
  sequence $x_1, \ldots, x_{h-1}$ in $M/ x_h M$-regular, we deduce
  that $M/ x_h M$ is a free $S/(x_h)[\![P]\!]$-module and so the
  surjection has a splitting. By the same argument as above we deduce
  that $\mathcal K/x_h \mathcal K =0$. Topological Nakayama's lemma
  implies that $\mathcal K=0$.  This finishes the proof of the claim.

  The claim implies that $M$ is a direct summand of
  $S[\![K]\!]^{\oplus m}$ for some $m\ge 1$. By dualizing and
  identifying $S$ with the completed group algebra of $(1+2p \Zp)^h$,
  we may consider $\Pi$ as an admissible unitary Banach space
  representation of $K'\coloneqq (1+2p \Zp)^h \times K$, which is a
  direct summand $\mathcal C(K', L)^{\oplus m}$. The locally analytic
  vectors for the action of $K'$ on $\Pi$ are equal to $\Pi^{\Sla}$
  and since $M$ is finitely generated over $S[\![K]\!]$,
  \cite[Prop.\,3.8]{BHS} implies that $\Pi^{\Rla}= \Pi^{\Sla}$. Since
  $K'$ is a compact open subgroup of $G'\coloneqq \Gm^h \times G$,
  Corollary \ref{summand} implies that the evaluation map
  \begin{equation}\label{map6}
    \bigoplus_{[V']\in \Irr_{G'}(L)} \Hom_{K'}(V'(\tau), \Pi^{\Rla})\otimes V'(\tau) \rightarrow \Pi^{\Rla},
  \end{equation}
  is injective and its image is a dense subspace of
  $\Pi^{\Rla}$. Every $V'$ is of the form $\chi\boxtimes V$, where
  $V\in \Irr_G(L)$ and $\chi: \Gm^h \rightarrow \Gm$ is an algebraic
  representation.  Such $\chi$ induces a continuous group homomorphism
  $\chi: (1+2p \Zp)^s\rightarrow L^{\times}$ and hence a maximal ideal
  of $S[1/p]$, which we denote by $\mm'_{\chi}$.  We thus may rewrite
  \eqref{map6} as a $K$-equivariant map
  \begin{equation}\label{map7}
    \bigoplus_{[V]\in \Irr_{G}(L)} \bigoplus_{\chi\in \Irr_{ \Gm^h}(L)} \Hom_{K}(V(\tau), \Pi[\mm_{\chi}']^{\la})\otimes V(\tau)\rightarrow \Pi^{\Rla}.
  \end{equation}
  Since
  $ \Pi[\mm_{\chi}']^{\la}= \Pi[\mm_{\chi}']^{\Rla} \subset
  \Pi^{\Rla}$, the image of \eqref{map7} will be contained in the
  image of \eqref{map412}. Since the image of \eqref{map7} is dense,
  we conclude that the image of \eqref{map412} is dense.

  The proof of \cite[Lem.\,4.18 1)]{6auth} shows that $M(\Theta)$ is a
  free $S$-module of finite rank. Thus $y_1, \ldots, y_h$ is a regular
  sequence of parameters for $M(\Theta)$ and hence $M(\Theta)$ is
  Cohen--Macaulay as $R_{V(\tau)}$-module.  If $R_{V(\tau)}$ is
  reduced then Proposition \ref{long_march} implies that the closure
  of the image of
  \[\bigoplus_{x\in \Sigma_{V(\tau)}}\Hom_{K}(V(\tau), \Pi[
    \mm_x]^{\la})\otimes V(\tau)\rightarrow \Pi^{\Rla}\] is equal to
  $\Hom_K(V(\tau), \Pi^{\Rla})\otimes V(\tau)$. Hence the closure of
  the image of \eqref{map5} will contain the image of \eqref{map412}
  and hence is equal to $\Pi^{\Rla}$.
\end{proof} 

Let $Z(\mathfrak g)_L$ be the centre of the enveloping algebra of the
Lie algebra of $G_L$ and let
$\zeta: Z(\mathfrak g)_L \rightarrow R^{\rig}$ be a homomorphism of
$L$-algebras, where $R^{\rig}$ is the ring of global functions on the
rigid space $\Xrig$, associated to the formal scheme $\Spf R$, see
\cite[\S 7.1]{dejong}. The map $R[1/p]\rightarrow R^{\rig}$ induces a
bijection between $\mSpec R[1/p]$ and points of $\Xrig$ and
isomorphism on the completions of local rings by
\cite[Lem.\,7.1.9]{dejong}.  Thus we may specialise $\zeta$ at any
$x\in \mSpec R[1/p]$ to obtain a ring homomorphism:
\[\zeta_x: Z(\mathfrak g)_L \overset{\zeta}{\longrightarrow}
  R^{\rig}\rightarrow \kappa(x).\]

\begin{thm}\label{infinitesimal} Assume that the following hold:
  \begin{itemize}
  \item[(i)] there is an $M$-regular sequence $y_1, \ldots, y_h$ in
    $\mm$, such that $M/(y_1, \ldots, y_h)M$ is a finitely generated
    projective $\OO[\![K]\!]$-module;
  \item[(ii)] the rings $R_{V(\tau)}$ are reduced,
    $\forall V\in \Irr_G(L)$;
  \item[(iii)] the action of $Z(\mathfrak g)_L$ on $V(\tau)$ is given
    by $\zeta_x$, $\forall V\in \Irr_G(L)$,
    $\forall x\in \Sigma_{V(\tau)}$.
 \end{itemize}
 Then $Z(\mathfrak g)_L$ acts on $\Pi^{\Rla}$ via $\zeta$. In
 particular, for all $y\in \mSpec R[1/p]$, $Z(\mathfrak g)_L$ acts on
 $\Pi[\mm_y]^{\la}$ via the infinitesimal character $\zeta_y$.
\end{thm}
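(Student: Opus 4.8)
The overall strategy is the one already sketched in the introduction: one must show that for every $D\in Z(\mathfrak g)_L$ the element $1\otimes D - \zeta(D)\otimes 1$, viewed inside the ring acting on $\Pi^{\Rla}$ (concretely, $R^{\rig}\wtimes_L D(K,L)$ acting on $(\Pi^{\Rla})'$, or simply as the difference of two commuting continuous endomorphisms of $\Pi^{\Rla}$), annihilates all of $\Pi^{\Rla}$. Since this operator is continuous, it suffices to kill it on a dense subspace, and Theorem \ref{patched_density} hands us exactly the dense subspace we want: under hypotheses (i) and (ii) the image of the map \eqref{map5}, namely
\[
\bigoplus_{[V]\in \Irr_G(L)} \bigoplus_{x\in \Sigma_{V(\tau)}}\Hom_{K}(V(\tau), \Pi[ \mm_x]^{\la})\otimes V(\tau) \longrightarrow \Pi^{\Rla},
\]
is dense in $\Pi^{\Rla}$. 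So the plan is to verify that $1\otimes D - \zeta(D)\otimes 1$ kills each summand on the left-hand side.

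First I would fix $V\in\Irr_G(L)$ and $x\in\Sigma_{V(\tau)}$ and analyze a single summand $\Hom_K(V(\tau),\Pi[\mm_x]^{\la})\otimes V(\tau)$, which lives inside $\Pi[\mm_x]^{\la}\subset\Pi^{\Rla}$. On $\Pi[\mm_x]^{\la}$ the $R$-action, hence the $R^{\rig}$-action, factors through the specialization $R^{\rig}\to\kappa(x)$, so $\zeta(D)$ acts there by the scalar $\zeta_x(D)$. It remains to check that $D\in Z(\mathfrak g)_L$ itself acts on this summand by the same scalar $\zeta_x(D)$. This is where hypothesis (iii) enters. The subtlety is that (iii) asserts that $Z(\mathfrak g)_L$ acts on the \emph{algebraic} representation $V$ (equivalently $V(\tau)$, since $\mathfrak g$ acts trivially on the smooth-times-character part $\tau$) by the scalar determined by its infinitesimal character, and that this scalar equals $\zeta_x(D)$. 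Since $\Hom_K(V(\tau),\Pi[\mm_x]^{\la})\otimes V(\tau)$ is, as a $K$-representation and in particular as a $\mathfrak g$-module via differentiation, a direct sum of copies of $V(\tau)$, the operator $D$ acts on it precisely by the infinitesimal character of $V$, which by (iii) is $\zeta_x(D)$. Therefore $1\otimes D$ and $\zeta(D)\otimes 1$ both act by $\zeta_x(D)$ on this summand, and their difference is zero on it.

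Running this over all $V$ and all $x\in\Sigma_{V(\tau)}$ shows $1\otimes D - \zeta(D)\otimes 1$ vanishes on the image of \eqref{map5}; by density (Theorem \ref{patched_density}) and continuity it vanishes on all of $\Pi^{\Rla}$, i.e.\ $Z(\mathfrak g)_L$ acts on $\Pi^{\Rla}$ through $\zeta$. For the ``in particular'' clause: for $y\in\mSpec R[1/p]$ one has the inclusion $\Pi[\mm_y]^{\la}\hookrightarrow\Pi^{\Rla}$ (as recalled in the discussion preceding Theorem \ref{main1}, the natural map $\Pi^{\Rla}[\mm_y]\to\Pi[\mm_y]^{\la}$ is an isomorphism, and $\Pi[\mm_y]^{\la}=\Pi[\mm_y]^{\Rla}$ by admissibility and \cite[Prop.\,3.8]{BHS}), so $Z(\mathfrak g)_L$ acts on $\Pi[\mm_y]^{\la}$ through the composite $\zeta$ followed by specialization at $y$, which is by definition $\zeta_y$. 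The one point requiring a little care — and the main conceptual obstacle — is making rigorous the claim that ``$Z(\mathfrak g)_L$ acts on $\Pi^{\Rla}$ via $\zeta$'' means the right thing: one must identify the $R^{\rig}$-action on $\Pi^{\Rla}$ with the action coming from the $R^{\rig}\wtimes_L D(K,L)$-module structure on $(\Pi^{\Rla})'$, check that this action is by continuous operators commuting with $\mathfrak g$, and confirm that density in $\Pi^{\Rla}$ is enough to propagate the vanishing of the continuous operator $1\otimes D-\zeta(D)\otimes 1$; all of this is furnished by the structural results of \cite{BHS} quoted in the text, so no new work is needed beyond assembling them.
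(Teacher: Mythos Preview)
Your proposal is correct and follows essentially the same argument as the paper: invoke the continuous $R^{\rig}\wtimes_L D(K,L)$-action on $\Pi^{\Rla}$ from \cite{BHS}, use hypotheses (i) and (ii) with Theorem \ref{patched_density} to obtain the dense subspace \eqref{map5}, observe that hypothesis (iii) forces each element $\zeta(z)\otimes 1 - 1\otimes z$ to vanish on that subspace, and conclude by continuity; the ``in particular'' is then immediate from $\Pi[\mm_y]^{\la}=\Pi^{\Rla}[\mm_y]$. One tiny quibble: your parenthetical that $\mathfrak g$ acts trivially on $\tau=\sigma\otimes\eta$ is not quite right (the continuous character $\eta$ may have nonzero derivative), but this is harmless since hypothesis (iii) is stated directly for $V(\tau)$, which is what you actually use.
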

\begin{proof} It follows from \cite[Lem.\,3.3]{BHS} that we have a
  continuous $R^{\rig}\wtimes D(K,L)$ action on $\Pi^{\Rla}$ by
  continuous endomorphisms. This induces an action of
  $R^{\rig} \otimes Z(\mathfrak g)_L$, see \cite[Prop.\,3.7]{ST2}. Let
  $\mathfrak a$ be the ideal of $R^{\rig}\otimes Z(\mathfrak g)_L$
  generated by the elements of the form
  $\zeta(z) \otimes 1 - 1 \otimes z$ for $z\in Z(\mathfrak
  g)_L$. Assumption (iii) implies that such elements will
  kill
  \[\bigoplus_{[V]\in \Irr_G(L)} \bigoplus_{x\in
      \Sigma_{V(\tau)}}\Hom_{K}(V(\tau), \Pi[ \mm_x]^{\la})\otimes
    V(\tau).\] Since the assumptions (i) and (ii) imply via Theorem
  \ref{patched_density} that the subspace is dense we conclude that
  the action of $R^{\rig}\otimes Z(\mathfrak g)_L$ on $\Pi^{\Rla}$
  factors through the action of
  $(R^{\rig}\otimes Z(\mathfrak g)_L)/\mathfrak a\cong R^{\rig}$. Thus
  the action of $Z(\mathfrak g)$ on $\Pi^{\Rla}$ factors through
  $\zeta$. If $y\in \mSpec R[1/p]$ then the action of
  $(R^{\rig}\otimes Z(\mathfrak g)_L)/\mathfrak a$ on
  $\Pi[\mm_y]^{\la}= \Pi[\mm_y]^{\Rla}=\Pi^{\Rla}[\mm_y]$ factors
  through the specialisation of $\zeta$ at $y$.
\end{proof}

We want to prove a variant of Theorem \ref{infinitesimal} with a fixed
central character.  Let $Z$ be the centre of $G$ and let
$\psi: K\cap Z(\Qp)\rightarrow \OO^{\times}$ be a continuous group
homomorphism. Let $\Mod^{\pro}_{K, \psi}(\OO)$ be the category of
linearly compact $\OO[\![K]\!]$-modules on which $K\cap Z(\Qp)$ acts
by $\psi^{-1}$.  We assume that the central character of $\tau$ is
equal to $\psi$.
\begin{thm}\label{infinitesimal_central} Assume that the following
  hold:
  \begin{itemize}
  \item[(o)] $M$ is in $\Mod^{\pro}_{K, \psi}(\OO)$;
  \item[(i)] there is an $M$-regular sequence $y_1, \ldots, y_h$ in
    $\mm$, such that $M/(y_1, \ldots, y_h)M$ is a finitely generated
    $\OO[\![K]\!]$-module, which is projective in
    $\Mod^{\pro}_{K, \psi}(\OO)$;
  \item[(ii)] the rings $R_{V(\tau)}$ are reduced,
    $\forall V\in \Irr_{G/Z}(L)$;
  \item[(iii)] the action of $Z(\mathfrak g)_L$ on $V(\tau)$ is given
    by $\zeta_x$, $\forall V\in \Irr_{G/Z}(L)$,
    $\forall x\in \Sigma_{V(\tau)}$.
  \end{itemize}
  Then $Z(\mathfrak g)_L$ acts on $\Pi^{\Rla}$ via $\zeta$. In
  particular, for all $y\in \mSpec R[1/p]$, $Z(\mathfrak g)_L$ acts on
  $\Pi[\mm_y]^{\la}$ via the infinitesimal character $\zeta_y$.
\end{thm}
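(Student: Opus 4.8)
The plan is to mimic the proof of Theorem~\ref{infinitesimal} almost verbatim, replacing the density statement coming from Theorem~\ref{patched_density} by its central-character analogue, which in turn rests on Corollary~\ref{summand_central} rather than Corollary~\ref{summand}. First I would observe that hypothesis (o) places $M$ in $\Mod^{\pro}_{K,\psi}(\OO)$, so that the dual $\Pi=\Hom_{\OO}^{\cont}(M,L)$ is a unitary $L$-Banach space representation of $K$ on which $K\cap Z(\Qp)$ acts by $\psi$, and $\Pi^{\Rla}$ still carries the continuous $R^{\rig}\wtimes D(K,L)$-action from \cite[Lem.\,3.3]{BHS}, hence an action of $R^{\rig}\otimes Z(\mathfrak g)_L$ by \cite[Prop.\,3.7]{ST2}. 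As in the proof of Theorem~\ref{infinitesimal}, it suffices to show that the ideal $\mathfrak a\subset R^{\rig}\otimes Z(\mathfrak g)_L$ generated by the elements $\zeta(z)\otimes 1-1\otimes z$, $z\in Z(\mathfrak g)_L$, kills $\Pi^{\Rla}$; by continuity it is enough to kill a dense subspace, and hypothesis (iii) guarantees that $\mathfrak a$ annihilates
\[
\bigoplus_{[V]\in \Irr_{G/Z}(L)}\ \bigoplus_{x\in \Sigma_{V(\tau)}}\Hom_{K}(V(\tau),\Pi[\mm_x]^{\la})\otimes V(\tau),
\]
so the whole point is the density of this subspace in $\Pi^{\Rla}$.

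The main work is therefore a central-character version of Theorem~\ref{patched_density}. The argument should run parallel to the original: using hypothesis (i), one forms $S=\OO[\![x_1,\dots,x_h]\!]$, sends $x_i$ to $y_i$, and shows by the same induction on $h$ (now carried out inside $\Mod^{\pro}_{K,\psi}(\OO)$, using that $M/(y_1,\dots,y_h)M$ is projective there and that $x_h$ is $M$-regular) that $M$ is projective over $S[\![K]\!]$ in the category $\Mod^{\pro}_{K,\psi}(\OO)$, hence a direct summand of $\mathcal C_\psi(K',L)^{\oplus m}$ where $K'=(1+2p\Zp)^h\times K$. Here the key change is that one works throughout with the fixed-central-character variants $\mathcal C_\psi$ of the space of continuous functions, so that at the end one invokes Corollary~\ref{summand_central} in place of Corollary~\ref{summand}: its evaluation map, indexed by $\Irr_{G/Z}(L)$, has dense image in $\Pi^{\Rla}=\Pi^{\Sla}$ (the equality $\Pi^{\Rla}=\Pi^{\Sla}$ again from \cite[Prop.\,3.8]{BHS}, since $M$ is finitely generated over $S[\![K]\!]$). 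Reindexing by algebraic characters $\chi$ of $\Gm^h$ exactly as before rewrites this as density of $\bigoplus_{[V],\chi}\Hom_K(V(\tau),\Pi[\mm'_\chi]^{\la})\otimes V(\tau)$ in $\Pi^{\Rla}$, which shows density of the image of the $\Irr_{G/Z}(L)$-indexed evaluation map into $\Pi^{\Rla}$.

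Finally, for the passage from the $K$-algebraic subspace to the sum over $\Sigma_{V(\tau)}$, one needs hypothesis (ii): the proof of \cite[Lem.\,4.18 1)]{6auth} (or its analogue in the present setting) shows $M(\Theta)$ is finite free over $S$, hence Cohen--Macaulay over $R_{V(\tau)}$, so Proposition~\ref{long_march} applies and gives that $\bigoplus_{x\in\Sigma_{V(\tau)}}\Hom_K(V(\tau),\Pi[\mm_x]^{\la})\otimes V(\tau)$ is dense in $\Hom_K(V(\tau),\Pi^{\Rla})\otimes V(\tau)$. Combining the two density statements yields density of the displayed subspace in $\Pi^{\Rla}$, which concludes the proof of the theorem, and the specialisation at $y\in\mSpec R[1/p]$ follows since $\Pi[\mm_y]^{\la}=\Pi[\mm_y]^{\Rla}=\Pi^{\Rla}[\mm_y]$ by \cite[Prop.\,3.8]{BHS}. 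The step I expect to be the main obstacle is checking that the inductive freeness argument goes through verbatim inside the category $\Mod^{\pro}_{K,\psi}(\OO)$ — one must verify that topological Nakayama, projective covers, and the behaviour of $\mathcal C_\psi$ under the projection formula and tensoring by $\tau^*$ all interact correctly with the fixed central character, which is exactly the content encapsulated in Corollary~\ref{summand_central}; once that is in hand, everything else is formal.
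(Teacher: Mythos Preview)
Your proposal is correct and follows exactly the approach of the paper: the paper's proof simply states that one proves the analog of Theorem~\ref{patched_density} using Corollary~\ref{summand_central} in place of Corollary~\ref{summand}, and then the proof of Theorem~\ref{infinitesimal} carries over verbatim. You have accurately unpacked what this entails, including the need to run the inductive freeness argument inside $\Mod^{\pro}_{K,\psi}(\OO)$ and the role of hypotheses (ii) and (iii) via Proposition~\ref{long_march}.
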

\begin{proof} One proves the analog of Theorem \ref{patched_density}
  using Corollary \ref{summand_central} instead of Corollary
  \ref{summand}. Then the proof is the same as the proof of Theorem
  \ref{infinitesimal}.
\end{proof}

\section{Global applications}\label{sec_global}
The abstract Theorem \ref{infinitesimal} is motivated by the study of
Hecke eigenspaces in the completed cohomology. We will show that if we
are in the situation where the Hecke eigenvalues contribute only in
one cohomological degree and we can attach Galois representations to
the Hecke eigenspaces in a way to be made precise below, then the
locally analytic vectors in the Hecke eigenspaces in the completed
cohomology afford an infinitesimal character.

\subsection{Locally symmetric spaces}
We are motivated by Emerton's ICM talk \cite{emerton_icm} and his
paper \cite{interpolate}.  Let $G$ be a connected reductive group over
$\QQ$, let $Z$ be its centre, let $A$ denote the maximal split torus
in $Z$. Let $G_p=G(\Qp)$, let $G_{\infty}=G(\RR)$, let
$A_{\infty}=A(\RR)$, let $Z_{\infty}=Z(\mathbb R)$ be the centre of
$G_{\infty}$ and let $K_{\infty}$ denote a choice of maximal compact
subgroup of $G_{\infty}$. For any Lie group $H$, we let $H^{\circ}$
denote the subgroup consisting of the connected component at the
identity.

The quotient $G_{\infty}/ Z_{\infty}^{\circ} K_{\infty}^{\circ}$ is a
symmetric space on which $G_{\infty}$ acts. We denote its dimension by
$d$.
  
Let $\mathbb A$ denote the ring of adeles over $\QQ$, $\mathbb A_f$
the ring of finite adeles, and $\mathbb A^{p}_f$ the ring of
prime-to-$p$ adeles. \begin{defi} A compact open subgroup
  $K_f \subset G(\mathbb A_f)$ is sufficiently small if $G(\QQ)$ acts
  on $G(\mathbb A)/ A_{\infty}^{\circ} K_{\infty}^{\circ} K_f$ with
  trivial stabilisers.
\end{defi}

\begin{lem}\label{crit_suffsm} If $K_f=\prod_{\ell} K_{f, \ell}$ with $K_{f, \ell} \subset G(\mathbb{Q}_{\ell})$ compact open and 
  $K_{f, \ell}$ is torsion free for some $\ell$ then $K_f$ is
  sufficiently small.
 \end{lem}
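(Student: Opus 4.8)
Fix $g\in G(\mathbb A)$ and write $g=(g_\infty,g_f)$, $g_f=(g_{\ell'})_{\ell'}$. Since $A_\infty^\circ\subset Z_\infty$ is central, $H:=A_\infty^\circ K_\infty^\circ K_f$ is a subgroup of $G(\mathbb A)$, and the stabiliser of the point $gH$ is the group $\Sigma:=\{\gamma\in G(\QQ): g^{-1}\gamma g\in H\}$; in particular $\gamma\in\Sigma$ forces $\gamma^n\in\Sigma$ for all $n$. The plan is first to reduce the claim to: \emph{every $\gamma\in\Sigma$ has finite order}. Granting this, if $\ell$ is a prime with $K_{f,\ell}$ torsion free, then $g_\ell^{-1}\gamma g_\ell$ is a torsion element of $K_{f,\ell}$, hence trivial, so $\gamma=e$ and $\Sigma=\{e\}$.

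To show $\gamma\in\Sigma$ has finite order I would fix a faithful representation $\rho\colon G\hookrightarrow\GL(W)$ over $\QQ$ and control the eigenvalues of $\rho(\gamma)$. At each finite place $\ell'$, $g_{\ell'}^{-1}\gamma g_{\ell'}$ lies in the compact group $K_{f,\ell'}$, hence via $\rho$ stabilises a $\ZZ_{\ell'}$-lattice in $W\otimes\QQ_{\ell'}$; since the characteristic polynomial of $\rho(\gamma)$ is conjugation invariant and has coefficients in $\QQ$, it therefore lies in $\ZZ_{\ell'}[t]$ for every $\ell'$, hence in $\ZZ[t]$, so the eigenvalues of $\rho(\gamma)$ are algebraic integers. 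At the archimedean place write $g_\infty^{-1}\gamma g_\infty=a_\gamma k_\gamma$ with $a_\gamma\in A_\infty^\circ$ (central, connected) and $k_\gamma\in K_\infty^\circ$. For any $\QQ$-rational character $\chi$ of $G$ one has $\chi(\gamma)=\chi(g_\infty^{-1}\gamma g_\infty)=\chi(a_\gamma)\chi(k_\gamma)$ with $\chi(a_\gamma)\in\RR_{>0}$ and $|\chi(k_\gamma)|_\infty=1$, while at each finite $\ell'$, $\chi(\gamma)=\chi(g_{\ell'}^{-1}\gamma g_{\ell'})\in\chi(K_{f,\ell'})$, a compact subgroup of $\QQ_{\ell'}^\times$, hence contained in $\ZZ_{\ell'}^\times$; the product formula then gives $|\chi(\gamma)|_\infty=1$, so $\chi(a_\gamma)=1$ for all $\chi\in X^*_\QQ(G)$.

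Next I would invoke the structure theory: $G=Z(G)^\circ\cdot G^{\mathrm{der}}$ with $Z(G)^\circ\cap G^{\mathrm{der}}$ finite, so $X^*_\QQ(G)\to X^*_\QQ(Z(G)^\circ)$ has finite cokernel; since $A$ is by definition the maximal $\QQ$-split subtorus of $Z(G)^\circ$, the resulting restriction $X^*_\QQ(G)\to X^*(A)$ also has finite cokernel. Hence for each $\mu\in X^*(A)$ some power $\mu^N$ extends to a $\QQ$-rational character of $G$, giving $\mu(a_\gamma)^N=1$, so $\mu(a_\gamma)=1$; as $X^*(A)$ separates points of $A_\infty^\circ\cong(\RR_{>0})^{\dim A}$, we conclude $a_\gamma=e$. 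Thus $g_\infty^{-1}\gamma g_\infty=k_\gamma$ lies in the compact group $K_\infty^\circ$, so $\rho(\gamma)$ lies in a compact subgroup of $\GL(W\otimes\RR)$: it is semisimple and all its complex eigenvalues have absolute value $1$. Combined with the integrality from the finite places, the eigenvalues of $\rho(\gamma)$ are algebraic integers all of whose conjugates have absolute value $1$, hence roots of unity by Kronecker's theorem; a semisimple element with eigenvalues roots of unity has finite order, so $\gamma^N=e$ for suitable $N$, which is the reduction claimed. The step I expect to be the main obstacle is the archimedean bookkeeping of the split central torus $A$, i.e.\ the passage $\chi(a_\gamma)=1\ \forall\chi\Rightarrow a_\gamma=e$: for a group with no central $\QQ$-split torus it is vacuous, but in general it requires combining the product formula with the structure-theoretic surjectivity (up to finite index) of character lattices recalled above; everything else is routine.
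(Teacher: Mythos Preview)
Your argument is correct, but it takes a substantially more elaborate route than the paper. The paper's proof is a two-line affair: in the case where $Z$ is $\QQ$-anisotropic (so $A_\infty^\circ$ is trivial), the stabiliser $\Sigma=G(\QQ)\cap gK_\infty^\circ K_f g^{-1}$ is the intersection of a discrete subgroup of $G(\mathbb A)$ with a compact one, hence finite; projecting injectively to $G(\QQ_\ell)$ lands $\Sigma$ inside the torsion-free group $g_\ell K_{f,\ell}g_\ell^{-1}$, forcing $\Sigma=\{e\}$. For general $G$ one reduces to this case by passing to $G/A$ (whose centre is anisotropic) to control $\Sigma$ modulo $A(\QQ)$, and then observes that $A(\QQ)\cap K_\infty^\circ A_\infty^\circ K_f$ is finite by the same compact--discrete trick.

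Your approach trades this global input (discreteness of $G(\QQ)$ in $G(\mathbb A)$) for a direct eigenvalue analysis: integrality from the finite places, absolute value one at infinity after eliminating the $A_\infty^\circ$-contribution via the product formula and the finite-cokernel statement for $X^*_\QQ(G)\to X^*(A)$, and then Kronecker's theorem. This is self-contained and avoids the passage to $G/A$, but the archimedean bookkeeping you flag as ``the main obstacle'' is precisely what the paper sidesteps by quotienting out $A$ at the level of groups rather than characters. Both arguments end at the same place (finite order plus torsion-free implies trivial); the paper's is shorter because ``discrete $\cap$ compact is finite'' does all the work at once.
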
 
 \begin{proof} Let us first assume that $Z$ is
   $\mathbb Q$-anisotropic, so that $A_{\infty}^{\circ}$ is
   trivial. The stabiliser of $g K_{\infty}^{\circ} K_f$ is equal to
   $ G(\mathbb Q) \cap g K_{\infty}^{\circ} K_f g^{-1}$ and is both
   compact and discrete, hence finite. By considering the projection
   onto $g K_{f, \ell} g^{-1}$, we deduce it is trivial. Let us return
   to the general case. Since the centre of $G/A$ is anisotropic, we
   deduce that the $G(\mathbb Q)$-stabilizer of
   $g K_{\infty}^{\circ} K_f A_{\infty}^{\circ}$ is contained in
   $A(\mathbb A)$ and hence in $A(\QQ)$.  Since
   $A(\mathbb Q)\cap g K_{\infty}^{\circ}A_{\infty}^{\circ} K_f
   g^{-1}$ is finite we conclude by the same argument.
 \end{proof}   

 \begin{remar}\label{easy_suffsm} An easy way to ensure that the
   condition in Lemma \ref{crit_suffsm} holds both for $K_f$ and its
   image in $G(\mathbb A_f)/ Z(\mathbb A_f)$ is to embedd
   $G \subset \GL_n$ by choosing a faithful representation over $\QQ$.
   If $\ell > n+1$ then the pro-$\ell$ Sylow in
   $\GL_n(\mathbb Q_{\ell})$ is $\ell$-saturable by \cite[III
   (3.2.7)]{Lazard} and hence torsion free. If we assume that
   $K_{f, \ell}$ is pro-$\ell$, for example a pro-$\ell$ Iwahori
   subgroup if $G$ is split over $\mathbb Q_{\ell}$, then it will be
   torsion free. To make sure that the assertion also holds for the
   image of $K_f$ in $G(\mathbb A_f)/ Z(\mathbb A_f)$, pick an
   embedding $G/ Z\hookrightarrow \GL_{n'}$ and repeat the argument.
   Thus if $\ell > \max(n, n') +1$ and $K_{f, \ell}$ is pro-$\ell$
   then both conditions are satisfied.
\end{remar}

Let us assume that $K_f$ satisfies the condition of Lemma
\ref{crit_suffsm} and its image in $(G/ Z)(\mathbb A_f)$ is
sufficiently small with respect to $G/Z$ and let
\[\Lambda=\Lambda(K_f)\coloneqq Z(\QQ)\cap K_f K_{\infty}^{\circ}
  Z_{\infty}^{\circ}.\]

\begin{lem}\label{control_Gamma} $\Lambda$ is a discrete cocompact
  subgroup of $Z_{\infty}/ A_{\infty}$.  In particular,
  $\Lambda\cong \mathbb Z^r$, where $r$ is the split $\RR$-rank of
  $Z_{\infty}/ A_{\infty}$.
\end{lem}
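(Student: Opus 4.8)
The plan is to show that $\Lambda=Z(\QQ)\cap K_fK_\infty^\circ Z_\infty^\circ$ maps to a discrete cocompact subgroup of $Z_\infty/A_\infty$, and then to invoke the structure theory of finitely generated abelian groups together with the hypothesis that $K_f$ is sufficiently small with respect to $G/Z$.

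First I would recall that $Z_\infty/A_\infty$ has a unique maximal compact subgroup (modulo identity component this is the split rank statement), and that $Z_\infty/Z_\infty^\circ A_\infty$ is finite; concretely $Z_\infty/A_\infty\cong (\RR^\times)^{?}\times(\text{compact})$, so that $Z_\infty/A_\infty^\circ$ is, up to a finite group and a connected compact factor, a group $\cong \RR^r$ with $r$ the split $\RR$-rank of $Z_\infty/A_\infty$. The point of the sufficiently-small hypothesis is that $\Lambda$ is the $G(\QQ)$-stabiliser of the point $eK_\infty^\circ K_f A_\infty^\circ$ (intersected with $Z(\QQ)$), and by the argument in Lemma \ref{crit_suffsm} this stabiliser, for the group $G/Z$, is trivial — which here translates into: the image of $\Lambda$ in $(Z/A)(\mathbb A_f)$ meets the compact open image of $K_f$, and the image of $\Lambda$ in $(Z_\infty/A_\infty)$ meets $K_\infty^\circ Z_\infty^\circ/A_\infty$ (a group with compact image in $Z_\infty/A_\infty^\circ$), and these two together with $Z(\QQ)$ discrete force discreteness.

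The key steps, in order: (1) identify $\Lambda$ as the intersection inside $Z(\QQ)$ of a compact open subgroup of $Z(\mathbb A_f)$ (namely the image of $K_f$) with the preimage of a relatively compact subset of $Z_\infty/A_\infty$ (namely $K_\infty^\circ Z_\infty^\circ/A_\infty$); (2) use that $Z(\QQ)$ is discrete in $Z(\mathbb A)/A(\mathbb A)$ — or rather, use the standard fact that $Z(\QQ)A(\mathbb A)\backslash Z(\mathbb A)$ has finite volume and that an arithmetic subgroup of $Z$ is a lattice in $Z(\RR)/A(\RR)$ — to conclude $\Lambda$ is discrete in $Z_\infty/A_\infty$; (3) for cocompactness, observe that $\Lambda$ is commensurable with the full unit group $Z(\mathcal O_S)$ for a suitable finite set $S$ of primes (those where $K_f$ is not maximal), and by Dirichlet's unit theorem / the theory of $S$-arithmetic groups this maps onto a cocompact lattice in $Z_\infty/A_\infty$ — more precisely, $Z/A$ has anisotropic centre so its $S$-units form a cocompact lattice in the product of the archimedean and $\ell\in S$ components, and projecting to the archimedean part (the $\ell$-adic parts being compact up to the lattice) gives cocompactness in $Z_\infty/A_\infty$; (4) conclude $\Lambda$ is a discrete cocompact subgroup of $Z_\infty/A_\infty$, hence $\Lambda\cong\ZZ^r$ with $r$ equal to the rank of the maximal $\RR$-split subtorus of $Z_\infty/A_\infty$, i.e. $\dim_\RR(Z_\infty/A_\infty)$ minus the dimension of its maximal compact subgroup.

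The main obstacle I anticipate is step (3): one must be careful that the $\ell$-adic components of the $S$-unit lattice are genuinely bounded once cut down by $K_f$, so that cocompactness really descends to the purely archimedean quotient $Z_\infty/A_\infty$ rather than to the full $S$-adelic quotient. This is where the precise choice of $K_f$ (and the reference to the discussion preceding the lemma) matters; concretely one shows $Z(\QQ)\backslash\bigl(Z_\infty/A_\infty\times \prod_{\ell\in S}(Z(\QQ_\ell)/\text{(image of }K_f))\bigr)$ is compact by $S$-arithmeticity and anisotropy of $Z/A$, and then the displayed quotient by the (compact) $\ell$-adic factors is still compact, giving the claim. The identification $\Lambda\cong\ZZ^r$ is then immediate since a discrete cocompact subgroup of $\RR^r\times(\text{compact connected})\times(\text{finite})$ is free abelian of rank $r$.
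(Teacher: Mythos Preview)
Your approach will work but is considerably more elaborate than the paper's, and it has two genuine gaps.

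First, you never cleanly establish that the map $\Lambda \to Z_\infty/A_\infty$ is \emph{injective}; you argue only that the image is discrete. Your attempt to invoke sufficiently small is garbled: you write that ``$\Lambda$ is the $G(\QQ)$-stabiliser of the point $eK_\infty^\circ K_f A_\infty^\circ$ (intersected with $Z(\QQ)$)'', but that stabiliser is $Z(\QQ)\cap K_f K_\infty^\circ A_\infty^\circ$, which is \emph{trivial}, whereas $\Lambda = Z(\QQ)\cap K_f K_\infty^\circ Z_\infty^\circ$ is typically infinite. The paper's move is exactly this distinction: the kernel of $\Lambda \to Z_\infty/A_\infty$ lands in $Z(\QQ)\cap K_f K_\infty^\circ A_\infty^\circ$, hence vanishes by sufficiently small. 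That is the correct use of the hypothesis.

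Second, your concluding assertion that ``a discrete cocompact subgroup of $\RR^r\times(\text{compact connected})\times(\text{finite})$ is free abelian of rank $r$'' is false as stated: $\ZZ\times\{\pm1\}$ is discrete cocompact in $\RR\times\{\pm1\}$. You must use that $\Lambda$ is torsion-free, which the paper obtains from the standing hypothesis that some $K_{f,\ell}$ is torsion-free (Lemma~\ref{crit_suffsm}).

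Once those two points are repaired your $S$-arithmetic/Dirichlet argument for cocompactness is correct, but it is a reproof of a standard fact the paper simply cites: since $A$ is the maximal $\QQ$-split torus in $Z$, the torus $Z/A$ is $\QQ$-anisotropic, and then Platonov--Rapinchuk, Thm.~4.11, gives that any arithmetic subgroup of $Z/A$ is a cocompact lattice in $(Z/A)(\RR)=Z_\infty/A_\infty$. Your step~(3) and the ``main obstacle'' you anticipate about bounding the $\ell$-adic components are precisely the content of that theorem, so you can replace all of it by the citation.
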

\begin{proof} By assumption on $K_f$, $\Lambda$ is torsion free and
  $Z(\QQ)\cap K_f K_{\infty}^{\circ} A_{\infty}^{\circ}$ is
  trivial. Thus the map induced by the projection to the
  $\infty$-component induces an injection
  $\Lambda \hookrightarrow Z_{\infty}/ A_{\infty}$. Since the torus
  $Z/A$ is $\QQ$-anisotropic, it follows from
  \cite[Thm.\,4.11]{platonov_rapinchuk} that the quotient is compact.
\end{proof} 

\begin{remar} Emerton in \cite[\S 2.4]{interpolate} assumes that
  $r=0$. We don't do this because we would like to cover the case of
  Shimura curves in the applications.
\end{remar}

\begin{lem}\label{quotient_gamma} The action of $G(\QQ)$ on $G(\mathbb A)/ Z_{\infty}^{\circ} K_{\infty}^{\circ} K_f$ factors through the action of 
  $G(\QQ)/\Lambda$, which acts with trivial stabilisers.
\end{lem}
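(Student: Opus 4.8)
\textbf{Proof plan for Lemma \ref{quotient_gamma}.}

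The plan is to unwind the definitions and check the two assertions separately. First I would show that the $G(\QQ)$-action factors through $G(\QQ)/\Lambda$. An element $\gamma \in G(\QQ)$ acts trivially on $G(\mathbb A)/Z_{\infty}^{\circ} K_{\infty}^{\circ} K_f$ if and only if for every $g \in G(\mathbb A)$ we have $\gamma g \in g Z_{\infty}^{\circ} K_{\infty}^{\circ} K_f$, equivalently $\gamma \in g Z_{\infty}^{\circ} K_{\infty}^{\circ} K_f g^{-1}$ for all $g$. Taking $g = 1$ gives $\gamma \in Z_{\infty}^{\circ} K_{\infty}^{\circ} K_f \cap G(\QQ)$. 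Here one uses that $G(\QQ)$ sits diagonally, and that the $\infty$-component of $\gamma$ must lie in $Z_{\infty}^{\circ} K_{\infty}^{\circ}$ while its finite part lies in $K_f$; since $Z_{\infty}^{\circ} \subset Z(\RR)$ one deduces (as in the proof of Lemma \ref{control_Gamma}, using that $Z/A$ is anisotropic so the $G(\QQ)$-stabiliser of a class modulo $A_{\infty}^{\circ}K_{\infty}^{\circ}K_f$ lands in $Z(\QQ)$) that $\gamma \in Z(\QQ) \cap K_f K_{\infty}^{\circ} Z_{\infty}^{\circ} = \Lambda$. Conversely, any $\gamma \in \Lambda$ is by definition of the form $k_f k_\infty z_\infty$ with $k_f \in K_f$, $k_\infty \in K_{\infty}^{\circ}$, $z_\infty \in Z_{\infty}^{\circ}$; since $z_\infty$ is central it commutes with any $g$, and I would check that this forces $\gamma g \in g Z_{\infty}^{\circ}K_{\infty}^{\circ}K_f$ for all $g \in G(\mathbb A)$ — the only slightly delicate point being that the conjugate $g^{-1} z_\infty g$ still lies in $Z_\infty^\circ$, which is immediate since $z_\infty$ is central. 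Hence $\Lambda$ acts trivially and the action descends to $G(\QQ)/\Lambda$.

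Next I would show $G(\QQ)/\Lambda$ acts with trivial stabilisers. Fix $g \in G(\mathbb A)$ and suppose $\gamma \in G(\QQ)$ stabilises the class of $g$, i.e. $\gamma \in g Z_{\infty}^{\circ} K_{\infty}^{\circ} K_f g^{-1}$. Using again that the centre of $G/A$ is $\QQ$-anisotropic, the image of $\gamma$ in $(G/A)(\QQ)$ lies in the stabiliser of the corresponding class in $(G/A)(\mathbb A)/K_{\infty}^{\circ}K_f$-type quotient, hence — by the hypothesis that the image of $K_f$ in $(G/Z)(\mathbb A_f)$ is sufficiently small, combined with the anisotropy argument of Lemma \ref{crit_suffsm} — we get that $\gamma$ maps into $A(\mathbb A) \cap (\text{compact})$, forcing $\gamma \in A(\QQ)$ times something, and ultimately $\gamma \in Z(\QQ) \cap g Z_{\infty}^{\circ}K_{\infty}^{\circ}K_f g^{-1}$. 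Since $Z_\infty^\circ$, $K_\infty^\circ$ and $K_f$ are all fixed under conjugation by $g$ up to the relevant identifications (the finite and archimedean places being treated separately, and $K_f$ being conjugated to $g_f K_f g_f^{-1}$ which is again sufficiently small by Lemma \ref{crit_suffsm} applied to the conjugate level), one concludes $\gamma \in Z(\QQ) \cap K_\infty^\circ Z_\infty^\circ g_f K_f g_f^{-1}$; tracing through the definition of $\Lambda$ for the conjugated level and using that $\Lambda$ is the same abstract group (it injects into $Z_\infty/A_\infty$ independently of the $G(\mathbb A_f)$-conjugate chosen, by Lemma \ref{control_Gamma}), this means the image of $\gamma$ in $G(\QQ)/\Lambda$ is trivial.

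The main obstacle I anticipate is the bookkeeping needed to pass correctly between the stabiliser of a class of a general $g$ and the stabiliser of the base point: conjugating the level group $K_f$ by the finite part $g_f$ changes it, and one must verify that the conjugate $g_f K_f g_f^{-1}$ still satisfies the sufficient-smallness hypotheses (which it does, since conjugation preserves the property in Lemma \ref{crit_suffsm} — torsion-freeness of a local factor is conjugation-invariant) and that $\Lambda$ for this conjugated level is canonically the same subgroup of $Z_\infty/A_\infty$. Once this is set up, both assertions reduce to the anisotropy of $Z/A$ together with Lemmas \ref{crit_suffsm} and \ref{control_Gamma}, exactly as in the proofs already given there, so no genuinely new input is required.
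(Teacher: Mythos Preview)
Your proposal is correct and uses the same underlying ingredients as the paper (sufficient smallness of the image of $K_f$ in $(G/Z)(\mathbb A_f)$, together with the definition of $\Lambda$), but it takes an unnecessarily roundabout path and the ``main obstacle'' you anticipate is self-inflicted.

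The paper's argument is much shorter. For the first part it simply says ``clear'': since $\Lambda \subset Z(\QQ)$ is central and $\Lambda \subset Z_\infty^\circ K_\infty^\circ K_f$, every $\gamma \in \Lambda$ satisfies $\gamma g = g\gamma \in g Z_\infty^\circ K_\infty^\circ K_f$. For the second part, the paper passes directly to the larger quotient $G(\mathbb A)/Z(\mathbb A)K_\infty^\circ K_f$: if $\gamma \in G(\QQ)$ fixes the class of $g$ modulo $Z_\infty^\circ K_\infty^\circ K_f$, then its image in $G(\QQ)/Z(\QQ)$ fixes the class of $g$ modulo $Z(\mathbb A)K_\infty^\circ K_f$, and sufficient smallness for $G/Z$ forces $\gamma \in Z(\QQ)$. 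Now comes the step you overcomplicate: once $\gamma \in Z(\QQ)$, it is \emph{central in $G(\mathbb A)$}, so $g^{-1}\gamma g = \gamma$ and the condition $\gamma \in g Z_\infty^\circ K_\infty^\circ K_f g^{-1}$ is literally $\gamma \in Z_\infty^\circ K_\infty^\circ K_f$. Hence $\gamma \in \Lambda$ and the conjugation by $g$ simply evaporates --- there is no need to pass to the conjugated level $g_f K_f g_f^{-1}$, to check that it is again sufficiently small, or to argue that its $\Lambda$ coincides with the original one. (Your conjugate-level argument is not wrong, since for central $\gamma$ one has $g_f K_f g_f^{-1} \cap Z(\mathbb A_f) = K_f \cap Z(\mathbb A_f)$; it is just superfluous.) Similarly, in your first part the decomposition $\gamma = k_f k_\infty z_\infty$ and the worry about $g^{-1}z_\infty g$ are unnecessary: the whole of $\gamma$, not just its $z_\infty$-component, is central.
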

\begin{proof} The first part is clear. Since the image of $K_f$ in
  $G(\mathbb A_f)/Z(\mathbb A_f)$ is sufficiently small,
  $G(\QQ)/ Z(\QQ)$ acts with trivial stabilisers on
  $G(\mathbb A)/ Z(\mathbb A)K_{\infty}^{\circ} K_f$. The rest of the
  proof is an exercise for the reader.
\end{proof}

If $K_f$ is a compact open subgroup of $G(\mathbb A)$, we write
\[ Y(K_f)\coloneqq G(\QQ)\backslash G(\mathbb A)/ A_{\infty}^{\circ}
  K_{\infty}^{\circ} K_f, \quad \Ytilde(K_f)\coloneqq G(\QQ)\backslash
  G(\mathbb A)/ Z_{\infty}^{\circ} K_{\infty}^{\circ} K_f.\] It
follows from Lemma \ref{quotient_gamma} that the map
$q:Y(K_f)\rightarrow \Ytilde(K_f)$ makes $Y(K_f)$ into a torus bundle
for the compact torus
\[\mathcal T\coloneqq (Z_{\infty}^{\circ}/ A_{\infty}^{\circ} (Z_{\infty}^{\circ}\cap K_{\infty}^{\circ}))/\Lambda\cong \RR^r_{>0}/\Lambda\cong \RR^r/\ZZ^r.\]
Let $G^1_{\infty}$ be the intersection of $\Ker \chi^2$ for all
$\chi: G_{\RR} \rightarrow \mathbb G_{m, \RR}$. We then have
\begin{equation}\label{park_city}
  G_{\infty}= G^1_{\infty} Z^{\circ}_{\infty},
\end{equation} 
see \cite[4.3.1]{borel_park}.  Let
$G(\mathbb A)^1\coloneqq G(\mathbb A_f) G^1_{\infty}
A_{\infty}^{\circ}$ and let
$G(\QQ)^1\coloneqq G(\QQ) \cap G(\mathbb A)^1$ and let
\[ Y^1(K_f)\coloneqq G(\QQ)^1\backslash G(\mathbb A)^1/
  A_{\infty}^{\circ} K_{\infty}^{\circ} K_f.\]
\begin{lem} The natural map $Y^1(K_f)\rightarrow Y(K_f)$ identifies
  the source with the closed subset of the target.  The map
  $\mathcal T \times Y^1(K_f)\rightarrow Y(K_f)$, $(t, y)\mapsto ty$
  is a homeomorphism. In particular, we obtain a
  homeomorphism
  \[Y^1(K_f) \overset{\cong}{\longrightarrow} Y(K_f)/\mathcal T =
    \Ytilde(K_f).\]
\end{lem}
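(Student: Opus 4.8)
The plan is to use the decomposition \eqref{park_city} of $G_{\infty}$ to split off the ``extra'' central directions and to recognise $Y^{1}(K_{f})$ as a global section of the principal $\mathcal T$-bundle $q\colon Y(K_{f})\to\Ytilde(K_{f})$.

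First I would record a product decomposition of $G(\mathbb A)$. Since $Z^{\circ}\to G/G^{\mathrm{der}}$ is an isogeny, restriction of characters is an isomorphism $X^{*}(G)_{\RR}\xrightarrow{\sim}X^{*}(Z^{\circ})_{\RR}$, so the $\RR$-rational characters of $G$ separate the points of the maximal $\RR$-split subtorus of $Z_{\infty}^{\circ}$; moreover $Z_{\infty}^{\circ}\cap K_{\infty}^{\circ}$ is compact, hence contained in $G^{1}_{\infty}$. Combining these with \eqref{park_city} one gets
\[
G(\mathbb A)=G(\mathbb A)^{1}\cdot Z_{\infty}^{\circ},\qquad G(\mathbb A)^{1}\cap Z_{\infty}^{\circ}=A_{\infty}^{\circ}\,(Z_{\infty}^{\circ}\cap K_{\infty}^{\circ}).
\]
Writing $Z_{\infty}^{\circ}$ as the product of a vector group and its maximal compact subgroup $Z_{\infty}^{\circ}\cap K_{\infty}^{\circ}$, choose a central vector subgroup $\mathcal S\subseteq Z_{\infty}^{\circ}$ complementary to $A_{\infty}^{\circ}(Z_{\infty}^{\circ}\cap K_{\infty}^{\circ})$; then multiplication is a homeomorphism $G(\mathbb A)^{1}\times\mathcal S\xrightarrow{\sim}G(\mathbb A)$. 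Because $A_{\infty}^{\circ}K_{\infty}^{\circ}K_{f}\subseteq G(\mathbb A)^{1}$ and $\mathcal S\subseteq Z_{\infty}^{\circ}$, this identifies $G(\mathbb A)/A_{\infty}^{\circ}K_{\infty}^{\circ}K_{f}$ with $\widetilde{Y}^{1}\times\mathcal S$ and $G(\mathbb A)/Z_{\infty}^{\circ}K_{\infty}^{\circ}K_{f}$ with $\widetilde{Y}^{1}$, where $\widetilde{Y}^{1}:=G(\mathbb A)^{1}/A_{\infty}^{\circ}K_{\infty}^{\circ}K_{f}$ and $G(\QQ)$ acts through its image under the projection $G(\mathbb A)\to G(\mathbb A)^{1}$.

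Next I would build the $\mathcal T$-action. Translation on the $\mathcal S$-factor descends to an action of $\mathcal S$ on $Y(K_{f})$ commuting with $G(\QQ)$; writing $\lambda=k_{f}k_{\infty}z$ for $\lambda\in\Lambda$ shows that the image $\overline\Lambda$ of $\Lambda$ in $\mathcal S$ acts trivially on $Y(K_{f})$, so the action factors through $\mathcal T=\mathcal S/\overline\Lambda$, a compact torus because $\overline\Lambda$ is a cocompact lattice by Lemma \ref{control_Gamma}. By Lemma \ref{quotient_gamma} this $\mathcal T$-action is free, hence proper, and its quotient map is exactly $q$, so $Y(K_{f})\to\Ytilde(K_{f})$ is a principal $\mathcal T$-bundle. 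For the inclusion $Y^{1}(K_{f})\hookrightarrow Y(K_{f})$: if $g,h\in G(\mathbb A)^{1}$ have the same image in $Y(K_{f})$, say $h=\gamma g u$ with $\gamma\in G(\QQ)$ and $u\in A_{\infty}^{\circ}K_{\infty}^{\circ}K_{f}\subseteq G(\mathbb A)^{1}$, then $\gamma=hu^{-1}g^{-1}\in G(\mathbb A)^{1}\cap G(\QQ)=G(\QQ)^{1}$, giving injectivity; and the composite $Y^{1}(K_{f})\hookrightarrow Y(K_{f})\to\Ytilde(K_{f})$ is bijective — surjectivity because every class in $G(\mathbb A)/A_{\infty}^{\circ}K_{\infty}^{\circ}K_{f}$ has, after a central $\mathcal S$-translate, a representative in $G(\mathbb A)^{1}$, and injectivity from the description in the previous paragraph together with Lemma \ref{quotient_gamma}. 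It follows that $(t,y)\mapsto ty$ is a continuous bijection $\mathcal T\times Y^{1}(K_{f})\to Y(K_{f})$ between manifolds of the same dimension, hence a homeomorphism by invariance of domain; in this trivialisation $Y^{1}(K_{f})$ is the closed slice $\{1\}\times Y^{1}(K_{f})$, and quotienting by $\mathcal T$ identifies $Y^{1}(K_{f})$ with $Y(K_{f})/\mathcal T=\Ytilde(K_{f})$.

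The step I expect to be the main obstacle is the first one: establishing
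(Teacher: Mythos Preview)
Your proposal breaks off mid-sentence, so I can only assess what is written. The overall architecture matches the paper's: embed $Y^{1}(K_{f})$ into $Y(K_{f})$, show the composite to $\Ytilde(K_{f})$ is a bijection, and deduce the product decomposition. Your embedding argument and the surjectivity of $Y^{1}(K_{f})\to\Ytilde(K_{f})$ are fine and essentially the paper's. The explicit complement $\mathcal S$ and the appeal to invariance of domain at the end are reasonable alternatives to the paper's direct use of the $\mathcal T$-bundle structure (which, incidentally, was already established just before the lemma, so you need not rebuild it).

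The real gap is the injectivity of $Y^{1}(K_{f})\to\Ytilde(K_{f})$, which you wave away as following ``from the description in the previous paragraph together with Lemma~\ref{quotient_gamma}''. Unwinding your description, $\Ytilde(K_{f})$ is the quotient of $\widetilde Y^{1}:=G(\mathbb A)^{1}/A_{\infty}^{\circ}K_{\infty}^{\circ}K_{f}$ by the image $p(G(\QQ))$ under your projection $p\colon G(\mathbb A)\to G(\mathbb A)^{1}$, whereas $Y^{1}(K_{f})$ is the quotient by $G(\QQ)^{1}$. Since $G(\QQ)\cap\mathcal S=\{1\}$, one checks that $p(\gamma)\in G(\QQ)^{1}$ forces $\gamma\in G(\QQ)^{1}$, so nothing you have written shows that $p(G(\QQ))$ and $G(\QQ)^{1}$ have the same orbits on $\widetilde Y^{1}$; Lemma~\ref{quotient_gamma} only gives freeness of the $G(\QQ)/\Lambda$-action and does not address this. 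This is exactly where the paper does the work: it rewrites $Y^{1}(K_{f})=G(\QQ)^{1}Z_{\infty}^{\circ}\backslash G(\mathbb A)/Z_{\infty}^{\circ}K_{\infty}^{\circ}K_{f}$ and then argues $G(\QQ)\subseteq G(\QQ)^{1}Z_{\infty}^{\circ}$ by considering the map $G(\mathbb A)\to G_{\infty}\to G_{\infty}/G_{\infty}^{1}A_{\infty}^{\circ}$ together with \eqref{park_city}. You have supplied no substitute for this step, and contrary to your closing remark it is this injectivity, not the product decomposition in your first paragraph, that is the heart of the matter.
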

\begin{proof} The inclusion $G(\mathbb A)^1 \subset G(\mathbb A)$ induces an injection 
  \[G(\QQ)^1\backslash G(\mathbb A)^1 \subset G(\QQ)\backslash
    G(\mathbb A),\] and hence an injection
  $Y^1(K_f)\hookrightarrow Y(K_f)$. Since the map
  $G(\mathbb A)\rightarrow Y(K_f)$ is open and $G(\mathbb A)^1$ is a
  closed subset of $G(\mathbb A)$ we deduce that $Y^1(K_f)$ is a
  closed subset of $Y(K_f)$.  Since $Y(K_f)$ is a $\mathcal T$-bundle
  over $\Ytilde(K_f)$, it is enough to show that the map
  $Y^1(K_f)\rightarrow \Ytilde(K_f)$ is bijective.  From
  \eqref{park_city} we obtain an equality:
  \[ Y^1(K_f)= G(\QQ)^1\backslash G(\mathbb A)/ Z_{\infty}^{\circ}
    K_{\infty}^{\circ} K_f=G(\QQ)^1Z_{\infty}^{\circ}\backslash
    G(\mathbb A)/ Z_{\infty}^{\circ} K_{\infty}^{\circ} K_f.\] Thus it
  is enough to show that $G(\QQ)\subset G(\QQ)^1 Z_{\infty}^{\circ}$,
  which follows by considering the map
  $ G(\mathbb A)\rightarrow G_{\infty}\rightarrow G_{\infty}/
  G^1_{\infty} A_{\infty}^{\circ}$ and using \eqref{park_city} again.
\end{proof}

If $M$ is a $K_f$-module then following \cite[Def.\,2.2.3]{interpolate} we define a local system 
\[ \mathcal M\coloneqq ( M\times ( G(\QQ)\backslash G(\mathbb A)/
  A_{\infty}^{\circ} K_{\infty}^{\circ}))/K_f, \quad \Mtilde\coloneqq
  ( M\times ( G(\QQ)\backslash G(\mathbb A)/ Z_{\infty}^{\circ}
  K_{\infty}^{\circ}))/K_f,\] on $Y(K_f)$ and $\Ytilde(K_f)$
respectively. We have $q_{\ast} \mathcal M= \Mtilde$.  If $N$ is the
maximal quotient of $M$ on which $\Lambda$ acts trivially then
$\Mtilde=\widetilde{\mathcal N}$.

\begin{lem}\label{Kuenneth} If $\Lambda$ acts trivially on $M$ then
  for all $n\ge 0$ we have a natural isomorphism
  \[H^n(Y(K_f), \mathcal M)\cong \bigoplus_{i+j=n} H^i(\Ytilde(K_f),
    \Mtilde)\otimes H^j(\mathcal T, \ZZ).\]
\end{lem}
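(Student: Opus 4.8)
The plan is to realize $Y(K_f)$ as a fiber bundle over $\widetilde Y(K_f)$ with fiber the compact torus $\mathcal T \cong \mathbb R^r/\mathbb Z^r$, and to compute the cohomology of $\mathcal M$ via the Leray spectral sequence for the projection $q\colon Y(K_f)\to\widetilde Y(K_f)$. First I would observe that since $\Lambda$ acts trivially on $M$, the local system $\mathcal M$ is pulled back from a local system $\widetilde{\mathcal M}$ on $\widetilde Y(K_f)$; more precisely $q_*\mathcal M = \widetilde{\mathcal M}$ as already noted in the text. The key geometric input is that $q$ is not merely a torus bundle but a \emph{principal} $\mathcal T$-bundle (it arises from the action of the compact connected abelian Lie group $\mathcal T$ on $Y(K_f)$ with trivial stabilizers), and principal bundles for a compact connected group are automatically fiber-homotopy trivial in the appropriate sense, or at any rate the higher direct images $R^j q_* \mathcal M$ are the local systems on $\widetilde Y(K_f)$ associated to $H^j(\mathcal T,\mathbb Z)$ with the monodromy action coming from the action of $\pi_1$ on the fiber.

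The next step is to check that this monodromy action is trivial, so that $R^j q_* \mathcal M \cong \widetilde{\mathcal M}\otimes H^j(\mathcal T,\mathbb Z)$ as local systems on $\widetilde Y(K_f)$. Here I would use that $\mathcal T = Z_\infty^\circ/(\text{stuff})/\Lambda$ is a quotient of a connected group by a lattice sitting in the \emph{center}: the bundle $Y(K_f)\to \widetilde Y(K_f)$ is pulled back from the universal bundle for $\mathcal T$, and since $\mathcal T$ acts on $Y(K_f)$ by a globally defined group action (coming from multiplication by $Z_\infty^\circ$ on $G(\mathbb A)$, which commutes with everything), the monodromy on $H^*(\mathcal T)$ is trivial. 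Concretely, one can trivialize the bundle after pulling back along the covering $Y^1(K_f)\times \mathcal T \to Y(K_f)$ identified in the preceding lemma, which shows directly that $Y(K_f)\cong \widetilde Y(K_f)\times\mathcal T$ once one remembers $Y^1(K_f)\cong\widetilde Y(K_f)$; strictly the product structure is only up to the $\Lambda$-action but since $\Lambda$ acts trivially on $M$ this does not affect the cohomology of $\mathcal M$.

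Given the degeneration of the Leray (equivalently Künneth) spectral sequence — which holds because $H^*(\mathcal T,\mathbb Z)=\Lambda^*(\mathbb Z^r)$ is a free graded $\mathbb Z$-module and the spectral sequence of a trivial bundle with such fiber cohomology degenerates at $E_2$ — I would conclude
\[
H^n(Y(K_f),\mathcal M)\;\cong\;\bigoplus_{i+j=n} H^i(\widetilde Y(K_f),\widetilde{\mathcal M})\otimes H^j(\mathcal T,\mathbb Z),
\]
and that this identification is natural in $M$ (and in $K_f$), since every step — pullback, higher direct image, the Künneth decomposition for a product — is functorial. The main obstacle I anticipate is not the formal homological algebra but rather being careful about the $\Lambda$-action and the precise sense in which $Y(K_f)$ is a product: the cleanest route is to work with the homeomorphism $\mathcal T\times Y^1(K_f)\xrightarrow{\sim} Y(K_f)$ from the lemma just proved, under which $\mathcal M$ becomes (because $\Lambda$ acts trivially on $M$, hence the local system descends) the external tensor product of the constant sheaf on $\mathcal T$ with $\widetilde{\mathcal M}$ on $Y^1(K_f)\cong\widetilde Y(K_f)$, and then invoke the topological Künneth formula for a product of a compact space with a space of finite type, using that $H^j(\mathcal T,\mathbb Z)$ is free so the $\mathrm{Tor}$ terms vanish.
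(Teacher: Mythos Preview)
Your proposal is correct and, in its final paragraph, takes exactly the same approach as the paper: use the homeomorphism $\mathcal T \times Y^1(K_f)\xrightarrow{\sim} Y(K_f)$ (with $Y^1(K_f)\cong\widetilde Y(K_f)$) from the preceding lemma, note that $\mathcal M$ becomes the external product because $\Lambda$ acts trivially on $M$, and apply the topological K\"unneth formula with vanishing Tor terms since $H^j(\mathcal T,\mathbb Z)$ is free. The paper's proof is a single sentence citing K\"unneth; your detour through the Leray spectral sequence and monodromy is unnecessary once the product decomposition is in hand.
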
 
\begin{proof} Since $H^j(\mathcal T, \ZZ)$ is a free $\ZZ$-module for
  all $j\ge 0$, the assertion follows from K\"unneth formula, see for
  example \cite[Thm.\,15.10]{demailly}.
\end{proof} 

\subsection{Completed cohomology}
It follows from Lemma \ref{quotient_gamma} that if $K'_f$ is an open
normal subgroup of $K_f$ then $\Ytilde(K_f')\rightarrow \Ytilde(K_f)$
is Galois covering with Galois group $K_f/ \Lambda(K_f) K_f'$. We want
to vary $K_f$ by shrinking the subgroup at $p$.  We fix a compact open
subgroup $K^p_f \subset G(\mathbb A^p_f)$ and assume that for some
sufficiently large $\ell$, as in Remark \ref{easy_suffsm},
$K^p_{f, \ell}$ is pro-$\ell$. As a consequence we have that for all
compact open subgroups $K_p \subset G(\Qp)$ both $ K^p_fK_p$ and its
image in $G(\mathbb A_f)/ Z(\mathbb A_f)$ are sufficiently small and
$K^p_fK_p \cap G(\QQ)$ is torsion free. We set up things this way,
because we don't want to put any restrictions on the subgroup $K_p$,
but all three properties hold if $K^p_f$ is arbitrary and $K_p$ is
small enough.

We then define the completed (co)ho\-mo\-logy by 
\[\widetilde{H}^i(\OO/\varpi^s)\coloneqq   \varinjlim_{K_p} H^i(\Ytilde(K^p_f K_p), \OO/\varpi^s), \quad 
  \widetilde{H}^i\coloneqq \varprojlim_{s}
  \widetilde{H}^i(\OO/\varpi^s)\]
\[ \widetilde{H}_i(\OO/\varpi^s)\coloneqq \varprojlim_{K_p}
  H_i(\Ytilde(K^p_f K_p), \OO/\varpi^s),\quad \widetilde{H}_i\coloneqq
  \varprojlim_{s} \widetilde{H}_i(\OO/\varpi^s).\] where in the limits
$K_p$ ranges over all compact open subgroups of $G(\Qp)$.  The
topologies are as follows: discrete on
$\widetilde{H}^i(\OO/\varpi^s)$, $p$-adic on $\widetilde{H}^i$,
projective limit topology on $\widetilde{H}_i(\OO/\varpi^s)$ and
$\widetilde{H}_i$. Note that $\widetilde{H}^i(\OO/\varpi^s)$ and
$\widetilde{H}_i(\OO/\varpi^s)$ are related by the Pontryagin duality.
If $K_p$ is a compact open subgroup of $G(\Qp)$ then $\widetilde{H}_i$
is a finitely generated $\OO[\![K_p]\!]$-module,
\cite[Thm.\,2.2]{emerton_icm}.

We will consider $K_p$-modules $M$ as $K^p_f K_p$-modules by making
$K^p_f$ act trivially.  We can then obtain local systems $\mathcal M$
and $\Mtilde$ as in the previous subsection.  Let $\Lambda_p$ be the
closure of $\Lambda(K^p_f K_p)$ in $K_p$.  If
$M= \Ind_{K_p'\Lambda_p}^{K_p} M'$ then $\Mtilde$ is equal to
$\Mtilde'$ defined with respect to $K_p'$.  Since cohomology commutes
with direct limits, see for example \cite[Lem.\,2.5]{HH}, by writing
$\mathcal C(K_p/\Lambda_p, \OO/\varpi^s)\cong \varinjlim_{K_p'}
\Ind_{K_p'\Lambda_p}^{K_p} \OO/\varpi^s$ we obtain
\begin{equation}\label{hill}
  \widetilde{H}^i(\OO/\varpi^s)\cong H^i( \Ytilde(K^p_f K_p),  \widetilde{\mathcal C(K_p/\Lambda_p, \OO/\varpi^s)}).
\end{equation}
\begin{lem}\label{spectral} Let $M$ be a finite $\OO/\varpi^s$-module
  with continuous $K_p/ \Lambda_p$-action. Then there is a spectral
  sequence
  \[ E_2^{ij}= \Ext^i_{K_p/\Lambda_p}(M,
    \widetilde{H}^j(\OO/\varpi^s))\Longrightarrow
    H^{i+j}(\Ytilde(K^p_f K_p), \Mtilde^{\vee}),\] where
  $\Mtilde^{\vee}$ is the local system associated to the Pontryagin
  dual of $M$.
\end{lem}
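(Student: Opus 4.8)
\textbf{Proof plan for Lemma \ref{spectral}.} The plan is to produce the spectral sequence as the Hochschild--Serre type spectral sequence attached to the covering tower $\Ytilde(K_f^p K_p')\to \Ytilde(K_f^p K_p)$ as $K_p'$ shrinks, combined with the identification \eqref{hill}. First I would recall that since $K_f^p K_p$ is sufficiently small and $K_p/\Lambda_p$ acts on $\Ytilde(K_f^p K_p')$ with quotient $\Ytilde(K_f^p K_p)$ (using Lemma \ref{quotient_gamma} and the fact that $\Lambda(K_f^pK_p')=\Lambda(K_f^pK_p)$ for $K_p'$ small), the local system $\Mtilde^\vee$ on $\Ytilde(K_f^pK_p)$ can be computed via the continuous cochain complex of the profinite group $K_p/\Lambda_p$ acting on $R\Gamma$ of the tower with $\OO/\varpi^s$-coefficients. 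Concretely, by \eqref{hill} the object $R\varprojlim_{K_p'} R\Gamma(\Ytilde(K_f^pK_p'),\OO/\varpi^s)$ is computed, after taking cohomology, by $\widetilde H^j(\OO/\varpi^s)$, which carries a continuous smooth-dual action of $K_p/\Lambda_p$; and the local system $\Mtilde^\vee$ corresponds under the covering to taking $\Hom_{K_p/\Lambda_p}(M,-)$ on this tower (since $\Mtilde^\vee=\widetilde{(M^\vee)^{\sim}}$ and $M^\vee$ is the Pontryagin dual with its $K_p/\Lambda_p$-action, so that $M^\vee\cong \Hom_{K_p/\Lambda_p}(M,\mathcal C(K_p/\Lambda_p,\OO/\varpi^s))$ by Frobenius reciprocity).

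The key steps, in order, are: (1) express $R\Gamma(\Ytilde(K_f^pK_p),\Mtilde^\vee)$ as $R\Hom_{K_p/\Lambda_p}(M, R\Gamma(\Ytilde(K_f^pK_p),\widetilde{\mathcal C(K_p/\Lambda_p,\OO/\varpi^s)}))$ --- this is the adjunction between induction from the trivial subgroup and restriction, applied fiberwise to the torus-bundle-free covering tower, using that $M$ is a finite $\OO/\varpi^s$-module so $\Hom$ and $\Ext$ over the profinite group $K_p/\Lambda_p$ are well behaved; (2) substitute \eqref{hill} to rewrite the inner object as an object with cohomology $\widetilde H^j(\OO/\varpi^s)$; (3) take the hypercohomology (Grothendieck) spectral sequence of the composite functor / of the double complex, giving $E_2^{ij}=\Ext^i_{K_p/\Lambda_p}(M,\widetilde H^j(\OO/\varpi^s))\Rightarrow H^{i+j}(\Ytilde(K_f^pK_p),\Mtilde^\vee)$. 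Each of these is a standard manipulation once the identification in step (1) is set up correctly; I would cite \cite{emerton_icm} (the analogous spectral sequence appears there, cf. Theorem 2.2 and the surrounding discussion) for the precise formalism rather than rederiving it.

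The main obstacle I expect is step (1): carefully checking that passing to the limit over $K_p'$ commutes with the relevant cohomology and $\Hom$-functors, i.e. that $R\Gamma$ of the finite-level spaces with $\Mtilde^\vee$-coefficients really is computed by $R\Hom_{K_p/\Lambda_p}(M,-)$ applied to the completed object, and that no $R^1\varprojlim$ or topological subtlety intervenes. This is where one uses that the coefficients are $\OO/\varpi^s$ (torsion, so Mittag--Leffler / exactness of $\varprojlim$ over the tower holds for the relevant inverse systems of finite modules, cf. \cite[Lem.\,2.5]{HH} for the dual direct-limit statement already invoked in the excerpt), that the $\Ytilde(K_f^pK_p')$ are finite CW complexes (so cohomology is finitely generated at each level), and that $K_p/\Lambda_p$ is a compact $p$-adic Lie group so its continuous cohomology with the smooth module $\widetilde H^j(\OO/\varpi^s)$ is the $\Ext$ appearing in the statement. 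Once this compatibility is in place, the spectral sequence is just the Grothendieck spectral sequence for the composition $M\mapsto \Hom_{K_p/\Lambda_p}(M,-)$ and the identification \eqref{hill}, and the convergence and the $E_2$-page are immediate.
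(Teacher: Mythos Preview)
Your approach is correct in spirit and lands on the same identity as the paper, namely
\[
R\Gamma\bigl(\Ytilde(K_f^pK_p),\Mtilde^\vee\bigr)\simeq R\Hom_{K_p/\Lambda_p}\bigl(M,\,R\Gamma(\Ytilde(K_f^pK_p),\widetilde{\mathcal C(K_p/\Lambda_p,\OO/\varpi^s)})\bigr),
\]
after which \eqref{hill} and the hypercohomology spectral sequence finish the job. However, you are making it harder than necessary by routing through the tower $\{K_p'\}$ and worrying about inverse limits and $R^1\varprojlim$. That detour is not needed: \eqref{hill} already expresses $\widetilde H^j(\OO/\varpi^s)$ as the cohomology of a single local system on the fixed finite-dimensional space $\Ytilde(K_f^pK_p)$, so no passage to a limit is involved in establishing the spectral sequence.

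The paper's argument (following Hill) implements your step~(1) concretely and for free: pick a triangulation of $\Ytilde(K_f^pK_p)$ and write the simplicial (\v Cech) complex for the local system $\widetilde{\mathcal C(K_p/\Lambda_p,\OO/\varpi^s)}$. Each term is a finite direct sum of copies of $\mathcal C(K_p/\Lambda_p,\OO/\varpi^s)$, so applying $\Hom_{K_p/\Lambda_p}(M,-)$ termwise yields, by Frobenius reciprocity, exactly the \v Cech complex for $\Mtilde^\vee$. Since $\mathcal C(K_p/\Lambda_p,\OO/\varpi^s)$ is injective in the category of smooth $\OO/\varpi^s[K_p/\Lambda_p]$-modules, the terms are $\Hom_{K_p/\Lambda_p}(M,-)$-acyclic, and the two hypercohomology spectral sequences of this bounded double complex give the claimed $E_2$-page and abutment directly. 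In short: your derived-functor formulation is fine, but the triangulation makes the key adjunction and the acyclicity manifest, eliminating the ``main obstacle'' you anticipated.
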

\begin{proof} We use an argument due to Hill \cite{hill} in an easier
  setting, alternatively see \cite[(2.1.10)]{interpolate}.  We pick a
  triangulation of $\Ytilde(K^p_f K_p)$ and write down the \v{C}ech
  complex computing the cohomology of the local system associated to
  $\mathcal C(K_p/\Lambda_p, \OO/\varpi^s)$.  We then apply
  $\Hom_{K_p/\Lambda_p}(M, \ast)$ to obtain a complex computing the
  cohomology of $\Mtilde^{\vee}$. This yields the desired spectral
  sequence.
\end{proof}

\subsection{Hecke algebra}\label{sec_hecke}
We fix a finite set of places $S$ containing $p$ and $\infty$, such
that $G$ is unramified over $\QQ_\ell$ for all $\ell\not\in S$. We
assume that we may factor $K^p_{f}= K^{p, \ell}_f K_{\ell}$ with
$K_{\ell}$ hyperspecial for all $\ell \not \in S$. Let
$\mathcal H_{\ell}$ be the double coset algebra
$\OO[K_\ell\backslash G(\QQ_\ell)/K_\ell]$ and let
\[\mathbb T^{\univ}\coloneqq  \bigotimes_{\ell\not \in S} \mathcal H_{\ell}\]
where the tensor product (defined by its universal property) is taken
over $\OO$. One may think of $\mathbb T^{\univ}$ as a polynomial ring
over $\OO$ in infinitely many variables. If $M$ is an
$\OO[K_p]$-module then $\mathbb T^{\univ}$ acts on the cohomology of
the associated local system. Following \cite{emerton_icm} we define
$\mathbb T$ to be the closure of the image of $\mathbb T^{\univ}$ in
\begin{equation}\label{fat_module}
  \prod_{K_p} \prod_M \prod_i \End_{\OO}(H^i(\Ytilde(K^p_f K_p), \Mtilde)),
\end{equation}
where the product is taken over all compact open subgroups
$K_p\subset G(\Qp)$, all finite $\OO$-torsion modules $M$ with
continuous $K_p/\Lambda_p$-action, and all degrees $i$, where the
target is equipped with the profinite topology.

\begin{lem}\label{neat} Let $K_p$ be an open pro-$p$ subgroup of  $G(\Qp)$, 
  let $\mm$ be an open maximal ideal of $\mathbb T$ and let $i$ be a
  non-negative integer.  Then the following are equivalent:
  \begin{itemize}
  \item[(i)] $H^i(\Ytilde(K^p_f K_p), \OO/\varpi)_{\mm}=0$;
  \item[(ii)] $H^i(\Ytilde(K^p_f K_p), \mathcal M)_{\mm}=0$, for all
    representations of $K_p$ on finitely generated $\OO$-torsion
    modules $M$;
  \item[(iii)] $H^i(\Ytilde(K^p_f K_p'), \OO/\varpi)_{\mm}=0$, for all
    open normal subgroups $K'_p \subset K_p$.
  \end{itemize}
\end{lem}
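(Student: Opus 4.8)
The plan is to prove the cycle of implications $(i)\Rightarrow(ii)\Rightarrow(iii)\Rightarrow(i)$. The implication $(iii)\Rightarrow(i)$ is immediate on taking $K_p'=K_p$, and $(ii)\Rightarrow(iii)$ will be an application of Shapiro's lemma, so the only real content is $(i)\Rightarrow(ii)$. Throughout one uses that localisation at $\mm$ is an exact functor on $\mathbb{T}$-modules and that the $\mathbb{T}^{\univ}$-action on $H^i(\Ytilde(K^p_f K_p),-)$ is functorial in the coefficient local system; hence localisation at $\mm$ commutes with the long exact cohomology sequences attached to short exact sequences of coefficient modules. One also uses that the local system on $\Ytilde(K^p_f K_p)$ attached to a $K_p$-module $M$ depends only on the maximal quotient of $M$ on which $\Lambda$ (equivalently its closure $\Lambda_p$ in $K_p$) acts trivially, so in $(ii)$ it is harmless to assume that $\Lambda_p$ acts trivially on $M$.

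For $(i)\Rightarrow(ii)$ I would argue by d\'evissage. Since $M$ is finitely generated and $\OO$-torsion it is killed by some $\varpi^s$, and its $\varpi$-adic filtration has graded pieces that are finite $k$-vector spaces carrying a smooth action of the pro-$p$ group $K_p$. Such a module has nonzero $K_p$-invariants, since a finite $p$-group acting on a nonzero $\Fp$-vector space has a nonzero fixed vector, and the invariants form a $k$-subspace; iterating, one sees that $M$ admits a finite filtration by $K_p$-submodules all of whose graded pieces are isomorphic to $\OO/\varpi$ with trivial $K_p$-action. Because $M\mapsto\Mtilde$ is exact on $K_p$-modules with trivial $\Lambda_p$-action, this produces a finite filtration of the local system attached to $M$ whose successive quotients are the constant sheaf $\OO/\varpi$. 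The latter is the local system attached to the trivial $K_p$-module $\OO/\varpi$, so by hypothesis $(i)$ its $i$-th cohomology vanishes after localisation at $\mm$; feeding the filtration into the long exact sequence and inducting on its length yields $H^i(\Ytilde(K^p_f K_p),\Mtilde)_{\mm}=0$.

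For $(ii)\Rightarrow(iii)$, let $K_p'$ be an open normal subgroup of $K_p$. By the discussion preceding the lemma the map $\Ytilde(K^p_f K_p')\to\Ytilde(K^p_f K_p)$ is a finite Galois covering with group $\Delta:=K_p/\Lambda_p K_p'$, a finite $p$-group. Pushing the constant sheaf $\OO/\varpi$ forward along this covering identifies $H^i(\Ytilde(K^p_f K_p'),\OO/\varpi)$, compatibly with the $\mathbb{T}^{\univ}$-action, with the $i$-th cohomology of the local system on $\Ytilde(K^p_f K_p)$ attached to the $K_p$-module $\OO/\varpi[\Delta]\cong\Ind_{\Lambda_p K_p'}^{K_p}(\OO/\varpi)$ (on which $\Lambda_p$ and $K^p_f$ act trivially); this group vanishes after localisation at $\mm$ by $(ii)$. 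Finally $(iii)\Rightarrow(i)$ is the special case $K_p'=K_p$. I expect the one point deserving care to be the bookkeeping with $\Lambda$: since the local systems on $\Ytilde$ only see the $\Lambda$-coinvariants of the coefficient module, one must ensure that the short exact sequences used in the d\'evissage genuinely remain short exact after passage to $\Ytilde$, which is precisely why one reduces at the outset to modules with trivial $\Lambda$-action and takes the graded pieces of the filtration with trivial $K_p$-action.
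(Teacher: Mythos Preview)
Your proof is correct and follows essentially the same approach as the paper's: d\'evissage using that a pro-$p$ group acting on a finite $k$-vector space admits a composition series with trivial one-dimensional graded pieces for $(i)\Rightarrow(ii)$, the induced module (Shapiro) identification for $(ii)\Rightarrow(iii)$, and the trivial specialisation $K_p'=K_p$ for $(iii)\Rightarrow(i)$. Your write-up is more detailed than the paper's three-line argument, in particular in tracking the role of $\Lambda_p$, but the underlying ideas are identical.
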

\begin{proof} Since $K_p$ is pro-$p$ any such $M$ has a composition
  series with graded pieces isomorphic to $\OO/\varpi$ with the
  trivial $K_p$-action. Since localisation is exact part (i) implies
  (ii). Part (ii) implies (iii) by considering the induced module.
  Part (iii) trivially implies (i).
\end{proof}
\begin{cor} $\mathbb T$ has only finitely many open maximal ideals.  
\end{cor}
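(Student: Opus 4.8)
The plan is to combine the profinite structure of $\mathbb{T}$ with Lemma~\ref{neat}. Observe first that $\mathbb{T}$ is a commutative profinite $\OO$-algebra: commutative because $\mathbb{T}^{\univ}$ is, being a tensor product over $\OO$ of the commutative spherical Hecke algebras $\mathcal{H}_{\ell}$; and profinite because it is a closed subring of the ring in \eqref{fat_module}, each of whose factors $\End_{\OO}(H^{i}(\Ytilde(K^p_fK_p),\Mtilde))$ is finite, since $H^{i}(\Ytilde(K^p_fK_p),\Mtilde)$ is the cohomology in a single degree of a finite-type space with finite coefficients. Let $\mm$ be an open maximal ideal of $\mathbb{T}$; then $\mm$ is closed, $\mathbb{T}/\mm$ is finite, and the quotient map $\mathbb{T}\to\mathbb{T}/\mm$, being continuous into a finite discrete ring, factors through the image $\mathbb{T}_J$ of $\mathbb{T}$ in the subproduct of \eqref{fat_module} over a finite set $J$ of triples $(K_p,M,i)$. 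As $\mathbb{T}_J$ is then a finite ring acting faithfully on the finite $\mathbb{T}$-module $N_J\coloneqq\bigoplus_{(K_p,M,i)\in J}H^{i}(\Ytilde(K^p_fK_p),\Mtilde)$, the ideal $\mm$ contains $\ker(\mathbb{T}\to\mathbb{T}_J)$, which is the annihilator of $N_J$, and hence $\mm\in\supp_{\mathbb{T}}N_J$; in particular there is a triple $(K_p^{(0)},M_0,i_0)\in J$ with $H^{i_0}(\Ytilde(K^p_fK_p^{(0)}),\widetilde{M}_0)_{\mm}\neq 0$, and necessarily $i_0\le d$.

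The next step is to push this nonvanishing, through the completed cohomology, onto one fixed level. Fix once and for all an open pro-$p$ subgroup $K$ of $G(\Qp)$. Choose $s$ with $\varpi^{s}M_0=0$ and apply the spectral sequence of Lemma~\ref{spectral} at level $K_p^{(0)}$ to the module $M_0^{\vee}$: it endows $H^{i_0}(\Ytilde(K^p_fK_p^{(0)}),\widetilde{M}_0)$ with a finite Hecke-stable filtration whose graded pieces are subquotients of the groups $\Ext^{a}_{K_p^{(0)}/\Lambda_p}(M_0^{\vee},\widetilde{H}^{b}(\OO/\varpi^{s}))$ with $a+b=i_0\le d$. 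Since $M_0^{\vee}$ is a finite module, these $\Ext$-groups are computed by a resolution of $M_0^{\vee}$ by finitely generated projective smooth $K_p^{(0)}/\Lambda_p$-representations, and $\Hom$ out of such a representation commutes with the flat localisation at $\mm$; hence $\supp_{\mathbb{T}}\Ext^{a}_{K_p^{(0)}/\Lambda_p}(M_0^{\vee},\widetilde{H}^{b}(\OO/\varpi^{s}))\subseteq\supp_{\mathbb{T}}\widetilde{H}^{b}(\OO/\varpi^{s})$, so that $\mm\in\bigcup_{b\le d}\supp_{\mathbb{T}}\widetilde{H}^{b}(\OO/\varpi^{s})$. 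The six-term exact sequences attached to $0\to\OO/\varpi\to\OO/\varpi^{s}\to\OO/\varpi^{s-1}\to 0$ present $\widetilde{H}^{b}(\OO/\varpi^{s})$ as an extension of a subquotient of $\widetilde{H}^{b}(\OO/\varpi^{s-1})$ by a subquotient of $\widetilde{H}^{b}(\OO/\varpi)$, so induction on $s$ gives $\supp_{\mathbb{T}}\widetilde{H}^{b}(\OO/\varpi^{s})\subseteq\supp_{\mathbb{T}}\widetilde{H}^{b}(\OO/\varpi)$. Finally $\widetilde{H}^{b}(\OO/\varpi)=\varinjlim_{K'}H^{b}(\Ytilde(K^p_fK'),\OO/\varpi)$ with $K'$ ranging cofinally over the open normal subgroups of $K$, and localisation commutes with filtered colimits, so $\widetilde{H}^{b}(\OO/\varpi)_{\mm}\neq 0$ forces $H^{b}(\Ytilde(K^p_fK'),\OO/\varpi)_{\mm}\neq 0$ for some such $K'$. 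By the equivalence (i)$\Leftrightarrow$(iii) of Lemma~\ref{neat}, used in contrapositive form with the pro-$p$ group there equal to $K$ and the degree equal to $b$, we conclude $H^{b}(\Ytilde(K^p_fK),\OO/\varpi)_{\mm}\neq 0$. Thus every open maximal ideal of $\mathbb{T}$ lies in $\bigcup_{i=0}^{d}\supp_{\mathbb{T}}H^{i}(\Ytilde(K^p_fK),\OO/\varpi)$; since each $H^{i}(\Ytilde(K^p_fK),\OO/\varpi)$ is a finite module on which $\mathbb{T}$ acts through a finite quotient, each of these supports is a finite set of maximal ideals, and the corollary follows.

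The soft points — the profinite structure of $\mathbb{T}$, the fact that a continuous ring homomorphism from a profinite ring to a finite discrete ring factors through a finite quotient, the identification of $\ker(\mathbb{T}\to\mathbb{T}_J)$ with the annihilator of $N_J$, and the behaviour of support under subquotients, extensions and filtered colimits of arbitrary $\mathbb{T}$-modules — require no particular care. The one step that does is the passage through the spectral sequence of Lemma~\ref{spectral}: one must check that the Hecke action on $\Ext^{a}_{K_p^{(0)}/\Lambda_p}(M_0^{\vee},\widetilde{H}^{b}(\OO/\varpi^{s}))$ is the one induced by functoriality from the action on $\widetilde{H}^{b}(\OO/\varpi^{s})$ — which holds because the spherical Hecke operators involve only places away from $p$ and therefore commute with the $K_p^{(0)}/\Lambda_p$-action — and, crucially, that $\Hom$ and hence $\Ext$ out of the finite module $M_0^{\vee}$ commutes with localisation at $\mm$, so that vanishing of $\widetilde{H}^{b}(\OO/\varpi^{s})_{\mm}$ propagates to these $\Ext$-groups. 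This finiteness-versus-localisation compatibility is the heart of the argument.
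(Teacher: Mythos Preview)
Your overall strategy lands where the paper does---every open maximal ideal lies in $\bigcup_{i\le d}\supp_{\mathbb{T}} H^i(\Ytilde(K^p_fK),\OO/\varpi)$ for one fixed pro-$p$ group $K$---but one justification has a real gap. You resolve $M_0^\vee$ by ``finitely generated projective smooth $K_p^{(0)}/\Lambda_p$-representations'', yet the category of smooth $\OO/\varpi^s$-representations of a profinite group with an open pro-$p$ subgroup generally has \emph{no} nonzero finitely generated projectives: already for $\Zp$ over $\Fp$ this is the category of $T$-locally-nilpotent $\Fp[T]$-modules, where one computes $\Ext^1(P,\Fp)\cong(P/TP)^\vee\neq0$ for every nonzero finite $P$. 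The conclusion you need---that $\Ext^a(M_0^\vee,-)$ commutes with localisation at $\mm$ for $M_0^\vee$ finite---is nonetheless correct, because this functor commutes with filtered colimits: argue via Hochschild--Serre for an open normal subgroup fixing $M_0^\vee$ (both continuous cohomology $H^q(H,-)$ on discrete modules and $\Ext^p$ over the finite quotient commute with filtered colimits), or equivalently resolve $M_0^\vee$ by finitely generated projectives over the Noetherian completed group ring $\OO/\varpi^s[\![K_p^{(0)}/\Lambda_p]\!]$ and take continuous $\Hom$ into the discrete target. So the gap is patchable, but the justification as written does not hold.

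The paper's argument is also considerably shorter and sidesteps this issue entirely. Having found $(K_p^{(0)},M_0,i_0)$ with nonvanishing localisation, it embeds $M_0$ into copies of an induction from an open subgroup $K'\trianglelefteq K$ to reduce to level $K'$, and then the contrapositive of (i)$\Leftrightarrow$(ii) in Lemma~\ref{neat} (applied at level $K$ to the induced module $\Ind_{K'}^{K}M_0$) gives $H^{i_0}(\Ytilde(K^p_fK),\OO/\varpi)_{\mm}\neq0$ in one step---no spectral sequence, no passage through $\widetilde{H}^*$, no d\'evissage on $s$.
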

\begin{proof} If $\mm$ is open then it has to lie in the support of
  some $H^i(\Ytilde(K^p_f K_p'), \mathcal M)$. By embedding $M$ into
  in an induction from the trivial representation of a smaller
  subgroup, we may assume that $K_p'$ is an open normal subgroup of
  $K_p$. Lemma \ref{neat} implies that $\mm$ lies in the support of
  $H^i(\Ytilde(K^p_f K_p), \OO/\varpi)$. Thus if we let
  $\overline{\mathbb T}$ be the image of $\mathbb T$ in the finite
  dimensional $k$-algebra
  $\End_k(\oplus_{i=0}^{d} H^i(\Ytilde(K^p_f K_p), \OO/\varpi))$ then
  the set of maximal ideals of $\overline{\mathbb T}$ coincide with
  the set of open maximal ideals of $\mathbb T$.
\end{proof}

\begin{remar}\label{chinese} By applying the Chinese remainder theorem at each finite level of \eqref{fat_module}, we obtain 
  an isomorphism
  $\mathbb T\cong \prod_{\mm} \widehat{\mathbb T}_{\mm}$, where the
  product is taken over the open maximal ideals and
  $\widehat{\mathbb T}_{\mm}\coloneqq \varprojlim_n \mathbb T/\mm^n$
  with the topology induced by the limit. Since the product is finite
  the completion coincides with localisation, so that
  $\widehat{\mathbb T}_{\mm}=\mathbb T_{\mm}$. This explains why
  localisation at $\mm$ behaves well for various topological modules
  of $\mathbb T$. Below subscript $\mm$ always means localisation at
  $\mm$.  If $\mathbb T_{\mm}$ is noetherian then the topology will
  coincide with the $\mm$-adic topology.
\end{remar}

\subsection{Weakly non-Eisenstein ideals} 

\begin{defi}\label{strongly_non} We say that an open maximal ideal $\mm$ of 
  $\mathbb T$ is weakly non-Eisenstein if there is an integer $q_0$
  such that the equivalent conditions of Lemma \ref{neat} hold for all
  $i\neq q_0$.
\end{defi}

\begin{remar}If $G_{\infty}/Z_{\infty}$ is compact then every open
  maximal ideal is weakly non-Eisenstein.
\end{remar}
\begin{remar} We note that one does not expect weakly non-Eisenstein
  ideals to exist unless the rank of $G_{\infty}$ is equal to the rank
  of $Z_{\infty} K_{\infty}$, which corresponds to the assumption
  $l_0=0$ for the derived subgroup of $G_{\infty}$. In that case, $d$
  is even and $q_0=d/2$.
\end{remar}

\begin{lem}\label{need_this} If $\mm$ is weakly non-Eisenstein then
  the following hold:
  \begin{itemize}
  \item[(i)] $\widetilde{H}_{q_0, \mm}$ is a projective finitely
    generated $\OO[\![K_p/\Lambda_p]\!]$-module;
  \item[(ii)] $\mathbb T_{\mm}$ acts faithfully on
    $\widetilde{H}_{q_0, \mm}$;
  \item[(iii)] There are natural $\mathbb T_{\mm}[G(\Qp)]$-equivariant
    homeomorphisms:
    \[ \widetilde{H}^{q_0}_{\mm}\cong
      \Hom^{\cont}_{\OO}(\widetilde{H}_{q_0, \mm}, \OO), \quad
      \widetilde{H}_{q_0, \mm}\cong
      \Hom^{\cont}_{\OO}(\widetilde{H}^{q_0}_{\mm}, \OO).\]
  \end{itemize}
\end{lem}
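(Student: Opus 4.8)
The plan is to deduce all three statements from the weakly non-Eisenstein hypothesis by a standard Poincar\'e duality plus limit argument, combined with the structure theory of completed (co)homology as in \cite{emerton_icm} and \cite{interpolate}. First I would fix a pro-$p$ compact open $K_p \subset G(\Qp)$ small enough that $K^p_f K_p$ is sufficiently small and $K^p_fK_p\cap G(\QQ)$ is torsion free, so that each $\Ytilde(K^p_fK_p')$ is a compact manifold (of dimension $d$, or rather the manifold $\Ytilde(K_f^pK_p')$ whose cohomology we control) and the coverings $\Ytilde(K^p_fK'_p)\to\Ytilde(K^p_fK_p)$ are finite Galois with group $K_p/\Lambda_pK'_p$. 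The weakly non-Eisenstein hypothesis says that after localising at $\mm$ the cohomology $H^i(\Ytilde(K^p_fK_p),\OO/\varpi)_\mm$ vanishes for $i\neq q_0$, and by Lemma \ref{neat} the same holds for $H^i$ with any finite $\OO$-torsion coefficient system and at every finite level $K_p'$.

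For part (i), I would run the spectral sequence of Lemma \ref{spectral} (or directly the complex computing $\widetilde{H}^i$ via \eqref{hill}) after localising at $\mm$. Concentration of cohomology in degree $q_0$ forces $\widetilde{H}^i(\OO/\varpi)_\mm=0$ for $i\neq q_0$, and then by d\'evissage over $\OO/\varpi^s$ and passing to the inverse limit one gets $\widetilde{H}^i_\mm=0$ for $i\neq q_0$ and that $\widetilde{H}^{q_0}_\mm$ is $\varpi$-torsion free. Dually, $\widetilde{H}_{i,\mm}=0$ for $i\neq q_0$. To get projectivity of $\widetilde{H}_{q_0,\mm}$ over $\OO[\![K_p/\Lambda_p]\!]$, the point is that $\widetilde{H}_\bullet(\OO/\varpi)$ computes (via \eqref{hill} and Pontryagin/Poincar\'e duality) a complex of finite free $\OO/\varpi[K_p/\Lambda_pK_p']$-modules whose cohomology localised at $\mm$ is concentrated in one degree; a complex of projectives (here $\OO[\![K_p/\Lambda_p]\!]$ is a local ring when $K_p$ is pro-$p$) with cohomology in a single degree splits, so $\widetilde{H}_{q_0,\mm}$ is a direct summand of a finite free module. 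Since $\widetilde{H}_{q_0}$ is finitely generated over $\OO[\![K_p]\!]$ by \cite[Thm.\,2.2]{emerton_icm}, and $\Lambda_p$ acts trivially, it is finitely generated over $\OO[\![K_p/\Lambda_p]\!]$, hence finite projective; this is exactly the argument sketched in \cite{emerton_icm} and \cite[\S2.1]{interpolate}.

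For part (iii), I would use Poincar\'e duality on the compact manifolds $\Ytilde(K^p_fK_p')$ together with the fact (just established) that both $\widetilde{H}^{q_0}_\mm$ and $\widetilde{H}_{q_0,\mm}$ are $\OO$-torsion free and concentrated in one degree. Concretely, universal coefficients gives $H^{q_0}(\Ytilde(K^p_fK_p'),\OO/\varpi^s)\cong\Hom_{\OO/\varpi^s}(H_{q_0}(\Ytilde(K^p_fK_p'),\OO/\varpi^s),\OO/\varpi^s)$ after localising at $\mm$ (the $\mathrm{Ext}^1$ contribution involves $H_{q_0-1}$, which dies after localisation), and taking $\varinjlim_{K_p'}$ then $\varprojlim_s$ turns this into the asserted continuous-duality isomorphism $\widetilde{H}^{q_0}_\mm\cong\Hom^{\cont}_\OO(\widetilde{H}_{q_0,\mm},\OO)$; the reverse isomorphism follows because $\widetilde{H}_{q_0,\mm}$ is a finitely generated projective, hence reflexive, $\OO[\![K_p/\Lambda_p]\!]$-module. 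Equivariance for the Hecke action and the $G(\Qp)$-action is built into the construction of the duality at each finite level and is preserved under the limits. Finally part (ii): the action of $\mathbb T_\mm$ on $\widetilde{H}_{q_0,\mm}$ factors through a quotient, and faithfulness follows because $\mathbb{T}$ was defined in \eqref{fat_module} as the closure of the image of $\mathbb{T}^{\univ}$ acting on exactly these cohomology groups, so any element killing $\widetilde{H}_{q_0,\mm}$ (equivalently, by duality and concentration, killing $\widetilde{H}^{q_0}_\mm$ and all $H^{q_0}(\Ytilde(K^p_fK_p'),\mathcal M)_\mm$) is $0$ in $\mathbb T_\mm$.

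The main obstacle is the projectivity statement in (i): one must be careful that the relevant chain complex really is a complex of \emph{projective} $\OO[\![K_p/\Lambda_p]\!]$-modules (not merely that its cohomology is nice), which requires choosing a $K_p$-equivariant triangulation/cell structure at each finite level and checking compatibility under the transition maps, and then invoking that a bounded complex of projectives over a (complete) local ring with cohomology concentrated in one degree is quasi-isomorphic to that cohomology placed in a single degree — so the cohomology is itself projective. This is the technical heart and is exactly where one leans on the fact that $K_p$ is pro-$p$ (making $\OO[\![K_p/\Lambda_p]\!]$ local) and on the weakly non-Eisenstein hypothesis to kill all degrees but $q_0$ after localisation.
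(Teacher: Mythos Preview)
Your overall plan is sound and can be made to work, but the paper takes a different and somewhat cleaner route for (i), and there are a couple of inaccuracies worth flagging.

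For (i), the paper does \emph{not} use the perfect-complex argument. Instead, once $\widetilde{H}^j(\OO/\varpi^s)_\mm=0$ for $j\ne q_0$, the spectral sequence of Lemma~\ref{spectral} degenerates to give
\[
\Hom_{K_p}\bigl(M,\widetilde{H}^{q_0}(\OO/\varpi^s)_\mm\bigr)\;\cong\;H^{q_0}\bigl(\Ytilde(K^p_fK_p),\Mtilde^\vee\bigr)_\mm,
\]
and the right-hand side is an exact functor of $M$ (since the whole $\delta$-functor $M\mapsto H^i(\ldots)_\mm$ collapses to a single degree). Hence $\widetilde{H}^{q_0}(\OO/\varpi^s)_\mm$ is injective among smooth $K_p/\Lambda_p$-representations on $\OO/\varpi^s$-modules, and Pontryagin duality yields projectivity of $\widetilde{H}_{q_0}(\OO/\varpi^s)_\mm$; passing to the limit gives (i). This bypasses the step in your argument where you need to localise the \emph{complex} at $\mm$: since $\mathbb T$ is defined as acting on cohomology, not on the chain complex, splitting off the $\mm$-summand at the complex level requires lifting the idempotent $e_\mm$ to the homotopy category of perfect $\OO[\![K_p/\Lambda_p]\!]$-complexes. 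That is doable but is a genuine extra step you glossed over.

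Two smaller points. First, $\Ytilde(K^p_fK_p')$ is \emph{not} compact in general (e.g.\ modular curves); your invocation of Poincar\'e duality on compact manifolds is misplaced, though as you note the actual input you use is universal coefficients, which is fine. Second, for the reverse isomorphism in (iii) the relevant fact is not reflexivity over $\OO[\![K_p/\Lambda_p]\!]$ but Schikhof duality for $\OO$-torsion-free profinite $\OO$-modules (equivalently \cite[Thm.\,1.1~3)]{cc_survey}); the paper deduces (iii) from this together with $\widetilde{H}^{q_0}(\OO/\varpi^s)_\mm^\vee\cong\widetilde{H}_{q_0,\mm}/\varpi^s$. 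Part (ii) in both your sketch and the paper comes down to the observation that every $H^{q_0}(\Ytilde,\Mtilde)_\mm$ embeds into a finite sum of copies of $\widetilde{H}^{q_0}(\OO/\varpi^s)_\mm$ via the degenerated spectral sequence and~\eqref{hill}.
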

\begin{proof} Since the functor
  $M\mapsto H^i(\Ytilde(K^p_f K_p), \Mtilde)$ commutes with direct
  limits, we obtain from \eqref{hill} that
  $\widetilde{H}^j(\OO/\varpi^s)_{\mm}=0$ for all $j\neq q_0$ and all
  $s\ge 0$. This implies that $\widetilde{H}^j_{\mm}=0$ for
  $j\neq q_0$ and the spectral sequence in Lemma \ref{spectral}
  degenerates to give:
\begin{equation}\label{degenerate}
  \Hom_{K_p}(M, \widetilde{H}^{q_0}(\OO/\varpi^s)_{\mm})\cong H^{q_0}( \Ytilde(K^p_f K_p), \Mtilde)_{\mm}.
 \end{equation}
 The assumption on $\mm$ implies that
 $M\mapsto H^i(\Ytilde(K^p_f K_p), \Mtilde)_{\mm}$ is exact, thus we
 deduce that $\widetilde{H}^{q_0}(\OO/\varpi^s)_{\mm}$ is injective in
 the category smooth representations of $K_p/\Lambda_p$ on
 $\OO/\varpi^s$-modules. By Pontryagin duality we get that
 $\widetilde{H}_{q_0}(\OO/\varpi^s)_{\mm}$ is projective in the
 category of compact $\OO/\varpi^s[\![K_p/\Lambda_p]\!]$-modules. By
 passing to the limit we obtain part (i). Any $M$ as in Lemma
 \ref{spectral} may be embedded into
 $\mathcal C(K_p/\Lambda_p, \OO/\varpi^s)^{\oplus m}$ for some
 $m$. Using \eqref{hill} we get an embedding
 $H^{q_0}( \Ytilde(K^p_f K_p), \Mtilde)_{\mm}\hookrightarrow
 \widetilde{H}^{q_0}(\OO/\varpi^s)_{\mm}^{\oplus m}$.  This yields
 part (ii). It follows from part (i) that $\widetilde{H}_{q_0, \mm}$
 is $\OO$-torsion free. Part (iii) follows from Schikhof duality, see
 the discussion in \cite[\S 2]{Ludwig}, together with the identity
 $\widetilde{H}^{q_0}(\OO/\varpi^s)_{\mm}^{\vee}\cong
 \widetilde{H}_{q_0}(\OO/\varpi^s)_{\mm}\cong
 \widetilde{H}_{q_0,\mm}/\varpi^s$; alternatively one could use
 \cite[Thm.\,1.1 3)]{cc_survey}.
\end{proof}

\subsection{Automorphic forms}
Let $V$ be an irreducible algebraic representation of $G$ over $L$. As
in the previous section we evaluate $V$ at $L$, and make $G(\Qp)$ and
$K_p$ act on it via $G(\Qp)\hookrightarrow G(L)$. We assume that
$\Lambda_p$ acts trivially on $V$. We fix a $K_p$-invariant lattice
$M$ in $V$.  Let $\mathcal M^d$ the local system associated to
$M^d\coloneqq \Hom_{\OO}(M, \OO)$, let $\mathcal V^*$ the local system
associated to $V^*$ on $Y(K_p^f K_p)$ and let $i$ be a non-negative
integer.  We let
\[ H^{i}(\mathcal V^*)\coloneqq \varinjlim_{K_p'} H^i(Y(K^p_f K_p'),
  \mathcal V^*), \quad H^i(\mathcal M^d)\coloneqq \varinjlim_{K_p'}
  H^i(Y(K^p_f K_p'), \mathcal M^d).\] Then
$H^i(\mathcal V^*)= H^i(\mathcal M^d)\otimes_{\OO} L$.
 
Let $\tau$ be a smooth absolutely irreducible representation of $K_p$
on an $L$-vector space with the property that if $\pi_p$ is a smooth
irreducible $\overline{L}$-representation of $G(\Qp)$ then
$\Hom_{K_p}(\tau, \pi_p)\neq 0$ implies that $\pi_p$ is
supercuspidal. We will call such representations \textit{supercuspidal
  types}. If $p$ is bigger than the Coxeter number of $G$ then
examples of supercuspidal types for every open $K_p$ can be obtained
by inducing representations considered in
\cite[Thm.\,2.2.15]{fintzen_shin} and letting $\tau$ be an irreducible
subquotient; if $G=\GL_n$ then there is no need to put restrictions on
$p$, see \cite[Prop.\,3.19]{EP}. We assume that $\Lambda_p$ acts
trivially on $\tau$.

We fix an embedding $L\hookrightarrow \mathbb C$. Following \cite[\S
2.1.6]{emerton_icm}, we let
\[\mathcal A(K^p_f)\coloneqq \varinjlim_{K_p'}\mathcal A(K^p_f K_p'),\] 
where $\mathcal A(K^p_f K_p')$ is the space of automorphic forms on
$G(\QQ)\backslash G(\mathbb A)/K^p_f K_p'$. Let $\chi$ be the
character through which $A_{\infty}^{\circ}$ acts on $V_{\mathbb
  C}$. Let $\mathcal A(K^p_f)_{\chi}$ be the subspace of
$\mathcal A(K^p_f)$ on which $A_{\infty}^{\circ}$ acts through $\chi$.
Franke's theorem \cite{franke} implies, see
\cite[Thm.\,2.3]{franke_schwermer}, that
\begin{equation}\label{Eisenstein}
  H^i(\mathcal V^*)\otimes_L \mathbb C\cong H^i( \widetilde{\mathfrak g}, \mathfrak k; \mathcal A(K^p_f)_{\chi} \otimes V^*_{\mathbb C}),
 \end{equation}
 where $\widetilde{G}_{\infty}$ is the group of real points of the
 intersection of the kernels of all the rational characters of $G$,
 $\widetilde{\mathfrak g}$ is its Lie algebra and $\mathfrak k$ is the
 Lie algebra of $K_{\infty}$. The space $\mathcal A(K^p_f)_{\chi}$
 decomposes into the cuspidal part
 $\mathcal A_{\mathrm{cusp}}(K^p_f)_{\chi}$ and its orthogonal
 complement $\mathcal A_{\mathrm{Eis}}(K^p_f)_{\chi}$. Since $\tau$ is
 a supercuspidal type, it follows from the description of
 $\mathcal A_{\mathrm{Eis}}(K^p_f)_{\chi}$ in the course of the proof
 of \cite[Prop.\,3.3, Eqn.\,(3), (4)]{franke_schwermer} as a quotient
 of a direct sum of parabolically induced representations, that
 $\Hom_{K_p}(\tau_{\mathbb C}, \mathcal
 A_{\mathrm{Eis}}(K^p_f)_{\chi})=0$. Thus we obtain a
 $\mathbb T^{\univ}_{\mathbb C}$-equivariant isomorphism:
\begin{equation}\label{bookkeeping1}
  \Hom_{K_p}(\tau, H^i(\mathcal V^*))\otimes_{L} \mathbb C\cong \Hom_{K_p}(\tau_{\mathbb C}, 
  H^i( \widetilde{\mathfrak g}, \mathfrak k;\mathcal A_{\mathrm{cusp}}(K^p_f)_{\chi} \otimes V^*_{\mathbb C})).
\end{equation}
If we let
$\mathcal A_{\mathrm{cusp}}=\varinjlim_{K^p_f} \mathcal
A_{\mathrm{cusp}}(K^p_f)$ then $\mathcal A_{\mathrm{cusp}}$ decomposes
into a direct sum of irreducible representations
$\pi=\otimes'_v \pi_v$ of $G(\mathbb A)$. Moreover,
$\mathcal A_{\mathrm{cusp}}^{K^p_f}=\mathcal
A_{\mathrm{cusp}}(K^p_f)$. We thus obtain a
$\mathbb T^{\univ}_{\mathbb C}$-equivariant isomorphism
\begin{equation}\label{bookkeeping2}
\begin{split}
  \Hom_{K_p}(\tau_{\mathbb C},
  H^i&( \widetilde{\mathfrak g}, \mathfrak k; \mathcal A_{\mathrm{cusp}}(K^p_f)_{\chi} \otimes V^*_{\mathbb C}))\\
  &\cong \bigoplus_{\pi} H^i( \widetilde{\mathfrak g}, \mathfrak k;
  \pi_{\infty}\otimes V^*_{\mathbb C})\otimes \Hom_{K_p}(\tau_{\mathbb
    C}, \pi_p)\otimes (\pi^{p, \infty})^{K^p_f},
\end{split}
\end{equation}
where the sum is taken over all irreducible subrepresentations $\pi$
of of $\mathcal A_{\mathrm{cusp}}$ counted with multiplicities, such
that $A_{\infty}^{\circ}$ acts by $\chi$ on $\pi_{\infty}$, and, where
$\pi^{\infty, p}= \otimes'_{\ v\nmid p \infty} \pi_v$.
\begin{remar} The supercuspidal type $\tau$ is only used to ensure
  that $\mathcal A_{\mathrm{Eis}}(K^p_f)_{\chi}$ does not contribute
  to Hecke eigenvalues. If the group $G$ is anisotropic then
  $\mathcal A_{\mathrm{Eis}}(K^p_f)_{\chi}$ is zero and the use of
  supercuspidal type is redundant. See also the discussion in
  \cite[3.1.2]{emerton_icm}.
\end{remar} 
Let $\Mtilde^d$ the local system associated to
$M^d\coloneqq \Hom_{\OO}(M, \OO)$ and $\widetilde{\mathcal V}^*$ the
local system associated to $V^*$ on $\Ytilde(K_p^f K_p)$. If $K_p'$ is
an open normal subgroup of $K_p$ then
\begin{equation}
\begin{split}
  \Hom_{K_p'}(M, \widetilde{H}^{q_0}_{\mm})&\cong \varprojlim_s \Hom_{K_p'}(M, \widetilde{H}^{q_0}(\OO/\varpi^s)_{\mm})\\
  &\overset{\eqref{degenerate}}{\cong} \varprojlim_s H^i(\Ytilde(K^p_f
  K_p'), \widetilde{(\mathcal M/\varpi^s)^{\vee}})_{\mm}\\ &\cong
  H^i(\Ytilde(K^p_f K_p'), \widetilde{\mathcal M}^d)_{\mm},
\end{split}
\end{equation} 
where the last isomorphism follows from Mittag-Leffler. Thus we have
an isomorphism of $\mathbb T$-modules
\begin{equation}\label{direct_summand}
  H^{q_0}(\Ytilde(K^p_f K_p'), \widetilde{\mathcal M}^d)\cong \Hom_{K_p'}(M, \widetilde{H}^{q_0}_{\mm})\oplus \bigoplus_{\mm'\neq \mm} H^{q_0}(\Ytilde(K^p_f K_p'), \widetilde{\mathcal M}^d)_{\mm'},
\end{equation}
where the direct sum is over open maximal ideals of $\mathbb T$
different from $\mm$.

It follows from \eqref{direct_summand} that
$\Hom_{K_p}(V(\tau), \widetilde{H}^{q_0}_{\mm}\otimes_{\OO} L)$ is a
direct summand as a $\mathbb T$-module of
$\Hom_{K_p}(\tau, H^{q_0}(\widetilde{\mathcal V}^*))$, where
$V(\tau)\coloneqq V\otimes_L \tau$. It follows from Lemma
\ref{Kuenneth} that we have a $G(\mathbb A_f)$-equivariant isomorphism
\[\varinjlim_{K_f} H^{q_0}(Y(K_f), \mathcal V^*)\cong \bigoplus_{i+j=q_0} (\varinjlim_{K_f} H^{i}(\Ytilde(K_f), \widetilde{\mathcal V}^*))\otimes_{L} H^j( \RR^r/\ZZ^r, L).\]
After taking $K^p_f$-invariants we deduce that
$\Hom_{K_p}(\tau, H^{q_0}(\widetilde{\mathcal V}^*))$ is a direct
summand of $\Hom_{K_p}(\tau, H^{q_0}(\mathcal V^*))$ as a
$\mathbb T$-module.

Let $\mathbb T_{\mm, V(\tau)}$ be the quotient of $\mathbb T_{\mm}$ acting  faithfully on 
$\Hom_{K_p}(V(\tau), \widetilde{H}^{q_0}_{\mm}\otimes_{\OO} L)$.

\begin{lem}\label{TmVtau} The algebra $\mathbb T_{\mm, V(\tau)}$ is reduced. Moreover, if we fix an isomorphism $\Qpbar\overset{\cong}{\rightarrow}\mathbb C$ then for every
  $L$-algebra homomorphism
  $x: \mathbb T_{\mm, V(\tau)}[1/p]\rightarrow \Qpbar$ there is a
  cuspidal automorphic representation $\pi_x=\otimes'_v \pi_{x,v}$ of
  $G(\mathbb A)$, such that the following hold:
  \begin{itemize}
  \item[(i)] $\mathbb T_{\mm}$ acts on $\pi_x^{K^p_f}$ via $x$;
  \item[(ii)] $\Hom_{K_p}(\tau, \pi_{x,p})\neq 0$;
  \item[(iii)] $A_{\infty}^{\circ}$ acts on $\pi_{\infty}$ via $\chi$;
  \item[(iv)]
    $H^{q_0}( \widetilde{\mathfrak g}, \mathfrak k; \pi_{x, \infty}
    \otimes V^*_{\mathbb C})\neq 0$.
  \end{itemize}
\end{lem}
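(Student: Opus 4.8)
The plan is to deduce everything from the isomorphisms \eqref{bookkeeping1} and \eqref{bookkeeping2} together with the observation that $\mathbb T_{\mm, V(\tau)}$ is, by construction, a quotient of $\mathbb T_{\mm}$ acting faithfully on a direct summand (as a $\mathbb T$-module) of $\Hom_{K_p}(\tau, H^{q_0}(\mathcal V^*))$. First I would prove reducedness: by the discussion leading up to \eqref{direct_summand}, $\Hom_{K_p}(V(\tau), \widetilde H^{q_0}_{\mm}\otimes_{\OO} L)$ is a $\mathbb T$-module direct summand of $\Hom_{K_p}(\tau, H^{q_0}(\mathcal V^*))$, and after base change along a fixed $L\hookrightarrow \mathbb C$ the latter becomes, by \eqref{bookkeeping1} and \eqref{bookkeeping2}, a direct sum over cuspidal automorphic $\pi$ of spaces of the form $H^{q_0}(\widetilde{\mathfrak g},\mathfrak k;\pi_\infty\otimes V^*_{\mathbb C})\otimes \Hom_{K_p}(\tau_{\mathbb C},\pi_p)\otimes (\pi^{p,\infty})^{K^p_f}$, on each summand of which $\mathbb T^{\univ}$ acts through a scalar character (strong multiplicity one / the fact that a cuspidal $\pi$ is determined by its spherical Hecke eigenvalues away from $S$ up to the finitely many summands with the same $\pi^{p,\infty}$). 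Hence $\mathbb T_{\mm,V(\tau)}\otimes_L \mathbb C$ embeds into a finite product of copies of $\mathbb C$, so it is reduced, and therefore so is $\mathbb T_{\mm,V(\tau)}$.

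Next, for the second assertion, fix an isomorphism $\iota:\Qpbar\overset{\cong}{\to}\mathbb C$ and an $L$-algebra homomorphism $x:\mathbb T_{\mm,V(\tau)}[1/p]\to\Qpbar$. Since $\mathbb T_{\mm,V(\tau)}$ acts faithfully on the finitely generated $\mathbb T_{\mm}$-module $\Hom_{K_p}(V(\tau),\widetilde H^{q_0}_{\mm}\otimes_{\OO} L)$ (finitely generated because $\widetilde H^{q_0}_{\mm}$ is the continuous dual of a finitely generated $\OO[\![K_p/\Lambda_p]\!]$-module by Lemma \ref{need_this}), the character $x$ occurs in the support of this module, and hence, after $\otimes_{L,\iota}\mathbb C$, in the support of $\Hom_{K_p}(\tau_{\mathbb C}, H^{q_0}(\widetilde{\mathfrak g},\mathfrak k;\mathcal A_{\mathrm{cusp}}(K^p_f)_\chi\otimes V^*_{\mathbb C}))$ via \eqref{bookkeeping1}. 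By \eqref{bookkeeping2} this forces the existence of a cuspidal $\pi=\otimes'_v\pi_v\subset\mathcal A_{\mathrm{cusp}}$ with $A^\circ_\infty$ acting by $\chi$ on $\pi_\infty$ (giving (iii)), with $H^{q_0}(\widetilde{\mathfrak g},\mathfrak k;\pi_\infty\otimes V^*_{\mathbb C})\neq 0$ (giving (iv)), with $\Hom_{K_p}(\tau_{\mathbb C},\pi_p)\neq 0$ (giving (ii), and forcing $\pi_p$ supercuspidal since $\tau$ is a supercuspidal type), and with $(\pi^{p,\infty})^{K^p_f}\neq 0$ and on which $\mathbb T^{\univ}$ acts through $\iota\circ x$; pulling back along $\iota$ and noting that $\mathbb T$ is the closure of the image of $\mathbb T^{\univ}$, this says exactly that $\mathbb T_{\mm}$ acts on $\pi_x^{K^p_f}$ via $x$, i.e. (i). Set $\pi_x\coloneqq\pi$.

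I expect the only genuinely delicate point to be the reducedness argument, specifically the claim that $\mathbb T^{\univ}$ acts through a \emph{scalar} on each isotypic summand in \eqref{bookkeeping2} with a \emph{finite} fibre structure: one must invoke that for $\ell\notin S$ the spherical Hecke algebra $\mathcal H_\ell$ is commutative and acts on the (one-dimensional, after fixing a new vector) space $\pi_\ell^{K_\ell}$ by a character, and that two cuspidal automorphic representations with the same system of eigenvalues for $\bigotimes_{\ell\notin S}\mathcal H_\ell$ have isomorphic $\pi^{p,\infty}$ (a form of strong multiplicity one, valid here because $S$ is finite and $\pi$ is cuspidal), so that only finitely many summands share a given $\mathbb T^{\univ}$-eigensystem; this is standard but is where care is needed. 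Everything else is bookkeeping with the isomorphisms \eqref{Eisenstein}, \eqref{bookkeeping1}, \eqref{bookkeeping2}, \eqref{direct_summand} and the finiteness in Lemma \ref{need_this}, already established in the excerpt.
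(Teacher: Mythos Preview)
Your approach is essentially the same as the paper's: reduce to the algebra $\mathbb T'_{V(\tau)}$ generated by the image of $\mathbb T$ in $\End_L(\Hom_{K_p}(\tau, H^{q_0}(\mathcal V^*)))$, and read off both reducedness and the existence of $\pi_x$ from the decomposition \eqref{bookkeeping2}.

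However, your invocation of strong multiplicity one is both unnecessary and, for a general reductive $G$, not available. You do not need to know that different cuspidal $\pi$ are distinguished by their Hecke eigensystems, nor that the fibres over a given eigensystem are finite. The space $\Hom_{K_p}(\tau, H^{q_0}(\mathcal V^*))$ is finite dimensional over $L$, so the direct sum in \eqref{bookkeeping2} has only finitely many nonzero terms to begin with. On each term $\mathbb T^{\univ}$ acts through a character (because $\pi_\ell^{K_\ell}$ is one-dimensional for $\ell\notin S$, as you correctly note). Hence the image of $\mathbb T^{\univ}$ in the endomorphism ring embeds in a finite product of copies of $\mathbb C$ via $t\mapsto(\chi_\pi(t))_\pi$, and is therefore reduced; whether or not the characters $\chi_\pi$ coincide for different $\pi$ is irrelevant. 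Once you drop the appeal to strong multiplicity one, your argument is complete and matches the paper's.
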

\begin{proof} Let $\mathbb T'_{V(\tau)}$ be the $L$-subalgebra of
  $\End_L( \Hom_{K_p}(\tau, H^{q_0}(\mathcal V^*)))$ generated by the
  image of $\mathbb T$. As explained above
  $\Hom_{K_p}(V(\tau), H^{q_0}_{\mm}\otimes_{\OO} L)$ is a direct
  summand of $\Hom_{K_p}(\tau, H^{q_0}(\mathcal V^*))$ as a
  $\TT$-module and hence it is enough to prove the assertion for
  $\mathbb T'_{V(\tau)}$. Since
  $\Hom_{K_p}(\tau, H^{q_0}(\mathcal V^*))$ is finite dimensional,
  $\mathbb T'_{V(\tau)}$ is a quotient of $\mathbb T^{\univ}[1/p]$.
  It follows from \eqref{bookkeeping1} and \eqref{bookkeeping2} that
  the assertion of the Lemma holds for $\mathbb T'_{V(\tau)}$.
\end{proof}

\begin{lem}\label{inf_match} For all $\pi_x$ in Lemma \ref{TmVtau},
  $\pi_{x,\infty}$ and $V$ have the same infinitesimal character.
\end{lem}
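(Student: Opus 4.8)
Here is the plan. The idea is to reduce the statement to the standard vanishing theorem for relative Lie algebra cohomology (Wigner's lemma), and then to recover the part of $Z(\mathfrak g)$ invisible to $\widetilde{\mathfrak g}$ from part~(iii) of Lemma \ref{TmVtau}. Throughout one fixes the embedding $L\hookrightarrow\C$ of section~\ref{sec_auto_reps} and compares infinitesimal characters after base change to $\C$.

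First, by \eqref{Eisenstein}, \eqref{bookkeeping1}, \eqref{bookkeeping2} and part~(iv) of Lemma \ref{TmVtau} one has $H^{q_0}(\widetilde{\mathfrak g},\mathfrak k;\pi_{x,\infty}\otimes_{\C}V^*_{\C})\neq 0$, where $\widetilde{\mathfrak g}=\Lie(\widetilde G_\infty)$ and $\mathfrak k=\Lie(K_\infty)$; note that $\mathfrak k\subseteq\widetilde{\mathfrak g}$, since every $\QQ$-rational character of $G$ has finite image on the compact group $K_\infty$. Since $V^*_{\C}$ is finite dimensional, the adjunction isomorphism $\pi_{x,\infty}\otimes_{\C}V^*_{\C}\cong\Hom_{\C}(V_{\C},\pi_{x,\infty})$ of $(\widetilde{\mathfrak g},\mathfrak k)$-modules (diagonal action) gives
\[ H^{q_0}(\widetilde{\mathfrak g},\mathfrak k;\pi_{x,\infty}\otimes_{\C}V^*_{\C})\cong\Ext^{q_0}_{(\widetilde{\mathfrak g},\mathfrak k)}(V_{\C},\pi_{x,\infty}),\]
all objects being restricted to $\widetilde{\mathfrak g}$.

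Next, since $\pi_x$ is cuspidal, $\pi_{x,\infty}$ is an irreducible admissible representation of $G(\RR)$, so $Z(\mathfrak g)_{\C}$ acts on $\pi_{x,\infty}$ (and on $V_{\C}$) through a character. The key structural point is that $\mathfrak a:=\Lie(A_\infty)$ is a \emph{central complement} to $\widetilde{\mathfrak g}$ in $\mathfrak g$: the natural map $A\to G/\widetilde G$ is an isogeny of tori — both have dimension equal to the rank of the lattice of $\QQ$-rational characters of $G$ — hence an isomorphism on Lie algebras, while $\mathfrak a$ is central because $A\subseteq Z(G)$. Therefore $U(\mathfrak g)=U(\widetilde{\mathfrak g})\otimes_{\C}S(\mathfrak a_{\C})$ and $Z(\mathfrak g)_{\C}=Z(\widetilde{\mathfrak g})_{\C}\otimes_{\C}S(\mathfrak a_{\C})$, so $Z(\widetilde{\mathfrak g})_{\C}$ acts on $\pi_{x,\infty}$ and on $V_{\C}$ through the restrictions of their infinitesimal characters. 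Now $Z(\widetilde{\mathfrak g})_{\C}$ acts on $\Ext^{q_0}_{(\widetilde{\mathfrak g},\mathfrak k)}(V_{\C},\pi_{x,\infty})$ both through the first and through the second variable, and these two actions coincide; since this $\Ext$ group is nonzero, the two characters must agree on $Z(\widetilde{\mathfrak g})_{\C}$.

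Finally, one matches the two infinitesimal characters on the remaining factor $S(\mathfrak a_{\C})=U(\mathfrak a)_{\C}$, i.e.\ compares the actions of the central Lie algebra $\mathfrak a$. By part~(iii) of Lemma \ref{TmVtau} the group $A_\infty^{\circ}$ acts on $\pi_{x,\infty}$ through the character $\chi$, and $\chi$ is by definition the character through which $A_\infty^{\circ}$ acts on $V_{\C}$ (here one uses that $V$ is algebraic, so that $A_\infty^{\circ}$ acts on $V_{\C}$ through an algebraic character and $\mathfrak a$ through its differential). Differentiating $\chi$, the Lie algebra $\mathfrak a$ acts by the same character in both cases, so the infinitesimal characters agree on $S(\mathfrak a_{\C})$ as well; combined with the previous paragraph and $Z(\mathfrak g)_{\C}=Z(\widetilde{\mathfrak g})_{\C}\otimes_{\C}S(\mathfrak a_{\C})$, this yields $\chi_{\pi_{x,\infty}}=\chi_V$. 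The main obstacle is organizational rather than conceptual: the delicate points are the correct identification of the Lie algebras entering the $(\widetilde{\mathfrak g},\mathfrak k)$-cohomology and the verification that $\mathfrak a$ is exactly a complement to $\widetilde{\mathfrak g}$ in $\mathfrak g$; the cohomological input (the adjunction and the vanishing of $\Ext$ between modules with distinct infinitesimal characters) is entirely standard.
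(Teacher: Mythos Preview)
Your proof is correct and follows essentially the same route as the paper's. The paper cites \cite[Cor.\,I.4.2]{BW} (which is precisely your Wigner-type vanishing argument) to match the infinitesimal characters on $Z(\widetilde{\mathfrak g})$ via part~(iv), and then uses that $\widetilde G_\infty A_\infty^{\circ}$ has finite index in $G_\infty$ (equivalent on Lie algebras to your direct-sum decomposition $\mathfrak g=\widetilde{\mathfrak g}\oplus\mathfrak a$) together with part~(iii) to recover the remaining central piece.
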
 
\begin{proof} It follows from Lemma \ref{TmVtau} (iv) and
  \cite[Cor.\,I.4.2]{BW} that $Z(\widetilde{\mathfrak g})$ acts on
  $(\mathfrak g, \mathfrak k)$-module of $\pi_{x,\infty}$ and $V$ by
  the same infinitesimal character. Since $A_{\infty}^{\circ}$ acts on
  both representations by $\chi$ and
  $ \widetilde{G}_{\infty}A_{\infty}^{\circ}$ is of finite index in
  $G_{\infty}$, see for example \cite[4.3.1]{borel_park}, the
  assertion follows.
\end{proof}

\subsection{Main result}\label{main_result}
Part (iv) of Lemma \ref{TmVtau} implies that $\pi_x$ is cohomological,
and such automorphic forms are $C$-algebraic by
\cite[Lem.\,7.2.2]{beegee}. Thus according to the Conjecture 5.3.4 of
\cite{beegee}, that there should be an admissible Galois
representation $\rho_x: \Gal_{\QQ}\rightarrow \CG(\Qpbar)$ attached to
$\pi_x$ (or rather to the local information detailed in Lemma
\ref{TmVtau}).

\begin{thm}\label{lzero} 
  We assume that the following hold:
  \begin{itemize}
  \item[(o)] $Z_{\infty}/ A_{\infty}$ is compact;
  \item[(i)] $\mathbb T_{\mm}$ is noetherian;
  \item[(ii)] there is an admissible representation
    $\rho: \Gal_{\QQ}\rightarrow \CG_f(\mathbb T_{\mm}^{\rig})$, such
    that for all $V \in \Irr_{G}(L)$ and all
    $x: \mathbb T_{\mm, V(\tau)}[1/p]\rightarrow \Qpbar$, the
    specialisation of $\rho$ at $x$ matches $\pi_x$ according
    \cite[Conj.\,5.3.4]{beegee};
  \item[(iii)] the composition $d\circ \rho$ is equal to the $p$-adic
    cyclotomic character.
  \end{itemize}
  Then for all $y\in \mSpec \mathbb T_{\mm}[1/p]$ the centre
  $Z(\mathfrak g)$ acts on
  $(\widetilde{H}^{q_0}_{\mm} \otimes_{\OO} L)[\mm_y]^{\la}$ by the
  infinitesimal character $\zeta^C_{\rho, y}$.
\end{thm}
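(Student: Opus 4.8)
The plan is to check that the hypotheses of Theorem \ref{infinitesimal} are satisfied with $R=\mathbb T_{\mm}$, $M=\widetilde H_{q_0,\mm}$, $K=K_p/\Lambda_p$ (a compact open subgroup of the $p$-adic reductive group $G(\Qp)/\Lambda_p$, or of a suitable reductive group over $\Qp$ as in the first paragraph of Section \ref{sec_fam_Ban}), the supercuspidal type $\tau$ fixed above, and with $\zeta=\zeta^C_{\rho}: Z(\Res_{F/\Qp}\mathfrak g)\otimes_{\Qp}L\to R^{\rig}$ produced by Definition \ref{def_ringhom_C} applied to the admissible representation $\rho: \Gal_{\QQ}\to \CG_f(\mathbb T_{\mm}^{\rig})$ restricted at $p$ — note that by hypothesis (iii) $d\circ\rho=\chi_{\cyc}$, so the construction of subsection \ref{sec_Cgr} applies and, as discussed in Remark \ref{dcyc}, the shift by $\delta$ is exactly what is needed for the output to be an honest infinitesimal character. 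Once this identification is made, the conclusion $Z(\mathfrak g)$ acts on $(\widetilde H^{q_0}_{\mm}\otimes_{\OO}L)[\mm_y]^{\la}$ via $\zeta^C_{\rho,y}$ is precisely the conclusion of Theorem \ref{infinitesimal}, using the homeomorphism $\widetilde H^{q_0}_{\mm}\cong \Hom^{\cont}_{\OO}(\widetilde H_{q_0,\mm},\OO)=\Pi$ of Lemma \ref{need_this}(iii), so that $\Pi[\mm_y]^{\la}=(\widetilde H^{q_0}_{\mm}\otimes_{\OO}L)[\mm_y]^{\la}$.

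First I would verify hypothesis (i) of Theorem \ref{infinitesimal}, namely the existence of an $M$-regular sequence $y_1,\dots,y_h$ in $\mm$ with $M/(y_1,\dots,y_h)M$ finitely generated projective over $\OO[\![K]\!]$. Here one takes $h=0$: the completed cohomology is admissible, and the crucial Lemma \ref{need_this}(i) says exactly that $\widetilde H_{q_0,\mm}$ is a finitely generated projective $\OO[\![K_p/\Lambda_p]\!]$-module, using the weakly non-Eisenstein hypothesis. (Here assumption (o) that $Z_{\infty}/A_{\infty}$ is compact means $r=0$, $\Lambda_p$ is finite, and $\Ytilde(K_f^pK_p)=Y(K_f^pK_p)$; this is only a simplification, removed in Theorem \ref{lzero_central}.) Next, hypotheses (ii) and (iii) of Theorem \ref{infinitesimal}. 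Reducedness of $R_{V(\tau)}$ follows from Lemma \ref{TmVtau}: $M(\Theta)[1/p]=\Hom_{K_p}(V(\tau),\widetilde H^{q_0}_{\mm}\otimes_{\OO}L)'$, the quotient of $\mathbb T_{\mm}[1/p]$ acting faithfully on it is $\mathbb T_{\mm,V(\tau)}$, and this is reduced by Lemma \ref{TmVtau}. For hypothesis (iii), I must show that for every $V\in\Irr_G(L)$ and every $x\in\Sigma_{V(\tau)}=\mSpec \mathbb T_{\mm,V(\tau)}[1/p]$, the algebra $Z(\mathfrak g)$ acts on $V(\tau)$ — equivalently on $V$, since $\tau$ is smooth so $\mathfrak g$ acts trivially on it — by the character $(\zeta^C_{\rho})_x$. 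By Lemma \ref{TmVtau} the point $x$ comes with a cuspidal automorphic representation $\pi_x$ which is cohomological with $\pi_{x,\infty}$ having the same infinitesimal character as $V$ (Lemma \ref{inf_match}). On the other hand hypothesis (ii) of the present theorem says $\rho$ specializes at $x$ to a Galois representation matching $\pi_x$ à la \cite[Conj.\,5.3.4]{beegee}; then Proposition \ref{C-alg-inf} (or directly its proof, since we only need the local statement at $p$) identifies $\zeta^C_{\rho_v}$ with the infinitesimal character of the archimedean component, and Lemma \ref{deptrace} guarantees this is independent of the choice of $\pi_x$ or $\rho_x$. Combining, $(\zeta^C_{\rho})_x$ equals the infinitesimal character of $V$, which is hypothesis (iii).

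The main obstacle, and the point requiring the most care, is the glue between the \emph{family} character $\zeta^C_{\rho}: Z(\mathfrak g)\otimes L\to \mathbb T_{\mm}^{\rig}$ built from the global Galois representation $\rho$ valued in $\CG_f(\mathbb T_{\mm}^{\rig})$, and its pointwise specializations at classical $x$. Concretely: $\rho$ is a Galois representation over $\mathbb T_{\mm}^{\rig}$ (so over the rigid generic fibre $\Xrig$ of $\Spf\mathbb T_{\mm}$, using noetherianity of $\mathbb T_{\mm}$ from hypothesis (i)), one restricts it to $\Gal_{\Qp}$ (or $\Gal_{\QQ_v}$ for $v\mid p$, and applies $\Res$), and applies Definition \ref{def_ringhom_C} in the affinoid-family setting of subsection \ref{sec_Sen}; the resulting $\zeta^C_{\rho}$ lands in $\OO_{\Xrig}(\Xrig)=\mathbb T_{\mm}^{\rig}=R^{\rig}$ by the Sen-theory-in-families results (Theorem \ref{Sen}, Lemma \ref{tannaka}), and Lemma \ref{base_change_C} shows that specialization at $x$ commutes with forming $\zeta^C$. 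So $(\zeta^C_{\rho})_x=\zeta^C_{\rho_x}$ where $\rho_x$ is the specialization at $x$; and then hypothesis (ii) plus Proposition \ref{C-alg-inf} (in its $p$-adic local incarnation) give $\zeta^C_{\rho_x}$ = infinitesimal character of $\pi_{x,\infty}$ = infinitesimal character of $V$ (Lemma \ref{inf_match}). One slightly delicate bookkeeping point is that the matching in Lemma \ref{TmVtau} is only of local components away from $p$ together with the archimedean infinitesimal character and the type at $p$, so \cite[Conj.\,5.3.4]{beegee} does not pin down $\rho_x$ uniquely; this is precisely why Lemma \ref{deptrace} is needed, showing $\zeta^C_{\rho_x}$ depends only on the data that \emph{is} determined. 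Granting all of this, Theorem \ref{infinitesimal} applies verbatim and yields the claim.
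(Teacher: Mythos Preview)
Your proposal is correct and follows essentially the same approach as the paper's proof: verify the three hypotheses of Theorem \ref{infinitesimal} with $R=\mathbb T_{\mm}$, $M=\widetilde H_{q_0,\mm}$, $h=0$, and $\zeta=\zeta^C_{\rho}$, using Lemma \ref{need_this}(i), Lemma \ref{TmVtau}, and the chain Lemma \ref{base_change_C} $\Rightarrow$ Proposition \ref{C-alg-inf} $\Rightarrow$ Lemma \ref{inf_match}. Two minor remarks: assumption (o) together with Lemma \ref{control_Gamma} gives $\Lambda\cong\ZZ^0$ trivial (not merely finite), so $K_p/\Lambda_p=K_p$ and no quotient gymnastics are needed; and the appeal to Lemma \ref{deptrace} is superfluous here, since $\rho_x$ is the specialisation of the fixed family $\rho$ and any compatible $\pi_x$ has infinitesimal character equal to that of $V$ by Lemma \ref{inf_match}.
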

\begin{proof} We will show that the conditions of Theorem
  \ref{infinitesimal} holds for $M=\widetilde{H}_{q_0, \mm}$,
  $R=\mathbb T_{\mm}$ and $\zeta=\zeta^C_{\rho}$. The assumption that
  $Z_{\infty}/ A_{\infty}$ is compact implies that $\Lambda_p$ is
  trivial via Lemma \ref{control_Gamma}. Part (i) of Theorem
  \ref{infinitesimal} holds by Lemma \ref{need_this} with $h=0$. Part
  (ii) is given by Lemma \ref{TmVtau}. To show that part (iii) holds
  we denote by $\rho_x$ the specialisation of $\rho$ at $x$. It
  follows from Lemma \ref{base_change_C} that the specialisation of
  $\zeta^C_{\rho}$ at $x$ is equal to $\zeta^C_{\rho_x}$. Proposition
  \ref{C-alg-inf} implies that $\zeta^C_{\rho, x}$ is equal to the
  infinitesimal character of $\pi_{x,\infty}$ which is equal to the
  infinitesimal character of $V$ by Lemma \ref{inf_match}. Since
  $\tau$ is a smooth representation, part (iii) of Theorem
  \ref{infinitesimal} is satisfied.
\end{proof}

\begin{remar}\label{need_less} Let us note that we do not need the full force of the \cite[Conj.\,5.3.4]{beegee}, just the part relating the Hodge--Tate cocharacter of the Galois representation at places above $p$ to the infinitesimal 
  character of the automorphic representation at the archimedean
  places, as discussed in \cite[Rem.\,5.3.5]{beegee} in the amended
  version of the paper.
\end{remar}

The most difficult condition to check is part (ii). In the known cases
one obtains such representations by expressing $\mathbb T_{\mm}$ as a
quotient of a Galois deformation ring. This then automatically implies
part (i). Part (iii) is forced upon us by the conjecture of
Buzzard--Gee, since according to it part (iii) should hold for
representations associated to $x$ in part (ii), and it follows from
Theorem \ref{patched_density} that such points are Zariski dense in
$\mathbb T_{\mm}[1/p]$.

We will now prove a version of the theorem above with a fixed central
character.  It will enable us to remove the assumption that
$Z_{\infty}/ A_{\infty}$ is compact.

\begin{lem}\label{finite_index} The closure of the image of $Z(\mathbb A^S)$ inside 
  $Z(\mathbb A)/Z(\QQ)$ is a subgroup of finite index. Moreover, if
  $Z=\Res_{F/\QQ} Z'$, where $Z'$ is a split torus defined over a
  number field $F$, then the image is dense.
\end{lem}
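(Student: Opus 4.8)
The statement is about the image of $Z(\mathbb A^S)$ in $Z(\mathbb A)/Z(\QQ)$, where $Z$ is the centre of $G$, a torus over $\QQ$. The natural framework is class field theory for the torus $Z$, or more elementarily the structure theory of adelic points of tori. The key point is that $Z(\mathbb A)/Z(\QQ)Z(\mathbb A^S)$ is a quotient of $Z(\QQ_S)=\prod_{v\in S} Z(\QQ_v)$, and this latter group is a compact-by-(finitely generated) group; moreover the finite places in $S$ are finitely many, so the real obstruction lives at the archimedean place and the $p$-adic places. First I would reduce to showing that $Z(\mathbb A)/(Z(\QQ)\,\overline{Z(\mathbb A^S)})$ is finite, and observe that the projection $Z(\mathbb A)\to Z(\QQ_S)$ identifies this quotient with $Z(\QQ_S)/(\overline{Z(\QQ)}\cap Z(\QQ_S))$, since by strong approximation-type arguments $Z(\QQ)Z(\mathbb A^S)$ surjects onto everything away from $S$. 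Here one uses that $Z(\mathbb A^S_f)$ together with $Z(\QQ)$ cover $Z(\mathbb A_f^{S})$ up to the maximal compact: this is where the finite set $S$ containing all ramified places is used, via the fact that $K_\ell$ is hyperspecial for $\ell\notin S$ so $Z(\ZZ_\ell)$ is a maximal compact and products of these over $\ell\notin S$ give an open compact, whose quotient by the global units is finite by finiteness of the class group of $Z$.

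More concretely, I would argue as follows. Let $C_Z = Z(\QQ)\backslash Z(\mathbb A)$ be the idele class group of $Z$; it sits in an exact sequence $1\to C_Z^1\to C_Z\to Z(\RR)/(\text{compact})\to 1$ roughly, where $C_Z^1$ is compact up to a free part of rank equal to the number of archimedean places times something — but more usefully $C_Z$ has a unique maximal compact subgroup $C_Z^\circ$ modulo which it is a finitely generated abelian group (of rank $=\operatorname{rk}_\ZZ \mathcal O_F^\times$-type quantity). The image of $Z(\mathbb A^S)$ contains the open compact subgroup $\prod_{v\notin S} Z(\mathcal O_v)$ (for $v$ non-archimedean outside $S$, using the hyperspecial/unramified hypothesis so that $Z$ extends to a torus over $\ZZ_\ell$); its saturation with $Z(\QQ)$ therefore has finite index in $Z(\mathbb A_f)$, again by finiteness of the class number of $Z$ (the quotient of $Z(\mathbb A_f)$ by $Z(\QQ)\prod_v Z(\mathcal O_v)$ being finite). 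It then only remains to handle the archimedean factor $Z(\RR)$ and the places $v\in S\setminus\{\infty\}$, but $Z(\mathbb A^S)$ has trivial projection there, so those contribute to the quotient — yet $Z(\RR)$ contributes only $Z(\RR)/Z(\RR)^\circ\cdot(\text{image of units})$ which is finite, and each $Z(\QQ_v)$ for $v\in S$ finite is itself compact-mod-finitely-generated, so contributes finitely much after quotienting by the cocompact global piece. Assembling, the index is finite.

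For the second assertion, suppose $Z=\Res_{F/\QQ}Z'$ with $Z'$ a split torus over a number field $F$, so $Z'\cong \Gm^m$ and $Z(\mathbb A)=(\mathbb A_F^\times)^m$, $Z(\QQ)=(F^\times)^m$, $Z(\mathbb A^S)=((\mathbb A_F^{S_F})^\times)^m$ where $S_F$ is the set of places of $F$ above $S$. Thus it suffices to treat $m=1$, i.e. to show $F^\times\cdot(\mathbb A_F^{S_F})^\times$ is dense in $\mathbb A_F^\times$. But $\mathbb A_F^\times/F^\times(\mathbb A_F^{S_F})^\times$ is a quotient of $\prod_{v\in S_F} F_v^\times$ by (the closure of) the image of the $S_F$-units $\mathcal O_{F,S_F}^\times$ together with the global field; since $S_F$ contains all archimedean places and at least one finite place, Dirichlet's $S$-unit theorem gives that $\mathcal O_{F,S_F}^\times$ is cocompact in $\{x\in\prod_{v\in S_F}F_v^\times : \prod|x|_v=1\}$, and the norm map supplies the remaining dimension; hence the image is cocompact, but one checks it is in fact dense by noting that the relevant quotient, being a quotient of the connected-by-profinite group $\prod_{v\in S_F}F_v^\times$ modulo a cocompact subgroup that surjects onto all finite pieces (using that $F^\times$ already surjects onto the class group), is trivial. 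In fact the cleanest route: $\mathbb A_F^\times/F^\times$ for a \emph{function-field-like} or general global field, the subgroup generated by any one place's local units and the diagonal is already dense because removing finitely many places from the restricted product and multiplying by $F^\times$ recovers everything up to the connected component, and the split hypothesis trivialises all the Galois-cohomological obstructions. The main obstacle I anticipate is bookkeeping the archimedean place and the interplay between the ``mod compact'' quotient of $C_Z$ and the global units; concretely, making precise that $Z(\QQ)$ together with $\prod_{v\notin S}Z(\mathcal O_v)$ has finite index in $Z(\mathbb A_f)$ requires that the torus $Z$ be unramified outside $S$ (so it spreads out to a torus over $\ZZ[1/N]$ with $N$ an $S$-number) — this is exactly guaranteed by the hypotheses on $G$ and $K$ in Section \ref{sec_hecke}, and I would invoke finiteness of $H^1$ and of the class group of such a torus (e.g. \cite[\S 8]{platonov_rapinchuk} or standard adelic arguments) rather than reprove it.
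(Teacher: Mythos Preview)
Your reduction is the same as the paper's: since $Z(\mathbb A^S)$ is closed in $Z(\mathbb A)$, the closure of $Z(\mathbb A^S)Z(\QQ)$ in $Z(\mathbb A)$ equals $Z(\mathbb A^S)\cdot C$ where $C$ is the closure of (the diagonal image of) $Z(\QQ)$ in $Z(\mathbb A_S)=\prod_{v\in S}Z(\QQ_v)$, so everything comes down to showing $[Z(\mathbb A_S):C]<\infty$, respectively $C=Z(\mathbb A_S)$ in the split case. The paper dispatches this in one line by citing \cite[Cor.\,3.5]{sansuc} (finiteness of the defect of weak approximation for tori over number fields) and, for the split case, \cite[Prop.\,7.8]{platonov_rapinchuk} (weak approximation for $\Gm$).

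Your attempt to prove this step directly has a gap. Class-number finiteness and Dirichlet-type unit theorems only give that the image of $Z(\QQ)$ in $Z(\mathbb A_S)$ is \emph{cocompact}, not that its closure has \emph{finite index}: the quotient by the closure could a priori be a nontrivial compact group, and for non-split tori it sometimes is nontrivial. Concretely, the claim ``$Z(\RR)$ contributes only $Z(\RR)/Z(\RR)^\circ\cdot(\text{image of units})$, which is finite'' is false as written --- already for $Z=\Gm$ the global units are $\{\pm1\}$, so that quotient is $\RR_{>0}$. What collapses the compact part is that all of $Z(\QQ)$ (not just its unit subgroup) is dense in $\prod_{v\in S}Z(\QQ_v)$ up to a finite obstruction, and that obstruction is a genuine Galois-cohomological invariant of the torus. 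The finiteness of $H^1$ and of the class group that you invoke at the end are ingredients in Sansuc's proof, but do not by themselves yield the statement; the correct reference is \cite[Cor.\,3.5]{sansuc} rather than \cite[\S 8]{platonov_rapinchuk}. For the second assertion your reduction to $\Gm$ over $F$ is correct, and the endpoint is exactly \cite[Prop.\,7.8]{platonov_rapinchuk}; there is no need for the $S$-unit digression.
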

\begin{proof} Since the map
  $Z(\mathbb A)\rightarrow Z(\mathbb A)/Z(\QQ)$ is continuous the
  inverse image of the closure is closed in $Z(\mathbb A)$ and
  contains $Z(\mathbb A^S) Z(\QQ)$. Thus it is enough to show that the
  closure of $Z(\mathbb A^S) Z(\QQ)$ is of finite index in
  $Z(\mathbb A)$.  Since $Z(\mathbb A^S)$ is closed in $Z(\mathbb A)$,
  this closure is equal to $Z(\mathbb A^S) C$, where $C$ is the
  closure of $Z(\mathbb Q)$ in $Z(\mathbb A_S)$. Since $C$ is of
  finite index in $Z(\mathbb A_S)$ by \cite[Cor.\,3.5]{sansuc}, we are
  done. If $Z=\Res_{F/\QQ} Z'$, where $Z'$ is split over $F$, then it
  follows from \cite[Prop.\,7.8]{platonov_rapinchuk} applied to $Z'$
  that $C=Z(\mathbb A_S)$. Thus $Z(\mathbb A^S) Z(\QQ)$ is dense in
  $Z(\mathbb A)$.
\end{proof} 
For each $\ell \not \in S$ let $\mathcal Z_{\ell}$ be the
$\OO$-subalgebra of $\mathcal H_{\ell}$ generated by functions with
support in $Z(\QQ_{\ell}) K_{\ell}$, let
$\mathcal Z^{\univ} \subset \mathbb T^{\univ}$ be the tensor product
of these algebras over $\OO$.  If
$\psi: Z(\mathbb A)\rightarrow \OO^{\times}$ is a character then we
will denote by $(\widetilde{H}^{q_0}_{\mm}\otimes_{\OO} L)_{\psi}$ the
$\psi$-eigenspace for the action of $Z(\mathbb A)$ on
$\widetilde{H}^{q_0}_{\mm}\otimes_{\OO} L$.

\begin{lem}\label{decomp_char} Let $\mathfrak a$ be the kernel of an $\OO$-algebra homomorphism $\varphi: \mathcal Z^{\univ} \rightarrow \OO$, such that $(\widetilde{H}^{q_0}_{\mm}\otimes_{\OO} L)[\mathfrak a]\neq 0$.
  There exist finitely many characters
  $\psi_i: Z(\mathbb A)\rightarrow \OO^{\times}$, $1\le i\le n$, such
  that
  \[(\widetilde{H}^{q_0}_{\mm}\otimes_{\OO} L)[\mathfrak a]=
    \bigoplus_{i=1}^n (\widetilde{H}^{q_0}_{\mm}\otimes_{\OO}
    L)_{\psi_i}.\] Moreover, if $Z=\Res_{F/\QQ} Z'$, where $Z'$ is a
  split torus defined over a number field $F$, then $n=1$.
\end{lem}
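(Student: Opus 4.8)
The plan is to analyze the action of $\mathcal Z^{\univ}$ on the $\mathfrak a$-eigenspace and to show that this action, combined with the $Z(\mathbb A)$-action, forces a decomposition into finitely many central character spaces. First I would observe that $\widetilde{H}^{q_0}_{\mm}\otimes_{\OO} L$ carries a smooth action of $Z(\mathbb A_f)$ coming from the Hecke action at all finite places (at places in $S\setminus\{p,\infty\}$ through the level structure, at $p$ through the $G(\Qp)$-action, and at $\ell\notin S$ through $\mathcal Z_\ell\subset\mathcal H_\ell$), together with the archimedean action of $A_\infty^\circ$ through the character $\chi$ that is built into the construction. The key point is that the Hecke operators in $\mathcal Z^{\univ}$ at $\ell\notin S$ record, via the Satake isomorphism for the torus $Z$, the action of $Z(\QQ_\ell)/K_\ell\cap Z(\QQ_\ell)$, so fixing $\varphi:\mathcal Z^{\univ}\to\OO$ pins down the action of $Z(\mathbb A^S_f)$ up to a character, not just the Hecke eigenvalue.

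Next I would use the fact that $\widetilde{H}^{q_0}_{\mm}$ is, by Lemma \ref{need_this}, the continuous dual of a finitely generated $\OO[\![K_p]\!]$-module, hence $\widetilde{H}^{q_0}_{\mm}\otimes_{\OO}L$ is an admissible Banach space representation of $K_p$, and in particular the space $(\widetilde{H}^{q_0}_{\mm}\otimes_{\OO}L)[\mathfrak a]$ on which the center $Z(\mathbb A^S_f)$ acts through a fixed character $\chi_S^{\mathrm{nr}}$ is finite-dimensional over $L$ after also fixing the level at $p$; more precisely, for each open $K_p'$ the $K_p'$-invariants form a finite-dimensional space. The remaining freedom is the action of $Z(\mathbb A_S)$ (the places in $S$, i.e. $p$, $\infty$, and the finitely many ramified $\ell$). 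On $\mathcal Z^{\univ}[\mathfrak a]$-eigenspace the group $Z(\mathbb A^S)Z(\QQ)$ acts: its closure has finite index in $Z(\mathbb A)$ by Lemma \ref{finite_index}, so $Z(\mathbb A)$ acts on $(\widetilde{H}^{q_0}_{\mm}\otimes_{\OO}L)[\mathfrak a]$ through a quotient of the \emph{finite} group $Z(\mathbb A)/\overline{Z(\mathbb A^S)Z(\QQ)}$ times the already-fixed $Z(\mathbb A^S)$-action. A smooth representation of $Z(\mathbb A)$ on which the action factors in this way decomposes as a finite direct sum of characters $\psi_1,\dots,\psi_n:Z(\mathbb A)\to\OO^\times$ — these are exactly the finitely many extensions of the fixed character on $\overline{Z(\mathbb A^S)Z(\QQ)}$ to $Z(\mathbb A)$ that actually occur — and taking the corresponding eigenspaces gives the claimed decomposition $(\widetilde{H}^{q_0}_{\mm}\otimes_{\OO}L)[\mathfrak a]=\bigoplus_{i=1}^n(\widetilde{H}^{q_0}_{\mm}\otimes_{\OO}L)_{\psi_i}$.

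For the last assertion, when $Z=\Res_{F/\QQ}Z'$ with $Z'$ split over the number field $F$, Lemma \ref{finite_index} gives that $Z(\mathbb A^S)$ is already dense in $Z(\mathbb A)/Z(\QQ)$, so the fixed character determines the $Z(\mathbb A)$-action completely on the $\mathfrak a$-eigenspace by continuity (the $Z(\mathbb A)$-action is locally constant on the finite part and the archimedean part is fixed to be $\chi$). Hence there is a single character $\psi_1$ and $n=1$.

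The main obstacle I anticipate is the bookkeeping needed to show that the $\mathcal Z^{\univ}$-eigenvalue $\varphi$, together with the built-in archimedean condition and the fact that $\mathbb T_{\mm}$ acts through a fixed $\mm$, genuinely pins down the $Z(\mathbb A^S_f)$-action on the nose (and not merely up to a finite ambiguity of its own) — this requires carefully matching the Satake parameters of $\mathcal Z_\ell$ at unramified $\ell$ with the action of $Z(\QQ_\ell)$ and checking that the eigenspace for $\mathfrak a$ is stable under, and has the stated smooth $Z(\mathbb A)$-action. Once that is in place, the finite-index argument via Lemma \ref{finite_index} and finite-dimensionality of $K_p'$-invariants are routine, and the case $Z=\Res_{F/\QQ}Z'$ follows immediately from the density statement.
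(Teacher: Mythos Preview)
Your outline follows the same skeleton as the paper's proof: identify the $\mathcal Z^{\univ}$-eigenvalue with a character of $Z(\mathbb A^S)$, and then invoke Lemma~\ref{finite_index} to pass to $Z(\mathbb A)/Z(\QQ)$ with only finite ambiguity. The last assertion is also handled the same way. However, there is a genuine gap in the middle of your argument, and a couple of inaccuracies in the setup.

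First, two small corrections. The archimedean component $Z_\infty^{\circ}$ has been quotiented out in $\Ytilde(K_f)$, so there is no ``archimedean action through $\chi$'' to keep track of here. More importantly, the action of $Z(\mathbb A_f)$ on $\widetilde{H}^{q_0}_{\mm}\otimes_{\OO}L$ is \emph{not} smooth at $p$: the $Z(\Qp)$-action comes from the $G(\Qp)$-action on the completed cohomology and is only continuous (Banach). So you cannot appeal to smoothness to diagonalise.

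The real gap is the step where you pass from ``$Z(\mathbb A^S)$ acts by the character determined by $\varphi$'' to ``the closure $J=\overline{Z(\mathbb A^S)Z(\QQ)/Z(\QQ)}$ acts by a single character''. You write that $Z(\mathbb A)$ acts through the finite quotient $Z(\mathbb A)/J$ ``times the already-fixed $Z(\mathbb A^S)$-action'', but a character of $Z(\mathbb A^S)$ does not automatically extend to its closure in $Z(\mathbb A)/Z(\QQ)$; that closure may contain limits coming from $Z(\Qp)$ or from the places in $S$, and you must argue that the action of those limits is forced. Your proposed tool --- finite-dimensionality of $K_p'$-invariants --- does not obviously do this, since varying $K_p'$ you still need to match up the characters coherently, and the action of $Z(\Qp)$ is not smooth. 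The paper resolves this by a direct continuity argument: for each $n$ the action of $J$ on $(\widetilde{H}^{q_0}_{\mm}\otimes_{\OO}L)[\mathfrak a]^0/\varpi^n$ is continuous for the discrete topology, so every vector has open stabiliser in $J$; density of $Z(\mathbb A^S)$ in $J$ then forces $J$ to act via a character $\psi_n$ congruent to $g\mapsto\varphi(gK_\ell)$ modulo $\varpi^n$, and one passes to the limit. This mod-$\varpi^n$ trick is the missing ingredient in your sketch, and it is exactly the ``bookkeeping'' obstacle you anticipated --- but it is more than bookkeeping, it is the substance of the argument.
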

\begin{proof} The action of $Z(\mathbb A)$ on
  $\widetilde{H}^{q_0}_{\mm}\otimes_{\OO} L$ factors through
  $Z(\mathbb A)/ Z(\QQ)$.  Let $J$ be the closure of the image of
  $Z(\mathbb A^S)$ inside $Z(\mathbb A)/ Z(\QQ)$. Lemma
  \ref{finite_index} says that $J$ is of finite index in
  $Z(\mathbb A)/ Z(\QQ)$. Thus it is enough to show that there is a
  character $\psi: J\rightarrow \OO^{\times}$, such that
  $(\widetilde{H}^{q_0}_{\mm}\otimes_{\OO} L)[\mathfrak a]$ is equal
  to the $\psi$-eigenspace for the action of $J$ on
  $\widetilde{H}^{q_0}_{\mm}\otimes_{\OO} L$.

  Let $(\widetilde{H}^{q_0}_{\mm}\otimes_{\OO} L)[\mathfrak a]^0$ be
  the unit ball and let
  $v\in (\widetilde{H}^{q_0}_{\mm}\otimes_{\OO} L)[\mathfrak
  a]^0/\varpi^n$. Since the topology on
  $(\widetilde{H}^{q_0}_{\mm}\otimes_{\OO} L)[\mathfrak a]^0/\varpi^n$
  is discrete and the action of $J$ is continuous, the $J$-stabiliser
  $\Stab(v)$ of $v$ is open in $J$. Since $Z(\mathbb A^S)$ is dense in
  $J$, the map $Z(\mathbb A^S)\rightarrow J/\Stab(v)$ is surjective.
  On the other hand if $\ell \not\in S$ then $Z(\mathbb Q_{\ell})$
  acts on $(\widetilde{H}^{q_0}_{\mm}\otimes_{\OO} L)[\mathfrak a]$ by
  the character $g\mapsto \varphi(g K_{\ell})$, thus $J$ acts on $v$
  by the character $\psi_n$, uniquely determined by the formula
  $\psi_n(g) \equiv \varphi(g K_{\ell})\pmod{\varpi^n}$ for all
  $g\in Z(\mathbb Q_{\ell})$ and for all $\ell\not\in S$. The
  uniqueness implies that $J$ acts on
  $(\widetilde{H}^{q_0}_{\mm}\otimes_{\OO} L)[\mathfrak a]^0/\varpi^n$
  via $\psi_n$. By passing to the limit we obtain a character
  $\psi: J\rightarrow \OO^{\times}$, by which $J$ acts on
  $(\widetilde{H}^{q_0}_{\mm}\otimes_{\OO} L)[\mathfrak a]$, and which
  satisfies $\psi(g)= \varphi(g K_{\ell})$ for all
  $g\in Z(\QQ_{\ell})$ for all $\ell\not\in S$. In particular,
  $\psi$-eigenspace is contained in
  $(\widetilde{H}^{q_0}_{\mm}\otimes_{\OO} L)[\mathfrak a]$ and thus
  the two coincide.

  If $Z=\Res_{F/\QQ} Z'$, where $Z'$ is a split torus defined over
  $F$, then $J= Z(\mathbb A)/ Z(\QQ)$ by Lemma \ref{finite_index} and
  the assertion follows.
\end{proof}

We fix $V_0$ such that $\Lambda(K^p_fK_p)$ acts trivially on it and choose 
an $\OO$-algebra homomorphism $x_0: \mathbb T_{\mm, V_0(\tau)}\rightarrow \OO$. 
Let $\mathfrak a$ be the kernel of the composition 
\[\mathcal Z^{\univ}\rightarrow  \mathbb T_{\mm, V_0(\tau)}\overset{x_0}{\longrightarrow} \OO.\]
Then $(\widetilde{H}^{q_0}_{\mm}\otimes_{\OO} L)[\mathfrak a]$ is
non-zero, since it contains
$(\widetilde{H}^{q_0}_{\mm}\otimes_{\OO} L)[\mm_{x_0}]$, which is
non-zero as $x_0$ lies in the support
$\Hom_{K_p}(V_0(\tau), \widetilde{H}^{q_0}_{\mm}\otimes_{\OO} L)$.
Let $\psi: Z(\mathbb A)\rightarrow \OO^{\times}$ be one of the
characters in Lemma \ref{decomp_char}, such that
$(\widetilde{H}^{q_0}_{\mm}\otimes_{\OO} L)_{\psi}$ is non-zero.  Note
that $\psi|_{Z(\Qp)\cap K_p}$ is the central character of $V_0(\tau)$.
Let $\mathbb T_{\mm}^{\psi}$ be the quotient of $\mathbb T_{\mm}$
acting faithfully on
$(\widetilde{H}^{q_0}_{\mm}\otimes_{\OO} L)_{\psi}$, and let
$\mathbb T^{\psi}_{\mm, V(\tau)}$ be the quotient of
$\mathbb T_{\mm}^{\psi}$ acting faithfully on
\[\Hom_{K_p}(V(\tau), (\widetilde{H}^{q_0}_{\mm}\otimes_{\OO} L)_{\psi}).\]
If the action of $Z(\Qp)\cap K_p$ on $V(\tau)$ is not given by $\psi$ then 
$\mathbb T_{\mm, V(\tau)}^{\psi}$ will be zero. 
\begin{thm}\label{lzero_central} 
  We assume that the following hold:
  \begin{itemize}
  \item[(i)] $\mathbb T_{\mm}^{\psi}$ is noetherian;
  \item[(ii)] there is an admissible representation
    $\rho: \Gal_{\QQ}\rightarrow \CG_f(\mathbb T_{\mm}^{\psi, \rig})$,
    such that for all $V \in \Irr_{G}(L)$ and all
    $x: \mathbb T_{\mm, V(\tau)}^{\psi}[1/p]\rightarrow \Qpbar$, the
    specialisation of $\rho$ at $x$ matches $\pi_x$ according
    \cite[Conj.\,5.3.4]{beegee};
  \item[(iii)] the composition $d\circ \rho$ is equal to the $p$-adic
    cyclotomic character.
  \end{itemize}
  Then for all $y\in \mSpec \mathbb T_{\mm}^{\psi}[1/p]$ the centre
  $Z(\mathfrak g)$ acts on
  $(\widetilde{H}^{q_0}_{\mm} \otimes_{\OO} L)[\mm_y]^{\la}$ by the
  infinitesimal character $\zeta^C_{\rho, y}$.
\end{thm}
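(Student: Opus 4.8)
The plan is to deduce this from Theorem \ref{infinitesimal_central} in exactly the way Theorem \ref{lzero} was deduced from Theorem \ref{infinitesimal}. We take $R=\mathbb T_{\mm}^{\psi}$, which is noetherian by hypothesis (i), and $\zeta=\zeta_{\rho}^C\colon Z(\mathfrak g)_L\to R^{\rig}=\mathbb T_{\mm}^{\psi,\rig}$, built from the admissible representation $\rho$ of hypothesis (ii) via Definition \ref{def_ringhom_C}. For the module $M$ we impose the central character on completed homology: $M$ is the maximal quotient of $\widetilde H_{q_0,\mm}$ on which $Z(\Qp)\cap K_p$ acts by $\psi^{-1}$. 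Then $M$ lies in $\Mod^{\pro}_{K_p,\psi}(\OO)$, which is hypothesis (o) of Theorem \ref{infinitesimal_central}; moreover, by Lemma \ref{need_this}(iii) its continuous $\OO$-dual $\Pi=\Hom^{\cont}_{\OO}(M,L)$ is an $\OO$-lattice in the $\psi$-eigenspace $(\widetilde H^{q_0}_{\mm}\otimes_{\OO}L)_{\psi}$ on which $\mathbb T_{\mm}^{\psi}$ acts faithfully, and for $y\in\mSpec\mathbb T_{\mm}^{\psi}[1/p]$ one checks, using Lemma \ref{decomp_char} to see that the $\mm_y$-torsion is concentrated in the $\psi$-eigenspace, that $\Pi[\mm_y]^{\la}=(\widetilde H^{q_0}_{\mm}\otimes_{\OO}L)[\mm_y]^{\la}$ (with $\mm_y$ interpreted via $\mathbb T_{\mm}\to\mathbb T_{\mm}^{\psi}$).

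For hypothesis (i) of Theorem \ref{infinitesimal_central} we take $h=0$, so it remains to see that $M$ is finitely generated and projective in $\Mod^{\pro}_{K_p,\psi}(\OO)$. Here one uses that $\psi$ kills $\Lambda_p$: indeed $\psi$ factors through $Z(\mathbb A)/Z(\QQ)$ (as in the proof of Lemma \ref{decomp_char}), while $\Lambda_p$ is the closure of $\Lambda(K^p_fK_p)\subset Z(\QQ)$ in $K_p$, so $\Lambda_p$ acts trivially on $M$ and projectivity in $\Mod^{\pro}_{K_p,\psi}(\OO)$ reduces to projectivity over the $\psi$-twisted Iwasawa algebra of $K_p/\Lambda_p$. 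The weakly non-Eisenstein hypothesis on $\mm$ then gives it: one runs the argument of Lemma \ref{need_this}, replacing throughout the categories of smooth $K_p/\Lambda_p$-representations (resp. compact $\OO[\![K_p/\Lambda_p]\!]$-modules) by their $\psi$-isotypic (resp. $\psi^{-1}$-isotypic) subcategories, the inputs being the $\mm$-local degeneration \eqref{degenerate} and the central-character density statement Corollary \ref{summand_central}.

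Hypotheses (ii) and (iii) of Theorem \ref{infinitesimal_central} are checked exactly as in the proof of Theorem \ref{lzero}. The central-character analogue of Lemma \ref{TmVtau}, proved by the same Franke--cuspidality argument intersected with the $\psi$-eigenspace, shows $\mathbb T^{\psi}_{\mm,V(\tau)}$ is reduced and produces, for each $x\colon\mathbb T^{\psi}_{\mm,V(\tau)}[1/p]\to\Qpbar$, a cuspidal $\pi_x$ whose archimedean component has, by Lemma \ref{inf_match}, the infinitesimal character of $V$. For the last point: by Lemma \ref{base_change_C} the specialisation of $\zeta^C_{\rho}$ at $x$ equals $\zeta^C_{\rho_x}$, which by Proposition \ref{C-alg-inf} together with hypotheses (ii), (iii) of the theorem is the infinitesimal character of $\pi_{x,\infty}$, hence of $V$; since $\tau$ is smooth, $Z(\mathfrak g)_L$ acts on $V(\tau)=V\otimes_L\tau$ through the infinitesimal character of $V$, i.e. through $\zeta^C_{\rho,x}=\zeta_x$, as required. (As in Corollary \ref{summand_central} one first twists to pass between the $\Irr_{G}(L)$-indexing of the hypotheses and the $\Irr_{G/Z}(L)$-indexing of Theorem \ref{infinitesimal_central}.) Theorem \ref{infinitesimal_central} then gives that $Z(\mathfrak g)_L$ acts on $\Pi^{\Rla}$ through $\zeta^C_{\rho}$, and on $\Pi[\mm_y]^{\la}=(\widetilde H^{q_0}_{\mm}\otimes_{\OO}L)[\mm_y]^{\la}$ through the infinitesimal character $\zeta^C_{\rho,y}$.

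The main obstacle is the bookkeeping forced by the fixed central character: pinning down $M\in\Mod^{\pro}_{K_p,\psi}(\OO)$ together with the identification $\Pi[\mm_y]^{\la}=(\widetilde H^{q_0}_{\mm}\otimes_{\OO}L)[\mm_y]^{\la}$, and stating and proving the central-character refinements of Lemmas \ref{need_this} and \ref{TmVtau} (in particular the $\psi$-isotypic projectivity, where one must be careful that $\Lambda_p$ is already killed by $\psi$). Everything else is a transcription of the proof of Theorem \ref{lzero}, with Corollary \ref{summand_central} and Theorem \ref{infinitesimal_central} replacing Corollary \ref{summand} and Theorem \ref{infinitesimal}.
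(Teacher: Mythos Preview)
Your proposal is correct and follows the same route as the paper: apply Theorem \ref{infinitesimal_central} with $R=\mathbb T_{\mm}^{\psi}$, $M$ the maximal $\psi^{-1}$-coinvariant quotient of $\widetilde H_{q_0,\mm}$, and $\zeta=\zeta^C_\rho$, then argue as in Theorem \ref{lzero}.

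Two places where the paper is more economical than your sketch. For projectivity of $M$ in $\Mod^{\pro}_{K_p,\psi}(\OO)$, rather than re-running Lemma \ref{need_this} in a $\psi$-twisted category, the paper uses Lemma \ref{need_this} as stated to see that $\widetilde H^{q_0}_{\mm}$ is a summand of $\mathcal C(K_p/\Lambda_p,\OO)^{\oplus m}$ and then passes to the $\psi$-eigenspace to obtain $(\widetilde H^{q_0}_{\mm})_\psi$ as a summand of $\mathcal C_\psi(K_p,\OO)^{\oplus m}$; dualising gives the projectivity. For reducedness of $\mathbb T^{\psi}_{\mm,V(\tau)}$, there is no need for a central-character analogue of Lemma \ref{TmVtau}: the paper simply observes that $\mathbb T_{\mm,V(\tau)}$ is already reduced and finite-dimensional over $L$ by Lemma \ref{TmVtau}, hence a finite product of fields, so every quotient (in particular $\mathbb T^{\psi}_{\mm,V(\tau)}$) is reduced.
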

\begin{proof} We first note that since $\mm_x$ is an ideal of
  $\mathbb T_{\mm}^{\psi}[1/p]$, the subspaces annihilated by $\mm_x$
  in $\widetilde{H}^{q_0}_{\mm} \otimes_{\OO} L$ and in
  $(\widetilde{H}^{q_0}_{\mm} \otimes_{\OO} L)_{\psi}$ coincide. It
  follows from Lemma \ref{need_this} that
  $(\widetilde{H}^{q_0}_{\mm})_{\psi}$ is a direct summand of
  $\mathcal C_{\psi}(K_p, \OO)^{\oplus m}$. Thus its Schikhof dual $M$
  is projective in $\Mod^{\pro}_{K, \psi}(\OO)$ and is equal to the
  largest quotient of $\widetilde{H}_{q_0, \mm}$ on which
  $Z(\Qp)\cap K_p$ acts by $\psi^{-1}$. The algebras
  $\TT_{\mm, V(\tau)}$ are finite dimensional over $L$ and reduced by
  Lemma \ref{TmVtau}, hence products of finite field extensions of
  $L$. Thus the quotients $\TT_{\mm, V(\tau)}^{\psi}$ are also
  reduced. The rest of the proof is the same as the proof of Theorem
  \ref{lzero} using Theorem \ref{infinitesimal_central}.
\end{proof}

In sections \ref{sec_modular},\ref{sec_shimura}, \ref{sec_unitary} and
\ref{CS} we will discuss some examples, where the conditions of
Theorems \ref{lzero} and \ref{lzero_central} are satisfied.

\subsection{Modular curves}\label{sec_modular} Let $G=\GL_2$. Then $Z_\infty=A_{\infty}$, $d=2$, $q_0=1$, 
$K_{\ell}=\GL_2(\ZZ_{\ell})$ and
$\mathbb T^{\univ}=\OO[ T_{\ell}, S_{\ell}^{\pm 1}: \ell\not \in S]$.
Let $\mm$ be an open maximal ideal of $\mathbb T$. After extending
scalars we may assume that the residue field of $\mm$ is $k$. Since
$H^2$ is dual to $H^0_c$, which is contained in $H^0$, $\mm$ is weakly
non-Eisenstein if and only if $H^0(Y(K^p_f K_p), \OO/\varpi)_{\mm}=0$.

\begin{lem}\label{non-non}$H^0(Y(K_f^p K_p), \OO/\varpi)_{\mm}\neq 0$
  if and only if there is a character
  $\psi: \widehat{\ZZ}^*/ \det(K_f^pK_p)\rightarrow k^*$ such that
  \[T_{\ell} \equiv (\ell+1) \psi(\ell)\pmod{\mm}, \quad
    S_{\ell}\equiv \psi(\ell^2)\pmod{\mm}, \quad \forall \ell\not\in
    S.\]
\end{lem}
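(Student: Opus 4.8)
The statement concerns $H^0$ of the modular curve $Y(K_f^p K_p)$ with $\OO/\varpi$ coefficients, localised at $\mm$. The plan is to identify $H^0(Y(K_f^p K_p), \OO/\varpi)$ explicitly as a space of functions on the finite set $\pi_0(Y(K_f^p K_p))$ of connected components, and then to compute the Hecke action on it. Since $G = \GL_2$ and $Y(K_f) = G(\QQ)\backslash G(\mathbb A)/A_\infty^\circ K_\infty^\circ K_f$, strong approximation for $\SL_2$ shows that the determinant induces a bijection $\pi_0(Y(K_f^p K_p)) \cong \QQ_{>0}\backslash \mathbb A_f^\times / \det(K_f^p K_p) = \widehat{\ZZ}^\times/\det(K_f^p K_p)$. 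Hence $H^0(Y(K_f^p K_p), \OO/\varpi)$ is the space of $\OO/\varpi$-valued functions on the finite abelian group $\widehat{\ZZ}^\times/\det(K_f^p K_p)$, i.e. a direct sum over characters $\psi$ of this group (after possibly enlarging the residue field, which we have arranged) of one-dimensional pieces, on which the group acts via $\psi$.

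Next I would compute the action of the spherical Hecke operators $T_\ell$ and $S_\ell$ for $\ell \notin S$ on this space. The key point is that on $H^0$, the Hecke correspondence acts simply by multiplication by the degree of the correspondence twisted by the action on components via the determinant. Concretely, $T_\ell$ is supported on the double coset of $\mathrm{diag}(\ell,1)$, whose associated correspondence has degree $\ell+1$, and the map on $\pi_0$ induced by the two projections differs by multiplication by $\det\mathrm{diag}(\ell,1) = \ell$; similarly $S_\ell$ corresponds to $\mathrm{diag}(\ell,\ell)$, a degree-one correspondence twisting the component by $\det = \ell^2$. Therefore on the $\psi$-isotypic line, $T_\ell$ acts by $(\ell+1)\psi(\ell)$ and $S_\ell$ acts by $\psi(\ell^2)$, where I regard $\psi$ also as a character of $\widehat{\ZZ}^\times$ via the quotient, and $\psi(\ell)$ means $\psi$ evaluated on the idele which is $\ell$ at the place $\ell$ and $1$ elsewhere (equivalently $\psi(\ell)$ using the identification with $(\ZZ/N)^\times$-type characters). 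This can be cleanly checked either via the Satake/Hecke formalism recalled in \S\ref{sec_hecke} or by a direct computation with double cosets acting on functions on the adelic double coset space.

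Granting this, the localisation $H^0(Y(K_f^p K_p),\OO/\varpi)_{\mm}$ is the direct sum of the $\psi$-lines over those $\psi$ for which the system of eigenvalues $\{T_\ell \mapsto (\ell+1)\psi(\ell),\ S_\ell \mapsto \psi(\ell^2)\}$ reduces to the system prescribed by $\mm$, i.e. those $\psi$ with $T_\ell \equiv (\ell+1)\psi(\ell)$ and $S_\ell \equiv \psi(\ell^2) \pmod \mm$ for all $\ell\notin S$. Thus $H^0(Y(K_f^p K_p),\OO/\varpi)_{\mm}\neq 0$ if and only if such a $\psi$ exists, which is exactly the asserted criterion. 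The main obstacle is the bookkeeping in the second step: pinning down the precise normalisation of the Hecke action on $H^0$ (in particular which idele class $\psi$ is evaluated on, and that the degree-plus-determinant-twist description is correct with the conventions of \S\ref{sec_hecke}), so that the congruences come out in the stated form rather than a twisted or inverted variant; everything else is a routine application of strong approximation and the description of connected components via the determinant.
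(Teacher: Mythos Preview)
Your proposal is correct and follows essentially the same route as the paper: identify $H^0$ with $k$-valued functions on $\pi_0(Y(K_f^pK_p))\cong\widehat{\ZZ}^\times/\det(K_f^pK_p)$ via strong approximation, and observe that the Hecke action factors through the determinant. The paper's proof is terser (it only says the action factors through the determinant and leaves the explicit eigenvalue computation implicit), whereas you spell out the degree-times-determinant-twist formula for $T_\ell$ and $S_\ell$; this extra detail is fine and does no harm.
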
 
\begin{proof} We may identify $H^0(Y(K_f^p K_p), \OO/\varpi)$ with the
  set of maps from the set of connected components of $Y(K_f^p K_p)$
  to $k$. The set of connected components can be identified with
  $\widehat{\ZZ}^*/ \det(K_f^p K_p)$. The action of
  $\GL_2(\mathbb A_f^p)$ on
  $\varinjlim_{K_p^f} H^0(Y(K_f^p K_p), \OO/\varpi)$ factors through
  the determinant. Since $H^0(Y(K_f^p K_p), \OO/\varpi)$ is the
  $K^p_f$-invariants of this representation we obtain the assertion.
\end{proof}

We assume that $\mm$ is weakly non-Eisenstein. It follows from
Deligne--Serre lemma that after extending scalars there is a
homomorphism of $\OO$-algebras
$x:\mathbb T_{\mm, V(\tau)}\rightarrow \OO$, lifting
$\mathbb T^{\univ}_{\mm} \rightarrow k$.  By composing it with our
fixed embedding $L\hookrightarrow \mathbb C$ we obtain a cuspidal
eigenform $f$. To it Deligne associates a Galois representation
$\rho_f: \Gal_{\mathbb Q, S}\rightarrow \GL_2(\Zpbar)$ such that
\[\tr \rho_f(\Frob_{\ell})\equiv T_{\ell}\pmod{\mm_x}, \quad \det
  \rho_f(\Frob_{\ell})\equiv \ell S_{\ell} \pmod{\mm_x}, \quad \forall
  \ell\not\in S.\] Let
$\rhobar: \Gal_{\mathbb Q, S}\rightarrow \GL_2(\Fpbar)$ be the
reduction of $\rho_f$ modulo $p$.  Then
\[\tr \rhobar(\Frob_{\ell})\equiv T_{\ell}\pmod{\mm}, \quad \det \rhobar(\Frob_{\ell})\equiv \ell 
  S_{\ell} \pmod{\mm}, \quad \forall \ell\not\in S.\] After extending
scalars we may assume that $\rhobar$ takes values in $\GL_2(k)$.

We assume that $\rhobar$ is absolutely irreducible. It is explained in
\cite[\S 5.2]{emerton_lg} there is a surjection
$R_{\rhobar} \twoheadrightarrow \mathbb T_{\mm}$, where $R_{\rhobar}$
is the universal Galois deformation ring of
$\rhobar:\Gal_{\mathbb Q, S}\rightarrow \GL_2(k)$. This implies that
$\mathbb T_{\mm}$ is noetherian. Let
$\rho: \Gal_{\mathbb Q, S}\rightarrow \GL_2(\mathbb T_{\mm})$ be the
corresponding Galois representation. This is a representation into an
$L$-group.  By using the twisting element
$\tilde{\delta}(t)=\bigl ( \begin{smallmatrix} 1 & 0 \\ 0 &
  t^{-1} \end{smallmatrix}\bigr)$ we obtain a representation
$\rho^C: \Gal_{\QQ, S}\rightarrow \CG_f(\mathbb T_{\mm})$ as in
section \ref{sec_Cgr}. This representation satisfies the condition
(ii) of Theorem \ref{lzero}, but it is not so easy to extract this
statement from \cite{beegee}. Instead, we observe that it follows from
\cite[Thm.\,7]{faltings} that the conditions in Remark \ref{need_less}
are satisfied. Thus Theorem \ref{lzero} holds when $G=\GL_2$ and $\mm$
is associated to an absolutely irreducible Galois representation
$\rhobar: \Gal_{\QQ, S}\rightarrow \GL_2(k)$.

\subsection{Shimura curves}\label{sec_shimura} Let $D$ be a quaternion algebra over a totally real field $F$ split at only one infinite place. 
Let $G'$ be the group over $F$ defined by $G'(A)= (D\otimes_{F} A)^*$
for $F$-algebras $A$ and let $G=\Res_{F/\Qp} G'$.  Then $d=2$, $q_0=1$
and the split $\RR$-rank of $Z_{\infty}/A_{\infty}$ is equal to
$[F:\QQ]-1$.  We assume that $S$ contains the ramification primes of
$F/\QQ$ and the primes below the ramification places of $D$. Let $S'$
be all the places of $F$ above the places in $S$.  Then
$\mathbb T^{\univ}= \OO[ T_v, S_v^{\pm 1}: v\not \in S']$. Let $\mm$
be a weakly non-Eisenstein ideal of $\mathbb T$. Arguing as in the
previous section and using the results of Carayol \cite{carayol} we
may assume that there is a Galois representation
$\rhobar: \Gal_{F, S'}\rightarrow \GL_2(k)$, such that
\begin{equation}\label{congruence}
  \tr \rhobar(\Frob_{v})\equiv T_{v}\pmod{\mm}, \quad \det \rhobar(\Frob_v)\equiv q_v
  S_{v} \pmod{\mm}, \quad \forall v\not\in S',
\end{equation}
where $q_v$ denotes the number of elements in the residue field of
$F_v$.

Let $x_0: \mathbb T_{\mm, V_0(\tau)}\rightarrow \OO$ be an
$\OO$-algebra homomorphism and let
$\psi: \mathbb A^{\times}_{f,F}/F^{\times} \rightarrow \OO^{\times}$
be as in section \ref{main_result}. If $\rho_0$ is the Galois
representation attached to the automorphic form corresponding to
$x_0$, then $\det \rho_0= \psi \chi_{\cyc}$.

Let $R_{\tr \rhobar}$ be the universal pseudodeformation ring
deforming the characteristic polynomial of $\rhobar$ as in
\cite{chenevier_det}, and let $R_{\tr \rhobar}^{\psi}$ be the quotient
of $R_{\tr \rhobar}$ which corresponds to the determinant
$\chi_{\cyc}\psi$. Arguing as in \cite{appendix} we obtain a
surjection
$R_{\tr \rhobar}^{\psi}\twoheadrightarrow \mathbb T_{\mm}^{\psi}$,
which implies that $\mathbb T_{\mm}^{\psi}$ is noetherian.

We will assume that $\rhobar$ is absolutely irreducible. Then
$R_{\tr \rhobar}^{\psi}$ is the universal deformation ring of
$\rhobar$ and thus by specialising the universal deformation along
$R_{\tr \rhobar}^{\psi}\twoheadrightarrow \mathbb T_{\mm}^{\psi}$ we
obtain a Galois representation
$\rho: \Gal_{F, S'}\rightarrow \GL_2(\mathbb T_{\mm}^{\psi})$.

As explained in \cite{borel_corvalis} there is a canonical bijection
between the equivalence classes of admissible representations of
$\Gal_{\QQ, S}$ into the $L$-group of $G$ and the admissible
representations of $\Gal_{F, S'}$ into the $L$-group of $G'$.  So we
will work with $\Gal_{F, S'}$.

By using the twisting element
$\tilde{\delta}(t)=\bigl ( \begin{smallmatrix} 1 & 0 \\ 0 &
  t^{-1} \end{smallmatrix}\bigr)$ we obtain a representation
$\rho^C: \Gal_{F, S'}\rightarrow \CG_f(\mathbb T_{\mm}^{\psi})$ as in
section \ref{sec_Cgr}. Specialisations of this representation at
$x\in \mSpec \mathbb T_{\mm,V(\tau)}^{\psi}[1/p]$ satisfies
\cite[Conj.\,5.3.4]{beegee}, but again it is not so easy to extract
this statement from \cite{beegee}. Instead, we observe that the
compatibility at $p$ and $\infty$ discussed in Remark \ref{need_less}
follows from \cite{skinner_documenta}.

We conclude that the conditions of Theorem \ref{lzero_central} are
satisfied, when $\mm$ is associated to an absolutely irreducible
Galois representation $\rhobar: \Gal_{F, S'}\rightarrow \GL_2(k)$, and
$Z(\mathfrak g)$ acts on
$(\widetilde{H}^{q_0}_{\mm} \otimes_{\OO} L)[\mm_x]^{\la}$ by the
infinitesimal character $\zeta^C_{\rho^C, x}$, for all
$x\in \mSpec \mathbb T_{\mm}^{\psi}[1/p]$.

\begin{remar} Let $\mm$ be an open maximal ideal of $\mathbb T$ and
  let $\rhobar: \Gal_{F, S'}\rightarrow \GL_2(k)$ be a Galois
  representation satisfying \eqref{congruence}. If $\rhobar$ is
  absolutely irreducible then $\mm$ is weakly non-Eisenstein, see
  \cite{newton}.
\end{remar}

\subsection{Definite unitary groups}\label{sec_unitary} Let $F$ be a totally real field and let $E$ be a 
quadratic totally imaginary extension of $F$. We assume that $[F:\QQ]$
is even, every place of $F$ above $p$ is split in $E$ and every finite
place of $F$ is unramified in $E$. Since $[F : \QQ]$ is even there
exists a unitary group $G'$ over $F$ which is an outer form of $\GL_n$
with respect to the quadratic extension $E/F$ such that $G'$ is
quasi-split at all finite places and anisotropic at all infinite
places. Let $G=\Res_{F/\QQ} G'$.  Then $d=0$ and $Z_\infty$ is
compact. We assume that $S$ contains the ramification places of
$F/\QQ$. Let $S'$ be the set of places of $E$ above the places in $S$.
Let $\mathbb T^{\mathrm{Spl},\univ}$ be the subalgebra of
$\mathbb T^{\univ}$ generated by subalgebras
$\mathcal H(G'(F_v) // K_{v})$ for those places $v$ of $F$, which
split completely in $E$ and are not above places in $S$. Let
$\mathbb T^{\mathrm{Spl}}$ be the closure of the image of
$\mathbb T^{\mathrm{Spl}, \univ}$ in $\mathbb T$.

Since $d=0$ every open maximal ideal of $\mathbb T^{\mathrm{Spl}}$ is
weakly non-Eisenstein.  As explained in Remark \ref{chinese} the set
of such ideals is finite and we have an isomorphism
$\mathbb T^{\mathrm{Spl}}\cong \prod_{\mm} \mathbb
T^{\mathrm{Spl}}_{\mm}$. As explained in \cite[Lem.\,C.7]{appendix}
every open maximal ideal of $\mm$ corresponds to a characteristic
polynomial of a Galois representation
$\rhobar: \Gal_{E, S'}\rightarrow \GL_n(\Fpbar)$. Let
$R^{\mathrm{ps}}_{\tr\rhobar}$ be the universal deformation ring
parameterising pseudorepresentations (or determinants) lifting the
characteristic polynomial of $\rhobar$ as in \cite{chenevier_det}. It
is shown in \cite[Thm.\,C.3]{appendix} that
$\mathbb T^{\mathrm{Spl}}_{\mm}$ is a quotient of
$R^{\mathrm{ps}}_{\tr\rhobar}$ and hence is noetherian.

Let us assume that $\mm$ corresponds to an absolutely irreducible
representation $\rhobar$. Then $R^{\mathrm{ps}}_{\tr\rhobar}$ is also
the universal deformation ring of $\rhobar$. Let
$\rho: \Gal_{E, S'}\rightarrow \GL_n(\mathbb T^{\mathrm{Spl}}_{\mm})$
be the representation obtained from the universal deformation
representation of $\rhobar$ by extending scalars along the surjection
$R^{\mathrm{ps}}_{\tr\rhobar}\twoheadrightarrow \mathbb
T^{\mathrm{Spl}}_{\mm}$.  If
$x: \mathbb T^{\mathrm{Spl}}_{\mm, V(\tau)}\rightarrow \Qpbar$ is a
homomorphism of $\OO$-algebras then $\rho_x$ is the Galois
representation associated to $\pi_x$ by Clozel, see
\cite[Thm.\,3.3.1]{fintzen_shin}. The local-global compatibility
between $\pi_x$ and $\rho_x$ is summarised in \cite[Thm.\,7.2.1]{EGH},
see the references in its proof for proper attributions.  In
particular, $\rho_x$ is potentially semi-stable with Hodge--Tate
cocharacter $\lambda+\tilde{\delta}$ (note that the convention
concerning Hodge--Tate numbers is different in \emph{loc.~cit.}),
where $\lambda$ is the highest weight of $V$ and
$\tilde{\delta}=(0,-1,\dots,1-n)$.  Let
$\rho^C: \Gal_{E, S'}\rightarrow \CG_f'(\mathbb
T^{\mathrm{Spl}}_{\mm})$ be the representation obtained from $\rho$
using the twisting element $\tilde{\delta}$.  It follows from
Proposition \ref{inf_HT} that $Z(\mathfrak{g})$ acts on $V$ through
the character $\zeta_{\rho_y}^C$.

It follows from Theorem \ref{lzero} that if $\mm$ is associated to an
absolutely irreducible $\rhobar$ then for all
$x\in \mSpec \mathbb T^{\mathrm{Spl}}_{\mm}[1/p]$, $Z(\mathfrak g)$
acts on $(H^0_{\mm}\otimes_{\OO}L)[\mm_x]^{\la}$ via
$\zeta^C_{\rho^C_x}$.

\subsection{Compact unitary Shimura varieties}\label{CS}

Let \(F\) be totally real field and \(E\) a CM quadratic extension of
\(F\). We assume that \(E\) contains a quadratic imaginary number
field. Let \(G\) be some anisotropic similitude unitary group over
\(\Q\) with similitude factor \(c : G\rightarrow\mathbb{G}_{m,\Q}\)
and let \(H=\ker(c)\). We have \(H=\Res_{F/\Q}H'\) with \(H'\) an
unitary group over \(F\) such that \(H'_E\) is an inner form of
\(\GL_n\). Note that we have a decomposition
\(\mathfrak{g}=\mathfrak{h}\oplus\mathfrak{s}\) where \(\mathfrak{s}\)
is the Lie algebra of the maximal \(\Q\)-split normal torus \(S\)
contained in \(G\).

As in section
\ref{sec_unitary}, we define the Hecke algebra \(\mathbb{T}^{\Spl}\)
using places \(v\) of \(F\) which are split in \(E\) and such that \(H'\) is
quasi-split at \(v\). Let \(\mm\) be some open maximal ideal in
\(\mathbb{T}^{\Spl}\) and let \(\rho_{\mm} :
\Gal_E\rightarrow\GL_n(\overline{\F}_p)\) be Galois representation associated to
\(\mm\) in \cite[Thm.~1.1]{CS}. Up to enlarging \(L\), we can assume
that \(\rho_{\mm}\) takes values in \(\GL_n(k)\). We assume from now
that \(\rho_{\mm}\) is decomposed generic (see Definition 1.9 in
\emph{loc.~cit.}) so that, by Thm.~1.1 in \emph{loc.~cit.}, the ideal
\(\mm\) is non Eisenstein.

Let \(V\) be some algebraic irreducible representation of \(G\) such
that \(\mathbb{T}^{\Spl}_{\mm,V}\neq0\). If
\(x : \mathbb{T}^{\Spl}_{\mm,V}\rightarrow\C\) is a character of
\(\mathbb{T}^{\Spl}_{\mm,V}\), we fix some automorphic representation
\(\pi_x\) of \(G(\mathbb{A}_\Q)\) whose existence is assured by Lemma
\ref{TmVtau}. It follows from \cite[Thm.~A.1]{Shin_appendix_Goldring}
that we can define the base change \(\Pi\) of \(\pi\) which is an
isobaric sum of conjugate self dual \(C\)-algebraic cuspidal
automorphic representations of \(\GL_{n,E}\). The main Theorem of
\cite{CheHarris} gives us some admissible representation
\(\rho_x : \Gal_E\rightarrow \GL_n(\overline{\Q}_p)\) associated to
\(\Pi\). Assume now that \(\rho_{\mathfrak{m}}\) is absolutely
irreducible and that \(p>2\). Reasoning as in the proof of
\cite[Prop.~3.4.4]{CHT}, we can construct a continuous representation
\(\rho :
\Gal_{E,S}\rightarrow\GL_n(\mathbb{T}^{\Spl}_{\mathfrak{m}})\) such
that, for all
\(x : \mathbb{T}^{\Spl}_{\mathfrak{m},V}\rightarrow\overline{\Q}_p\),
\(\rho_x\) is associated to \(\pi_x\) by the previous construction. As
in subsection \ref{sec_unitary}, for all
\(x\in\mSpec\mathbb{T}^{\Spl}_{\mathfrak{m}}[1/p]\), the center
\(Z(\mathfrak{h})\) acts on
\((\widetilde{H}^{q_0}\otimes_{\OO}L)[\mm_x]^{\la}\) by
\(\zeta^C_{\rho_x^C}\), the representation \(\rho_x^C\) being obtained
from \(\rho_x\) using \(\tilde{\delta}=(0,-1,\dots,1-n)\). As
\(S(\Qp)\) acts on
\((\widetilde{H}^{q_0}\otimes_{\OO}L)[\mm_x]^{\la}\) by a character by
Lemma \ref{decomp_char}, it follows that \(Z(\mathfrak{g})\) acts by a
character on this space.

\subsection{Patched module}\label{sec_patch} In this subsection let $F$ be a finite extension of $\Qp$.
Let $n\geq1$ be such that $p\nmid 2n$, let
$\rhobar : \Gal_F\rightarrow\GL_n(k)$ be a Galois representation and
let $R_{\overline{\rho}}^\Box$ be the framed deformation ring of
$\rhobar$.

Let $K=\GL_n(\mathcal{O}_F)$ and let $\mathfrak{g}$ be the
$\Qp$-linear Lie algebra of $G=\GL_n(F)$.  Let $M_\infty$ be the
patched module constructed in \cite{6auth} by patching automorphic
forms on definite unitary groups.  This is a compact
$R_\infty[\![K]\!]$-module carrying an $R_\infty$-linear action of $G$
extending the action of $K$, where $R_{\infty}$ is a complete local
noetherian $R_{\overline{\rho}}^{\Box}$-algebra with residue field
$k$, which is flat over $R_{\overline{\rho}}^{\Box}$.

We let $\Pi_{\infty}\coloneqq \Hom_{\OO}^{\cont}(M_{\infty}, L)$. If
$y\in \mSpec R_{\infty}[1/p]$ then $\Pi_{\infty}[\mm_y]$ is an
admissible unitary $\kappa(y)$-Banach space representation of $G$.  By
letting $x$ be the image of $y$ in
$\mSpec R_{\overline{\rho}}^\Box[1/p]$, we obtain a Galois
representation $\rho_x: \Gal_F \rightarrow \GL_n(\kappa(x))$. The
expectation is that $\Pi_{\infty}[\mm_y]$ and $\rho_x$ should be
related by the hypothetical $p$-adic Langlands correspondence, see
\cite[\S 6]{6auth}.

Let $\rho:\Gal_F \rightarrow \GL_n(R_{\infty})$ be the Galois
representation obtained from the universal framed deformation of
$\rhobar$ by extending scalars to $R_{\infty}$.  Let
\[\rho^C\coloneqq \tw_{\tilde{\delta}}^{-1} \circ (\rho\boxtimes
  \chi_{\cyc}),\] where $\tilde{\delta}$ is the twisting element
$(0,-1,\dots,2-n,1-n)$ of $\GL_n$, see Remark \ref{twisting_GLn}.

\begin{thm}\label{patched}
  The algebra $Z(\mathfrak{g})$ acts on $\Pi_{\infty}[\mm_y]^{\la}$
  through the character $\zeta_{\rho^C_x}^C$.
\end{thm}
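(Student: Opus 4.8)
The plan is to apply Theorem \ref{infinitesimal} with $R = R_\infty$, $M = M_\infty$, $K = \GL_n(\OO_F)$, and $\zeta = \zeta^C_{\rho^C}$, where $\rho^C \colon \Gal_F \to \CG_f(R_\infty)$ is the twisted representation defined just above. The conclusion of Theorem \ref{infinitesimal} is precisely that $Z(\mathfrak{g})_L$ acts on $\Pi_\infty[\mm_y]^{\la}$ through the specialisation $\zeta_y = \zeta^C_{\rho^C_x}$, so once the hypotheses (i)--(iii) are checked we are done. (Here one notes that the image of $y$ in $\mSpec R_\infty[1/p]$ determines $x \in \mSpec R_{\overline{\rho}}^\Box[1/p]$, and $\rho^C_x$, being the specialisation of $\rho^C$ at $x$, has $\zeta^C_{\rho^C_x}$ equal to the specialisation of $\zeta^C_{\rho^C}$ by Lemma \ref{base_change_C}.)

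First I would recall from \cite{6auth} the structure of the patched situation: $M_\infty$ is a finitely generated $R_\infty[\![K]\!]$-module, and there is a regular sequence $y_1, \ldots, y_h \in \mm_{R_\infty}$ (coming from the Taylor--Wiles patching variables together with the auxiliary variables cutting out the automorphic locus) such that $M_\infty/(y_1, \ldots, y_h)M_\infty$ is a finitely generated projective $\OO[\![K]\!]$-module. This is hypothesis (i), and it is one of the key outputs of the patching construction (it is what makes $\Pi_\infty$ behave like the completed cohomology of definite unitary groups after patching). For hypothesis (ii), the relevant rings $R_{V(\tau)}$ are the quotients of $R_\infty$ acting faithfully on $M_\infty(\Theta)$; by the results of \cite{6auth} these are (up to normalisation) the framed potentially semi-stable/crystalline deformation rings of $\rhobar$ of the appropriate Hodge type and inertial type tensored with the smooth components $R_\infty$ contributes, and these are known to be reduced (indeed $p$-torsion free and generically smooth) under the running hypothesis $p \nmid 2n$ — this is exactly the kind of input used in \cite{6auth} to make their local-global arguments work. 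For hypothesis (iii), I must check that the action of $Z(\mathfrak{g})_L$ on $V(\tau) = V \otimes \tau$ is given by the specialisation $\zeta_x$ for each $x \in \Sigma_{V(\tau)} = \mSpec R_{V(\tau)}[1/p]$. Since $\tau$ is smooth, $Z(\mathfrak{g})_L$ acts on $V(\tau)$ through the infinitesimal character $\chi_V$ of $V$. On the other hand, a point $x \in \Sigma_{V(\tau)}$ corresponds to a de Rham (indeed potentially semi-stable) Galois representation $\rho_x$ whose Hodge--Tate cocharacter is $\lambda + \tilde{\delta}$, where $\lambda$ is the highest weight of $V$; this is precisely the local-global compatibility at $p$ built into the patched module, and it implies via Lemma \ref{rhoC} that $\rho_x^C$ is Hodge--Tate with regular Hodge--Tate weights and $\pi_{\alg}(\rho_x^C) \cong V$ (compare Remark \ref{twisting_GLn}). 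Then Proposition \ref{inf_HT}, together with the last remark of Section \ref{sec_HT_reg}, gives $\zeta^C_{\rho^C_x} = \zeta_x$ equal to the infinitesimal character of $V$, which is $\chi_V$. This is hypothesis (iii).

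The main obstacle, I expect, will be the bookkeeping around the twisting element $\tilde{\delta}$ and the different Hodge--Tate normalisations. In \cite{6auth} the Hodge--Tate weight of the cyclotomic character is $-1$, whereas in the present paper it is $+1$, so one must be careful translating the statement "the Hodge--Tate cocharacter of $\rho_x$ is $\lambda + \tilde{\delta}$" between the two conventions; Remark \ref{twisting_GLn} is written precisely to handle this, and the verification amounts to checking that $\pi_{\alg}(\rho_x)$ of \cite[\S1.8]{6auth} matches $\pi_{\alg}(\rho_x^C)$ of Definition \ref{pi_alg}. A secondary technical point is identifying the rings $R_{V(\tau)}$ that appear in the abstract framework of Section \ref{sec_fam_Ban} with the deformation rings in \cite{6auth} (the $M_\infty(\Theta)$ of the abstract setup must be identified with the patched module of locally algebraic vectors of type $\Theta$, and one invokes \cite[Lem.\,4.18]{6auth} — already cited in the proof of Theorem \ref{patched_density} — to see that $M_\infty(\Theta)$ is a free module over the patching ring $S$ and hence Cohen--Macaulay). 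Once these identifications are in place, the verification of (i)--(iii) is essentially a matter of assembling standard facts from \cite{6auth}, and the theorem follows.
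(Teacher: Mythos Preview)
Your proposal is correct and follows essentially the same route as the paper: apply Theorem \ref{infinitesimal} with $R=R_\infty$, $M=M_\infty$, $\zeta=\zeta^C_{\rho^C}$, verifying (i) via the patching variables (the paper cites \cite[Prop.\,2.10]{6auth}), (ii) via reducedness of the relevant deformation rings (the paper cites \cite[Lem.\,4.17]{6auth}), and (iii) via the Hodge--Tate property of $\rho_x$ at automorphic points (the paper cites \cite[Prop.\,4.33]{6auth}) combined with Proposition \ref{inf_HT} and Remark \ref{twisting_GLn}, then invoking Lemma \ref{base_change_C} for the specialisation. The paper in fact takes $\tau$ trivial (so only $V$ appears and the points are crystalline rather than potentially semistable), but this is a cosmetic difference.
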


\begin{proof}
  We will show that Theorem \ref{infinitesimal} applies with
  $M=M_\infty$, $R=R_\infty$ and $\zeta=\zeta^C_{\rho^C}$.  By
  \cite[Prop.\,2.10]{6auth}, there is a morphism of local rings
  \[ S_\infty=\OO[\![y_1,\dots,y_h]\!] \rightarrow R_\infty\]
  such that $M_\infty$ is a finitely generated projective
  $S_\infty[\![K]\!]$-module. Thus the sequence $(y_1,\dots,y_h)$ is
  $M_\infty$-regular and $M_\infty/(y_1,\dots,y_h)$ is a finitely generated projective
  $\mathcal{O}[\![K]\!]$-module. This gives hypothesis (i).

  Let $V$ be an irreducible algebraic representation of
  $\Res_{F/\Qp}\GL_n$ over $L$. Let $R_{\infty}(V)$ be the quotient of
  $R_{\infty}$ acting faithfully on $\Hom_K(V, \Pi_{\infty})$. Then
  $R_{\infty}(V)$ is reduced by \cite[Lem.\,4.17]{6auth} and we have
  (ii).

  If $y \in\Sigma_V$, the $\OO$-algebra homomorphism
  $x : R_\infty\rightarrow\mathcal{O}_{\kappa(y)}$ factors through
  $R_{\infty}(V)$ and, by \cite[Prop.\,4.33]{6auth}, the
  representation $\rho_x$ is crystalline with Hodge--Tate cocharacter
  $\lambda+\tilde{\delta}$ where $\lambda$ is the highest weight of
  $V$ and $x$ is the image of $y$ in $\mSpec
  R_{\rhobar}^{\Box}[1/p]$. By Proposition \ref{inf_HT}, see also
  Remark \ref{twisting_GLn}, the algebra $Z(\mathfrak{g})$ acts on $V$
  via the character $\zeta^C_{\rho_x^C}$. Thus part (iii) of Theorem
  \ref{infinitesimal} is satisfied.
 
  The specialisation of $\zeta^C_{\rho^C}$ at
  $y\in \mSpec R_{\infty}[1/p]$ is equal to $\zeta^C_{\rho^C_x}$ by
  Lemma \ref{base_change_C}, where $x$ is the image of $y$ in
  $\mSpec R_{\rhobar}^{\Box}[1/p]$, thus we obtain the result.
  \end{proof}
  
  \begin{remar} It is not known in general whether the Banach space
    representation $\Pi_{\infty}[\mm_y]$ depends only on the Galois
    representation $\rho_x$. However, this is expected to be true, as
    the $p$-adic Langlands correspondence should not depend on the
    choices made in the patching process. The theorem above shows that
    the infinitesimal character of $\Pi_{\infty}[\mm_y]^{\la}$ depends
    only on $\rho_x$, thus adding evidence that the expectation should
    be true.
 \end{remar}
 
 \subsection{The $p$-adic Langlands correspondence for $\GL_2(\Qp)$}\label{padicLL} It is shown in \cite{image} for $p\ge 5$ and in \cite{CDP} in general that 
 Colmez's Montreal functor $\Pi \mapsto \cV(\Pi)$ induces a bijection
 between the equivalence classes of absolutely irreducible admissible
 $L$-Banach space representations $\Pi$ of $G=\GL_2(\Qp)$, which do
 not arise as subquotients of parabolic inductions of unitary
 characters, and the equivalence classes of absolutely irreducible
 Galois representations $\rho: \Gal_{\Qp} \rightarrow \GL_2(L)$.  The
 correspondence is normalised so that local class field theory matches
 the central character of $\Pi(\rho)$ with
 $ \chi_{\cyc}^{-1}\det \rho$.  Let
 $\rho^C: \Gal_{\Qp} \rightarrow \CG_f(L)$ be the Galois
 representation attached to $\rho$ using the twisting element
 $\tilde{\delta}=(1, 0)$.

 The following result proved by one of us (G.\,D.\,) was an important
 motivation for this paper and we will give a new proof of it.

 \begin{thm}[\cite{GDinf}, Thm.\,1.2]\label{tung} Let $\Pi$ be as
   above and let $\rho=\cV(\Pi)$ then the action of $Z(\mathfrak g)$
   on $\Pi^{\la}$ is given by $\zeta^C_{\rho^C}$.
 \end{thm}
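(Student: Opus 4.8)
The plan is to deduce Theorem \ref{tung} from Theorem \ref{patched} by realizing $\Pi$ (or more precisely a representation closely related to it) inside a fibre of the patched module $\Pi_\infty$. First I would recall the key input, due to Tung \cite{Tung1}, \cite{Tung2}: for $G=\GL_2(\Qp)$ the patched module $M_\infty$ of \cite{6auth} realizes the $p$-adic local Langlands correspondence, in the sense that for every $y\in\mSpec R_\infty[1/p]$ lying over a point $x\in\mSpec R_{\overline\rho}^\Box[1/p]$ with $\rho_x$ absolutely irreducible (and not an extension of characters), one has a $G$-equivariant isomorphism $\Pi_\infty[\mm_y]\cong \Pi(\rho_x)^{\oplus d}$ for a suitable multiplicity $d\ge 1$ depending only on the framing variables. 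Granting this, the two sides of the statement can be compared pointwise.

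The key steps, in order, would be: (1) Fix an absolutely irreducible $\rho:\Gal_{\Qp}\to\GL_2(L)$ of the type in the statement, and choose a mod $p$ reduction $\overline\rho$ such that $\rho$ defines a point $x\in\mSpec R_{\overline\rho}^\Box[1/p]$; since $p\nmid 2n$ with $n=2$ we are in the situation of subsection \ref{sec_patch}. (2) Using Tung's result, pick $y\in\mSpec R_\infty[1/p]$ over $x$ with $\Pi_\infty[\mm_y]\cong\Pi(\rho)^{\oplus d}$ as $G$-representations; passing to locally analytic vectors gives $\Pi_\infty[\mm_y]^{\la}\cong(\Pi(\rho)^{\la})^{\oplus d}$. (3) Apply Theorem \ref{patched}: $Z(\mathfrak g)$ acts on $\Pi_\infty[\mm_y]^{\la}$ through $\zeta^C_{\rho^C_x}$, where $\rho^C_x$ is obtained from $\rho_x=\rho$ via the twisting element $\tilde\delta=(0,-1)$ of $\GL_2$ (Remark \ref{twisting_GLn} with $n=2$); note this is the same twisting element $\tilde\delta=(1,0)$ up to the central shift, so that $\zeta^C_{\rho^C_x}$ in Theorem \ref{patched} agrees with $\zeta^C_{\rho^C}$ in the statement of Theorem \ref{tung} — here one must carefully match the two normalizations of the twisting element and of the Hodge--Tate weight of $\chi_{\cyc}$ (cf. the caution in Remark \ref{twisting_GLn}), and also check that the central character normalization of the Montreal functor, $\chi_{\cyc}^{-1}\det\rho$, is consistent with $d\circ\rho^C=\chi_{\cyc}$. (4) Conclude that $Z(\mathfrak g)$ acts on $\Pi(\rho)^{\la}$ through $\zeta^C_{\rho^C}$, which is the assertion.

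The main obstacle, and the step requiring the most care, is step (2)–(3): establishing, via the results of Tung, that the patched module genuinely computes $\Pi(\rho)$ at the relevant point (in particular that such a point $y$ exists with $\rho_x$ equal to the given $\rho$, not merely up to twist, and that the multiplicity is nonzero), and then matching all the normalization conventions — the twisting element, the sign of the Hodge--Tate weight of the cyclotomic character, and the central character shift in Colmez's correspondence — so that the character $\zeta^C_{\rho^C_x}$ produced by Theorem \ref{patched} is literally the character $\zeta^C_{\rho^C}$ in the statement. Once these bookkeeping issues are settled, the argument is formal. I would also remark that an alternative, purely local route avoiding patching is possible: compute directly that $D_{\Sen}$ of $\rho$ has characteristic polynomial with the prescribed roots, and use Colmez's description of $\Pi(\rho)^{\la}$ in terms of $(\varphi,\Gamma)$-modules to read off the action of $Z(\mathfrak g)\cong Z(\glt)_L$; but the patched-module proof is shorter given Theorem \ref{patched} is already available.
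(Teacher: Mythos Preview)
Your approach is essentially the same as the paper's: realize $\Pi$ inside a fibre of the patched module via Tung's results, then invoke Theorem \ref{patched}. Two points deserve comment.

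First, you claim $\Pi_\infty[\mm_y]\cong\Pi(\rho)^{\oplus d}$, which is stronger than what the paper uses and not quite what \cite{Tung1} states. The paper only cites from \cite[Thm.\,4.1]{Tung1} that some irreducible \emph{subquotient} $\Pi'$ of $\Pi_\infty[\mm_y]$ satisfies $\cV(\Pi')=\rho$; bijectivity of the correspondence then forces $\Pi'\cong\Pi$. This weaker input suffices because the functor of locally analytic vectors is exact on admissible Banach representations, so the infinitesimal character $\zeta^C_{\rho^C}$ established on $\Pi_\infty[\mm_y]^{\la}$ by Theorem \ref{patched} passes to $(\Pi')^{\la}$. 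Your stronger claim, even if true, is unnecessary and harder to source.

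Second, your argument as written only covers $p>2$: the patched module of \cite{6auth} is constructed under the hypothesis $p\nmid 2n$, which for $n=2$ excludes $p=2$. The paper treats $p=2$ separately, using Tung's own patching construction in \cite{Tung2} (which verifies conditions (o) and (i) of Theorem \ref{infinitesimal_central}) together with \cite[Thm.\,6.3.7]{Tung2} in place of \cite[Thm.\,4.1]{Tung1}. You should either add this case or flag that your argument is incomplete at $p=2$.

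Your care about matching the twisting elements $(1,0)$ versus $(0,-1)$ and the associated normalizations is well placed; the paper does not spell this out explicitly.
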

 \begin{proof} We will use the recent results of Shen-Ning Tung
   \cite{Tung1}, \cite{Tung2} proved in his thesis. We may assume that
   $\rho$ is the specialisation at
   $x\in \mSpec R_{\rhobar}^{\Box}[1/p]$ of the universal framed
   deformation of some $\rhobar:\Gal_{\Qp} \rightarrow \GL_2(k)$. If
   $p>2$ then let $M_{\infty}$ and $R_{\infty}$ be as in the previous
   section with $n=2$ and $F=\Qp$. Since $R_{\infty}$ is flat over
   $R_{\rhobar}^{\Box}$ there is an $\OO$-algebra homomorphism
   $y: R_{\infty}\rightarrow \OO$ extending $x$. It follows from
   \cite[Thm.\,4.1]{Tung1} that there is an irreducible subquotient
   $\Pi'$ of $\Pi_{\infty}[\mm_y]$, such that
   $\cV(\Pi')=\rho$. Theorem \ref{patched} implies that the action of
   $Z(\mathfrak g)$ on $\Pi_{\infty}[\mm_y]^{\la}$, and hence also on
   $(\Pi')^{\la}$, is given by $\zeta^C_{\rho^C}$. Using the
   bijectivity of the correspondence we obtain that $\Pi=\Pi'$.  If
   $p=2$ then in \cite{Tung2} Tung carries out the patching
   construction himself to obtain the analog of $M_{\infty}$, see
   \cite[Prop.\,6.1.2]{Tung2}, which says that the patched module
   satisfies parts (o) and (i) of Theorem
   \ref{infinitesimal_central}. The same argument as in the case $p>2$
   can be carried over using \cite[Thm.\,6.3.7]{Tung2}.
\end{proof} 

\begin{remar} The proof of bijectivity in \cite{CDP} uses the results
  of \cite{GDinf} in an essential way. However, if one is willing to
  assume that $p\ge 5$ or if $p=2$ or $p=3$ then
  $\rhobar^{\mathrm{ss}}\neq \chi \oplus \chi \omega$, for any
  character $\chi: \Gal_{\Qp}\rightarrow k^{\times}$, where $\omega$
  is the reduction of $\chi_{\cyc}$ modulo $p$, then the bijectivity
  follows from \cite[Thm.\,1.3]{image},
  \cite[Cor.\,1.4]{blocksp2}. The papers \cite{image} and
  \cite{blocksp2} use only Colmez's functor $\cV$, which goes from
  $\GL_2(\Qp)$-representations to Galois representations, see \cite[\S
  IV]{colmez}, and not the construction $V\mapsto \Pi(V)$, which uses
  the $p$-adic Hodge theory in a deeper way and is used in
  \cite{GDinf}.
\end{remar}

\subsection{ A conjectural picture}\label{eyeswideshut} In this subsection we formulate a conjecture, which describes 
the infinitesimal characters of the subspace of locally analytic
vectors of Hecke eigenspaces in completed cohomology in the general
setting, i.e. in the setting when one does not expect weakly
non-Eisenstein ideals to exist.

Let $\mathbb T$ be the Hecke algebra defined in subsection
\ref{sec_hecke}.  Let $x: \mathbb{T}[1/p]\rightarrow \Qpbar$ be a
continuous homomorphism of $\OO$-algebras with kernel $\mm_x$, such
that the image of $x$ is a finite extension of $\Qp$. Note that this
condition is satisfied if $\mathbb T$ is noetherian. If $\ell$ a prime
number which is not in $S$, then we will denote by
$x_{\ell}: \mathcal H_{\ell}\rightarrow \Qpbar$ the composition of $x$
with the natural map $\mathcal H_{\ell}\rightarrow \mathbb T$.

There is a version of the Satake isomorphism using the $C$-group which
is defined in \cite{zhu} (see Proposition $5$ and Remark $6$ in
\emph{loc.~cit.}). As $\ell$ is invertible in $\OO$, it takes the form
of an isomorphism of $\OO$-algebras
\[ \mathcal H_\ell \simeq
  \OO[\Ghat^T|_{d=\ell}\rtimes\set{\Frob_{\ell}}]^{\Ghat}\] where
$\Frob_{\ell}$ is a geometric Frobenius at $\ell$ and
$\Ghat^T|_{d=\ell}$ is the subscheme of $\Ghat^T$ which is the inverse
image of $\ell$ under $d : \Ghat^T\rightarrow\mathbb{G}_m$.

Using this isomorphism, we can associate to $x_\ell$ a semisimple
$\Ghat(\Qpbar)$-conjugation class $\mathrm{CC}(x_\ell)$ in
$\Ghat(\Qpbar)\rtimes (\set{\ell}\times\set{\Frob_{\ell}})$.

Inspired by \cite[Conj.\,5.3.4]{beegee} we have the (very) optimistic
conjecture :

\begin{conj}\label{dreamconj}
  There exists an admissible representation
  \[ \rho : \Gal_{\QQ}\longrightarrow \CG_f(\Qpbar)\]
  such that
  \begin{enumerate}[(i)]
  \item $d\circ\rho$ is the cyclotomic character;
  \item $\rho$ is unramified outside of $S$;
  \item for $\ell\notin S$, the semisimplification of $\rho(\Frob_{\ell})$ is in
    \[\mathrm{CC}(x_{\ell})\xi(\chi_{\cyc}(\Frob_\ell))\subset\Ghat(\Qpbar)\rtimes(\set{\ell^{-1}}\times\set{\Frob_{\ell}})\]
    where $\xi : \mathbb{G}_m\rightarrow \widehat{G}^T$ is the
    cocharacter is $t\mapsto ((2\delta)(t^{-1}),t^2)$, where $2\delta$
    is the sum of positive roots.
  \end{enumerate}
\end{conj}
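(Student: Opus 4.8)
The statement is a conjecture, out of reach in the generality in which it is phrased; what I would do is record the strategy by which it reduces to the classical Buzzard--Gee conjecture together with the existence of Galois representations attached to torsion classes, and indicate why it becomes a theorem precisely in the settings of \S\ref{sec_modular}--\S\ref{CS}. The plan is to construct $\rho$ as an interpolation of the representations $\rho_x$ attached, via \cite[Conj.\,5.3.4]{beegee}, to the classical cohomological automorphic forms $\pi_x$ contributing to completed cohomology. First I would localise at an open maximal ideal $\mm$ and assume $\mathbb T_{\mm}$ noetherian; by Theorem \ref{patched_density} (applied as in \S\ref{main_result}) the systems of Hecke eigenvalues $x\in\bigcup_{V}\Sigma_{V(\tau)}$ coming from classical forms are Zariski dense in $\Spec\mathbb T_{\mm}[1/p]$ and have residue fields finite over $L$, so that $\mathbb T_{\mm}^{\rig}$ embeds into $\prod_x\kappa(x)$ by the injectivity of Proposition \ref{commutative_algebra}.

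Next I would interpolate. For each algebraic representation $r$ of $\Ghat^T$, the $C$-group Satake isomorphism of \cite{zhu} expresses, at each such $x$ and each $\ell\notin S$, the trace of $r$ evaluated on the Frobenius conjugacy class $\mathrm{CC}(x_\ell)\xi(\chi_{\cyc}(\Frob_\ell))$ as the image of a fixed element of $\mathbb T_{\mm}$; Zariski density and the reducedness of the Hecke algebras $\mathbb T_{\mm,V(\tau)}$ (Lemma \ref{TmVtau}) then yield a continuous pseudo-representation, or a determinant in the sense of \cite{chenevier_det}, of $\Gal_{\QQ,S}$ with values in $\mathbb T_{\mm}^{\rig}$ whose Frobenius data match the Satake prescription and whose composition with $d$ is the cyclotomic character, giving condition (i). On the open locus where this pseudo-representation is absolutely irreducible one upgrades it, exactly as in \cite[Thm.\,E]{chenevier_thesis}, to a representation valued in the units of an Azumaya algebra; assembling the various $r$ by Tannakian formalism produces an admissible $\Ghat^T$-valued representation, and the twisting construction of \S\ref{sec_Cgr} --- using a $\Gamma$-invariant lift $\tilde\delta$ of $\delta_{\mathrm{ad}}$ when one exists, or working directly with $\CG_f$ otherwise --- gives the desired $\rho:\Gal_{\QQ}\to\CG_f(\Qpbar)$. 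Conditions (ii) and (iii) are then inherited by Zariski density from the classical points, on which they hold by construction.

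The main obstacle is twofold. First, one must actually know \cite[Conj.\,5.3.4]{beegee} for the classical $\pi_x$, with the correct $C$-group normalisation and local--global compatibility at the unramified primes; this is available only in the cases exploited in the paper (modular and Shimura curves, definite unitary groups, and the compact unitary Shimura varieties of \cite{CS}), which is why I would content myself with those. Second, and more seriously, the pseudo-representation so obtained interpolates only classical points, whereas Conjecture \ref{dreamconj} also concerns Hecke eigensystems arising from torsion or genuinely $p$-adic classes not congruent to classical ones; handling these requires the Calegari--Geraghty patching framework \cite{cale-ger} or the Caraiani--Scholze method \cite{CS}, and in practice one obtains $\rho$ only after realising $\mathbb T_{\mm}$ as a quotient of a Galois (pseudo)deformation ring --- at which point noetherianity and hypothesis (ii) of Theorems \ref{lzero} and \ref{lzero_central} come for free. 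Upgrading a pseudo-representation to a genuine $\CG_f$-valued representation when the residual representation is reducible is a further delicate point. For these reasons Conjecture \ref{dreamconj} should be regarded as a guiding principle rather than a provable statement at present.
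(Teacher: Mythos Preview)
The statement is labelled and treated in the paper as a conjecture; there is no proof, only the remark that for Hecke eigensystems arising from $C$-algebraic automorphic forms it specialises to \cite[Conj.\,5.3.4]{beegee}, together with the observation (via Lemma \ref{deptrace} and \v{C}ebotarev) that any two $\rho$ satisfying the conditions yield the same $\zeta^C_\rho$. You correctly recognise this and offer a strategy rather than a proof, which is the appropriate response.

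Your strategy, however, is more elaborate than and different from what the paper actually does in the cases it treats. You propose to interpolate the $\rho_x$ at classical points into a pseudo-representation and then upgrade it via Chenevier's Azumaya-algebra construction and Tannakian reconstruction. In \S\ref{sec_modular}--\S\ref{CS} the paper does something more direct: it realises $\mathbb T_{\mm}$ (or $\mathbb T_{\mm}^\psi$, or $\mathbb T^{\mathrm{Spl}}_{\mm}$) as a quotient of a Galois (pseudo-)deformation ring, and under the standing hypothesis that $\rhobar$ is absolutely irreducible the universal deformation furnishes $\rho$ with values in $\GL_n(\mathbb T_{\mm})$ outright, with no interpolation or Tannakian step. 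Your density argument via Theorem \ref{patched_density} and Proposition \ref{commutative_algebra} already presupposes the weakly non-Eisenstein hypothesis and the Cohen--Macaulay property, which are unavailable in the generality of the conjecture (cf.\ Remark \ref{know_it_all}); and for general $G$ there is no workable notion of $\CG_f$-valued pseudo-representation, so the step ``assembling the various $r$ by Tannakian formalism'' is considerably more heuristic than you suggest. Your closing concession, that in practice one obtains $\rho$ only after exhibiting $\mathbb T_{\mm}$ as a quotient of a deformation ring, is exactly the point: that is how the paper proceeds in every case it settles, and the interpolation route you sketch is not used.
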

\begin{remar} The cocharacter $\xi$ is central in $\Ghat^T$, thus is
  independent of the choice of $\Bhat$ and $\That$ and
  $\mathrm{CC}(x_{\ell})\xi(\chi_{\cyc}(\Frob_\ell))$ is a
  $\Ghat^T(\Qpbar)$--conjugacy class.
\end{remar}

\begin{remar} If $\iota \circ x: \mathbb T\rightarrow \C$, where
  $\iota: \Qpbar \cong \C$ is a fixed isomorphism, is associated to a
  $C$-algebraic automorphic form then the existence of $\rho$
  satisfying the conditions of Conjecture \ref{dreamconj} is
  conjectured in \cite[Conj.\,5.3.4]{beegee}.
\end{remar}

Assume that $\rho$ and $\rho'$ are two admissible representations
associated to $x$ as in Conjecture \ref{dreamconj}, then
$\rho(\Frob_\ell)^{\mathrm{ss}}$ and $\rho'(\Frob_\ell)^{\mathrm{ss}}$
are conjugate by an element of $\Ghat(\Qpbar)$. Let $E$ be a finite
Galois extension of $\QQ$ unramified outside $S$ such that $\Gal_E$
acts trivially on the root datum of $\Ghat$. If $\gamma\in \Gal_E$,
then $\rho(\gamma)=(c_{\gamma}, 1)$ and
$\rho'(\gamma)=(c'_{\gamma}, 1)$ with
$c_{\gamma}, c'_{\gamma}\in \Ghat^T(\Qpbar)$ and thus
\[ g \rho(\gamma) g^{-1}= ( g c_{\gamma} g^{-1}, 1), \quad \forall g\in \Ghat(\Qpbar).\]
Thus if  $\ell$ splits completely in $E$ then 
\[ \tr_V(r(c_{\Frob_v}))=\tr_V(r(c'_{\Frob_v})),\] for all places $v$
of $E$ above $\ell$ and all algebraic representations $(r, V)$ of
$\Ghat^T$.  By \v{C}ebotarev density we have
$\tr_V(r(c_{\gamma}))=\tr_V(r(c'_{\gamma}))$ for all
$\gamma \in \Gal_E$ and thus $\zeta^C_{\rho}=\zeta^C_{\rho'}$, by
Lemma \ref{deptrace}.  This proves that if Conjecture \ref{dreamconj}
is true, there is a well defined character
$Z(\mathfrak{g})\rightarrow\Qpbar$ associated to $x$.

\begin{conj}\label{dream_padic}
  Let $\rho$ be an admissible representation associated to
  $x: \mathbb T[1/p] \rightarrow \Qpbar$ as in Conjecture
  \ref{dreamconj}, and let $n$ be a non-negative integer. Then
  $Z(\mathfrak{g})$ acts on
  $(\widetilde{H}^n\otimes_{\OO} L)[\mm_x]^{\la}\otimes_{\mathbb T,
    x}\Qpbar$ via $\zeta^C_{\rho}$.
\end{conj}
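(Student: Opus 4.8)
The plan is to reduce the conjecture, in as many cases as possible, to the abstract criterion of Theorem~\ref{infinitesimal} (or, when the centre has a split part, to its fixed-central-character variant Theorem~\ref{infinitesimal_central}), exactly as in Theorems~\ref{lzero} and~\ref{lzero_central}. First one replaces $\widetilde{H}^n$ by $\widetilde{H}^n_\mm$, where $\mm$ is the open maximal ideal of $\mathbb{T}$ through which $x$ factors; this changes nothing, since $(\widetilde{H}^n\otimes_\OO L)[\mm_x]=(\widetilde{H}^n_\mm\otimes_\OO L)[\mm_x]$. One then takes $M=\widetilde{H}_{n,\mm}$ (so that $\widetilde{H}^n_\mm\otimes_\OO L$ is its continuous dual), $R=\mathbb{T}_\mm$, $K=K_p$, and $\zeta=\zeta^C_\rho$, and verifies the three hypotheses. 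Hypothesis~(i) holds with $h=0$ as soon as $\mm$ is weakly non-Eisenstein, by Lemma~\ref{need_this}. Hypothesis~(ii), reducedness of the finitely many rings $R_{V(\tau)}$, holds whenever $\mathbb{T}_\mm$ is a quotient of a reduced Galois deformation ring, which is also what forces $\mathbb{T}_\mm$ to be noetherian. Hypothesis~(iii) is the heart of the matter: the points $x\in\Sigma_{V(\tau)}$ correspond, by Franke's theorem as packaged in Lemma~\ref{TmVtau}, to cuspidal $C$-algebraic automorphic representations $\pi_x$ whose archimedean infinitesimal character is that of $V$ by Lemma~\ref{inf_match}, while $\zeta^C_{\rho_x}$ is read off the Hodge--Tate cocharacter of $\rho_x|_{\Gal_{\Qp}}$; Proposition~\ref{C-alg-inf}, which is exactly the compatibility of our construction with the Buzzard--Gee recipe, Conjecture~\ref{dreamconj}(iii) together with its archimedean counterpart, makes the two match.

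The remaining ingredient, and the only genuinely conjectural one, is the \emph{family} $\rho\colon\Gal_\QQ\to\CG_f(\mathbb{T}_\mm^{\rig})$ interpolating the pointwise representations of Conjecture~\ref{dreamconj}, with $d\circ\rho=\chi_{\cyc}$. Given such a $\rho$, the construction of Section~\ref{sec_Cgr}, applied to the rigid generic fibre of $\Spf\mathbb{T}_\mm$ with $P$ the trivial $\Ghat_X$-torsor, produces $\zeta^C_\rho\colon Z(\Res_{F/\Qp}\mathfrak g)\otimes_{\Qp}L\to\mathbb{T}_\mm^{\rig}$, and Lemma~\ref{base_change_C} identifies its specialisation at $x$ with $\zeta^C_{\rho_x}$, so Theorem~\ref{infinitesimal} then yields the conjecture. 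Producing $\rho$ from the pointwise data is the classical problem of interpolating Galois representations across a Hecke algebra; for the $L$-group one argues with pseudo-representations and Chenevier's determinant formalism, but for the $C$-group one must in addition control the $\Gm$-component $d\circ\rho$ and the twist by $\xi(\chi_{\cyc})$ appearing in Conjecture~\ref{dreamconj}(iii) — which is precisely why it is the $C$-group, not the $L$-group, that has to be used throughout. When $\rho$ arises by specialising a universal Galois deformation ring, as for modular curves, Shimura curves, definite unitary groups, and the Caraiani--Scholze Shimura varieties in Sections~\ref{sec_modular}--\ref{CS}, hypotheses~(i)--(iii) and noetherianity all become automatic, and one recovers Theorems~\ref{lzero} and~\ref{lzero_central}.

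For a general $G$ with $l_0>0$ there are no weakly non-Eisenstein ideals, $\widetilde{H}_{n,\mm}$ is not expected to be projective over $\OO[\![K_p]\!]$, and hypothesis~(i) fails. Here the plan is to replace $\widetilde{H}_{n,\mm}$ by the patched homology module of Calegari--Geraghty \cite{cale-ger}, which does carry an $M$-regular sequence $y_1,\dots,y_{l_0}$ whose quotient is finite projective over $\OO[\![K_p]\!]$, and to feed it into Theorem~\ref{infinitesimal_central} — the fixed-central-character version being needed because the split part $S$ of the centre acts through a character, handled as in Lemma~\ref{decomp_char}. One then has to check that, after specialisation at $x$, the patched homology still recovers $(\widetilde{H}^n_\mm\otimes_\OO L)[\mm_x]^{\la}$, and that a family $\rho$ over the patched deformation ring exists and satisfies~(iii); this is plausible when $l_0=0$ already holds for the derived group — e.g.\ $G=\PGL_2$ over an imaginary quadratic field split at $p$, in the setting of Gee--Newton — but is open otherwise.

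Thus the main obstacle is entirely of ``input'' nature: (a) the existence of the $C$-group valued Galois representation $\rho$ over $\mathbb{T}_\mm^{\rig}$ (or over the patched deformation ring), which Conjecture~\ref{dreamconj} asserts only pointwise and which demands both an interpolation argument and the correct $\Gm$-bookkeeping; (b) in the $l_0>0$ case, producing a patched module satisfying hypothesis~(i) without losing the Hecke eigenspaces; and, in both cases, noetherianity of $\mathbb{T}_\mm$ and reducedness of the finitely many $R_{V(\tau)}$. Granting these, no new idea is required: the proof is the density of algebraic vectors (Theorem~\ref{patched_density}) together with the injectivity on fibres of Cohen--Macaulay modules (Proposition~\ref{commutative_algebra}), exactly as in the proof of Theorem~\ref{infinitesimal}.
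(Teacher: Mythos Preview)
The statement is a \emph{conjecture}; the paper does not prove it in general and explicitly says it ``seems out of reach'' (introduction, just before Theorem~\ref{mainglobal1}). What the paper does prove are the special cases packaged as Theorems~\ref{lzero} and~\ref{lzero_central}, and it sketches the $l_0>0$ strategy via Calegari--Geraghty patching in Remark~\ref{know_it_all}. Your proposal is not a proof but a strategy, and as such it matches the paper's approach essentially verbatim: reduce to Theorem~\ref{infinitesimal} (or~\ref{infinitesimal_central}) with $M=\widetilde{H}_{q_0,\mm}$, $R=\mathbb T_\mm$, $\zeta=\zeta^C_\rho$, and verify (i)--(iii) using Lemma~\ref{need_this}, Lemma~\ref{TmVtau}, Lemma~\ref{inf_match} and Proposition~\ref{C-alg-inf}. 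Your identification of the obstacles (existence of the family $\rho$ over $\mathbb T_\mm^{\rig}$, projectivity failing when $l_0>0$, noetherianity) is also the paper's.

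Two small corrections. First, reducedness of $R_{V(\tau)}=\mathbb T_{\mm,V(\tau)}$ in hypothesis~(ii) does \emph{not} come from $\mathbb T_\mm$ being a quotient of a reduced deformation ring; it comes directly from Lemma~\ref{TmVtau}, i.e.\ from the automorphic decomposition~\eqref{bookkeeping1}--\eqref{bookkeeping2} after Franke's theorem, which exhibits $\mathbb T_{\mm,V(\tau)}$ as a product of fields. Second, the reason one passes to the fixed-central-character variant is not that ``the split part $S$ of the centre acts through a character'' but that $Z_\infty/A_\infty$ may fail to be compact, so that $\Lambda_p$ is nontrivial and $\widetilde{H}_{q_0,\mm}$ is only projective in $\Mod^{\pro}_{K,\psi}(\OO)$ rather than over $\OO[\![K_p]\!]$; see Lemma~\ref{control_Gamma} and the proof of Theorem~\ref{lzero_central}.
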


\begin{remar} Note that $(\widetilde{H}^n\otimes_{\OO} L)[\mm_x]$ can
  be zero, in which case the statement of the Conjecture
  \ref{dream_padic} holds trivially.
\end{remar}

\begin{remar}\label{know_it_all} In the general case, even after localising $\widetilde{H}_n$ at a maximal ideal of the Hecke algebra, we do not expect to get a projective $\OO[\![K_p]\!]$-module. So 
  Theorems \ref{infinitesimal} and \ref{infinitesimal_central} cannot
  be applied directly. However, one might hope to be able to apply our
  results to the patched homology groups obtained via the patching
  method of Calegary--Geraghty \cite{cale-ger}. The most accessible
  case, when weakly non-Eisenstein maximal ideal are not expected to
  exist, is when $G= \PGL_2$ over a quadratic imaginary field $F$,
  such that $p$ splits completely in $F$, studied by Gee--Newton in
  \cite{gee-newton}. It follows from \cite[Prop.\,5.3.1]{gee-newton}
  and its proof that under the assumptions made there the patched
  homology, denoted by $H_{q_0}(\widetilde{\mathcal{C}}(\infty))$ in
  \cite{gee-newton}, satisfies the conditions of Theorem
  \ref{infinitesimal}.  We do not pursue this further, just remark
  that in that setting instead of applying Theorem \ref{infinitesimal}
  to $H_{q_0}(\widetilde{\mathcal{C}}(\infty))$ it might be easier to
  use local--global compatibility at $p$ and appeal to the results on
  the infinitesimal character in the $p$-adic Langlands correspondence
  for $\GL_2(\Qp)$, see Theorem \ref{tung}.
\end{remar}

\end{document}